\documentclass[12pt]{article}
\usepackage{amsmath,amsfonts,amssymb,amsthm}
\usepackage{mathtools}
\usepackage{xcolor}

\usepackage{colortbl}
\RequirePackage[colorlinks,citecolor=blue,urlcolor=blue,backref=page]{hyperref}

\numberwithin{equation}{section}

\newtheorem{proposition}{Proposition}[section]
\newtheorem{theorem}[proposition]{Theorem}
\newtheorem{corollary}[proposition]{Corollary}

\newtheorem{lemma}[proposition]{Lemma}
\newtheorem{Lemma}[proposition]{Lemma}
\newtheorem{remark}[proposition]{Remark}

\newtheorem{convention}[proposition]{Convention}

\newcommand{\nc}{\newcommand}
\nc{\I}{{\mathbf 1}}

\nc{\bN}{{\mathbf N}}
\nc{\bM}{{\mathbf M}}
\nc{\bH}{{\mathbf H}}
\nc{\bF}{{\mathbf F}}
\nc{\bS}{{\mathbf S}}
\nc{\cB}{{\mathcal B}}
\nc{\cM}{{\mathcal M}}
\nc{\R}{{\mathbb R}}
\nc{\N}{{\mathbb N}}
\nc{\Z}{{\mathbb Z}}
\nc{\BX}{{\mathbb X}}
\nc{\BS}{{\mathbb S}}
\nc{\BD}{{\mathbb D}}
\nc{\BY}{{\mathbb Y}}
\nc{\cX}{{\mathcal X}}
\nc{\cY}{{\mathcal Y}}

\nc{\cE}{{\mathcal E}}

\nc{\dd}{\mathrm{d}} 
\nc{\Pn}[2]{#1^{(#2)}} 
\nc{\Pnm}[2]{#1^{[#2]}} 
\nc{\bfm}{\mathbf}

\DeclareMathOperator{\BV}{{\mathbb Var}}
\DeclareMathOperator{\CV}{{\mathbb Cov}}

\nc{\BP}{\mathbb{P}}
\nc{\BE}{\mathbb{E}}
\nc{\BQ}{\mathbb{Q}}

\nc{\Po}{{\mathrm{Po}}}
\nc{\Be}{{\mathrm{Beta}}}
\nc{\Di}{{\mathrm{Dir}}}

\begin{document} 

\renewcommand{\thefootnote}{\fnsymbol{footnote}}
\author{{\sc G\"unter Last\footnotemark[1] \hspace{0.1mm} and Babette Picker\footnotemark[2]}}
\footnotetext[1]{guenter.last@kit.edu, 
Institute of Stochastics, Karlsruhe Institute of Technology}
\footnotetext[2]{babette.picker@kit.edu,
Institute of Stochastics, Karlsruhe Institute of Technology}

\title{Stochastic analysis for the Dirichlet--Ferguson process}
\date{\today}
\maketitle

\begin{abstract} 
\noindent 
We study a Dirichlet--Ferguson process $\zeta$
on a general phase space. First we reprove the chaos expansion from
\cite{Peccati08}, providing an explicit formula for the kernel functions.
Then we proceed with developing a Malliavin calculus
for $\zeta$. To this end we introduce a gradient, a divergence and a generator
which act  as linear operators on random variables or random fields
and which are linked by some basic formulas such as integration by parts.
While this calculus is strongly motivated
by Malliavin calculus for isonormal Gaussian processes and the general Poisson process,  
the strong dependence properties of $\zeta$ require considerably more combinatorial
efforts. We apply our theory to identify our generator as the generator of the 
Fleming--Viot process and to describe  the associated Dirichlet form
explicitly in terms of the chaos expansion. We also establish the product and
chain rule for the  
gradient and an integral representation of the divergence.
Finally we give a short direct proof of the Poincar\'e inequality. 
\end{abstract}

\bigskip

\noindent
{\bf Keywords:} 
Dirichlet--Ferguson process, Dirichlet distribution, 
Mecke type equation, chaos expansion, Malliavin calculus, Dirichlet forms, Fleming--Viot process,
Poincar\'e inequality

\vspace{0.1cm}
\noindent
{\bf AMS MSC 2010:} 60G55, 60G57, 31C25

\section{Introduction}

We consider a {\em Dirichlet--Ferguson process} (DF process) $\zeta$ on some
measurable space $(\BX,\cX)$ with finite parameter measure
$\rho\ne 0$. This is a random purely discrete measure on $\BX$,
whose finite-dimensional distributions are Dirichlet distributions;
see \cite{Ferguson74} and, for instance, \cite{Kingman75,Sethuraman94,Pitman06,Feng2010,LastPenrose18}.
The random measure $\zeta$ is a benchmark model of a random probability
measure. It arises as a limit of a continuous
version of a P\'olya urn scheme \cite{BlackwellQueen73}.
The atom sizes of $\zeta$ can be constructed via a stick-breaking process 
based on a Beta distribution \cite{Pitman06}.
Its distribution is invariant and reversible for the
Fleming--Viot process of population genetics; see \cite{FlemingViot79,EthierKurtz93,Ethier90}.
The Dirichlet--Ferguson process also plays an important role in Bayesian statistics
and machine learning; see e.g.\ \cite{Murphy12}.

For ease of exposition, we assume in this introduction that $\zeta$ is defined over
a probability space $(\Omega,\mathcal{A},\BP)$, where
$\mathcal{A} =\sigma(\zeta)$ is the $\sigma$-field generated by $\zeta$.
A fundamental result from \cite{Peccati08} says that any $F\in L^2(\BP)$
can be written as an infinite series 
\begin{equation}\label{chaosdec}
F= \BE F + \sum_{n=1}^\infty \int f_n(x) \, \zeta^n(\dd x),\quad \BP\text{-a.s.}, 
\end{equation}
converging in $L^2(\BP)$. Here the $f_n$ are measurable (kernel) functions  on $\BX^n$
which are square-integrable w.r.t.\ the measure $\rho^{[n]}$ defined by 
\eqref{mu[n]} and conditionally centered in the sense of 
\eqref{condcent}. The summands in the {\em chaos expansion} \eqref{chaosdec}
are mutually orthogonal in $L^2(\BP)$. 
In Section \ref{sec:chaos} we shall rederive the chaos expansion \eqref{chaosdec}
and obtain the explicit representation \eqref{kerneln} for the kernel functions $f_n$.

For Gaussian orthonormal processes,
the chaos expansion is the basis of {\em Malliavin calculus}, a differential calculus
for random variables. The key ingredients of this calculus are linear operators
(gradient, divergence and generator) which are connected via several
relationships \cite{Nualart2006}. This calculus has proven to be very useful
for studying regularity properties of Gaussian functionals. In combination with Stein's method
it provides a powerful tool for deriving explicit bounds  for the normal approximation
of Gaussian functionals \cite{NourdinPeccati2012}.
Malliavin calculus has also been developed for   
the general Poisson  process (see \cite{Last2016} and the references there),
L\'evy processes (see e.g.\ \cite{SoleUtzet16}),
free Brownian motion \cite{BianeSpeicher98} and 
Rademacher sequences \cite{PrivSchout2002}.
In all these cases the underlying stochastic process enjoys strong independence properties.

In Section \ref{sec:Malliavin} we shall introduce and study Malliavin operators
for the Dirichlet--Ferguson process. Discrete versions of a gradient and a 
divergence were proposed in \cite{FlintTorrisi23}.
But to the best of our knowledge our work is the first to develop an intrinsic Malliavin calculus for
a strongly dependent 
process. 
We start by introducing for $F\in L^2(\BP)$ with chaos expansion \eqref{chaosdec}
the {\em gradient} $\nabla F$ by \eqref{defnabla},
provided the kernel functions satisfy \eqref{domNabla}. In the latter case $F$ belongs
to the domain $\operatorname{dom}(\nabla)$ of $\nabla$.
The gradient is a measurable function $(\omega,x)\mapsto \nabla_x F(\omega)$
and we write  $\nabla_x F$ for the random variable
$\omega\mapsto \nabla_x F(\omega)$. The series \eqref{defnabla}
converges in $L^2(C_\zeta)$, where
\begin{align}\label{Campbell}
C_\zeta:=\iint \I\{(\omega,x)\in\cdot\}\,\zeta(\omega,\dd x)\,\BP(\dd\omega)
\end{align}
is the {\em Campbell measure} of $\zeta$. For $H\in L^2(C_\zeta)$
we define the divergence $\delta(H)\in L^2(\BP)$ by the {\em partial integration}
formula
\begin{align*}
\BE \int H_x \nabla_xF\,\zeta(\dd x)=\BE \delta(H)F,\quad F\in \operatorname{dom}(\nabla),
\end{align*}
whenever $H$ belongs to the suitably defined domain $\operatorname{dom}(\delta)$ 
of $\delta$. 
Using the Campbell measure, we can rewrite this equation more succinctly  
as
\begin{align*}
C_\zeta(H\nabla F)=\BE \delta(H)F.
\end{align*}
We wish to stress that,
given $\omega\in\Omega$, we do not consider the function $x\mapsto\nabla_xF(\omega)$
as an element of a Hilbert space. In contrast to the independent cases mentioned above, 
our partial integration is not of the form
$\BE \langle H_\cdot, \nabla_{\cdot}F\rangle=\BE \delta(H)F$ 
for a suitable scalar product $\langle\cdot,\cdot\rangle$.
In the independent case the (suitably defined)
Campbell measure of $\zeta$ is close to a product measure with $\BP$ as first factor. 
This is not true for the DF process. In fact, we have
\begin{align*}
C_\zeta=\rho(\BX)^{-1}\int \I\{(\omega,x)\in\cdot\}\,\BP_x(\dd\omega)\,\rho(\dd x), 
\end{align*}
where $\BP_x$ is 
is the distribution of a DF process with directing measure $\rho+\delta_x$.
(The $\BP_x$, $x\in\BX$, are the {\em Palm distributions} of $\zeta$.) 
The third operator is a generator $L$ acting via \eqref{L} on all
$F$  from the domain $\operatorname{dom}(L)$ (defined by \eqref{domL}).
If $F\in \operatorname{dom}(\nabla)$, then $F\in \operatorname{dom}(L)$
and $\delta(\nabla)=-L$.

In Section \ref{sec:FlemingViot} we assume that $\BX$ is a locally compact Hausdorff space.
We consider the generator $L_\rho$ of the Fleming--Viot process, a second order differential
operator acting on smooth functions of $\zeta$, and the associated bilinear form.
We show that the closure of this form is given by
\begin{align}
\cE(F,G):=C_\zeta(\nabla F \nabla G),\quad F,G\in \operatorname{dom}(\nabla),
\end{align}
and that the closure of $2L_\rho$ is $L$. This gives an explicit description of these closures
in terms of Malliavin operators and justifies to call
$L$ the Fleming--Viot operator.
We also use chaos expansion to identify the strongly continuous semigroup $\{T_t:t\ge 0\}$ associated
with $L$.

In Section \ref{sec:furtherproperties} we derive the product and the chain rule for the
Malliavin gradient, which look exactly the same as in the Gaussian case 
\cite{Nualart2006,NourdinPeccati2012}. We also establish a pathwise representation of the divergence,
under appropriate assumptions on the integrand. These formulas are similar to the Gaussian case.
In fact, the continuous nature of jump sizes forces our Malliavin operators (when applied to smooth functions
of $\zeta$)  to be differential operators. This is in contrast to the Poisson case, where the gradient
is a pathwise defined difference operator \cite{Last2016}.
In Section \ref{sec:covariances} we present some formulas for the covariance between
specific functions of $\zeta$.
In Section \ref{sec:Poincare} we use chaos expansion
to give a short and direct proof of the Poincar\'e inequality 
for functions of the DF process. 
This inequality was derived in \cite{Stannat00} using the Poincar\'e inequality for the Dirichlet distribution 
(proved in \cite{Shimakura77}) and a suitable approximation. 

\section{The Dirichlet--Ferguson process}\label{preliminaries}

We work on a probability space $(\Omega,\mathcal{A},\BP)$.
Let $(\BX, \cX,\rho)$ be a measure space where $\theta:=\rho(\BX)> 0$ and $\theta<\infty$.
Let $\bM_1\equiv\bM_1(\BX)$ be the space of all probability measures on
$\BX$ equipped with the smallest $\sigma$-field $\mathcal{M}_1$ such that the mapping
$\mu \mapsto\mu(B)$ from $\bM_1$ to $[0,1]$ is measurable for each
$B\in\cX$. A {\em Dirichlet--Ferguson process} (DF Process) $\zeta$ on $\BX$ with parameter 
measure $\rho$ is a random probability measure 
$\zeta$ on $\BX$ (a random element of the space $\bM_1(\BX)$) 
satisfying
\begin{align}\label{e1.1}
\BP((\zeta(B_1),\ldots,\zeta(B_n))\in\cdot)=\Di(\rho(B_1),\ldots,\rho(B_n)),
\end{align}
whenever $B_1,\ldots,B_n$, $n\ge 1$, form a measurable partition of $\BX$. 
Recall that  the {\em Dirichlet distribution} $\Di(\alpha_1,\dots,\alpha_n)$
with $n\ge 1$ parameters $\alpha_1,\dots,\alpha_n\in[0,\infty)$
such that $\alpha_1+\cdots+\alpha_n>0$ is  
the probability measure on 
$$
\Delta_n:=\{(p_1,\dots,p_n)\in[0,1]^n:p_1+\dots+p_n=1\}
$$
defined as follows. If $n\ge 2$ and $\alpha_1,\dots,\alpha_n\in(0,\infty)$,
then $\Di(\alpha_1,\dots,\alpha_n)$ has the density 
\begin{align*}
(x_1,\ldots,x_n)\mapsto \frac{\Gamma(\alpha_1+\dots+\alpha_n)}{\Gamma(\alpha_1)\cdots\Gamma(\alpha_n)}
x_1^{\alpha_1-1}\cdots x_{n}^{\alpha_{n}-1}
\end{align*}
with respect to (properly normalized) $(n-1)$-dimensional Hausdorff measure on $\Delta_n$.
Here $\Gamma(\alpha):=\int^\infty_0t^{\alpha-1}e^{-t}\,\dd t$, $\alpha>0$, denotes the Gamma function. 
If $n=1$, then $\Di(\alpha_1):=\delta_1$. If $n\ge 2$ and 
$\alpha_1=\cdots=\alpha_k=0$ for some $k\le n-1$, then
$\Di(\alpha_1,\dots,\alpha_n):=\delta_0^{\otimes k}\otimes \Di(\alpha_{k+1},\dots,\alpha_n)$.
A similar definition applies if some other set of the $\alpha_i$ vanish.
In the extreme case $\rho=\delta_x$ for some $x\in\BX$ we have $\zeta=\delta_x$.
The distribution of a DF process 
is denoted by  $\Di(\rho)\equiv\Di(\rho,\cdot)$, a probability measure on $\bM_1(\BX)$.  
Sometimes we write $\zeta_\rho:=\zeta$.

Given a measure $\mu$ on $\BX$ and $n \in \N$, we define a measure $\mu^{[n]}$ on $\BX^n$
by
\begin{equation}\label{mu[n]}
	\mu^{[n]}(B) = \idotsint \I_B (x_1, \ldots, x_n)  
(\mu + \delta_{x_1} + \cdots + \delta_{x_{n-1}})(\dd x_n) 
\cdots (\mu + \delta_{x_1})(\mathrm{d} x_2) \mu(\dd x_1).
\end{equation}
Note that
\begin{equation*}
\mu^{[n]}(B^n) = \mu(B) (\mu(B) + 1) \cdots (\mu(B) + n -1) = \Pn{\mu(B)}{n},\quad B \in \cX,
\end{equation*} 
where, for $w\in\R$,  $\Pn{w}{n}:=w(w+1)\cdots(w+n-1)$ is the $n$-th rising factorial of $w$.
As usual we set $\Pn{w}{0}:=1$.
It can be shown that $\mu^{[n]}$ is a symmetric measure.

Let $f\colon \bM_1(\BX) \times \BX^n \rightarrow [0,\infty)$ be measurable.
Then the multivariate Mecke-type equation for the DF process says that
\begin{align}\label{Mecke1}\notag
	\iint &f(\mu, x_1, \ldots, x_n) \,
\mu^n(\mathrm \dd (x_1, \ldots, x_n))\, \Di(\rho,\dd\mu) \\ 
&= \frac{1}{\Pn{\theta}{n}} \iint f(\mu, x_1, \ldots, x_n)\, 
\Di(\rho + \delta_{x_1} + \cdots + \delta_{x_n},\dd \mu)\, \rho^{[n]}(\dd (x_1, \ldots, x_n)).
\end{align}
This result can be found in \cite{Ferguson74}, at least in an implicit form.
It can be proved inductively, starting with the Mecke-type equation from 
\cite{DelloShiavoLytvynov23,Last20}.
Sometimes it is convenient to write \eqref{Mecke1} as
\begin{align*}
\BE \bigg[ \int f(\zeta_\rho, x_1, \ldots, x_n)& \, \zeta_{\rho}^n(\dd (x_1, \ldots, x_n))\bigg] \\
   & = \frac{1}{\Pn{\theta}{n}} \BE\bigg[\int f(\zeta_{\rho,x_1,\ldots,x_n},x_1, \ldots, x_n)\, 
\Pnm{\rho}{n}(\dd (x_1, \ldots, x_n))\bigg],
\end{align*}
where $\zeta_{\rho,x_1,\ldots,x_n}:=\zeta_{\rho + \delta_{x_1} + \cdots + \delta_{x_n}}$ is a DF process with
directing measure $\rho + \delta_{x_1} + \cdots + \delta_{x_n}$.
Equation \eqref{Mecke1} shows in particular that the $n$-th moment measure
of $\zeta$ is given by
\begin{equation}\label{nmoment}
\BE \zeta^n (B) = \frac{1}{\Pn{\theta}{n}} \rho^{[n]}(B),\quad B\in\cX^n.
\end{equation}

\section{Chaos expansion}\label{sec:chaos}

In this section we reprove the chaos expansion from \cite{Peccati08}  and derive an explicit
formula for the kernel functions.

For $n\in\N$ we let $\mathbf{H}_n$ denote the set of all symmetric functions $g \in L^2(\rho^{[n]})$ satisfying 
\begin{equation}\label{condcent}
\int g(x_1, \ldots, x_{n-1}, x) (\rho + \delta_{x_1} + \cdots + \delta_{x_{n-1}})(\dd x) = 0,
\quad \rho^{[n-1]}\text{-a.e.\ $(x_1, \ldots, x_{n-1}) \in \BX^{n-1}$.} 
\end{equation}
For $n=1$ this has to be interpreted as $\int g(x)\,\rho(\dd x)=0$.

Since we consider $\mathbf{H}_n$ as a subspace of $L^2(\rho^{[n]})$, the symmetry of
$f\in L^2(\rho^{[n]})$ means that $f$ coincides with a symmetric function
$\rho^{[n]}$-a.e. An  important property of the spaces $\bH_n$ is
\begin{align}\label{e:345}
\int g(x) h(y) \, \rho^{[m+n]}(\dd (x,y)) = \I\{m=n\} n! \int g(x) h(x) \,\rho^{[n]}(\dd x),
\quad g\in\mathbf{H}_m,h\in\mathbf{H}_n,
\end{align}
for $m,n\in\N$. More general versions of these orthogonality relations are proved in the appendix.
Together with \eqref{nmoment} this shows that
\begin{equation}\label{KorOrhtoFmFn}
\BE\left[ \int  g(x) \,\zeta^m(d x) \int h(y) \,\zeta^n(\dd y) \right] 
=\I\{m=n\} \frac{n!}{\Pn{\theta}{2n}} \int g(x) h(x)\, \rho^{[n]}(\dd x).
\end{equation}
As a consequence the spaces $\mathbf{F}_n$, $n\in\N_0$, 
defined by
\begin{equation*}
\mathbf{F}_n = \left\{ \int g(x) \, \zeta^n(\dd x): g \in \mathbf{H}_n \right\}
\end{equation*}
and $\mathbf{F}_0 := \R$ are pairwise orthogonal subspaces of $L^2(\BP)$.
We call $\bF_n$ the $n$-th {\em chaos} associated with $\zeta$.
Let $L^2(\zeta)$ be the space of all measurable $F\colon \bM_1(\BX)  \rightarrow \R$ 
such that $\BE F(\zeta)^2 < \infty$. 

\begin{convention}\rm Given $F\in L^2(\zeta)$ we often write $F:=F(\zeta)$.
If $(\Omega,\mathcal{A},\BP)$ is the canonical probability space 
$(\bM_1,\cM_1,\Di(\rho))$ (with $\zeta$ given as the identity), this is no
abuse of notation. In the general case this leads to a significantly lighter
notation, which should hopefully not cause any confusion. 
\end{convention}

\begin{convention}\rm Let $\mu$ be a measure on $\BX$.
The space $\BX^0$ is identified with a singleton (as $\{\emptyset\}$, for instance)
and a $[-\infty,\infty]$-valued function on $\BX^0$ can be identified with a constant $c$. 
Then it is both natural and useful to set $\int_ {\BX^0}c\, \dd \mu^{0}=\int_ {\BX^0}c\, \dd \mu^{[0]}:=c$.
With this convention \eqref{KorOrhtoFmFn} remains true for $m=0$ or $n=0$,
upon interpreting $f$ (resp.\ $g$) as constant. If $\BY$ is a set and $f\colon\BY\to[-\infty,\infty]$
is a function, then we can interpret $f$ as a function on $\BY\times\BX^{0}$ and
we have $\int f(y,x)\,\mu^{0}(\dd x)=\int f(y,x)\,\mu^{[0]}(\dd x)=f(y)$, $y\in\BY$.
\end{convention}

The main result in \cite{Peccati08} (cf.\ equation (6) therein) 
says that any $F\in L^2(\zeta)$ can be written as an orthogonal series
\begin{equation}\label{EqChaosZerlegung}
F = \BE F + \sum_{n=1}^\infty \int f_n(x) \, \zeta^n(\dd x),\quad \BP\text{-a.s.}, 
\end{equation}
where the convergence is in $L^2(\BP)$ and
the {\em kernel functions} $f_n\in\mathbf{H}_n$ are uniquely determined
$\rho^{[n]}$-a.e. From \eqref{KorOrhtoFmFn} we obtain
\begin{equation}\label{e:iso0}
\BE FG=(\BE F)(\BE G)
+\sum^\infty_{n=1}\frac{n!}{\Pn{\theta}{2n}}\int f_n(x)g_n(x)\,\rho^{[n]}(\dd x)\,\quad F,G\in L^2(\zeta), 
\end{equation}
where $g_n$, $n\in\N$, are the kernel functions of $G$.

Given $F\in L^2(\zeta)$, our first main result shows that
the kernel functions $f_n$, $n\in\N$, of $F$ are given by 
\begin{equation}\label{kerneln}
f_n (x_1, \ldots, x_n)=  
\frac{\theta + 2n - 1}{n!} \sum_{j=0}^{n}  (-1)^{n-j} 
\Pn{(\theta+j)}{n-1} \sum_{1\le i_1 < \cdots < i_j\le n} \BE F(\zeta_{\rho,x_{i_1},\ldots,x_{i_j}}).
\end{equation}

\begin{theorem}\label{t:chaos} Suppose that $F\in L^2(\zeta)$.
Then the chaos expansion \eqref{EqChaosZerlegung} holds with the functions $f_n$, $n\in\N$,
given by \eqref{kerneln}. The functions $f_n$ are $\rho^{[n]}$-a.e.\ uniquely
determined by \eqref{EqChaosZerlegung}.
\end{theorem}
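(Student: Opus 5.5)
Our plan is to reduce everything to the dense class of exponential-type (or polynomial) functionals and then obtain the formula by orthogonal projection.

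\textbf{Step 1 (projection formula).} Fix $n\in\N$ and $g\in\bH_n$. If the expansion \eqref{EqChaosZerlegung} holds with kernels $f_m\in\bH_m$, then orthogonality \eqref{KorOrhtoFmFn} gives
\[
\BE\Big[F\int g\,\dd\zeta^n\Big]=\frac{n!}{\Pn{\theta}{2n}}\int f_n g\,\dd\rho^{[n]} .
\]
This yields uniqueness, since $\BE[F\int g\,\dd\zeta^n]$ determines $f_n$ in $\bH_n$. For existence, compute the left side through the Mecke equation \eqref{Mecke1}:
\[
\BE\Big[F\int g\,\dd\zeta^n\Big]=\frac{1}{\Pn{\theta}{n}}\int g(x)\,\BE F(\zeta_{\rho,x_1,\dots,x_n})\,\rho^{[n]}(\dd x).
\]
So $\frac{n!}{\Pn{\theta}{2n}}f_n$ must be the orthogonal projection in $L^2(\rho^{[n]})$ onto $\bH_n$ of $\frac{1}{\Pn{\theta}{n}}h_n$, where $h_n(x):=\BE F(\zeta_{\rho,x_1,\dots,x_n})$. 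This function is symmetric and square integrable: by Jensen and Mecke, $\int h_n^2\,\dd\rho^{[n]}\le \Pn{\theta}{n}\BE F^2$.

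\textbf{Step 2 (projection onto $\bH_n$).} We identify the projection $P_n$ onto $\bH_n$ explicitly as the inclusion--exclusion operator
\[
P_nh(x)=c_n\sum_{j}(-1)^{n-j}\Pn{(\theta+j)}{n-1}\sum_{i_1<\dots<i_j}\tilde h(x_{i_1},\dots,x_{i_j}),
\]
applied to conditional "Palm averages" of $h_n$. The key observation is that $h_n$ has the special structure $h_n(x)=\BE F(\zeta_{\rho,x})$, and integrating out a variable reduces the order:
\[
\int \BE F(\zeta_{\rho,x_1,\dots,x_{n-1},y})\,(\rho+\delta_{x_1}+\dots+\delta_{x_{n-1}})(\dd y)=(\theta+n-1)\,\BE F(\zeta_{\rho,x_1,\dots,x_{n-1}}),
\]
by Mecke with $n=1$ applied to the DF process with parameter $\rho+\sum\delta_{x_i}$. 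Using this identity we verify two things directly for the candidate $f_n$ in \eqref{kerneln}:
\begin{itemize}
\item[(a)] it satisfies \eqref{condcent}, via a telescoping combinatorial identity in the coefficients $\Pn{(\theta+j)}{n-1}$;
\item[(b)] $\frac{n!}{\Pn{\theta}{2n}}\int f_n g\,\dd\rho^{[n]}=\frac{1}{\Pn{\theta}{n}}\int h_n g\,\dd\rho^{[n]}$ for every $g\in\bH_n$.
\end{itemize}
Part (b) follows because all terms with $j<n$ depend on fewer than $n$ coordinates, hence are annihilated by $g$ through the general orthogonality relations of the appendix (integrate against $g$ in a variable not appearing). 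Only the $j=n$ term survives, with coefficient $\frac{\theta+2n-1}{n!}\Pn{(\theta+n)}{n-1}=\frac{\Pn{\theta}{2n}}{n!\,\Pn{\theta}{n}}$, as required.

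\textbf{Step 3 (completeness).} It remains to show that $\bigoplus_n\bF_n$ is dense in $L^2(\zeta)$, i.e.\ that $F\perp\bF_n$ for all $n$ forces $F=0$. We show that functionals $\int u\,\dd\zeta^m$ with bounded $u$ lie in $\bF_0\oplus\dots\oplus\bF_m$: by symmetrization and the decomposition $u=P_mu+{}$(lower order terms), each piece reduces to an integral over fewer variables, since $\zeta$ is a probability measure and $\int v(x,y)\,\zeta(\dd y)$ lowers the order. Products $\prod_i\zeta(B_i)^{k_i}$ are of this form, and polynomials in finitely many $\zeta(B_i)$ are dense in $L^2$ of the Dirichlet vector (compact support). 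A monotone class argument then gives density in $L^2(\zeta)$. Given density, \eqref{EqChaosZerlegung} holds with $f_n=$ the projection kernel, which by Step 2 equals \eqref{kerneln}.

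The main obstacle is Step 2(a): checking the conditional centering of \eqref{kerneln}. After integrating $x_n$ out and applying the reduction identity, the terms with $x_n$ among the chosen indices (level $j$) merge with those without it (level $j-1$). Their coefficients must cancel, which requires
\[
(\theta+n-1+j)\cdot\ldots\ \text{versus}\ \Pn{(\theta+j)}{n-1}(\theta+n-1+j)-\Pn{(\theta+j+1)}{n-1}(\ldots),
\]
and we need care with the measure $(\rho+\sum_{i<n}\delta_{x_i})$ when $x_n$ coincides with an atom. We would first verify the cases $n=1,2$ by hand, then prove the general identity by induction on $n$.
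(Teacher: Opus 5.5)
Your Steps 1 and 2 reproduce the paper's argument. Uniqueness comes from the isometry \eqref{KorOrhtoFmFn}. The rest consists of the recursion for $\BE F(\zeta_{\rho,x_1,\dots,x_k})$, the centering check of Lemma~\ref{l:fnHn}, and the projection property of Proposition~\ref{l:fnprojection}, where all terms with $j<n$ are annihilated by $g\in\bH_n$.

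The step you call the main obstacle closes at once. After integrating out $x_n$, the coefficient of $\BE F(\zeta_{\rho+\delta_{x_{i_1}}+\cdots+\delta_{x_{i_k}}})$, for $\{i_1,\dots,i_k\}\subset[n-1]$, is
\[
(-1)^{n-k}\Big(\Pn{(\theta+k)}{n-1}(\theta+n-1+k)-\Pn{(\theta+k+1)}{n-1}(\theta+k)\Big)=0,
\]
because both products equal $\Pn{(\theta+k)}{n}$. The extra $k$ in $\theta+n-1+k$ is exactly the contribution of the atoms $\delta_{x_l}$ at indices not already among the arguments, which is the coincidence you were worried about. So no induction on $n$ is needed.

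The real departure is Step~3. The paper proves completeness by computing the kernels of $\zeta^m(f)$ (Lemma~\ref{l:3.5}) and noting that $G=\zeta^m(f)-\sum_{k\le m}\zeta^k(f_k)$ is orthogonal to every chaos. It then forces $G=0$ through Lemma~\ref{l:3.9}, which rests on the combinatorial injectivity result Lemma~\ref{l:3.7} and the appendix expansion of $(\rho+\delta_{x_1}+\cdots+\delta_{x_k})^{[m]}$.

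You instead use duality. Within symmetric functions, the orthogonal complement of $\bH_m$ is the closed span of symmetrizations of functions $h(x_1,\dots,x_{m-1})$; this is exactly the characterization of $\bH_m$ used in Lemma~\ref{l:orthHn}. Since $\zeta(\BX)=1$, $\zeta^m$ sends such a symmetrization to $\zeta^{m-1}(h)$, so induction on $m$ gives $\zeta^m(u)\in\bF_0\oplus\cdots\oplus\bF_m$.

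To make this rigorous, you should make three points explicit:
\begin{itemize}
\item $u-P_mu$ is only an $L^2(\rho^{[m]})$-limit of such symmetrizations, not a finite sum of them.
\item You therefore need the bound $\BE\,\zeta^m(v)^2\le\rho^{[m]}(v^2)/\Pn{\theta}{m}$ and the closedness of $\bF_0\oplus\cdots\oplus\bF_{m-1}$.
\item The induction should run over all $u\in L^2(\rho^{[m]})$ rather than over bounded $u$.
\end{itemize}

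With these points added, your route is shorter, makes Lemmas~\ref{l:3.7} and~\ref{l:3.9} unnecessary, and gives $f_n=0$ for $n>m$ for free. The paper's route costs more combinatorics, but the appendix identities and the explicit kernels \eqref{e:3.25} it develops are reused later, in Propositions~\ref{l:nabla=nabla*} and~\ref{t:closure} and Theorem~\ref{t:covariancechaos}. That said, \eqref{e:3.25} also follows in your setting by inserting the moment formula into \eqref{kerneln}.
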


\begin{remark}\rm
Somewhat similar to the Poisson case 
(see \cite{LastPenrose18}), the function \eqref{kerneln} is an alternating sum
of (multivariate) Palm expectations. Indeed, by \eqref{Mecke1},
\begin{align*}
\{(\Pn{\theta}{n})^{-1}\Di(\rho + \delta_{x_1} + \cdots + \delta_{x_n}): (x_1,\ldots,x_n)\in\BX^n\}
\end{align*}
are the $n$-variate Palm distributions \cite{Kallenberg17} of $\zeta$. 
Formula \eqref{e:iso0} can then be seen as the DF analogue of the Fock space representation
of Poisson functionals derived in \cite{LastPenrose11}. However, in the DF case the kernel
functions cannot be represented as expected (pathwise defined) difference operators. 
\end{remark}

We split the proof of Theorem \ref{t:chaos} into several parts,
fixing $F\in L^2(\zeta)$. The first lemma shows already the uniqueness
part. In the proof and also later
we write $\nu(f)$ for the integral of a measurable function with respect to
a measure $\nu$.

\begin{Lemma}\label{LemEindProj}
Let $n \in \N$ and $g,h\in\bH_n$. Then
$\zeta^n(g)=\zeta^n(h)$ $\BP$-a.s. if and only if $g =h$ $\Pnm{\rho}{n}$-a.e.
\end{Lemma}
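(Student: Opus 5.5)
The plan is to use linearity together with the isometry \eqref{KorOrhtoFmFn}. The map $g\mapsto\zeta^n(g)$ is linear on $\bH_n$, and $g-h\in\bH_n$ because $\bH_n$ is a linear subspace of $L^2(\rho^{[n]})$: symmetry and the conditional centering \eqref{condcent} are both preserved under differences. So it suffices to show that for $u:=g-h\in\bH_n$ we have $\zeta^n(u)=0$ $\BP$-a.s.\ if and only if $u=0$ $\rho^{[n]}$-a.e.

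Before doing so, I have to settle that $\zeta^n(u)$ is well defined. For $u\in L^2(\rho^{[n]})$, the moment formula \eqref{nmoment} gives
\begin{align*}
\BE\int|u|\,\dd\zeta^n=\frac{1}{\Pn{\theta}{n}}\int|u|\,\dd\rho^{[n]}.
\end{align*}
The right-hand side is finite by Cauchy--Schwarz, since $\rho^{[n]}(\BX^n)=\Pn{\theta}{n}<\infty$. Hence $\zeta^n(|u|)<\infty$ a.s., and $\zeta^n(u)$ is a.s.\ defined and linear in $u$. In particular $\zeta^n(g)-\zeta^n(h)=\zeta^n(u)$ a.s. Changing $u$ on a $\rho^{[n]}$-null set $N$ does not change $\zeta^n(u)$ a.s., because $\BE\zeta^n(N)=\rho^{[n]}(N)/\Pn{\theta}{n}=0$. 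This already gives the direction ``$\Leftarrow$''.

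For ``$\Rightarrow$'', apply \eqref{KorOrhtoFmFn} with $m=n$ and $g=h=u$:
\begin{align*}
\BE\big[\zeta^n(u)^2\big]=\frac{n!}{\Pn{\theta}{2n}}\int u^2\,\dd\rho^{[n]}.
\end{align*}
If $\zeta^n(u)=0$ a.s., the left-hand side vanishes. Since $n!/\Pn{\theta}{2n}>0$, it follows that $\int u^2\,\dd\rho^{[n]}=0$, that is, $u=0$ $\rho^{[n]}$-a.e.

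The only point requiring care is that \eqref{KorOrhtoFmFn} is applicable to $u$. This holds because $u\in\bH_n$, so the orthogonality relation \eqref{e:345} (with $m=n$) applies, and \eqref{e:345} combined with \eqref{nmoment} yields \eqref{KorOrhtoFmFn}. I expect no real obstacle beyond these integrability remarks.
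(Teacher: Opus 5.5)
Your proposal is correct and follows the paper's own proof. The paper likewise applies \eqref{KorOrhtoFmFn} to $h-g\in\bH_n$ to obtain $\BE(\zeta^n(h)-\zeta^n(g))^2=\frac{n!}{\Pn{\theta}{2n}}\rho^{[n]}((h-g)^2)$, which gives both directions at once; your extra integrability remarks are valid details that the paper leaves implicit.
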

\begin{proof} By \eqref{KorOrhtoFmFn}, we have
\begin{equation*}
\BE (\zeta^n(h)-\zeta^n(g))^2 
=\BE (\zeta^n(h-g))^2 
=\frac{n!}{\Pn{\theta}{2n}} \rho^{[n]}(h-g)^2,
\end{equation*}
proving the result.
\end{proof}

It is convenient
to define measurable functions $T_{F,n}\colon\BX^n\to\R$, $n\in\N$, by
\begin{equation}\label{e:TFn}
T_{F,n}(x_1, \ldots, x_n) := \int F(\mu)\, \Di(\rho + \delta_{x_1} + \cdots + \delta_{x_n},\dd \mu)
=\BE F(\zeta_{\rho,x_1,\ldots,x_n});
\end{equation}
see \eqref{kerneln}. Further we set
\begin{equation*}
T_{F,0} = \BE F= \int  F(\mu) \, \Di(\rho, \dd \mu).
\end{equation*}
In the following proofs and some of the lemmas we often abbreviate
$(x_1,\ldots,x_n)\in\BX^n$ as $\bfm{x}_n$ or just $\bfm{x}$
if the meaning of $n$ is clear from the context. 
We also abbreviate $\delta_{\bfm{x}_n}:=\delta_{x_1}+\cdots+\delta_{x_n}$.

\begin{Lemma} \label{LemRekT_{F,n}} Let $k\in\N$. Then 
\begin{equation*}
    \int  T_{F,k} (x_1, \ldots, x_{k-1}, x) \, 
(\rho + \delta_{x_1}+\cdots+\delta_{x_{k-1}})(\dd x) = (\theta + k-1) T_{F, k-1} (x_1, \ldots, x_{k-1})
\end{equation*}
holds for $\Pnm{\rho}{k-1}$-a.e.\ $(x_1, \ldots, x_{k-1})\in \BX^{k-1}$, 
where the case $k=1$ has to be interpreted as $\int T_{F,1} (x) \, \rho (\dd x) = \theta T_{F, 0}$.
\end{Lemma}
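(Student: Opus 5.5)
The plan is to reduce the identity to the one-point Mecke equation for a Dirichlet--Ferguson process with directing measure $\rho_{\bfm{x}}:=\rho+\delta_{x_1}+\cdots+\delta_{x_{k-1}}$. Fix $\bfm{x}=(x_1,\dots,x_{k-1})$; then $T_{F,k}(\bfm{x},x)=\int F(\mu)\,\Di(\rho_{\bfm{x}}+\delta_x,\dd\mu)$. The total mass of $\rho_{\bfm{x}}$ is $\theta+k-1$. Equation \eqref{Mecke1} with $n=1$, applied to $\rho_{\bfm{x}}$ in place of $\rho$, reads
\begin{align*}
\iint g(\mu,x)\,\mu(\dd x)\,\Di(\rho_{\bfm{x}},\dd\mu)=\frac{1}{\theta+k-1}\iint g(\mu,x)\,\Di(\rho_{\bfm{x}}+\delta_x,\dd\mu)\,\rho_{\bfm{x}}(\dd x).
\end{align*}
Here $\rho_{\bfm{x}}^{[1]}=\rho_{\bfm{x}}$.

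Choose $g(\mu,x):=F(\mu)$, which does not depend on $x$. Since $\mu$ is a probability measure, the left-hand side becomes $\int F\,\dd\Di(\rho_{\bfm{x}})=T_{F,k-1}(\bfm{x})$. The right-hand side is $(\theta+k-1)^{-1}\int T_{F,k}(\bfm{x},x)\,\rho_{\bfm{x}}(\dd x)$. Multiplying through by $\theta+k-1$ gives the claimed identity. For $k=1$ the same computation with $\rho$ itself gives $\int T_{F,1}\,\dd\rho=\theta T_{F,0}$.

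The only real obstacle is integrability, since \eqref{Mecke1} is stated for nonnegative $f$ and $F$ is only in $L^2(\zeta)$. I would apply the argument separately to $F^+$ and $F^-$, or to $|F|$ first. The integral identity then holds for $|F|$ with both sides possibly infinite, so the task is to show $T_{|F|,k-1}(\bfm{x})<\infty$ for $\rho^{[k-1]}$-a.e.\ $\bfm{x}$.

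For this, apply \eqref{Mecke1} with $n=k-1$ and the nonnegative function $|F(\mu)|$:
\begin{align*}
\int T_{|F|,k-1}\,\dd\rho^{[k-1]}=\Pn{\theta}{k-1}\,\BE\big[|F|\,\zeta^{k-1}(\BX^{k-1})\big]=\Pn{\theta}{k-1}\,\BE|F|<\infty.
\end{align*}
Hence $T_{|F|,k-1}<\infty$ $\rho^{[k-1]}$-a.e. For such $\bfm{x}$, the identity for $|F|$ shows that $x\mapsto T_{|F|,k}(\bfm{x},x)$ is $\rho_{\bfm{x}}$-integrable. So for these $\bfm{x}$ both sides are finite for $F^+$ and $F^-$, and subtracting the two identities yields the lemma for $\rho^{[k-1]}$-a.e.\ $\bfm{x}$. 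This also shows that $T_{F,n}$ is a.e.\ well defined.

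Measurability of $(\bfm{x},x)\mapsto T_{F,k}(\bfm{x},x)$ follows from measurability of the kernel $\nu\mapsto\Di(\nu)$, which the paper implicitly uses when it defines $T_{F,n}$.
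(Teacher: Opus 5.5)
Your proof is correct, but it takes a different route from the paper's. For $k=1$ the two arguments coincide. For $k\ge 2$ the paper never leaves the fixed directing measure $\rho$. It integrates the left-hand side against an arbitrary test function $g\colon\BX^{k-1}\to[0,1]$ with respect to $\Pnm{\rho}{k-1}$ and uses the recursive definition of $\Pnm{\rho}{k}$ to turn this into a $\Pnm{\rho}{k}$-integral. It then applies the multivariate Mecke equation \eqref{Mecke1} twice, once with $n=k$ and once with $n=k-1$. In between it drops the last coordinate because $\zeta$ is a probability measure. Since $g$ is arbitrary, a.e.\ equality follows.

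You instead fix $\bfm{x}$ and apply the one-point Mecke equation to the DF process with directing measure $\rho+\delta_{x_1}+\cdots+\delta_{x_{k-1}}$. This is legitimate because \eqref{Mecke1} is stated for an arbitrary finite nonzero parameter measure.

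What your route buys:
\begin{itemize}
\item It is shorter and shows the lemma to be simply the one-point Mecke equation applied at the Palm level.
\item It gives a sharper description of the exceptional set. For a fixed version of $F$, the identity holds at every $\bfm{x}$ with $T_{|F|,k-1}(\bfm{x})<\infty$, hence everywhere if $F$ is bounded.
\item The $\Pnm{\rho}{k-1}$-null set enters only through integrability, and you handle that explicitly via \eqref{Mecke1} with $n=k-1$. The paper leaves this step implicit, since it applies \eqref{Mecke1} directly to the signed function $Fg$.
\end{itemize}

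What the paper's route buys: it needs only the Mecke equation for the fixed $\rho$ together with the recursive structure of $\Pnm{\rho}{k}$. It never has to invoke the Mecke equation, or measurability of the kernel $\nu\mapsto\Di(\nu)$, for the shifted parameter measures $\rho+\delta_{x_1}+\cdots+\delta_{x_{k-1}}$.
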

\begin{proof}
We first consider the case $k=1$. We obtain from the
definition of $T_{F,1}$, the Mecke-type equation \eqref{Mecke1} and the definition of $T_{F,0}$ that
\begin{equation*}
\int  T_{F,1} (x) \, \rho (\dd x) 
= \int \BE F(\zeta_{\rho+\delta_{x}} ) \, \rho (\dd x) 
= \theta \BE  F(\zeta_\rho)
 = \theta T_{F,0}.
\end{equation*}
Let $k\ge 2$ and take a measurable function $g \colon \BX^{k-1}\to[0,1]$. 
By definition of $T_{F,k}$ and $\Pnm{\rho}{k}$, we have
\begin{align*}
    \iint T_{F,k} (\bfm{x}_{k-1}, x_k) &g(\bfm{x}_{k-1}) \, (\rho + \delta_{\bfm{x}_{k-1}})(\dd x_k) 
\, \Pnm{\rho}{k-1} (\dd \bfm{x}_{k-1}) \\
&=\int  \BE  F(\zeta_{\rho + \delta_{\bfm{x}_{k}}})  g(\bfm{x}_{k-1}) \, \Pnm{\rho}{k} (\dd \bfm{x}_{k}).
\end{align*}
By the Mecke-type equation \eqref{Mecke1} and the fact that $\zeta_{\rho}$ 
is a probability measure, the above equals
\begin{equation*}
    \Pn{\theta}{k} \BE \int  F(\zeta_\rho) g(\bfm{x}_{k-1}) \, \zeta_\rho^k(\dd\bfm{x}_{k}) 
    = \Pn{\theta}{k} \BE \int  F(\zeta_\rho) g(\bfm{x}_{k-1}) \, \zeta_\rho^{k-1}(\dd\bfm{x}_{k-1}). 
\end{equation*}
Using once again the Mecke-type equation and the definition of $T_{F,k-1}$, this equals
\begin{align*}
    \frac{\Pn{\theta}{k}}{\Pn{\theta}{k-1}} \int 
\BE &F(\zeta_{\rho+ \delta_{\bfm{x}_{k-1}}}) g(\bfm{x}_{k-1}) \, \Pnm{\rho}{k-1}(\dd\bfm{x}_{k-1})\\
& = (\theta + k-1) \int T_{F,k-1}(\bfm{x}_{k-1}) g(\bfm{x}_{k-1}) \, \Pnm{\rho}{k-1}(\dd\bfm{x}_{k-1}). 
\end{align*}
This concludes the proof.
\end{proof}

\begin{lemma}\label{l:orthHn} Let $n\in\N$. The space $\bH_n$ is a closed
subspace of $L^2(\rho^{[n]})$, while $\bF_n$ is a closed subspace of $L^2(\zeta)$.
\end{lemma}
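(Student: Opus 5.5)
We prove the two closedness claims separately: the first by showing that $\bH_n$ is cut out of $L^2(\rho^{[n]})$ by conditions that pass to $L^2$-limits, the second by transporting the first through the isometry \eqref{KorOrhtoFmFn}.

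\textbf{Closedness of $\bH_n$.} Let $g_k\in\bH_n$ with $g_k\to g$ in $L^2(\rho^{[n]})$. We check that $g$ is symmetric and satisfies \eqref{condcent}.

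For symmetry, let $\pi$ be a permutation of the coordinates. The measure $\rho^{[n]}$ is symmetric, so $h\mapsto h\circ\pi$ is an isometry of $L^2(\rho^{[n]})$. Hence $g\circ\pi=\lim_k g_k\circ\pi=\lim_k g_k=g$ in $L^2(\rho^{[n]})$, so $g$ is symmetric $\rho^{[n]}$-a.e.

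For \eqref{condcent}, define for $h\in L^2(\rho^{[n]})$ the function
\begin{equation*}
(Ah)(\bfm{x}_{n-1}):=\int h(\bfm{x}_{n-1},x)\,(\rho+\delta_{\bfm{x}_{n-1}})(\dd x).
\end{equation*}
Note that $(\rho+\delta_{\bfm{x}_{n-1}})(\BX)=\theta+n-1$. By Cauchy--Schwarz and the disintegration in \eqref{mu[n]},
\begin{equation*}
\int (Ah)^2\,\dd\rho^{[n-1]}\le (\theta+n-1)\int h^2\,\dd\rho^{[n]}.
\end{equation*}
So $A$ is a bounded linear map from $L^2(\rho^{[n]})$ to $L^2(\rho^{[n-1]})$, and it is well defined a.e. Therefore $Ag=\lim_k Ag_k=0$ in $L^2(\rho^{[n-1]})$, which gives \eqref{condcent} for $g$. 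For $n=1$ the map $A$ is the functional $h\mapsto\int h\,\dd\rho$, which is bounded on $L^2(\rho)$ since $\rho$ is finite.

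We conclude $g\in\bH_n$. Equivalently, $\bH_n$ is the intersection of the kernel of $A$ with the symmetric subspace, and both are closed.

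\textbf{Closedness of $\bF_n$.} Consider the linear map $J\colon\bH_n\to L^2(\zeta)$, $J(g):=\zeta^n(g)$. By \eqref{KorOrhtoFmFn} with $m=n$,
\begin{equation*}
\BE\,J(g)^2=c_n\,\|g\|^2_{L^2(\rho^{[n]})},\qquad c_n:=\frac{n!}{\Pn{\theta}{2n}}>0,
\end{equation*}
so $J$ is a multiple of an isometry. Now take $F_k=J(g_k)\to F$ in $L^2(\zeta)$. Then $(F_k)$ is Cauchy, so $(g_k)$ is Cauchy in $L^2(\rho^{[n]})$. By the first part, $g_k\to g$ for some $g\in\bH_n$. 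Continuity of $J$ then gives $F=J(g)\in\bF_n$.

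\textbf{Main technical point.} The only delicate step is to make sure $J$ is well defined on $L^2$-equivalence classes. One must check that $\zeta^n(g)$ is a.s. finite and depends only on the $\rho^{[n]}$-class of $g$. By \eqref{nmoment},
\begin{equation*}
\BE\,\zeta^n(|g|)=\frac{1}{\Pn{\theta}{n}}\int|g|\,\dd\rho^{[n]}<\infty,
\end{equation*}
using that $\rho^{[n]}$ is finite and $g\in L^2$. Likewise, if $g=0$ $\rho^{[n]}$-a.e., then $\BE\,\zeta^n(|g|)=0$, so $\zeta^n(g)=0$ a.s. With this in place, the isometry identity from \eqref{KorOrhtoFmFn} holds on all of $\bH_n$ and the argument above goes through.
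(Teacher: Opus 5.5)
Your proof is correct and follows essentially the same route as the paper. Symmetry and the centering condition \eqref{condcent} both pass to $L^2(\rho^{[n]})$-limits; the paper states the latter by testing against bounded functions $h(x_1,\ldots,x_{n-1})$, which is just the dual form of your bounded operator $A$. Closedness of $\bF_n$ is then transported from $\bH_n$ through the isometry \eqref{KorOrhtoFmFn}, and your explicit check that $\zeta^n(g)$ is well defined on $\rho^{[n]}$-classes makes precise a point the paper leaves implicit.
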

\begin{proof}
Let $(g_m)_{m\ge 1}$ be a sequence in
$\bH_n$ with $g_m \to g$ in $L^2(\rho^{[n]})$ as $m\to\infty$, for some function
$g\in L^2(\Pnm{\rho}{n})$. Since $\rho^{[n]}$ is symmetric, it easily follows that
$g$ is symmetric.

By definition, we have $g\in\bH_n$ if and only if
\begin{equation*}
\int  h(x_1, \ldots, x_{n-1}) g(x_1, \ldots, x_n) \, \Pnm{\rho}{n}(\dd (x_1, \ldots, x_n)) = 0 
\end{equation*}
for all measurable  $h \colon \BX^{n-1} \rightarrow [0,1]$. Taking such a $h$, we 
clearly have that $hg_m\to hg$ in $L^2(\rho^{[n]})$ as $m\to\infty$.
In particular, we have $0=\int hg_m\,\dd\rho^{[n]}\to \int hg\,\dd\rho^{[n]}$, 
proving the first assertion.

To prove the second assertion, we take $g_m\in\bH_n$, $m\in\N$, such that
the sequence $(\zeta^n(g_m))_{m\ge 1}$ converges in $L^2(\BP)$. It then follows from
\eqref{KorOrhtoFmFn} that $(g_m)_{m\ge 1}$ is a Cauchy sequence in $\bH_n$.
By the first assertion, the sequence has a limit $g$, say. 
Again by \eqref{KorOrhtoFmFn}, we obtain that
$\BE (\zeta^n(g_m)-\zeta^n(g))^2=\BE (\zeta^n(g_m-g))^2=(\theta^{(n)})^{-1}\rho^{[n]}((g_m-g)^2)$
which goes to $0$ as $m\to\infty$.
\end{proof}

\begin{lemma}\label{l:fnHn} Let $n\in\N$. Then $f_n\in \bH_n$, where $f_n$ is defined by
\eqref{kerneln}.
\end{lemma}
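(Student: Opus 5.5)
We have to check three things: $f_n$ is symmetric, $f_n\in L^2(\rho^{[n]})$, and $f_n$ satisfies the conditional centering \eqref{condcent}.

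\emph{Symmetry.} This is immediate from \eqref{kerneln}. Each inner sum $\sum_{i_1<\cdots<i_j}T_{F,j}(x_{i_1},\dots,x_{i_j})$ runs over all $j$-subsets of $\{1,\dots,n\}$. Also, $T_{F,j}$ is itself symmetric, since $\rho+\delta_{x_{i_1}}+\cdots+\delta_{x_{i_j}}$ does not depend on the order of the points.

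\emph{Square integrability.} It suffices to show $T_{F,j}(x_{i_1},\dots,x_{i_j})\in L^2(\rho^{[n]})$ for each subset. By symmetry of $\rho^{[n]}$ we may take the subset to be $\{1,\dots,j\}$. The marginal of $\rho^{[n]}$ on the first $j$ coordinates is $(\theta+j)\cdots(\theta+n-1)\,\rho^{[j]}$, by iterating $(\rho+\delta_{\bfm{x}_{k-1}})(\BX)=\theta+k-1$. So it suffices that $T_{F,j}\in L^2(\rho^{[j]})$. Jensen gives $T_{F,j}^2\le \BE F(\zeta_{\rho,\bfm{x}_j})^2=T_{F^2,j}$. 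Then \eqref{Mecke1} with $f=F(\mu)^2$ (and $\zeta$ a probability measure) yields
\begin{align*}
\int T_{F^2,j}\,\dd\rho^{[j]}=\Pn{\theta}{j}\,\BE F^2<\infty .
\end{align*}

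\emph{Centering.} This is the main step. Fix $\bfm{x}_{n-1}$ and integrate $f_n(\bfm{x}_{n-1},x)$ against $(\rho+\delta_{\bfm{x}_{n-1}})(\dd x)$. Split each inner sum over $j$-subsets $I\subset\{1,\dots,n\}$ according to whether $n\in I$.

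If $n\notin I$, the term does not depend on $x$. Integrating just multiplies it by $\theta+n-1$, giving the contribution $(\theta+n-1)\sum_{|I|=j,\,I\subset[n-1]}T_{F,j}(\bfm{x}_I)$.

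If $n\in I$, write $I=J\cup\{n\}$ with $|J|=j-1$. The measure is $\rho+\delta_{\bfm{x}_J}+\delta_{\bfm{x}_{J^c}}$. The part $\rho+\delta_{\bfm{x}_J}$ is handled by Lemma \ref{LemRekT_{F,n}}, which gives $(\theta+j-1)T_{F,j-1}(\bfm{x}_J)$. The remaining atoms at $x_m$, $m\in[n-1]\setminus J$, give $\sum_{m\notin J}T_{F,j}(\bfm{x}_{J\cup\{m\}})$.

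There is a subtlety here. Lemma \ref{LemRekT_{F,n}} holds only $\rho^{[j-1]}$-a.e. in $\bfm{x}_J$, and we need it $\rho^{[n-1]}$-a.e. in $\bfm{x}_{n-1}$. This transfers because the $J$-marginal of $\rho^{[n-1]}$ is a multiple of $\rho^{[j-1]}$, as noted above.

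Regroup the $n$-in-$I$ contributions as a sum over $K\subset[n-1]$ with $|K|=k$:
\begin{itemize}
\item the Lemma part gives $(\theta+k)\,T_{F,k}(\bfm{x}_K)$ from $j=k+1$;
\item the atom part gives $k\,T_{F,k}(\bfm{x}_K)$ from $j=k$, since each $K$ arises from $k$ choices of $J$ and $m$.
\end{itemize}
Combining with the $n\notin I$ part, the coefficient of $T_{F,k}(\bfm{x}_K)$, up to the prefactor $(\theta+2n-1)/n!$, is
\begin{align*}
&(-1)^{n-k}(\theta+n-1)\Pn{(\theta+k)}{n-1}
+(-1)^{n-k-1}(\theta+k)\Pn{(\theta+k+1)}{n-1}
+(-1)^{n-k}k\,\Pn{(\theta+k)}{n-1}.
\end{align*}
Here the first and third terms are absent when $k=n$, and the second is absent when $k=n-1$ fails to allow $j=k+1\le n$.

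Use $(\theta+k)\Pn{(\theta+k+1)}{n-1}=\Pn{(\theta+k)}{n}=\Pn{(\theta+k)}{n-1}(\theta+k+n-1)$. The bracket then becomes
\begin{align*}
\Pn{(\theta+k)}{n-1}\bigl[(\theta+n-1)-(\theta+k+n-1)+k\bigr]=0 .
\end{align*}
So each coefficient vanishes, and \eqref{condcent} holds $\rho^{[n-1]}$-a.e. (For $n=1$, the same computation reduces to $\int T_{F,1}\,\dd\rho=\theta T_{F,0}$.)

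The main obstacle is this index bookkeeping, in particular checking that the boundary cases $k=n-1$ and $k=0$ fit the same identity.
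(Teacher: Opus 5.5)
Your proof is correct and follows essentially the same route as the paper: Jensen plus the Mecke equation \eqref{Mecke1} for square integrability, and, for the centering, splitting $\rho+\delta_{\bfm{x}_{n-1}}$ into $\rho+\delta_{\bfm{x}_J}$ (handled by Lemma~\ref{LemRekT_{F,n}}) plus the remaining atoms, with the same count that each $K$ arises from $k$ pairs $(J,m)$. Your coefficient-by-coefficient bookkeeping is a cleaner organization of the paper's check that $A_1+A_{2,1}+A_{2,2}=0$, and you transfer the a.e.\ statements through the marginals of $\rho^{[n-1]}$ more carefully than the paper does. Your boundary-case remark is garbled but harmless: since $K\subset[n-1]$, only $k\le n-1$ occurs, and all three terms are present for every such $k$ (the third vanishes at $k=0$).
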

\begin{proof}
We note that the inequality 
\begin{equation*} 
    (c_1 + \cdots + c_k)^2\le k(c_1^2 + \cdots + c_k^2)
\end{equation*}
holds for all $c_1, \ldots, c_k \in\R$ and $k\in\N$.
Let $n\in\N$. By the aforementioned inequality, the integral $\Pnm{\rho}{n}(f_n^2)$ is bounded by
\begin{align*} 
a_n&\int\big( \Pn{\theta}{n-1} \BE F(\zeta_{\rho})\big)^2 \, \Pnm{\rho}{n}(\dd x) \\
    &+ a_n \int \Bigg( \sum_{j=1}^{n}  (-1)^{n-j} \Pn{(\theta+j)}{n-1} \sum_{1 \le i_1 < \cdots < i_j\le n} 
\BE F(\zeta_{\rho + \delta_{x_{i_1}} + \cdots + \delta_{x_{i_j}}})  \Bigg)^2 \, \Pnm{\rho}{n}(\dd \bfm{x}_{n}),
\end{align*}
where $a_n:=(2(\theta + 2n - 1)^2)/(n!)^2$.
The first integral in this expression is finite since $\Pnm{\rho}{n}(\BX)$ is finite and the integrand is finite by Jensen's inequality.
An upper bound for the second integral is
\begin{equation*}
    \int  n \sum_{j=1}^{n}  \left(\Pn{(\theta+j)}{n-1}\right)^2 \binom{n}{j} \sum_{1 \le i_1 < \cdots < i_j\le n} 
\BE F(\zeta_{\rho + \delta_{x_{i_1}} + \cdots + \delta_{x_{i_j}}})^2  \, \Pnm{\rho}{n}(\dd \bfm{x}_{n}).
\end{equation*}
According to the multivariate Mecke-type equation~\eqref{Mecke1} for the DF process, this integral is equal to the finite value
\begin{equation*}
    n \sum_{j=1}^{n}  \left(\Pn{(\theta+j)}{n-1}\right)^2 \binom{n}{j}^2 \Pn{\theta}{n} 
\BE F(\zeta_{\rho})^2.
\end{equation*}
Hence, we conclude that $f_n$ is an element of $L^2(\Pnm{\rho}{n})$. 
Moreover, $f_n$ is symmetric. In order to show $f_n \in \bH_n$, it remains to establish  
\begin{equation*}
    \int f_n(\bfm{x}_{n-1}, x_n) \, (\rho + \delta_{\bfm{x}_{n-1}})(\dd x_n) = 0, \quad \Pnm{\rho}{n-1}\text{-a.e. $(x_1, \ldots, x_{n-1}$)}.
\end{equation*}
Let $(x_1, \ldots, x_{n-1}) \in \BX^{n-1}$ and set $b_n:=\frac{\theta + 2n - 1}{n!}$. By definition of $f_n$, the preceding integral equals
\begin{align*} 
 &b_n(-1)^n \Pn{\theta}{n-1}(\theta+n-1)T_{F,0} \\
&+ b_n\sum_{j=1}^{n}  (-1)^{n-j} \Pn{(\theta+j)}{n-1} \int \sum_{i_1 < \cdots < i_j} 
T_{F,j}(x_{i_1}, \ldots, x_{i_{j}}) \, (\rho + \delta_{\bfm{x}_{n-1}} )(\dd x_n)=:b_nA_1+b_nA_2.
\end{align*}
Let us write
\begin{align*}
A_2&=\Pn{(\theta+n)}{n-1} \int \sum_{i_1 < \cdots < i_n} T_{F,n}(\bfm{x}_n) \, (\rho + \delta_{\bfm{x}_{n-1}} )(\dd x_n)\\
&\quad+\sum_{j=1}^{n-1}  (-1)^{n-j} \Pn{(\theta+j)}{n-1} \int \sum_{i_1 < \cdots < i_j} T_{F,j}(x_{i_1}, \ldots, x_{i_{j}}) 
\, (\rho + \delta_{\bfm{x}_{n-1}} )(\dd x_n)=:A_{2,1}+A_{2,2}.
\end{align*}
By Lemma~\ref{LemRekT_{F,n}}, 
\begin{align*}
A_{2,1}=(\theta+n-1)^{(n)} T_{F,n-1}(\bfm{x}_{n-1})
\end{align*}
whereas $A_{2,2}$ equals 
\begin{align*}
    A_{2,2} &= \sum_{j=1}^{n-1} \sum_{i_1 < \cdots < i_j \leq n-1} (-1)^{n-j} \Pn{(\theta+j)}{n-1} T_{F,j} (x_{i_1}, \ldots, x_{i_j}) (\theta + n-1) \\
        &\quad+ \sum_{j=1}^{n-1} \sum_{i_1 < \cdots < i_{j-1} \leq n-1} \int (-1)^{n-j} \Pn{(\theta+j)}{n-1} 
T_{F,j} (x_{i_1}, \ldots, x_{i_{j-1}}, x_n) \, (\rho + \delta_{\bfm{x}_{n-1}})(\dd x_n) \\
        &\eqqcolon A_{2,2,1} + A_{2,2,2}.
\end{align*}
We have
\begin{align*}
    A_{2,2,2} &= \int(-1)^{n-1} \Pn{(\theta+1)}{n-1} T_{F,1}(x_n) \, (\rho + \delta_{\bfm{x}_{n-1}}) (\dd x_n) \\ 
    &\quad + \sum_{j=2}^{n-1} \sum_{i_1 < \cdots < i_{j-1} \leq n-1} \int (-1)^{n-j} \Pn{(\theta+j)}{n-1} 
T_{F,j} (x_{i_1}, \ldots, x_{i_{j-1}}, x_n) \, (\rho + \delta_{\bfm{x}_{n-1}})(\dd x_n).
\end{align*}
Using Lemma~\ref{LemRekT_{F,n}} once again, we obtain that
\begin{align*}
    A_{2,2,2} =&(-1)^{n-1} \Pn{\theta}{n} T_{F,0} + \sum_{l=1}^n (-1)^{n-1} \Pn{(\theta+1)}{n-1} T_{F,1}(x_l) \\
        &+ \sum_{j=2}^{n-1} \sum_{ i_1 < \cdots < i_{j-1} \leq n-1} \sum_{\substack{l=1\\ l \notin \{i_1, \cdots, i_{j-1}\}}}^{n-1} (-1)^{n-j} 
\Pn{(\theta+j)}{n-1} T_{F,j} (x_{i_1}, \ldots, x_{i_{j-1}}, x_l) \\
        & + \sum_{j=2}^{n-1} \sum_{ i_1 < \cdots < i_{j-1} \leq n-1}  (-1)^{n-j} \Pn{(\theta+j-1)}{n} T_{F,j-1} (x_{i_1}, \ldots, x_{i_{j-1}}).
\end{align*}
For $j \in \{2, \ldots, n-1 \}$ the symmetry of $T_{F,j}$ and a combinatorial argument 
yield
\begin{align*}
\sum_{i_1 < \cdots < i_{j-1} \leq n-1} &\sum_{{\substack{l=1\\ l \notin \{i_1, \ldots, i_{j-1}\}}}}^{n-1} (-1)^{n-j} \Pn{(\theta+j)}{n-1} 
T_{F,j} (x_{i_1}, \ldots, x_{i_{j-1}}, x_l) \\
    &=  j \sum_{i_1 < \cdots < i_j \leq n-1} (-1)^{n-j} \Pn{(\theta+j)}{n-1} T_{F,j} (x_{i_1}, \ldots, x_{i_{j}}).
\end{align*}
Hence, we obtain with a little algebra that
\begin{align*}
    A_{2,2,2}&=  \sum_{l=1}^n (-1)^{n} \Pn{(\theta+1)}{n-1}(\theta+n-1)  T_{F,1}(x_l)  - (n-1) \Pn{(\theta+n-1)}{n-1} T_{F,n-1} (\bfm{x}_{n-1})    \\
    &\quad- (-1)^{n} \Pn{\theta}{n} T_{F,0} - \sum_{j=2}^{n-2}  \sum_{i_1 < \cdots < i_j \leq n-1} 
(-1)^{n-j} \Pn{(\theta+j)}{n-1}(\theta+n-1) T_{F,j} (x_{i_1}, \ldots, x_{i_{j}})
\end{align*}
so that
\begin{equation*}
    A_{2,2} = A_{2,2,1} + A_{2,2,2} = - (-1)^{n} \Pn{\theta}{n} T_{F,0} - (\theta + 2n -2) \Pn{(\theta+n-1)}{n-1} T_{F,n-1} (\bfm{x}_{n-1}). 
\end{equation*}
Thus,
\begin{equation*}
    A_1 + A_2 = A_1 + A_{2,1} + A_{2,2} = 0. 
\end{equation*}
Hence, $f_n$ is an element of $\bH_n$.    
\end{proof}

\begin{proposition}\label{l:fnprojection} Let $n\in\N_0$. Then $\zeta^n(f_n)$ is the orthogonal
projection of $F$ onto $\bF_n$, where $\zeta^0(f_0)=f_0 :=\BE F$.
\end{proposition}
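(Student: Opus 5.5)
Since $\zeta^n(f_n)\in\bF_n$ by Lemma \ref{l:fnHn} and $\bF_n$ is closed by Lemma \ref{l:orthHn}, it suffices to show $\BE[(F-\zeta^n(f_n))\zeta^n(g)]=0$ for all $g\in\bH_n$. By \eqref{KorOrhtoFmFn}, $\BE[\zeta^n(f_n)\zeta^n(g)]=\frac{n!}{\Pn{\theta}{2n}}\rho^{[n]}(f_ng)$. The task is therefore to establish
\begin{equation*}
\BE\big[F\zeta^n(g)\big]=\frac{n!}{\Pn{\theta}{2n}}\int f_n g\,\dd\rho^{[n]},\quad g\in\bH_n.
\end{equation*}
The case $n=0$ is trivial, because then $g$ is a constant.

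For the left-hand side I apply the Mecke-type equation \eqref{Mecke1} with $f(\mu,\bfm x)=F(\mu)g(\bfm x)$. This gives $\BE[F\zeta^n(g)]=(\Pn{\theta}{n})^{-1}\int T_{F,n}g\,\dd\rho^{[n]}$. To justify it for signed $F,g$, I split into positive and negative parts. Integrability follows from Cauchy--Schwarz together with $\BE\zeta^n(|g|)^2<\infty$, which in turn follows from \eqref{nmoment} applied to $\zeta^{2n}$.

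For the right-hand side I insert the formula \eqref{kerneln}. A typical term is
\begin{equation*}
\int T_{F,j}(x_{i_1},\dots,x_{i_j})\,g(\bfm x_n)\,\rho^{[n]}(\dd\bfm x_n).
\end{equation*}
By symmetry of $\rho^{[n]}$ and of $g$, I may take $(i_1,\dots,i_j)=(1,\dots,j)$. By the recursive definition \eqref{mu[n]}, $\rho^{[n]}(\dd\bfm x_n)$ disintegrates as $\rho^{[j]}(\dd\bfm x_j)$ followed by the later coordinates integrated against $(\rho+\delta_{x_1}+\dots+\delta_{x_{k-1}})(\dd x_k)$. Integrating out the last coordinate $x_n$ first when $j<n$, the conditional centering \eqref{condcent} of $g$ makes the term vanish, since $T_{F,j}$ does not depend on $x_n$. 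Hence only $j=n$ survives. That term is $\binom{n}{n}=1$ copy with coefficient $\frac{\theta+2n-1}{n!}\Pn{(\theta+n)}{n-1}$, so
\begin{equation*}
\frac{n!}{\Pn{\theta}{2n}}\int f_ng\,\dd\rho^{[n]}=\frac{(\theta+2n-1)\Pn{(\theta+n)}{n-1}}{\Pn{\theta}{2n}}\int T_{F,n}g\,\dd\rho^{[n]}.
\end{equation*}
Now $(\theta+2n-1)\Pn{(\theta+n)}{n-1}=\Pn{(\theta+n)}{n}$ and $\Pn{\theta}{2n}=\Pn{\theta}{n}\Pn{(\theta+n)}{n}$, so the prefactor is exactly $(\Pn{\theta}{n})^{-1}$. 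This matches the left-hand side, and the orthogonality follows.

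The main obstacle is the vanishing step for $j<n$. It needs care because the centering in \eqref{condcent} is stated with respect to $\rho+\delta_{x_1}+\dots+\delta_{x_{n-1}}$ in the last coordinate, while after symmetrization the free coordinates are $x_{j+1},\dots,x_n$. Writing the iterated integral with $x_n$ innermost handles this directly, provided $T_{F,j}(\bfm x_j)g(\bfm x_n)$ is $\rho^{[n]}$-integrable to justify Fubini. That integrability follows from $T_{F,j}\in L^2(\rho^{[j]})$, established as in the proof of Lemma \ref{l:fnHn}, combined with Cauchy--Schwarz.
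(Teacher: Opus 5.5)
Your proposal is correct and follows essentially the same route as the paper: Mecke for $\BE[F\zeta^n(g)]$, the isometry \eqref{KorOrhtoFmFn} for $\BE[\zeta^n(f_n)\zeta^n(g)]$, elimination of all $j<n$ terms of \eqref{kerneln} via the centering of $g$, and the identity $(\theta+2n-1)\Pn{(\theta+n)}{n-1}/\Pn{\theta}{2n}=1/\Pn{\theta}{n}$. The only difference is that you kill the $j<n$ terms directly, by integrating out $x_n$ in the recursive definition of $\rho^{[n]}$. The paper instead cites Corollary~\ref{KorIntegrationInIH_mAndIH_N} with $k=j$, and your argument is just the simplest special case of that corollary. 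You are also more explicit about the integrability needed for Mecke and Fubini.
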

\begin{proof}
The last claim follows from $\BE F(\zeta)c = c \,\BE F(\zeta)$ for all $c\in\mathbf{F}_0 = \R$. 
Let $n\in\N$ and $g \in \bH_n$.
On the one hand, the multivariate Mecke-type equation~\eqref{Mecke1} for the DF process yields 
\begin{equation} \label{eq Bew Projformel li Seite}
	\BE F(\zeta_\rho) \int g(x) \, \zeta_\rho^n(\dd x) 
    = \frac{1}{\Pn{\theta}{n}} \int \BE F(\zeta_{\rho+ \delta_{\bfm{x}_{n}}}) g(\bfm{x}_{n})  \, \Pnm{\rho}{n}(\dd \bfm{x}_{n}).
\end{equation}
On the other hand, \eqref{KorOrhtoFmFn} implies
\begin{equation*} 
    \BE \left[ \int f_n(y)  \, \zeta_\rho^n(\dd y) \int g(x)  \, \zeta_\rho^n(\dd x) \right] = \frac{n!}{\Pn{\theta}{2n}} \int f_n (x) g(x)\,\Pnm{\rho}{n}(\dd x).
\end{equation*}
By the definition of $f_n$, the right-hand side equals
\begin{equation*}
    \frac{n!}{\Pn{\theta}{2n}} \int\frac{\theta + 2n - 1}{n!}  
\Bigg(\sum_{j=0}^{n}  (-1)^{n-j} \Pn{(\theta+j)}{n-1} \sum_{1\le i_1 < \cdots < i_j\le n} 
T_{F,j}(x_{i_1}, \ldots, x_{i_{j}})\Bigg)   g(\bfm{x}_{n})  \, \Pnm{\rho}{n}(\dd \bfm{x}_{n}).  
\end{equation*}
Since $g$ is an element of $\bH_n$ and thus Corollary~\ref{KorIntegrationInIH_mAndIH_N} (with $k=j$)
is applicable, this reduces to
\begin{equation*}
    \frac{n!}{\Pn{\theta}{2n}} \int \left(\frac{\theta + 2n - 1}{n!}   
(-1)^{n-n} \Pn{(\theta+n)}{n-1} T_{F,n}(\bfm{x}_{n})\right) g(\bfm{x}_{n})  \, \Pnm{\rho}{n}(\dd \bfm{x}_{n})  
\end{equation*}
which is, because of $T_{F,n}(\bfm{x}_{n}) = \BE F(\zeta_{\rho+ \delta_{\bfm{x}_{n}}}) $, 
in turn equal to~\eqref{eq Bew Projformel li Seite}.
Therefore, $\zeta^n(f_n)$ is indeed the orthogonal projection of $F$ onto $\mathbf{F}_n$. 
\end{proof}

In the following we abbreviate  $[m]\coloneqq \{1,\ldots,m\}$ for $m\in\N$.

\begin{lemma}\label{l:3.5} Let $m\in\N$ and assume that $F=\zeta^m(f)$ for some
$f\in L^2(\rho^{[m]})$. Then we have for each $k\in[m]$ and for $\rho^{[k]}$-a.e.\
$(x_1,\ldots,x_k)$ that
\begin{align}\label{e:3.25}
f_k&(x_1,\ldots,x_k)\\ \notag
&=    \frac{\theta + 2k - 1}{k!}
\left(\sum_{j=0}^{k}  (-1)^{k-j} \frac{\Pn{(\theta+j)}{k-1}}{\Pn{(\theta+j)}{m}} \sum_{1 \le i_1 < \cdots < i_j\le k} 
\Pnm{(\rho + \delta_{x_{i_1}} + \cdots + \delta_{x_{i_j}})}{m}(f)  
      \right),
\end{align}
where the term with $j=0$  has to be interpreted as
$(-1)^{k} (\Pn{\theta}{k-1}/\Pn{\theta}{m}) \Pnm{\rho}{m}(f)$.
Furthermore, we have for $n\ge m+1$ that $f_n=0$ $\rho^{[n]}$-a.e.
\end{lemma}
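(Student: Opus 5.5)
The first formula should follow from the kernel representation \eqref{kerneln} of Theorem~\ref{t:chaos} by computing the Palm expectations $T_{F,j}$ explicitly when $F=\zeta^m(f)$; the vanishing of the higher kernels should then follow from orthogonality.

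The key step is to compute, for $\mu:=\rho+\delta_{x_{i_1}}+\cdots+\delta_{x_{i_j}}$ (a measure of total mass $\theta+j$),
\begin{align*}
T_{F,j}(x_{i_1},\ldots,x_{i_j})=\BE \zeta_{\mu}^m(f)=\frac{1}{\Pn{(\theta+j)}{m}}\,\mu^{[m]}(f).
\end{align*}
This is the moment formula \eqref{nmoment}, applied to a DF process with directing measure $\mu$. It first holds for $f\ge 0$ and then extends to integrable $f$ by splitting $f=f^+-f^-$.

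For integrability, I will need $f\in L^1(\mu^{[m]})$ for $\rho^{[j]}$-a.e.\ $(x_{i_1},\ldots,x_{i_j})$. I plan to get this from the multivariate Mecke equation \eqref{Mecke1}:
\begin{align*}
\int \mu^{[m]}(|f|)\,\rho^{[j]}(\dd \bfm{x}_j)=\Pn{\theta}{j}\,\Pn{(\theta+j)}{m}\,\BE\int\zeta^{m}(|f|)\,\zeta^j(\dd \bfm{x}_j)=\Pn{\theta}{j}\,\Pn{(\theta+j)}{m}\,\BE\zeta^m(|f|).
\end{align*}
The right-hand side is finite because $\BE\zeta^m(|f|)=\rho^{[m]}(|f|)/\Pn{\theta}{m}<\infty$ and $\rho^{[m]}$ is finite. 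Since $\rho^{[k]}$-null sets of a $j$-subset of coordinates pull back to $\rho^{[k]}$-null sets (the marginals of $\rho^{[k]}$ are $\rho^{[j]}$, by symmetry of $\rho^{[k]}$), substituting into \eqref{kerneln} gives \eqref{e:3.25} $\rho^{[k]}$-a.e. The $j=0$ term is read as stated in the lemma.

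For $n\ge m+1$ I would avoid the combinatorics and argue by orthogonality. By Proposition~\ref{l:fnprojection}, $\zeta^n(f_n)$ is the projection of $F$ onto $\bF_n$, so it suffices to show $\BE[\zeta^m(f)\zeta^n(g)]=0$ for all $g\in\bH_n$. Writing $\zeta^m(f)\zeta^n(g)=\int f(x)g(y)\,\zeta^{m+n}(\dd(x,y))$ and using \eqref{nmoment}, this expectation equals
\begin{align*}
\frac{1}{\Pn{\theta}{m+n}}\int f(x)g(y)\,\rho^{[m+n]}(\dd(x,y)).
\end{align*}
This should vanish because $g$ is conditionally centred and $n>m$. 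Concretely, I would use the ordering of coordinates in $\rho^{[m+n]}$ (symmetry allows putting the $g$-variables last) and integrate out the last variable $y_n$ against $\rho+\delta_{x}+\delta_{y_1}+\cdots+\delta_{y_{n-1}}$.

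This last vanishing is the main obstacle. The extra atoms $\delta_x$ from the $f$-variables break the centring \eqref{condcent}, since $g$ is centred only against $\rho+\delta_{y_1}+\cdots+\delta_{y_{n-1}}$. I would handle it with the general orthogonality relations from the appendix (the more general versions of \eqref{e:345}, e.g.\ Corollary~\ref{KorIntegrationInIH_mAndIH_N}), which presumably state exactly that $\int h(x)g(y)\,\rho^{[k+n]}(\dd(x,y))=0$ for $g\in\bH_n$, arbitrary integrable $h$ on $\BX^k$, and $k<n$. Integrability follows by Cauchy--Schwarz once $f$ is bounded. For general $f\in L^2(\rho^{[m]})$ I would approximate by bounded $f$ and use that $f\mapsto\zeta^m(f)$ is continuous into $L^2(\BP)$. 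Then Lemma~\ref{LemEindProj} gives $f_n=0$ $\rho^{[n]}$-a.e.
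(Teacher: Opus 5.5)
Your proposal is correct and follows the paper's proof essentially verbatim. You plug the moment formula \eqref{nmoment} for the DF process with directing measure $\rho+\delta_{x_{i_1}}+\cdots+\delta_{x_{i_j}}$ into \eqref{kerneln}, and for $n>m$ you combine \eqref{nmoment} with Corollary~\ref{KorIntegrationInIH_mAndIH_N} (with $k=0$), Proposition~\ref{l:fnprojection} and Lemma~\ref{LemEindProj}. Your extra care about a.e.\ finiteness is fine. The approximation by bounded $f$ is unnecessary, because Cauchy--Schwarz, the recursion \eqref{Rekursion mu[m+n]} and the symmetry of $\rho^{[m+n]}$ already give $\int|f(x)g(y)|\,\rho^{[m+n]}(\dd(x,y))<\infty$ for $f\in L^2(\rho^{[m]})$ and $g\in\bH_n$.
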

\begin{proof} To see \eqref{e:3.25} we have to use formula
\eqref{kerneln} with  $F(\mu)=\mu^m(f)$. 
The result follows from the moment formula \eqref{nmoment}.

To prove the second assertion, we take $n>m$ and  $g \in \bH_n$. By
\eqref{nmoment},
\begin{equation*}
\BE \zeta^m(f)\zeta^n(g) 
= \frac{1}{\Pn{\theta}{m+n}} \int f(x) g(y)\,\Pnm{\rho}{m+n} (\dd (x,y)) = 0,
\end{equation*}
where the second equality follows from
Corollary~\ref{KorIntegrationInIH_mAndIH_N} (with $k=0$). 
Hence, $F$ is orthogonal to $\bF_n$. Proposition \ref{l:fnprojection} and
Lemma \ref{LemEindProj} then show that $f_n=0$ $\rho^{[n]}$-a.e.
\end{proof}

For $m\in\N$ and a function $f\colon\BX^m\to\R$, we denote by
\begin{equation*}
\tilde{f}(x_1,\ldots,x_m):=\frac{1}{n!}\sum_\pi f(x_{\pi(1)},\ldots,x_{\pi(m)}),\quad x_1,\ldots,x_m\in\BX, 
\end{equation*}
the {\em symmetrization} of $f$, where the summation is over all
permutations $\pi$ of $[m]$.

\begin{lemma}\label{l:3.7} Let $m\in\N$ and $f\in L^2(\rho^{[m]})$.
Assume for all $k\in\{0,\ldots,m\}$ that
\begin{equation}\label{e:3.4}
(\rho + \delta_{x_1} + \cdots + \delta_{x_k})^{[m]}(f)=0,\quad \rho^{[k]}\text{-a.e.\ $(x_1,\ldots,x_k)$}.  
\end{equation}
Then $\tilde{f}=0$ $\rho^{[m]}$-a.e.
\end{lemma}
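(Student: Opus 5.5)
The plan is to show first that $F:=\zeta^m(f)$ vanishes $\BP$-a.s., and then to recover $\tilde f$ from $F$ through the Palm distributions. I will not use the completeness part of Theorem~\ref{t:chaos}, since Lemma~\ref{l:3.7} is presumably an ingredient of its proof. Only the case $k=m$ of \eqref{e:3.4} should be needed. Note that $\zeta^m(f)=\zeta^m(\tilde f)$, so nothing changes if $f$ is replaced by $\tilde f$.

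\emph{Step 1: $\BE F^2=0$.} I apply the Mecke-type equation \eqref{Mecke1} with $n=m$ to $\mu^m(f)\,f(x_1,\dots,x_m)$ (first for bounded $f$ or $|f|$, then for $f\in L^2(\rho^{[m]})$ by a truncation argument). This gives
\begin{align*}
\BE F^2=\frac{1}{\Pn{\theta}{m}}\int f(\bfm{x})\,\BE\,\zeta_{\rho+\delta_{\bfm{x}}}^m(f)\,\rho^{[m]}(\dd\bfm{x}).
\end{align*}
By the moment formula \eqref{nmoment}, applied to the DF process with parameter $\rho+\delta_{\bfm{x}}$ of total mass $\theta+m$, we have $\BE\,\zeta^m_{\rho+\delta_{\bfm{x}}}(f)=(\rho+\delta_{\bfm{x}})^{[m]}(f)/\Pn{(\theta+m)}{m}$. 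This vanishes for $\rho^{[m]}$-a.e.\ $\bfm{x}$ by \eqref{e:3.4} with $k=m$. Hence $F=0$ a.s.

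\emph{Step 2: transfer to the Palm versions.} I apply \eqref{Mecke1} to the indicator $\I\{\mu^m(f)\neq 0\}$. Together with Step 1 this gives that $\zeta_{\rho+\delta_{\bfm{x}}}^m(f)=0$ a.s.\ for $\rho^{[m]}$-a.e.\ $\bfm{x}$.

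\emph{Step 3: extract the coefficient.} Fix such an $\bfm{x}$. Let $y_1,\dots,y_r$ be the distinct points among $x_1,\dots,x_m$, with multiplicities $c_1,\dots,c_r$. I use the standard decomposition of a Dirichlet process with an atomic part in its parameter:
\begin{align*}
\zeta_{\rho+\delta_{\bfm{x}}}\overset{d}{=}W_0\zeta'+\sum_{i=1}^r W_i\delta_{y_i}.
\end{align*}
Here $(W_0,\dots,W_r)\sim\Di(\theta,c_1,\dots,c_r)$ is independent of $\zeta'\sim\Di(\rho)$. This follows from \eqref{e1.1}, since both sides have the same finite-dimensional Dirichlet laws.

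Expanding $\zeta^m_{\rho+\delta_{\bfm{x}}}(f)$ gives a homogeneous polynomial of degree $m$ in $(W_0,\dots,W_r)$, with coefficients that are measurable in $\zeta'$. Conditionally on $\zeta'$, this polynomial vanishes a.s.\ under a law with a positive density on the relative interior of the simplex. By continuity and homogeneity it then vanishes identically, so every coefficient is $0$ a.s.

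The coefficient of $W_1^{c_1}\cdots W_r^{c_r}$ involves no $\zeta'$-integration. It equals the sum of $f$ over all arrangements of the multiset $\{y_1^{c_1},\dots,y_r^{c_r}\}$, that is, a positive multinomial constant times $\tilde f(\bfm{x})$. Hence $\tilde f(\bfm{x})=0$ for $\rho^{[m]}$-a.e.\ $\bfm{x}$.

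The main obstacle is Step 3. It requires justifying the decomposition of the Palm process, including the case where $\rho$ itself has atoms at some $y_i$. In that case one should instead merge the atom of $\rho$ at $y_i$ into the $W_i$ term; the coefficient argument is unchanged. It also requires making the polynomial-identity argument measurable in $\bfm{x}$, and the integrability bookkeeping for unbounded $f$ in Step 1 needs care.
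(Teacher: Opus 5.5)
Your argument is correct, and it takes a genuinely different route from the paper.

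\textbf{The paper's route.} The paper argues deterministically. With $f$ symmetric, it expands $(\rho+\delta_{x_1}+\cdots+\delta_{x_k})^{[m]}(f)$ via Proposition~\ref{PropRho+delta_x_1+...+delta_x_k} into $\rho^{[m]}(f)$ plus the terms $\frac{m!}{(m-r)!}\sum_{1\le j_1\le\cdots\le j_r\le k}\int f(x_{j_1},\ldots,x_{j_r},z)\,\rho^{[m-r]}(\dd z)$. It then peels these off by an inclusion--exclusion induction over $k=0,1,\ldots,m$, using symmetry to relabel points, until at $k=m$ only $m!\,f(x_1,\ldots,x_m)$ survives. Each case of \eqref{e:3.4} is only first-moment information about a Palm process, which is why all $k$ are used there.

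\textbf{Your route.} You upgrade the single case $k=m$ to almost sure information. Step~1 is the identity $\BE\zeta^m(f)^2=\big(\Pn{\theta}{m}\Pn{(\theta+m)}{m}\big)^{-1}\int f(\bfm{x})\,(\rho+\delta_{\bfm{x}})^{[m]}(f)\,\rho^{[m]}(\dd\bfm{x})$. Once $\zeta^m_{\rho+\delta_{\bfm{x}}}(f)=0$ a.s., the positive density of $\Di(\theta,c_1,\ldots,c_r)$ lets you read off all coefficients of your polynomial at once. Taking expectations of that polynomial reproduces the expansion of Proposition~\ref{PropRho+delta_x_1+...+delta_x_k}.

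Dropping $k<m$ is consistent with the paper. Apply Lemma~\ref{LemRekT_{F,n}} to $F=\zeta^m(f)$, for which $T_{F,k}(\bfm{x})=(\rho+\delta_{\bfm{x}})^{[m]}(f)/\Pn{(\theta+k)}{m}$; this shows that the case $k=m$ implies the others.

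\textbf{Trade-offs.} Your route buys two things. First, Step~1 alone yields $\zeta^m(f)=0$ a.s., which is all that the proof of Theorem~\ref{t:chaos} really uses. Second, Steps~2--3 give the injectivity of $g\mapsto\zeta^m(g)$ on symmetric $g\in L^2(\rho^{[m]})$ without the combinatorial bookkeeping. The price is an imported structural fact, the decomposition of a DF process whose parameter has atoms. The paper's self-contained argument never needs it.

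\textbf{The difficulties you flag are milder than you fear.}
\begin{itemize}
\item Step~1 needs no truncation. It is just \eqref{nmoment} for $\zeta^{2m}(f\otimes f)$ combined with \eqref{Rekursion mu[m+n]}, and $f\otimes f\in L^1(\rho^{[2m]})$ by Cauchy--Schwarz.
\item The decomposition $\zeta_{\rho+\sum_i c_i\delta_{y_i}}\overset{d}{=}W_0\zeta'+\sum_i W_i\delta_{y_i}$ with $\zeta'\sim\Di(\rho)$ holds verbatim even if $\rho$ charges some $y_i$. The Gamma-representation check on partitions never uses disjoint supports.
\item The coefficient of $W_1^{c_1}\cdots W_r^{c_r}$ never involves $\zeta'$, so no merging is needed.
\item The coefficients are a.s.\ finite: \eqref{Mecke1} applied to $\mu^m(|f|)$ gives $\int\BE\zeta^m_{\rho+\delta_{\bfm{x}}}(|f|)\,\rho^{[m]}(\dd\bfm{x})=\rho^{[m]}(|f|)<\infty$.
\item You conclude $\tilde f(\bfm{x})=0$ pointwise on a set of full measure, so no measurable selection in $\bfm{x}$ is required.
\end{itemize}
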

\begin{proof} Since $(\rho + \delta_{x_1} + \cdots + \delta_{x_k})^{[m]}$ is a 
symmetric measure, we can assume without loss
of generality that $f$ is symmetric. Assume $k\ge 1$ and take a measurable function $h\colon\BX^k\to[0,1]$.
By \eqref{Rekursion mu[m+n]} and \eqref{nmoment},
\begin{align*}
\Pn{\theta}{k+m}&\int h(x_1,\ldots,x_k) (\rho + \delta_{x_1} + \cdots + \delta_{x_k})^{[m]}(|f|)\,\rho^{[k]}(\dd (x_1,\ldots,x_k))\\
&= \Pn{\theta}{k+m} \int h(x_1,\ldots,x_k) |f(x_{k+1}, \ldots, x_{k+m})|\,\rho^{[k+m]}(\dd (x_1,\ldots,x_{k+m})) \\
&=\BE \zeta^k(h) \zeta^m(|f|) 
\le (\Pnm{\theta}{m})^{-1}\Pnm{\rho}{m}(|f|). 
\end{align*}
The last term is finite since $f\in L^2(\rho^{[m]})$ and $\rho^{[m]}$ is a finite measure. Therefore, we have
$(\rho + \delta_{x_1} + \cdots + \delta_{x_k})^{[m]}(|f|)<\infty$ for  $\rho^{[k]}$-a.e.\ $(x_1,\ldots,x_k)$.
In the remainder of the proof we ignore $\rho^{[m]}$-null sets.

Since $f$ is symmetric, we obtain from Proposition \ref{PropRho+delta_x_1+...+delta_x_k} that
\begin{align}\label{eq987}
   \int & f (z) \,\Pnm{(\rho + \delta_{x_1} + \cdots + \delta_{x_k})}{m}(\dd z) = \int f(z)  \,\Pnm{\rho}{m}(\dd z)\\ \notag
 &   + \sum_{r=1}^{m} \frac{m!}{(m-r)!}\sum_{1\le j_1\le\cdots\le j_r\le k} \int f (x_{j_1}, \ldots, x_{j_r}, z_1, \ldots, z_{m-r}) 
\,\Pnm{\rho}{m-r}(\dd (z_1, \ldots, z_{m-r}))
\end{align}
for all $k\in\{0,\ldots,m\}$ and all $(x_1,\ldots,x_m)\in\BX^m$. 
The case $k=0$ of our assumption \eqref{e:3.4} yields 
$\rho^{[m]}(f)=0$, so that the case $k=1$ gives
\begin{equation*}
\sum_{r=1}^{m} \frac{m!}{(m-r)!}\int f (x_1, \ldots, x_1, z_1, \ldots, z_{m-r}) \,\Pnm{\rho}{m-r}(\dd (z_1, \ldots, z_{m-r}))=0.
\end{equation*}
Since $\rho^{[m]}$ is symmetric, we can replace $x_1$ with $x_2$ here.
Therefore, we obtain from the case $k=2$ of \eqref{e:3.4} that
\begin{equation*}
\sum_{r=2}^{m} \frac{m!}{(m-r)!}\quad \sideset{}{^{*}}\sum_{1\le j_1\le\cdots\le j_r\le 2} \int f (x_{j_1}, \ldots, x_{j_r}, z_1, \ldots, z_{m-r}) 
\,\Pnm{\rho}{m-r}(\dd (z_1, \ldots, z_{m-r}))=0,
\end{equation*}
where the upper index * means  that the sum is restricted to those $j_1,\ldots, j_r\in\{1,2\}$ satisfying $\{1,2\}\subset \{j_1,\ldots, j_r\}$.
Continuing this way all the way up to $k=m$, we see that indeed $f(x_1,\ldots,x_m)=0$.
\end{proof}

\begin{lemma}\label{l:3.9} Let $m\in\N_0$ and $f\in L^2(\rho^{[m]})$. Assume that
$\zeta^m(f)$ is orthogonal to $\bF_n$ for each $n\in\{0,\ldots,m\}$. Then $\tilde{f}=0$ $\rho^{[m]}$-a.e. 
\end{lemma}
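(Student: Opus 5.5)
The plan is to reduce Lemma \ref{l:3.9} to Lemma \ref{l:3.7}. That is, we show that orthogonality of $F:=\zeta^m(f)$ to $\bF_0,\ldots,\bF_m$ forces
\[
(\rho+\delta_{x_1}+\cdots+\delta_{x_k})^{[m]}(f)=0
\]
for $\rho^{[k]}$-a.e.\ $(x_1,\ldots,x_k)$ and every $k\in\{0,\ldots,m\}$. The case $m=0$ is trivial: $f$ is then a constant $c$ with $c=\BE F\cdot$ (orthogonal to $\bF_0$), so $c^2=0$. So let $m\ge1$, and note $F\in L^2(\zeta)$ since $\zeta$ is a probability measure and $f\in L^2(\rho^{[m]})$.

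First, by Proposition \ref{l:fnprojection} and Lemma \ref{LemEindProj}, orthogonality to $\bF_n$ gives $f_n=0$ $\rho^{[n]}$-a.e.\ for $n\in[m]$, where $f_n$ are the kernels of $F$. Orthogonality to $\bF_0$ gives $\BE F=0$, which by \eqref{nmoment} is $\rho^{[m]}(f)=0$; this is the case $k=0$. By Lemma \ref{l:3.5}, the kernels $f_k$ are given by \eqref{e:3.25}, which involves exactly the quantities
\[
S_j(x_{i_1},\ldots,x_{i_j}):=\frac{(\rho+\delta_{x_{i_1}}+\cdots+\delta_{x_{i_j}})^{[m]}(f)}{\Pn{(\theta+j)}{m}}.
\]
Equivalently, $S_j=T_{F,j}$ by \eqref{nmoment} applied to $\zeta_{\rho,x_{i_1},\ldots,x_{i_j}}$.

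The next step is an induction on $k$. Suppose $T_{F,j}=0$ $\rho^{[j]}$-a.e.\ for all $j<k$. Then $f_k=0$ reduces via \eqref{e:3.25} to $\frac{\theta+2k-1}{k!}\Pn{(\theta+k)}{k-1}T_{F,k}(x_1,\ldots,x_k)=0$, since the terms with $j<k$ vanish. The coefficient is strictly positive, so $T_{F,k}=0$, i.e.\ $(\rho+\delta_{\bfm{x}_k})^{[m]}(f)=0$ for $\rho^{[k]}$-a.e.\ $\bfm{x}_k$.

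The one step needing care is the null-set bookkeeping in the $j<k$ terms. They are evaluated at sub-tuples $(x_{i_1},\ldots,x_{i_j})$, and we need that a $\rho^{[j]}$-null set lifts to a $\rho^{[k]}$-null set under projection onto any $j$ coordinates. This follows from the symmetry of $\rho^{[k]}$ together with the fact that its marginal on the first $j$ coordinates is $\Pn{(\theta+j)}{k-j}\rho^{[j]}$, which comes from the definition \eqref{mu[n]} (or from \eqref{nmoment}). With all $k\in\{0,\ldots,m\}$ covered, Lemma \ref{l:3.7} yields $\tilde f=0$ $\rho^{[m]}$-a.e.
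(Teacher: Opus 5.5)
Your proposal is correct and follows essentially the same route as the paper. Both arguments get $k=0$ from orthogonality to $\bF_0$ and the vanishing of the kernels $f_k$ from Proposition~\ref{l:fnprojection} and Lemma~\ref{l:3.5}. They then induct on $k$ via \eqref{e:3.25}, transferring null sets through the symmetry of $\rho^{[k]}$ and the marginal identity $\rho^{[k]}(\cdot\times\BX^{k-j})=\Pn{(\theta+j)}{k-j}\rho^{[j]}$, and conclude with Lemma~\ref{l:3.7}. Your separate treatment of $m=0$ and your explicit null-set bookkeeping for general $k$ simply spell out what the paper compresses into ``inductively''.
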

\begin{proof} Since $\zeta^m(f)$ is orthogonal to $\bF_0=\R$, we obtain $\BE \zeta^m(f)=(\theta^{(m)})^{-1}\rho^{[m]}(f)=0$. 
We use Lemma \ref{l:3.5}. It follows from our assumption, Proposition \ref{l:fnprojection}
and Lemma \ref{l:3.5}
that $f_k=0$ $\rho^{[k]}$-a.e.\ for each $k\in[m]$, where the $f_k$ are given by \eqref{e:3.25}. 
Since $\rho^{[m]}(f)=0$, we obtain  for $k=1$ that
$\Pnm{(\rho + \delta_{x})}{m}(f) =0$ for $\rho$-a.e.\ $x$. Since $\rho^{[2]}$ is symmetric
and $\rho^{[2]}(\cdot\times \BX)=(\theta+1)\rho$, we obtain for $k=2$ that 
$(\rho + \delta_{x_1} + \delta_{x_2})^{[m]}(f)=0$  for $\rho^{[2]}$-a.e.\ $(x_1,x_2)$.
Inductively it follows for each $k\in[m]$  that \eqref{e:3.4} holds.
Therefore, we obtain the assertion from Lemma \ref{l:3.7}.
\end{proof}

\begin{proof}[Proof of Theorem \ref{t:chaos}] In view of Proposition \ref{l:fnprojection} 
it remains to show that
\begin{align}\label{e:oplus}
\bigoplus^\infty_{n=0} \bF_n=\{F(\zeta): F\in L^2(\zeta)\}.
\end{align}
It is not difficult to see that the closure of the span of
$\{\zeta^m(f):f\in L^2(\rho^{[m]}),m\in\N_0\}$ coincides with the right-hand side of \eqref{e:oplus};
see \cite[Lemma 2]{Peccati08}.
On the other hand, the left-hand side of \eqref{e:oplus} is the orthogonal sum of
closed spaces and hence closed. Therefore, it suffices to show that 
$\zeta^m(f)$ belongs to the left-hand side of \eqref{e:oplus},
where $f\in L^2(\rho^{[m]})$ for some $m\in\N_0$.
Define $G:=\zeta^m(f)-\sum^m_{k=0}\zeta^k(f_k)$, where $f_1,\ldots,f_m$ are given by
\eqref{e:3.25} and $f_0:=\BE \zeta^m(f)$.
By Proposition \ref{l:fnprojection} and Lemma \ref{l:3.5} (and Lemma \ref{l:fnHn}), the orthogonal projection
of $G$ onto $\bF_n$ vanishes a.s.\ for each $n\in\N_0$. Since $\zeta$ is a probability measure,
we can write $G=\zeta^m(f^*)$ for a suitably defined $f^*\in L^2(\rho^{[m]})$. Lemma \ref{l:3.9} shows
that the symmetrization of $f^*$ vanishes $\rho^{[m]}$-a.e. Since $\zeta^m$ is symmetric,
we obtain  that $G=0$ $\BP$-a.s.  Therefore,
$\zeta^m(f)=\sum^m_{k=0}\zeta^k(f_k)$, as required to conclude the proof.
\end{proof}

\section{Malliavin operators}\label{sec:Malliavin}

In this section we use the chaos expansion to introduce Malliavin operators acting on
random variables and fields, which depend measurably on $\zeta$. Our approach is
inspired by the Malliavin calculus for classical isonormal Gaussian processes \cite{Nualart2006} 
and the general Poisson  process \cite{Last2016}. 
In the next section we shall see that the Malliavin  operators admit a natural
interpretation in the context of  Fleming--Viot processes.

\subsection{The gradient}

Let $\operatorname{dom}(\nabla)$ denote the set of all $F \in L^2(\zeta)$ with chaos decomposition
\eqref{EqChaosZerlegung} such that
\begin{equation}\label{domNabla}
     \sum_{n=1}^\infty \frac{(\theta + n -1)nn!}{\Pn{\theta}{2n}}\int f_n(x)^2 \, \Pnm{\rho}{n}(\dd x) < \infty.
\end{equation}
Define $\nabla \colon \operatorname{dom}(\nabla) \rightarrow L^2(C_\zeta)$ by
\begin{align}\label{defnabla}
(\nabla F)& (\omega, x)\\ \notag
&:=  \sum_{n=1}^\infty n \left( \int f_n (x, y_1, \ldots, y_{n-1}) \, \zeta^{n-1}(\omega, \dd (y_1, \ldots, y_{n-1})) 
- \int f_n (y) \, \zeta^{n}(\omega, \dd y) \right).
\end{align}
This definition is justified by the following proposition.

\begin{proposition}\label{p:convnabla} Suppose that $F\in\operatorname{dom}(\nabla)$.
Then the right-hand side of \eqref{defnabla} converges in $L^2(C_\zeta)$.
\end{proposition}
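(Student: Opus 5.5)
The plan is to show that the summands
\[
G_n(\omega,x):=n\Big(\int f_n(x,y)\,\zeta^{n-1}(\omega,\dd y)-\zeta^n(\omega,f_n)\Big),\qquad n\in\N,
\]
are pairwise orthogonal in $L^2(C_\zeta)$, with
\begin{align}\label{e:planGn}
C_\zeta(G_n^2)=\frac{(\theta+n-1)\,n\,n!}{\Pn{\theta}{2n}}\int f_n^2\,\dd\Pnm{\rho}{n}.
\end{align}
Granting this, \eqref{domNabla} says exactly that $\sum_n C_\zeta(G_n^2)<\infty$. The partial sums are then Cauchy in $L^2(C_\zeta)$ by orthogonality, so the series in \eqref{defnabla} converges there.

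To compute $C_\zeta(G_nG_m)=\BE\int G_n(\zeta,x)G_m(\zeta,x)\,\zeta(\dd x)$, I expand the product into four terms.
\begin{itemize}
\item Because $\zeta$ is a probability measure, $\int\zeta^{m-1}(f_m(x,\cdot))\,\zeta(\dd x)=\zeta^m(f_m)$. Each of the three terms involving $\zeta^n(f_n)$ or $\zeta^m(f_m)$ therefore reduces to $\pm\,\zeta^n(f_n)\zeta^m(f_m)$, and together they contribute $-nm\,\BE\,\zeta^n(f_n)\zeta^m(f_m)$.
\item The remaining term is $nm\,\BE\int f_n(x,y)f_m(x,z)\,\zeta^{n+m-1}(\dd(x,y,z))$. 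By the moment formula \eqref{nmoment} it equals
\[
\frac{nm}{\Pn{\theta}{n+m-1}}\int f_n(x,y)f_m(x,z)\,\Pnm{\rho}{n+m-1}(\dd(x,y,z)).
\]
\item Integrability of all terms follows from Cauchy--Schwarz and $f_n\in L^2(\Pnm{\rho}{n})$, using the same Mecke bounds as in the proof of Lemma~\ref{l:fnHn}.
\end{itemize}
The term $\BE\,\zeta^n(f_n)\zeta^m(f_m)$ is given by \eqref{KorOrhtoFmFn}: it vanishes for $m\ne n$ and equals $\frac{n!}{\Pn{\theta}{2n}}\Pnm{\rho}{n}(f_n^2)$ for $m=n$.

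The main step is the integral with one shared variable. The definition \eqref{mu[n]} gives
\[
\Pnm{\rho}{1+k}(\dd(x,w))=(\rho+\delta_x)^{[k]}(\dd w)\,\rho(\dd x).
\]
Now fix $x$ and set $g^x:=f_n(x,\cdot)$, $h^x:=f_m(x,\cdot)$. The conditional centering \eqref{condcent} of $f_n$ says precisely that $g^x$ belongs to the space $\bH_{n-1}$ built from the parameter measure $\rho+\delta_x$, for $\rho$-a.e.\ $x$. Likewise $h^x$ belongs to the corresponding space $\bH_{m-1}$. Applying the orthogonality relation \eqref{e:345} with $\rho+\delta_x$ in place of $\rho$ yields
\[
\int f_n(x,y)f_m(x,z)\,\Pnm{\rho}{n+m-1}(\dd(x,y,z))=\I\{m=n\}\,(n-1)!\int f_n^2\,\dd\Pnm{\rho}{n}.
\]
For $m=n$ I use $(\rho+\delta_x)^{[n-1]}(\dd y)\,\rho(\dd x)=\Pnm{\rho}{n}(\dd(x,y))$ once more. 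The delicate points are the a.e.\ qualifiers: the statement of \eqref{e:345} must hold for general finite $\rho$, applied to $\rho+\delta_x$, and the sets of measure zero have to be tracked. If this causes trouble, I would instead invoke the general orthogonality corollary from the appendix directly.

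Finally, for $m\ne n$ both contributions vanish, which gives orthogonality. For $m=n$,
\[
C_\zeta(G_n^2)=n^2\Big(\frac{(n-1)!}{\Pn{\theta}{2n-1}}-\frac{n!}{\Pn{\theta}{2n}}\Big)\Pnm{\rho}{n}(f_n^2)
=\frac{n\,n!}{\Pn{\theta}{2n}}\big(\theta+2n-1-n\big)\Pnm{\rho}{n}(f_n^2),
\]
where I used $\Pn{\theta}{2n}=\Pn{\theta}{2n-1}(\theta+2n-1)$. Since $\theta+2n-1-n=\theta+n-1$, this is exactly \eqref{e:planGn}, and the proposition follows.
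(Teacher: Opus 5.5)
Your proof is correct and follows the paper's argument. You expand $C_\zeta(G_mG_n)$, reduce the three terms involving $\zeta^n(f_n)$ or $\zeta^m(f_m)$ to $-\zeta^m(f_m)\zeta^n(f_n)$, apply \eqref{nmoment}, and use orthogonality to get pairwise orthogonal summands with the stated norms. Convergence then follows from \eqref{domNabla} via a Cauchy-sequence argument. The only cosmetic difference is in the shared-variable integral: you disintegrate $\Pnm{\rho}{n+m-1}$ over $x$ and apply \eqref{e:345} to $\rho+\delta_x$, whereas the paper quotes Corollary~\ref{KorIntegrationInIH_mAndIH_N} with $k=1$ directly. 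Both routes are valid.
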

\begin{proof}
Given $n\in\N$, let $H_n\colon \Omega \times \BX \rightarrow \R$ be defined by
\begin{align*}
	H_n(\omega,x) &\coloneqq \int f_n (x, \bfm{y}_{n-1}) \, \zeta^{n-1}(\omega, \dd \bfm{y}_{n-1}) 
- \int f_n (\bfm{y}) \, \zeta^{n}(\omega, \dd \bfm{y}). 
\end{align*}
Taking $m,n\in\N$, we have
\begin{align*}
 \int   H_m(\omega,x) & H_n(\omega,x) \, C_{\zeta} (\dd (\omega,x)) 
= \BE  \int  H_m (x) H_n(x) \, \zeta(\dd x)  \\
&= \BE \left[\int  \left(\int f_m(x, \bfm{y}_{m-1}) \, \zeta^{m-1}(\dd \bfm{y}_{m-1}) 
- \int f_m(\bfm{y}) \, \zeta^m(\dd \bfm{y}) \right) \right. \\
 & \qquad\qquad    \left. \left( \int f_n(x, \bfm{z}_{n-1}) \, \zeta^{n-1}(\dd \bfm{z}_{n-1}) 
- \int  f_n(\bfm{z}) \, \zeta^n(\dd \bfm{z})\right) \, \zeta(\dd x) \right].
\end{align*}
By \eqref{nmoment}, 
this expectation equals
\begin{align*}
    \frac{1}{\Pn{\theta}{m+n-1}} \int  f_m(x, \bfm{y}_{m-1}) &f_n(x, \bfm{z}_{n-1}) 
\, \Pnm{\rho}{m+n-1} (\dd ( x, \bfm{y}_{m-1}, \bfm{z}_{n-1}))   \\
    &- \frac{1}{\Pn{\theta}{m+n}} \int  f_m(\bfm{y}_{m}) f_n(\bfm{z}_{n}) 
\, \Pnm{\rho}{m+n} (\dd (\bfm{y}_{m}, \bfm{z}_{n})).
\end{align*}
According to Corollary~\ref{KorIntegrationInIH_mAndIH_N}, this reduces to
\begin{align*}
    \I\{m=n\} &\left(  \frac{(n-1)!}{\Pn{\theta}{2n-1}} \int  f_n(\bfm{x})^2 \, 
\Pnm{\rho}{n}(\dd \bfm{x}) - \frac{n!}{\Pn{\theta}{2n}} \int  f_n(\bfm{x})^2 
\, \Pnm{\rho}{n}(\dd \bfm{x}) \right) \\
    &= \I\{m=n\} \frac{(n-1)!}{\Pn{\theta}{2n}}(\theta + n -1) \int  f_n(\bfm{x})^2 
\, \Pnm{\rho}{n}(\dd \bfm{x}).
\end{align*}
Hence, we obtain  for each $n_0\in\N$ that
\begin{align*}
    \int  \left(\sum_{n=1}^{n_0} n H_n(\omega,x) \right)^2 \, C_{\zeta} (\dd (\omega,x)) 
    &= \sum_{n=1}^{n_0} \sum_{m=1}^{n_0} \BE  \int   n m H_n(x) H_m(x)  \, \zeta(\dd x)  \\
    &= \sum_{n=1}^{n_0} \frac{n^2 (n-1)!}{\Pn{\theta}{2n}}(\theta + n -1) 
\int  f_n(\bfm{x})^2 \, \Pnm{\rho}{n}(\dd \bfm{x}).
\end{align*}
Since $F\in \operatorname{dom}(\nabla)$,
we conclude that 
$\sum_{n=1}^{n_0} n H_n$, $n_0\in\N$,
is a Cauchy sequence in $L^2(C_{\zeta})$ and therefore convergent. 
The limit is $\nabla F$.
Moreover, we see that
\begin{equation} \label{eq Gamma Gradient}
    \BE \int \left( \sum_{n=1}^\infty n H_n(x) \right)^2 \, \zeta(\dd x) 
= \sum_{n=1}^\infty \frac{nn!}{\Pn{\theta}{2n}}(\theta + n -1) 
\int  f_n(\bfm{x})^2 \, \Pnm{\rho}{n}(\dd \bfm{x}). \qedhere
\end{equation}
\end{proof}

The operator $\nabla$ is linear in the following (natural) sense. If $F,G\in\operatorname{dom}(\nabla)$ 
and $a,b\in\R$, then  $aF+bG\in\operatorname{dom}(\nabla)$ and  $\nabla(aF+bG)=a\nabla F+b\nabla G$ holds
$C_\zeta$-a.e. We work with a measurable version of the gradient 
(which can be constructed via a $C_\zeta$-a.e.\ convergent subsequence)
and often suppress the dependence on $\omega$, 
by writing $\nabla_x F$, $x\in\BX$, for the random variable $\omega\mapsto (\nabla F) (\omega, x)$.
The next proposition provides an important isometry property.

\begin{proposition} Suppose that $F,G\in \operatorname{dom}(\nabla)$. Then
\begin{align}\label{e:iso}
\BE\int \nabla_x F \nabla_x G\,\zeta(\dd x)
=\sum^\infty_{n=1}\frac{(\theta+n-1) nn!}{\Pn{\theta}{2n}}\rho^{[n]}(f_ng_n),\quad F,G\in \operatorname{dom}(\nabla),
\end{align}
where $f_n$ (resp.\ $g_n$), $n\in\N$,  are the kernel functions of $F$ (resp.\ $G$).
\end{proposition}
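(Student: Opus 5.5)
The proof is a polarization of the computation already made in the proof of Proposition \ref{p:convnabla}. For each $n\in\N$ let $H_n$ be defined from $f_n$ as there, and let $K_n$ be the analogous random field built from the kernel $g_n$ of $G$:
\begin{align*}
K_n(\omega,x)\coloneqq \int g_n (x, \bfm{y}_{n-1}) \, \zeta^{n-1}(\omega, \dd \bfm{y}_{n-1})
- \int g_n (\bfm{y}) \, \zeta^{n}(\omega, \dd \bfm{y}).
\end{align*}

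\textbf{Mixed inner products.} For $m,n\in\N$ we compute $\BE\int H_m(x)K_n(x)\,\zeta(\dd x)$ exactly as before. Expanding the product and applying \eqref{nmoment} turns it into an integral against $\rho^{[m+n-1]}$ minus an integral against $\rho^{[m+n]}$. In the cross terms the mixed summands do not depend on $x$; since $\zeta$ is a probability measure, $x$ integrates out and these summands merge with the remaining term. Corollary \ref{KorIntegrationInIH_mAndIH_N} applies because $f_m\in\bH_m$ and $g_n\in\bH_n$, by Lemma \ref{l:fnHn}. It yields
\begin{align*}
\BE\int H_m(x)K_n(x)\,\zeta(\dd x)=\I\{m=n\}\frac{(n-1)!}{\Pn{\theta}{2n}}(\theta+n-1)\,\rho^{[n]}(f_ng_n).
\end{align*}
The only change from the earlier computation is that one factor carries $g_n$ instead of $f_n$. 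The corollary is bilinear, so nothing new is needed.

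\textbf{Passage to the limit.} By Proposition \ref{p:convnabla}, the partial sums $\sum_{n\le n_0} nH_n$ converge to $\nabla F$ in $L^2(C_\zeta)$, and likewise $\sum_{n\le n_0} nK_n$ converges to $\nabla G$. Hence their inner products in $L^2(C_\zeta)$ converge:
\begin{align*}
C_\zeta\Big(\sum_{n\le n_0} nH_n\sum_{m\le n_0} mK_m\Big)\to C_\zeta(\nabla F\nabla G).
\end{align*}
By the first step, the left-hand side equals
\begin{align*}
\sum_{n\le n_0}\frac{n^2(n-1)!(\theta+n-1)}{\Pn{\theta}{2n}}\rho^{[n]}(f_ng_n),
\end{align*}
and $n^2(n-1)!=n\,n!$. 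The series on the right of \eqref{e:iso} converges absolutely, by Cauchy--Schwarz in each term together with \eqref{domNabla} for both $F$ and $G$. Letting $n_0\to\infty$ gives \eqref{e:iso}.

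An alternative second step applies \eqref{eq Gamma Gradient} to $F+G$ and $F-G$, which lie in $\operatorname{dom}(\nabla)$ with kernels $f_n\pm g_n$ by linearity, and then polarizes via $4ab=(a+b)^2-(a-b)^2$.

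\textbf{Main obstacle.} The only point needing care is the first step: checking that Corollary \ref{KorIntegrationInIH_mAndIH_N} indeed applies to the mixed products with distinct kernels. This holds because the corollary is stated for general elements of $\bH_m$ and $\bH_n$. The polarization route avoids even this check, so I would use it as the main argument.
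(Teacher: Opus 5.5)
Your proposal is correct and is the paper's own argument: the paper gets the case $F=G$ from \eqref{eq Gamma Gradient}, then notes that the general case follows either by repeating the calculation of Proposition~\ref{p:convnabla} with mixed kernels or by polarization. Those are exactly your two routes, and your mixed-term computation (Corollary~\ref{KorIntegrationInIH_mAndIH_N} with $k=1$ and $k=0$, using that $f_n,g_n\in\bH_n$ as chaos kernels) and your polarization step (linearity of $\nabla$ and of the kernels) are both sound.
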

\begin{proof} For $F=G$ we obtain the asserted formula from \eqref{eq Gamma Gradient}.
The general case can either be derived by the same calculation or by polarization.
\end{proof}

The gradient is pathwise centred w.r.t.\ $\zeta$, that is  
\begin{equation}\label{gradientmeanzero}
\int \nabla_xF\,\zeta(\dd x)=0,\quad \BP\text{-a.s.}
\end{equation}
To see this, we may define for $m\in\N$ and $x\in\BX$ the random variable $H^m_x$
by truncating the infinite series \eqref{defnabla} at $m\in\N$. By definition, we then have
$\int H^m_x\,\zeta(\dd x)=0$. On the other hand, it follows from the $L^2(C_\zeta)$-convergence and 
Jensen's inequality (applied to $\zeta$) that $\int H^m_x\,\zeta(\dd x)$ converges to
$\int \nabla_xF\,\zeta(\dd x)$ in $L^2(\BP)$ as $m\to\infty$. Therefore,
\eqref{gradientmeanzero} follows.

The next result shows that the gradient is a {\em closed operator}.

\begin{lemma} \label{LemGradientAbg}
Let $F_n\in\operatorname{dom}(\nabla)$, $n\in \N$, and assume that $(\nabla F_n)_{n\in\N}$ 
forms a Cauchy sequence in $L^2(C_{\zeta})$.
Then there exists $F\in\operatorname{dom}(\nabla)$ with
\begin{equation} \label{eq LemGammaAbg}
    \lim\limits_{n\to\infty} \BE \int (\nabla_x F - \nabla_x F_n)^2\, \zeta(\dd x) = 0
\end{equation}
and $F_n-\BE F_n \to F$ in $L^2(\BP)$.
\end{lemma}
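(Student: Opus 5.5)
The plan is to reduce everything to the kernel level via the isometry \eqref{e:iso}. Write $f_{m,k}$ for the kernel functions of $F_m$, $k\in\N$. Since $\nabla$ is linear, $\nabla F_m-\nabla F_l=\nabla(F_m-F_l)$ $C_\zeta$-a.e., and the kernels of $F_m-F_l$ are $f_{m,k}-f_{l,k}$. Applying \eqref{e:iso} (equivalently \eqref{eq Gamma Gradient}) with $F=G=F_m-F_l$ gives
\begin{align*}
\BE\int(\nabla_xF_m-\nabla_xF_l)^2\,\zeta(\dd x)
=\sum_{k=1}^\infty\frac{(\theta+k-1)kk!}{\Pn{\theta}{2k}}\,\rho^{[k]}\big((f_{m,k}-f_{l,k})^2\big).
\end{align*}
Hence the sequence of kernel sequences $(f_{m,k})_{k\ge1}$ is Cauchy in the weighted Hilbert space $\ell$ of sequences $(g_k)_{k\ge 1}$ with $g_k\in\bH_k$, normed by $\sum_k w_k\rho^{[k]}(g_k^2)$ with $w_k:=(\theta+k-1)kk!/\Pn{\theta}{2k}$. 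By Lemma \ref{l:orthHn} each $\bH_k$ is closed in $L^2(\rho^{[k]})$, so this weighted direct sum is complete. We therefore obtain a limit $(f_k)_{k\ge1}$ with $f_k\in\bH_k$ and $\sum_k w_k\rho^{[k]}((f_{m,k}-f_k)^2)\to0$.

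Next I define $F:=\sum_{k\ge1}\zeta^k(f_k)$. To see that this series converges in $L^2(\BP)$, observe that $w_k\ge k!/\Pn{\theta}{2k}$ because $(\theta+k-1)k\ge\theta>0$ for $k\ge1$. Thus the $L^2(\BP)$ norm from \eqref{e:iso0} is dominated by the $\ell$-norm, so
\begin{align*}
\sum_k \frac{k!}{\Pn{\theta}{2k}}\rho^{[k]}(f_k^2)\le\sum_k w_k\rho^{[k]}(f_k^2)<\infty .
\end{align*}
By the orthogonality \eqref{KorOrhtoFmFn} the series defining $F$ converges, $\BE F=0$, and the kernels of $F$ are exactly $f_k$ (uniqueness in Theorem \ref{t:chaos}). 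The condition \eqref{domNabla} is precisely finiteness of the $\ell$-norm, so $F\in\operatorname{dom}(\nabla)$.

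Finally, \eqref{eq LemGammaAbg} follows from \eqref{e:iso} applied to $F-F_m$, whose kernels are $f_k-f_{m,k}$; the right-hand side is the $\ell$-distance, which tends to $0$. For the second claim, the kernels of $F_m-\BE F_m$ are again $f_{m,k}$, with zero constant term. By \eqref{e:iso0} and the same domination $w_k\ge k!/\Pn{\theta}{2k}$, we get $\BE(F_m-\BE F_m-F)^2\le\sum_kw_k\rho^{[k]}((f_{m,k}-f_k)^2)\to0$.

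The only delicate point is the completeness of the weighted sum of the $\bH_k$ and the identification of the limit's kernels. Both are handled by Lemma \ref{l:orthHn} and the uniqueness part of Theorem \ref{t:chaos}, so I do not expect a real obstacle. One should just state the weight comparison carefully, including the case $k=1$, where $w_1=\theta/\Pn{\theta}{2}\ge1/\Pn{\theta}{2}$ requires $\theta\ge1$. To avoid this, I would instead use $w_k\ge c\,k!/\Pn{\theta}{2k}$ with $c:=\min(\theta,1)>0$, which suffices.
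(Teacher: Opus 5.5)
Your proposal is correct and follows essentially the same route as the paper: you transfer the Cauchy property to the kernel sequences via \eqref{e:iso}, take the limit in the weighted direct sum using the closedness of $\bH_k$ from Lemma \ref{l:orthHn}, set $F=\sum_k\zeta^k(f_k)$, and obtain the $L^2(\BP)$ convergence from \eqref{e:iso0}. Your fix of the weight comparison is also right: since $(\theta+k-1)k\ge\theta$ for all $k\ge1$, a constant such as $\min(\theta,1)$, or simply $\theta$, is needed when comparing the two norms.
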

\begin{proof}
Let for each $n\in\N$ the chaos expansion  \eqref{EqChaosZerlegung} of $F_n \in L^2(\zeta)$ be given by
\begin{equation*}
    F_n (\zeta) = \BE F_n + \sum_{k=1}^\infty \int f_{n,k}(x) \, \zeta^k(\dd x). 
\end{equation*}
The assumed convergence and \eqref{e:iso} yield
\begin{align*}
    0 &=  \lim_{m,n\to\infty} \BE \left[ \int (\nabla_x F_m - \nabla_x F_n)^2\, \zeta(\dd x) \right] \\
    &= \lim\limits_{m,n\to\infty} \sum_{k=1}^\infty \frac{kk!}{\Pn{\theta}{2k}}(\theta + k -1) 
\int   (f_{m,k}(x) - f_{n,k}(x))^2 \, \Pnm{\rho}{k} (\dd x).
\end{align*} 
Let $\widetilde \bH$ be 
the vector space of all sequences $g = (g_k)_{k\in \N}$ such that $g_k \in L^2(\Pnm{\rho}{k})$ for all $k \in \N$ and 
\begin{equation*}
	\sum_{k=1}^\infty \frac{k k! (\theta + k-1)}{\Pn{\theta}{2k}} \int g_k (x)^2 \, \Pnm{\rho}{k}(\dd x) < \infty.
\end{equation*}
Equipped with the norm
\begin{equation*}
\Vert g \Vert_{\widetilde \bH}:= \left(\sum_{k=1}^\infty \frac{k k!(\theta + k-1)}{\Pn{\theta}{2k}} 
\int g_k (x)^2 \,\Pnm{\rho}{k} (\dd x)\right)^{\frac{1}{2}}, \qquad g= (g_k)_{k\in \N} \in \widetilde \bH,
\end{equation*}
the space $\widetilde \bH$ becomes a Hilbert space as a countable direct sum of Hilbert spaces.  
As we have seen above, the sequence formed by $f_n:= (f_{n,k})_{k \in \N}$, $n\in\N$, 
is a Cauchy sequence in $\widetilde \bH$. 
Hence, there exists a limit $f= (f_k)_{k \in \N}$ in $\widetilde \bH$, i.e.\
\begin{equation*}
    \lim\limits_{n\to\infty} \sum_{k=1}^\infty \frac{k k!(\theta + k-1)}{\Pn{\theta}{2k}} 
\int  (f_{n,k}(x) -  f_{k}(x))^2\, \Pnm{\rho}{k}(\dd x) = 0.
\end{equation*}
In particular, we obtain for each $k \in\N$ that $f_{n,k} \to f_k$ as $n\to\infty$ in $L^2(\rho^{[k]})$
so that Lemma \ref{l:orthHn} shows $f_k \in \bH_k$.
Thus, 
\begin{equation*}
    F:=\sum_{k=1}^\infty \int  f_{k}(x) \, \zeta^k(\dd x)
\end{equation*}
belongs to $\operatorname{dom}(\nabla)$ and satisfies~\eqref{eq LemGammaAbg}.  

The second assertion follows from \eqref{e:iso0} and \eqref{e:iso}, showing that
the convergence of $\nabla F_n\to\nabla F$ in $L^2(C_\zeta)$ implies the asserted
$L^2(\BP)$-convergence.
\end{proof}

\subsection{The divergence}

In this subsection we introduce and discuss the {\em divergence operator} 
acting as an adjoint of $\nabla$. 
To do so, it is convenient to introduce the canonical version $C'_\zeta$ of the
Campbell measure \eqref{Campbell}, which is the measure on $\bM_1\times\BX$ defined by
\begin{align}\label{Campbellcan}
C'_\zeta:=\iint \I\{(\mu,x)\in\cdot\}\,\mu(\dd x)\,\BP(\zeta \in\dd\mu).
\end{align}
Let 
$\operatorname{dom}(\delta)$ denote the set of all
$H\in L^2(C'_\zeta)$ for which there exists a  $c\ge 0$ such that
\begin{equation*}
\BE \int H(\zeta,x)\nabla_x G\,\zeta(\dd x)\le c\, \|G(\zeta)\|_2,\quad G\in \operatorname{dom}(\nabla),
\end{equation*}
where $\|\cdot\|_2$ denotes the norm in $L^2(\BP)$. Since $\operatorname{dom}(\nabla)$
is dense in $L^2(\zeta)$, 
we can apply  the Riesz representation theorem to obtain for each 
$H\in\operatorname{dom}(\delta)$ the existence of an $\BP$-a.s.\ uniquely determined 
$\sigma(\zeta)$-measurable $\delta(H)\in L^2(\BP)$ satisfying
\begin{equation}\label{PI}
\BE \int H_x \nabla_xG\,\zeta(\dd x)=\BE \delta(H)G,\quad G\in \operatorname{dom}(\nabla),
\end{equation}
where we write $H_x:=H(\zeta,x)$, $x\in\BX$. This equation is often referred to as
{\em partial integration}. It implies that $\delta$ is a linear operator. 
Choosing $G\equiv 1$ shows that $\BE \delta(H)=0$.

We wish to identify a reasonably large subset
of $\operatorname{dom}(\delta)$ along with an explicit formula for the divergence
in terms of chaos expansion. The next lemma is crucial.
In the proof and also later we use the notation
$\sideset{}{^{\ne}}\sum_{i_1,\dots,i_r\in[m]} $ to indicate a summation 
over $r$-tuples $(i_1,\dots,i_r)\in[m]^r$ ($m,r\in\N$) with pairwise different entries.

\begin{lemma}\label{l:Edelta2} Let $n\in\N$ and suppose that $h\in L^2(\rho^{[n+1]})$ such
that $h(x,\cdot)\in\bH_n$ for each $x\in\BX$. Define 
\begin{equation*}
Z:=(\theta+n)\int h(z) \, \zeta^{n+1}(\dd z) 
- \iint h(x,y_1, \ldots,y_n) \, (\rho + \delta_{y_1} + \cdots + \delta_{y_n})(\dd x) \, \zeta^n(\dd(y_1, \ldots, y_n)).
\end{equation*}
Then $\BE Z^2 <\infty$ and
\begin{align*}
\BE Z^2 &= \frac{\theta n!}{\Pn{\theta}{2n+1}} \Pnm{\rho}{n+1}(h^2) 
    +  \frac{\theta nn!}{\Pn{\theta}{2n+1}} \int h(x_1, \bfm{x}_n)^2 \ \Pnm{\rho}{n}(\dd \bfm{x}_n) \\
    &\quad +\frac{n!(n^2-\theta)}{\Pn{\theta}{2n+2}} \int h(x,\bfm{x}_n) h(y, \bfm{x}_n) \, \Pnm{\rho}{n+2}(\dd (x,y,\bfm{x}_n)) \\
    &\quad +\frac{nn!(n^2-\theta)}{\Pn{\theta}{2n+2}} \int h(x,x,\bfm{x}_{n-1}) h(y, y,\bfm{x}_{n-1})  \, \Pnm{\rho}{n+1}(\dd (x,y,\bfm{x}_{n-1})) \\
    &\quad - \frac{2(n+1)(\theta+n)nn!}{\Pn{\theta}{2n+2}} \int h(x,x,\bfm{x}_{n-1}) h(y, x,\bfm{x}_{n-1})  \, \Pnm{\rho}{n+1}(\dd (x,y,\bfm{x}_{n-1})) \\
    &\quad + \frac{nn!(\theta+n)^2}{\Pn{\theta}{2n+2}}\int h(x, y,\bfm{x}_{n-1}) h(y, x, \bfm{x}_{n-1}) \, \Pnm{\rho}{n+1}(\dd (x,y,\bfm{x}_{n-1}))  \\
    &\quad + \frac{nn!(\theta+n)^2}{\Pn{\theta}{2n+2}}\int h(x_1,\bfm{x}_{n}) h(x_2,  \bfm{x}_{n}) \, \Pnm{\rho}{n}(\dd \bfm{x}_{n}).
\end{align*}
\end{lemma}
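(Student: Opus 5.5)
Write $Z=(\theta+n)Z_1-Z_2$ with
\[
Z_1:=\zeta^{n+1}(h),\qquad Z_2:=\iint h(x,\bfm{y}_n)\,(\rho+\delta_{\bfm{y}_n})(\dd x)\,\zeta^n(\dd \bfm{y}_n).
\]
Then expand
\[
\BE Z^2=(\theta+n)^2\BE Z_1^2-2(\theta+n)\BE Z_1Z_2+\BE Z_2^2 .
\]
We first check that all three expectations are finite. By \eqref{nmoment}, each of them is an integral of products of $h$ against $\rho^{[2n+2]}$, $\rho^{[2n+1]}$ or $\rho^{[2n]}$, where some variables are additionally integrated against $\rho+\delta_{\cdots}$. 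Every such integral is bounded via Cauchy--Schwarz by a finite multiple of $\rho^{[n+1]}(h^2)$ and of the diagonal integrals $\int h(x_1,\bfm{x}_n)^2\,\rho^{[n]}(\dd\bfm{x}_n)$. The latter are finite because they appear as atom contributions in the recursion \eqref{Rekursion mu[m+n]} for $\rho^{[n+1]}$.

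Next we compute each term with the moment formula \eqref{nmoment}.
\begin{itemize}
\item[(a)] $\BE Z_1^2=(\Pn{\theta}{2n+2})^{-1}\int h(x,\bfm{y})h(x',\bfm{y}')\,\rho^{[2n+2]}(\dd(x,\bfm{y},x',\bfm{y}'))$.
\item[(b)] Split $Z_2=\int\!\int h(x,\bfm{y})\,\rho(\dd x)\,\zeta^n(\dd\bfm{y})+\sum_{i=1}^n\zeta^n\bigl(h(y_i,\bfm{y})\bigr)$. Then $\BE Z_1Z_2$ is an integral against $\rho^{[2n+1]}$ with one extra $\rho$-integration plus diagonal terms, and $\BE Z_2^2$ is an integral against $\rho^{[2n]}$ with two extra integrations plus diagonal pieces.
\end{itemize}
The main tool for reducing these integrals is the decomposition of $(\rho+\delta_{\bfm{y}_n})^{[\cdot]}$ from Proposition \ref{PropRho+delta_x_1+...+delta_x_k}, together with \eqref{Rekursion mu[m+n]}. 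Each large $\rho^{[2n+k]}$-integral is written as an integral over $\bfm{y}$ (or over $x,\bfm{y}$) of the inner measure $(\rho+\delta_{\cdot})^{[\cdot]}$ acting on the primed variables. Expanding that inner measure by Proposition \ref{PropRho+delta_x_1+...+delta_x_k} produces a sum over how many primed variables coincide with unprimed ones.

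The centering hypothesis $h(x,\cdot)\in\bH_n$ then kills most of these terms, via the orthogonality relations of Corollary \ref{KorIntegrationInIH_mAndIH_N} applied in the last $n$ coordinates with $x$ frozen. Only three kinds of term survive:
\begin{itemize}
\item[(i)] the full pairing $\bfm{y}'=\bfm{y}$ (up to permutation), giving the factor $n!$;
\item[(ii)] pairings where the first coordinate $x'$ coincides with some $y_i$, or $x$ with some $y'_j$, which produce the ``twisted'' integrals $h(x,y,\bfm{x}_{n-1})h(y,x,\bfm{x}_{n-1})$ and $h(x,x,\cdot)h(y,x,\cdot)$;
\item[(iii)] the cases $x=x'$, or $x,x'$ both free, which give the $\rho^{[n+1]}(h^2)$, $\rho^{[n+2]}$ and diagonal $\int h(x_1,\bfm{x}_n)^2\rho^{[n]}$ and $\int h(x_1,\bfm{x}_n)h(x_2,\bfm{x}_n)\rho^{[n]}$ terms.
\end{itemize}
Care is needed because $h$ is not symmetric in its first argument: pairings are symmetric only among the last $n$ coordinates. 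The resulting combinatorial factors $n$, $n+1$, $n^2$ come from choosing which $y_i$ matches.

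Finally we collect coefficients. Each coefficient arrives as a combination of $1/\Pn{\theta}{2n+2}$, $1/\Pn{\theta}{2n+1}$ and $1/\Pn{\theta}{2n}$ multiplied by powers of $(\theta+n)$. Using $\Pn{\theta}{2n+2}=\Pn{\theta}{2n+1}(\theta+2n+1)$ and simplifying should give the stated constants, for example $n^2-\theta$ and $(\theta+n)^2$. The main obstacle is the bookkeeping in step (b), in particular correctly enumerating the surviving pairings in $\BE Z_2^2$ and $\BE Z_1Z_2$ and verifying the cancellations. To keep this under control we would first carry out the case $n=1$ in full as a check on the coefficients, then organize the general case by the number of cross-coincidences between primed and unprimed first coordinates.
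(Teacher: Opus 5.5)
Your finiteness argument is fine, and your skeleton is the paper's: $\BE Z_1^2$, $\BE Z_1Z_2$ and $\BE Z_2^2$ are exactly $A_1/\Pn{\theta}{2n+2}$, $A_2/\Pn{\theta}{2n+1}$ and $A_3/\Pn{\theta}{2n}$ there. But the lemma \emph{is} the evaluation of these integrals, and you defer exactly that part as bookkeeping that ``should give the stated constants''.

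\textbf{The reduction you propose does not work as described.} Proposition~\ref{PropRho+delta_x_1+...+delta_x_k} integrates the free coordinates against $\rho^{[m-r]}$, which has no atoms at the fixed points $w_j$. By contrast, $h(y,\cdot)\in\bH_n$ means centring against $\rho+\delta_{\text{other arguments}}$. Two consequences:
\begin{itemize}
\item A term with $0<r<n$ centred coordinates fixed at $w_{j_1},\dots,w_{j_r}$ does not vanish. Integrating out its last free coordinate leaves $-\sum_{s=1}^r h(y,w_{j_1},\dots,w_{j_r},z_1,\dots,z_{n-r-1},w_{j_s})$.
\item Terms whose free variable is the uncentred first coordinate, such as $\int h(z,w_1,\dots,w_n)\,\rho(\dd z)$, are not touched by the centring at all.
\end{itemize}
The cancellations appear only after recombining over $r$. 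Doing that amounts to proving Lemma~\ref{LemAllgIsoFormel1Funktion}: if $f(\bfm{x}_m,\cdot)\in\bH_n$, then $\int f\,\dd\rho^{[m+n]}=\sum^{\ne}_{i_1,\dots,i_n\in[m]}\int f(\bfm{x}_m,x_{i_1},\dots,x_{i_n})\,\rho^{[m]}(\dd\bfm{x}_m)$. In words, a centred block must be matched injectively with the remaining coordinates. The paper applies this lemma with the uncentred first coordinates counted among the $m$ outer variables. That is what produces $I_1,\dots,I_4$, $J_{11},\dots,J_{222}$ and $K$ with explicit multiplicities.

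Corollary~\ref{KorIntegrationInIH_mAndIH_N} also cannot be used ``with $x$ frozen''. Neither factor lies in $\bH_{n+1}$, and disintegrating $\rho^{[2n+2]}$ over $x$ gives $(\rho+\delta_x)^{[2n+1]}$, not $\rho^{[2n+1]}$.

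\textbf{You also miss the step that keeps the algebra manageable.} Write $(\rho+\delta_{\bfm{y}_n})(\dd z)=(\rho+\delta_x+\delta_{\bfm{x}_n}+\delta_{\bfm{y}_n})(\dd z)-(\delta_x+\delta_{\bfm{x}_n})(\dd z)$ and use \eqref{Rekursion mu[m+n]}. This gives $A_2=A_1-J$ and similarly $A_3=A_1-J-K$, where $J$ and $K$ are small diagonal integrals. Consequently
$\BE Z^2=\frac{n^2-\theta}{\Pn{\theta}{2n+2}}A_1+\frac{\theta}{\Pn{\theta}{2n+1}}J-\frac{1}{\Pn{\theta}{2n}}K$.
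This is where $n^2-\theta$ and $(\theta+n)^2=(n^2-\theta)+\theta(\theta+2n+1)$ come from. Without it you must reconcile three independent large expansions.

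\textbf{Doing the count is not a formality.} In the paper's proof, $I_4$ and $J_{222}$ both equal $n(n-1)n!\int h(x_1,\bfm{x}_n)h(x_2,\bfm{x}_n)\,\rho^{[n]}(\dd\bfm{x}_n)$, and $K$ contains no such term. So the last coefficient comes out as $n(n-1)n!(\theta+n)^2/\Pn{\theta}{2n+2}$. This agrees with the printed $nn!(\theta+n)^2/\Pn{\theta}{2n+2}$ only for $n=2$; for $n=1$ the term is void. An argument that stops at ``should give the stated constants'' cannot detect this discrepancy.
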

\begin{proof} 
By~\eqref{nmoment},
\begin{align*}
\BE Z^2 
&= \frac{(\theta+n)^2}{\Pn{\theta}{2n+2}} \int (h \otimes h)(\bfm{x}) \,\Pnm{\rho}{2n+2}(\dd \bfm{x})  \\
&\quad -\frac{2(\theta+n)}{\Pn{\theta}{2n+1}} \iint h(\bfm{x}_{n+1}) h(z,\bfm{y}_{n}) \, (\rho + \delta_{\bfm{y}_{n}})(\dd z)\, 
\Pnm{\rho}{2n+1}(\dd (\bfm{x}_{n+1},\bfm{y}_{n}))  \\
    &\quad  + \frac{1}{\Pn{\theta}{2n}} \iiint h(x,\bfm{x}_{n})h(y,\bfm{y}_{n})\, (\rho + \delta_{\bfm{x}_{n}})(\dd x) 
(\rho + \delta_{\bfm{y}_{n}})(\dd y)\, \Pnm{\rho}{2n}(\dd (\bfm{x}_{n},\bfm{y}_{n})) \\
    &\eqqcolon \frac{(\theta+n)^2}{\Pn{\theta}{2n+2}} A_1 - \frac{2(\theta+n)}{\Pn{\theta}{2n+1}} A_2 
+ \frac{1}{\Pn{\theta}{2n}} A_3 , 
\end{align*}
where the tensor product notation $h\otimes h$ is defined in \eqref{Def Tensorprodukt}.

Since $h\in L^2(\rho^{[n+1]})$, we obtain  $h\otimes h\in L^1( \rho^{[2n+2]})$ from the Cauchy--Schwarz 
inequality. Since $h(x,\cdot)\in\bH_n$ for each $x\in\BX$, we can apply
Lemma~\ref{LemAllgIsoFormel1Funktion} to obtain 
\begin{equation*}
A_1 =\sideset{}{^{\ne}}\sum_{i_1, \ldots, i_n\in [n+2]} \int h(x, \bfm{x}_n) h(y, x_{i_1}, \ldots, x_{i_n}) \, \Pnm{\rho}{n+2}(\dd (x,y,\bfm{x}_n)),
\end{equation*}
where $x_{n+1}\coloneqq x$, $x_{n+2}\coloneqq y$.
By the symmetry of $h(x,\cdot)$, 
\begin{align*}
    A_1 =&\;\sideset{}{^{\ne}}\sum_{i_1, \ldots, i_n \in [n]} \int h(x, \bfm{x}_n) h(y, x_{i_1}, \ldots, x_{i_n}) \, \Pnm{\rho}{n+2}(\dd (x,y,\bfm{x}_n)) \\
    &+ n\; \sideset{}{^{\ne}}\sum_{i_1, \ldots, i_{n-1} \in [n]} \int h(x, \bfm{x}_n) h(y, x,  x_{i_1}, \ldots, x_{i_{n-1}}) \, \Pnm{\rho}{n+2}(\dd (x,y,\bfm{x}_n)) \\
    &+n\; \sideset{}{^{\ne}}\sum_{i_1, \ldots, i_{n-1} \in [n]} \int h(x, \bfm{x}_n) h(y, y, x_{i_1}, \ldots, x_{i_{n-1}}) \, \Pnm{\rho}{n+2}(\dd (x,y,\bfm{x}_n)) \\
    &+n(n-1)\;\sideset{}{^{\ne}}\sum_{i_1, \ldots, i_{n-2} \in [n]} \int h(x, \bfm{x}_n) h(y, x, y, x_{i_1}, \ldots, x_{i_{n-2}}) \, \Pnm{\rho}{n+2}(\dd (x,y,\bfm{x}_n)) \\
    \eqqcolon& I_1 + I_2 + I_3 + I_4.
\end{align*}
For $n=1$, we need to interpret $I_2$ as $\int h(x,x_1)h(y,x)\,\rho^{[3]}(\dd (x,y,x_1))$, while
$I_3$ has to be interpreted as $\int h(x,x_1)h(y,y)\,\rho^{[3]}(\dd (x,y,x_1))$.

We now treat each of the terms $I_1,I_2,I_3,I_4$ separately. We have
\begin{equation}\label{I1}
    I_1 = n! \int h(x,\bfm{x}_n) h(y, \bfm{x}_n) \, \Pnm{\rho}{n+2}(\dd (x,y,\bfm{x}_n)).
\end{equation}
By the recursion \eqref{Rekursion mu[m+n]} and $h(x,\cdot)\in\bH$, $x\in\BX$,
\begin{align}\notag
    I_2 =& nn! \int h(x,\bfm{x}_n) h(y, x,\bfm{x}_{n-1}) (\rho + \delta_x + \delta_y + \delta_{\bfm{x}_{n-1}})(\dd x_n) \, 
\Pnm{\rho}{n+1}(\dd (x,y,\bfm{x}_{n-1}))  \\ \notag
    =& nn! \int h(x,x,\bfm{x}_{n-1}) h(y, x,\bfm{x}_{n-1})  \, \Pnm{\rho}{n+1}(\dd (x,y,\bfm{x}_{n-1})) \\
\label{I2}
    &+ nn! \int h(x,y,\bfm{x}_{n-1}) h(y, x,\bfm{x}_{n-1})  \, \Pnm{\rho}{n+1}(\dd (x,y,\bfm{x}_{n-1})).
\end{align}
With the same argument, we get
\begin{align}\notag
    I_3 &= nn! \int h(x,x,\bfm{x}_{n-1}) h(y, y,\bfm{x}_{n-1})  \, \Pnm{\rho}{n+1}(\dd (x,y,\bfm{x}_{n-1})) \\
\label{I3} 
   &\quad + nn! \int h(x,y,\bfm{x}_{n-1}) h(y, y,\bfm{x}_{n-1})  \, \Pnm{\rho}{n+1}(\dd (x,y,\bfm{x}_{n-1})).
\end{align}
By symmetry, the multiple sum in $I_4$ equals
\begin{equation*}
\frac{n!}{2}\int h(x, \bfm{x}_n) h(y, x, y, x_1, \ldots, x_{n-2}) \, \Pnm{\rho}{n+2}(\dd (x,y,\bfm{x}_n)).
\end{equation*}
Therefore, we obtain from the recursion \eqref{Rekursion mu[m+n]} and 
the properties of $h$ that
\begin{align*}
I_4    &= \frac{n(n-1)n!}{2} \int h(x,x,y, \bfm{x}_{n-2}) h(y, x, y, \bfm{x}_{n-2})  \, \Pnm{\rho}{n}(\dd (x,y,\bfm{x}_{n-2})) \\
    &\quad + \frac{n(n-1)n!}{2} \int h(x,y,x, \bfm{x}_{n-2}) h(y, x, y, \bfm{x}_{n-2})  \, \Pnm{\rho}{n}(\dd (x,y,\bfm{x}_{n-2})) \\
    &= n(n-1)n! \int h(x,x,y, \bfm{x}_{n-2}) h(y, x, y, \bfm{x}_{n-2})  \, \Pnm{\rho}{n}(\dd (x,y,\bfm{x}_{n-2}))
\end{align*}
or, equivalently,
\begin{equation}\label{I4}
I_4= n(n-1)n! \int h(x_1, \bfm{x}_{n}) h(x_2, \bfm{x}_{n})  \, \Pnm{\rho}{n}(\dd\bfm{x}_{n}).
\end{equation}

Next we consider 
\begin{align*}
    A_2 = &\iint h(x,\bfm{x}_{n}) h(z,\bfm{y}_{n}) \, (\rho + \delta_x + \delta_{\bfm{x}_{n}} + \delta_{\bfm{y}_{n}})(\dd z)\, 
\Pnm{\rho}{2n+1}(\dd (x,\bfm{x}_{n},\bfm{y}_{n})) \\
&-\iint h(x,\bfm{x}_{n}) h(z,\bfm{y}_{n}) \, (\delta_x + \delta_{\bfm{x}_{n}} )(\dd z)\, \Pnm{\rho}{2n+1}(\dd (x,\bfm{x}_{n},\bfm{y}_{n}))
    \eqqcolon A_1 - J,
\end{align*}
noting that the above integrals are finite when replacing $h$ by $|h|$. 
Applying Lemma~\ref{LemAllgIsoFormel1Funktion} once again (and letting $x_{n+1} \coloneqq x$), we obtain
\begin{align*}
J &=\; \sideset{}{^{\ne}}\sum_{i_1, \ldots, i_n \in [n+1]} \iint h(x, \bfm{x}_n) h(z, x_{i_1}, \ldots, x_{i_n}) \, (\delta_x + \delta_{\bfm{x}_{n}} )(\dd z)\,
\Pnm{\rho}{n+1}(\dd (x,\bfm{x}_n)) \\
& =\; \sideset{}{^{\ne}}\sum_{i_1, \ldots, i_n \in [n+1]} \int h(x, \bfm{x}_n) h(x, x_{i_1}, \ldots, x_{i_n}) \, \Pnm{\rho}{n+1}(\dd (x,\bfm{x}_n)) \\
&\quad +\; \sideset{}{^{\ne}}\sum_{i_1, \ldots, i_n \in [n+1]} \sum_{i=1}^{n} \int h(x, \bfm{x}_n) h(x_i, x_{i_1}, \ldots, x_{i_n}) \, \Pnm{\rho}{n+1}(\dd (x,\bfm{x}_n)) 
    \eqqcolon J_1 + J_2.
\end{align*}
By symmetry,
\begin{align*}
    J_1 =&\; \sideset{}{^{\ne}}\sum_{i_1, \ldots, i_n \in [n]} \int h(x, \bfm{x}_n) h(x, x_{i_1}, \ldots, x_{i_n}) \, \Pnm{\rho}{n+1}(\dd (x,\bfm{x}_n)) \\
    &+ n\;  \sideset{}{^{\ne}}\sum_{i_1, \ldots, i_{n-1} \in [n]} \int h(x, \bfm{x}_n) h(x,x, x_{i_1}, \ldots, x_{i_{n-1}}) \, \Pnm{\rho}{n+1}(\dd (x,\bfm{x}_n))
    \eqqcolon J_{11} + J_{12},
\end{align*}
where $J_{12}:=n\int h(x,x_1)h(x,x)\,\rho^{[2]}(\dd (x,x_1))$ for $n=1$. We have
\begin{equation*}
    J_{11} = n! \int h(x,\bfm{x}_n)^2 \, \Pnm{\rho}{n+1}(\dd (x, \bfm{x}_{n})) 
    = n! \Pnm{\rho}{n+1}(h^2) 
\end{equation*}
and
\begin{align*}
    J_{12} &= n  n! \iint h(x, \bfm{x}_n) h(x,x, \bfm{x}_{n-1}) \, (\rho + \delta_x + \delta_{\bfm{x}_{n-1}})(\dd x_n) \, 
\Pnm{\rho}{n}(\dd (x,\bfm{x}_{n-1})) \\
    &= n  n! \int h(x,x, \bfm{x}_{n-1}) h(x,x, \bfm{x}_{n-1})  \, \Pnm{\rho}{n}(\dd (x,\bfm{x}_{n-1})) \\
    &= nn! \int h(x_1, \bfm{x}_n)^2 \ \Pnm{\rho}{n}(\dd \bfm{x}_n).
\end{align*}
The term $J_2$ is given by
\begin{align*}
   J_2 &= n \sideset{}{^{\ne}}\sum_{i_1, \ldots, i_n\in [n+1]} \int h(x, \bfm{x}_n) h(x_1, x_{i_1}, \ldots, x_{i_n}) \, \Pnm{\rho}{n+1}(\dd (x,\bfm{x}_n)) \\
   &= n \sideset{}{^{\ne}}\sum_{i_1, \ldots, i_n\in [n]} \int h(x, \bfm{x}_n) h(x_1, x_{i_1}, \ldots, x_{i_n}) \, \Pnm{\rho}{n+1}(\dd (x,\bfm{x}_n)) \\
   &\quad+ n^2 \sideset{}{^{\ne}}\sum_{i_1, \ldots, i_{n-1}\in [n]} \int h(x, \bfm{x}_n) h(x_1, x, x_{i_1}, \ldots, x_{i_{n-1}}) \, \Pnm{\rho}{n+1}(\dd (x,\bfm{x}_n))
   \eqqcolon J_{21} + J_{22},
\end{align*}
where $J_{22}:=n^2\int h(x,x_1)h(x_1,x)\,\rho^{[2]}(\dd (x,x_1))$ for $n=1$.
We have
\begin{equation*}
    J_{21} = nn! \int h(x, \bfm{x}_n) h(x_1,  \bfm{x}_n) \, \Pnm{\rho}{n+1}(\dd (x,\bfm{x}_n))
\end{equation*}
To treat $J_{22}$,  we write $J_{22} = J_{221} + J_{222}$, distinguishing the cases 
$1\notin\{i_1, \ldots, i_{n-1}\}$ and $1\in\{i_1, \ldots, i_{n-1}\}$,
where $J_{222}:=0$ for $n=1$. We have
\begin{align*}
    J_{221} &= n^2\; \sideset{}{^{\ne}}\sum_{i_1, \ldots, i_{n-1}\in ([n]\setminus\{1\})} 
\int h(x, \bfm{x}_n) h(x_1, x, x_{i_1}, \ldots, x_{i_{n-1}}) \, \Pnm{\rho}{n+1}(\dd (x,\bfm{x}_n)) \\
    &= n^2 (n-1)! \int h(x, \bfm{x}_n) h(x_1, x, x_{2}, \ldots, x_{n}) \, \Pnm{\rho}{n+1}(\dd (x,\bfm{x}_n)) \\
    &= nn! \int h(x, y,\bfm{x}_{n-1}) h(y, x, \bfm{x}_{n-1}) \, \Pnm{\rho}{n+1}(\dd (x,y,\bfm{x}_{n-1}))
\end{align*}
and, for $n\ge 2$,
\begin{align*}
    &J_{222} = n^2(n-1)\; \sideset{}{^{\ne}}\sum_{i_1, \ldots, i_{n-2} \in ([n]\setminus\{1\})} 
\int h(x, \bfm{x}_n) h(x_1, x, x_1, x_{i_1}, \ldots, x_{i_{n-2}}) \, \Pnm{\rho}{n+1}(\dd (x,\bfm{x}_n)) \\
    &= n(n-1) n! \iint h(x, \bfm{x}_n) h(x_1, x, x_{1}, \ldots, x_{n-1}) \, 
(\rho + \delta_x + \delta_{\bfm{x}_{n-1}})(\dd x_n) \,  \Pnm{\rho}{n}(\dd (x,\bfm{x}_{n-1})) \\
    &= n(n-1)n! \int h(x, x,\bfm{x}_{n-1}) h(x_1, x, \bfm{x}_{n-1}) \, \Pnm{\rho}{n}(\dd (x,\bfm{x}_{n-1})) \\
    &= n(n-1)n! \int h(x_1,\bfm{x}_{n}) h(x_2,  \bfm{x}_{n}) \, \Pnm{\rho}{n}(\dd \bfm{x}_{n}).
\end{align*}

Let us now consider
\begin{align*}
    A_3 &= \iint  h(x,\bfm{x}_{n}) h(y,\bfm{y}_{n}) \, (\rho + \delta_{\bfm{x}_{n}})(\dd x)  \, (\rho + \delta_{\bfm{x}_{n}} + \delta_{\bfm{y}_{n}})(\dd y) \, 
\Pnm{\rho}{2n}(\dd (\bfm{x}_{n},\bfm{y}_{n})) \\
&\quad - \iint  h(x,\bfm{x}_{n}) h(y,\bfm{y}_{n}) \, (\rho + \delta_{\bfm{x}_{n}})(\dd x)  \,  \delta_{\bfm{x}_{n}}(\dd y) \, \Pnm{\rho}{2n}(\dd (\bfm{x}_{n},\bfm{y}_{n})) \\
 &\eqqcolon (A_1 - J) - K.
\end{align*}
We have
\begin{align*}
    K &=\; \sideset{}{^{\ne}}\sum_{i_1, \ldots, i_{n} \in [n]} \iiint  h(x,\bfm{x}_{n}) h(y,x_{i_1}, \ldots, x_{i_n}) \, (\rho + \delta_{\bfm{x}_{n}})(\dd x)  
\,  \delta_{\bfm{x}_{n}}(\dd y) \, \Pnm{\rho}{n}(\dd \bfm{x}_{n}) \\
    &= n! \iint  h(x,\bfm{x}_{n}) h(y,\bfm{x}_{n}) \,  \delta_{\bfm{x}_{n}}(\dd y) \, \Pnm{\rho}{n+1}(\dd (x,\bfm{x}_{n})) \\
    &= nn! \int  h(x,\bfm{x}_{n}) h(x_1,\bfm{x}_{n})  \, \Pnm{\rho}{n+1}(\dd (x,\bfm{x}_{n}))\\
&= nn! \int  h(x,x,\bfm{x}_{n-1}) h_n(y,x,\bfm{x}_{n-1})  \, \Pnm{\rho}{n+1}(\dd (x,y,\bfm{x}_{n-1})).
\end{align*}

To finish the proof, we note that
\begin{align*}
 \BE Z^2 &= \frac{(\theta+n)^2}{\Pn{\theta}{2n+2}} A_1 - \frac{2(\theta+n)}{\Pn{\theta}{2n+1}} A_2 
+ \frac{1}{\Pn{\theta}{2n}} A_3\\ 
&= \frac{(\theta+n)^2}{\Pn{\theta}{2n+2}} A_1 - \frac{2(\theta+n)}{\Pn{\theta}{2n+1}} (A_1 -J) 
     + \frac{1}{\Pn{\theta}{2n}} (A_1 - J - K).
\end{align*}
Inserting here the preceding formulas for $A_1$, $J$ and $K$, we obtain the assertion  with the help
of some algebra.
\end{proof}

\begin{lemma}\label{l:4.5} Let the assumptions of Lemma \ref{l:Edelta2}
be satisfied. Then there exists $C>0$ such that
\begin{equation*}
    \BE Z^2\le C \frac{(n+1)!(\theta+n)^2}{\Pn{\theta}{2n+2}}\int h(z)^2 \, \Pnm{\rho}{n+1}(\dd z).
\end{equation*}
\end{lemma}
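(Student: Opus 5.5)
We will take the explicit formula for $\BE Z^2$ from Lemma~\ref{l:Edelta2} as the starting point and bound each of its seven terms by a constant multiple of $R:=\frac{(n+1)!(\theta+n)^2}{\Pn{\theta}{2n+2}}\rho^{[n+1]}(h^2)$. Here $C$ may depend on $\theta$ but not on $n$ or $h$; for the present statement it would even suffice if it were allowed to depend on $n$, but uniformity is what later applications need. The tools are the Cauchy--Schwarz inequality and the recursion \eqref{Rekursion mu[m+n]} for the measures $\rho^{[m]}$.

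The first step is to bring every integral to the form $\rho^{[n+1]}(h^2)$. Terms two and seven involve $\rho^{[n]}$ with a repeated argument. Since $h(x_1,\bfm{x}_n)^2$ is one of the atom contributions in $(\rho+\delta_{\bfm{x}_n})(\dd x)$, and everything is nonnegative, we get
\begin{align*}
\int h(x_1,\bfm{x}_n)^2\,\rho^{[n]}(\dd\bfm{x}_n)\le \iint h(x,\bfm{x}_n)^2\,(\rho+\delta_{\bfm{x}_n})(\dd x)\,\rho^{[n]}(\dd \bfm{x}_n)=\rho^{[n+1]}(h^2).
\end{align*}
Term seven is handled by Cauchy--Schwarz, giving $|h(x_1,\cdot)h(x_2,\cdot)|\le \frac12(h(x_1,\cdot)^2+h(x_2,\cdot)^2)$, and then the same inequality together with the symmetry of $\rho^{[n]}$.

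The terms with $\rho^{[n+2]}$ and $\rho^{[n+1]}$ in the variables $(x,y,\bfm{x}_{n-1})$ are treated similarly. For the third term, Cauchy--Schwarz gives
\begin{align*}
\Bigl|\int h(x,\bfm{x}_n)h(y,\bfm{x}_n)\,\rho^{[n+2]}\Bigr|\le \int h(x,\bfm{x}_n)^2\,\rho^{[n+2]},
\end{align*}
and integrating out $y$ via the recursion yields $(\theta+n+1)\rho^{[n+1]}(h^2)$. For terms four through six, we first apply $|ab|\le\frac12(a^2+b^2)$, then integrate out the variable on which the squared function does not depend, picking up a factor at most $\theta+n$. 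This leaves a squared function with a repeated argument, such as $h(x,x,\bfm{x}_{n-1})^2$ integrated against $\rho^{[n]}$, which is again at most $\rho^{[n+1]}(h^2)$ by the first step. Term six needs no diagonal step, since $h(y,x,\bfm{x}_{n-1})^2$ is already $h^2$ integrated against $\rho^{[n+1]}$.

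The final step is to compare the coefficients with $R$; this is where care is needed. Term one has the coefficient
\begin{align*}
\frac{\theta n!}{\Pn{\theta}{2n+1}}=\frac{\theta(\theta+2n+1)}{(n+1)(\theta+n)^2}\cdot\frac{(n+1)!(\theta+n)^2}{\Pn{\theta}{2n+2}},
\end{align*}
which is bounded by a constant times $R$. Term two carries an extra factor $n$, so the analogous ratio stays bounded. For the terms with $(\theta+n)^2$, or with $(n^2-\theta)$ times the factor $\theta+n+1$ from integrating out $y$, the prefactors $nn!$ and $n!$ against $(n+1)!$ keep the ratio bounded. The fifth term gives $2(n+1)(\theta+n)nn!$ times a factor $\theta+n$, again comparable. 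The main obstacle is the bookkeeping in the third and fourth terms. There $n^2(\theta+n+1)$ must be compared with $(n+1)(\theta+n)^2$, which holds since $n^2\le(n+1)(\theta+n)$. If the diagonal reductions leak an extra factor of $n$, I would first try exploiting $h(x,\cdot)\in\bH_n$ to cancel terms before estimating.
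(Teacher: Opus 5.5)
Your plan (bound the seven terms of Lemma~\ref{l:Edelta2} via $|ab|\le\frac12(a^2+b^2)$ and the recursion \eqref{Rekursion mu[m+n]}, then compare coefficients with $R$) is the paper's plan. The gap is your diagonal estimate $\int h(x_1,\bfm{x}_n)^2\,\Pnm{\rho}{n}(\dd\bfm{x}_n)\le\Pnm{\rho}{n+1}(h^2)$. It keeps only one of the $n$ atoms of $\delta_{\bfm{x}_n}$, and it is too weak to give a constant independent of $n$. You rightly insist on such a constant, since Theorem~\ref{t:delta} sums $\BE Z_n^2$ over $n$.

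With your estimate, the fourth term is bounded by $\frac{nn!\,|n^2-\theta|(\theta+n)}{\Pn{\theta}{2n+2}}\Pnm{\rho}{n+1}(h^2)$, whose ratio to $R$ is $\frac{n|n^2-\theta|}{(n+1)(\theta+n)}$. The fifth term is bounded by $\frac{(n+1)nn!(\theta+n)(\theta+n+1)}{\Pn{\theta}{2n+2}}\Pnm{\rho}{n+1}(h^2)$, whose ratio to $R$ is $\frac{n(\theta+n+1)}{\theta+n}$. Both ratios grow linearly in $n$.

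Your bookkeeping hides this. Comparing $n^2(\theta+n+1)$ with $(n+1)(\theta+n)^2$ is correct for the third term. The fourth term, however, carries $nn!(n^2-\theta)$ rather than $n!(n^2-\theta)$. In the fifth term, $2(n+1)nn!$ exceeds $(n+1)!$ by the factor $2n$, so it is not ``again comparable''. As written, your argument only yields a constant $C$ depending on $n$, which does not suffice for Theorem~\ref{t:delta}.

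The remedy is not the $\bH_n$ property you propose to fall back on. It is a sharper use of symmetry, which is exactly what the paper does. Since $\Pnm{\rho}{n}$ and $h(x,\cdot)$ are symmetric, $\int h(x_i,\bfm{x}_n)^2\,\Pnm{\rho}{n}(\dd\bfm{x}_n)$ does not depend on $i\in[n]$. Hence
\begin{equation*}
\int h(x_1,\bfm{x}_n)^2\,\Pnm{\rho}{n}(\dd\bfm{x}_n)=\frac1n\iint h(x,\bfm{x}_n)^2\,\delta_{\bfm{x}_n}(\dd x)\,\Pnm{\rho}{n}(\dd\bfm{x}_n)\le\frac1n\,\Pnm{\rho}{n+1}(h^2).
\end{equation*}
Insert this factor $\frac1n$ wherever a diagonal integral such as $\int h(x,x,\bfm{x}_{n-1})^2\,\Pnm{\rho}{n}(\dd(x,\bfm{x}_{n-1}))$ appears. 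The ratio for the fourth term then becomes $\frac{|n^2-\theta|}{(n+1)(\theta+n)}\le 1$, and that for the fifth becomes $1+\frac{n}{\theta+n}\le 2$. The remaining terms are fine as you treat them.
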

\begin{proof}
We need to bound the terms occurring in Lemma \ref{l:Edelta2}. By symmetry of $\rho^{[n]}$,
\begin{align*}
\int h(x_1, \bfm{x}_n)^2 \ \Pnm{\rho}{n}(\dd \bfm{x}_n)
&=\frac{1}{n} \sum ^n_{i=1}\int h(x_i, \bfm{x}_n)^2 \, \Pnm{\rho}{n}(\dd \bfm{x}_n) \le \frac{1}{n}\rho^{[n+1]}(h^2), 
\end{align*}
where we have used the recursion \eqref{Rekursion mu[m+n]} to obtain the inequality.
Multiplying this inequality  with 
$\frac{ nn!}{\Pn{\theta}{2n+1}}=\frac{nn!(\theta+2n+1)}{\Pn{\theta}{2n+2}}$,
we see that the required inequality holds.
Next we bound
\begin{align*}
2\int &|h(x,\bfm{x}_n) h(y, \bfm{x}_n)| \, \Pnm{\rho}{n+2}(\dd (x,y,\bfm{x}_n)) \\
&\le \int h(x,\bfm{x}_n)^2\, \Pnm{\rho}{n+2}(\dd (x,y,\bfm{x}_n))+ \int h(y,\bfm{x}_n)^2\, \Pnm{\rho}{n+2}(\dd (x,y,\bfm{x}_n))\\
      &=2(\theta+n+1)\rho^{[n+1]}(h^2).
\end{align*}
Multiplying this inequality  with $\frac{n!|n^2-\theta|}{\Pn{\theta}{2n+2}}$ yields the requested inequality.
Next we bound
\begin{align*}
2\int & |h(x,x,\bfm{x}_{n-1}) h(y, y,\bfm{x}_{n-1})|  \, \Pnm{\rho}{n+1}(\dd (x,y,\bfm{x}_{n-1}))\\
&\le 2(\theta+n) \int h(x,x,\bfm{x}_{n-1})^2\,\rho^{[n]}(\dd (x,\bfm{x}_{n-1}))\\
&=2(\theta+n) \int h(x_1,\bfm{x}_{n})^2\,\rho^{[n]}(\dd \bfm{x}_n)
\le \frac{2(\theta+n)}{n}\rho^{[n+1]}(h^2), 
\end{align*}
where the final inequality has been derived above. Multiplying this with 
$\frac{nn!|n^2-\theta|}{\Pn{\theta}{2n+2}}$ yields the requested bound.
Next we bound
\begin{align*}
2 \int& |h(x,x,\bfm{x}_{n-1}) h(y, x,\bfm{x}_{n-1})|  \, \Pnm{\rho}{n+1}(\dd (x,y,\bfm{x}_{n-1}))\\
&\le (\theta+n)\int h(x,x,\bfm{x}_{n-1})^2\,\rho^{[n]}(\dd (x,\bfm{x}_{n-1}))+\rho^{[n+1]}(h^2).
\end{align*}
Here the first term as well as  
$\int |h(x,y,\bfm{x}_{n-1}) h(y, x,\bfm{x}_{n-1})|  \, \Pnm{\rho}{n+1}(\dd (x,y,\bfm{x}_{n-1}))$
can be treated as above. Finally we can bound
\begin{align*}
2\int |h(x_1,\bfm{x}_{n}) h(x_2,  \bfm{x}_{n})| \, \Pnm{\rho}{n}(\dd \bfm{x}_{n})
&\le \int h(x_1,\bfm{x}_{n})^2 \, \Pnm{\rho}{n}(\dd \bfm{x}_{n}) 
+\int h(x_2,  \bfm{x}_{n})^2 \, \Pnm{\rho}{n}(\dd \bfm{x}_{n})\\
&\le \frac{1}{n} \Pnm{\rho}{n+1}(h^2),
\end{align*}
finishing the proof.
\end{proof}

We also need the following lemma.

\begin{lemma}\label{l:4.1}
Let $h_n\colon \BX^{n+1} \to \R$, $n\in\N_0$, be measurable functions such that $h_n(x, \cdot) \in \bH_n$ for each $x\in\BX$
and $n\in\N$. Assume further that
\begin{equation}\label{e:4.11}
\sum_{n=0}^\infty \frac{n!}{\Pn{\theta}{2n+1}} \Pnm{\rho}{n+1} (h_{n}^2) < \infty.
\end{equation}
Then the series of functions
\begin{equation}\label{e:33}
(\mu,x)\mapsto h_0(x) + \sum_{n=1}^\infty \int h_n(x,y) \,\mu^n(\dd y)
\end{equation}
converges in $L^2(C'_\zeta)$.
\end{lemma}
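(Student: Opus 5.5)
The plan is to show that the partial sums $S_N(\mu,x):=h_0(x)+\sum_{n=1}^N\int h_n(x,y)\,\mu^n(\dd y)$ form a Cauchy sequence in $L^2(C'_\zeta)$. We will do this by computing the inner products of the individual summands $U_n(\mu,x):=\int h_n(x,y)\,\mu^n(\dd y)$ (with $U_0:=h_0$) exactly.

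For $m,n\in\N_0$ we write
\begin{align*}
\int U_mU_n\,\dd C'_\zeta=\BE\int\!\!\iint h_m(x,y)h_n(x,z)\,\zeta^m(\dd y)\,\zeta^n(\dd z)\,\zeta(\dd x).
\end{align*}
The integrand is a function of $\zeta^{m+n+1}$. By the moment formula \eqref{nmoment}, this equals
\begin{align*}
\frac{1}{\Pn{\theta}{m+n+1}}\int h_m(x,y)h_n(x,z)\,\rho^{[m+n+1]}(\dd(x,y,z)),
\end{align*}
provided the analogous integral with absolute values is finite. That finiteness follows from Cauchy--Schwarz together with the $L^2$ bounds we establish below. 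Next we use the recursion \eqref{Rekursion mu[m+n]} to write $\rho^{[m+n+1]}(\dd(x,y,z))$ as $\rho^{[1]}(\dd x)$ followed by $(\rho+\delta_x)^{[m+n]}(\dd(y,z))$. Since $h_m(x,\cdot)\in\bH_m$ and $h_n(x,\cdot)\in\bH_n$ for each $x$, the orthogonality relation \eqref{e:345} should apply with $\rho$ replaced by $\rho+\delta_x$; this is the more general version proved in the appendix (Corollary~\ref{KorIntegrationInIH_mAndIH_N}).

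This step is the main obstacle. The spaces $\bH_n$ are defined via centering with respect to $\rho$, not $\rho+\delta_x$, so the orthogonality cannot be invoked directly. The cleaner route is to regard $(x,y)\mapsto h_m(x,y)$ as a function on $\BX^{1+m}$ that is conditionally centred in its last $m$ variables. Then Lemma~\ref{LemAllgIsoFormel1Funktion} or Corollary~\ref{KorIntegrationInIH_mAndIH_N} (with $k=1$ shared variable, exactly as used in the proof of Proposition~\ref{p:convnabla}) gives
\begin{align*}
\int h_m(x,y)h_n(x,z)\,\rho^{[m+n+1]}(\dd(x,y,z))=\I\{m=n\}\,n!\int h_n(x,y)^2\,\rho^{[n+1]}(\dd(x,y)).
\end{align*}
The factor $n!$ comes from matching the $n$ centred variables of the two kernels via permutations, as in \eqref{e:345}. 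For $m=0<n$ the left side vanishes directly by centering in the last variable of $h_n(x,\cdot)$ under $(\rho+\delta_x)$-type integration, using the recursion.

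Granting this, the summands are mutually orthogonal in $L^2(C'_\zeta)$ with $\|U_n\|^2=\frac{n!}{\Pn{\theta}{n+1+n}}\rho^{[n+1]}(h_n^2)$, which is exactly the $n$-th term of \eqref{e:4.11}. Hence, for $N<M$,
\begin{align*}
\|S_M-S_N\|^2_{L^2(C'_\zeta)}=\sum_{n=N+1}^M\frac{n!}{\Pn{\theta}{2n+1}}\rho^{[n+1]}(h_n^2)\to0.
\end{align*}
By completeness of $L^2(C'_\zeta)$ the series \eqref{e:33} converges, and its limit has squared norm equal to the sum in \eqref{e:4.11}. A preliminary check is that each $U_n$ lies in $L^2(C'_\zeta)$; this follows from the diagonal computation with $|h_n|$ and Jensen's inequality for the probability measure $\zeta^n$, which gives $\BE\int U_n^2\,\dd\zeta\le\BE\int h_n^2\,\dd\zeta^{n+1}<\infty$ by \eqref{nmoment}.
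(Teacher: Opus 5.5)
Your Cauchy-sequence strategy via \eqref{nmoment} is the right frame, but the identity you grant,
\[
\int h_m(x,y)h_n(x,z)\,\rho^{[m+n+1]}(\dd(x,y,z))=\I\{m=n\}\,n!\,\rho^{[n+1]}(h_n^2),
\]
is false. With it fall the claimed orthogonality of the $U_n$ and your formula for $\|U_n\|^2$.

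The obstacle you flagged is genuine, and Corollary~\ref{KorIntegrationInIH_mAndIH_N} does not remove it. That corollary needs the shared variable to be a \emph{centred} argument of a kernel in $\bH_n$. This is the situation in Proposition~\ref{p:convnabla}, where $f_n\in\bH_n$ in all of its variables. Here, by contrast, $x$ is the non-centred argument of both $h_m$ and $h_n$. Under $\rho^{[m+n+1]}$ the centred variables are integrated against measures containing $\delta_x$, and $h_n(x,\cdot)$ is not centred with respect to those. For example, $\int h_1(x,y)\,(\rho+\delta_x)(\dd y)=h_1(x,x)$, so your claim that the case $m=0<n$ vanishes also fails.

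Concretely, take $g\in L^2(\rho)$ with $\rho(g)=0$ and $\rho(g^2)>0$, and put $h_0:=g$, $h_1(x,y):=g(y)$. Then $U_1=\zeta(g)$ and
\[
\langle U_0,U_1\rangle_{L^2(C'_\zeta)}=\|U_1\|^2_{L^2(C'_\zeta)}=\BE\,\zeta(g)^2=\frac{\rho(g^2)}{\theta(\theta+1)}.
\]
This is nonzero, and it exceeds your predicted value $\rho^{[2]}(h_1^2)/\theta^{(3)}=\rho(g^2)/(\theta(\theta+2))$.

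The correct evaluation is Corollary~\ref{c:A3}. For $m=n$ the integral equals $n!\big(\rho^{[n+1]}(h_n^2)+\sum_{r=1}^n\int h_n(x_r,\bfm{x}_n)^2\,\rho^{[n]}(\dd\bfm{x}_n)\big)$. For $m=n+1$ it equals $(n+1)!\int h_{n+1}(x_1,\bfm{x}_{n+1})h_n(\bfm{x}_{n+1})\,\rho^{[n+1]}(\dd\bfm{x}_{n+1})$, symmetrically for $m=n-1$. It vanishes for $|m-n|\ge 2$. So the expansion of $\|\sum_{n=n_0}^{n_1}U_n\|^2$ is banded, not diagonal, and you must bound the extra terms by the summands of \eqref{e:4.11}:
\begin{itemize}
\item The additional diagonal piece equals $\iint h_n(y,\bfm{x}_n)^2\,\delta_{\bfm{x}_n}(\dd y)\,\rho^{[n]}(\dd\bfm{x}_n)\le\rho^{[n+1]}(h_n^2)$.
\item The neighbour terms are handled by $2|ab|\le a^2+b^2$ together with $(n+1)\int h_{n+1}(x_1,\bfm{x}_{n+1})^2\,\rho^{[n+1]}(\dd\bfm{x}_{n+1})\le\rho^{[n+2]}(h_{n+1}^2)$. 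This uses symmetry of $h_{n+1}$ in its last $n+1$ arguments.
\item The rising-factorial weights then compare as $\frac{\theta+2n+2}{n+1}\le\theta+2$ and $\frac{n+1}{\theta+2n+1}\le 1$.
\end{itemize}
Together these give $\|\sum_{n=n_0}^{n_1}U_n\|^2\le C_\theta\sum_{n=n_0}^{n_1}\frac{n!}{\theta^{(2n+1)}}\rho^{[n+1]}(h_n^2)$, which is what the Cauchy argument needs.

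Your closing assertion that the limit has squared norm equal to the sum in \eqref{e:4.11} is also false. The extra diagonal terms and the neighbour cross terms contribute to that norm.
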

\begin{proof}
For $n\in\N$, we define $H_n\colon \Omega \times \BX \to \R$ by 
\begin{equation*}
    H_n(\omega,x) \coloneqq \int h_n(x,\bfm{y}_n)\, \zeta^n(\omega,\dd\bfm{y}_n).
\end{equation*}
Let $m,n\in\N$ with $m\ge n$. By~\eqref{nmoment}, we obtain
\begin{align*}
    \int H_m(\omega,x) &H_n(\omega,x) \, C_\zeta(\dd (\omega,x))\\
    &= \frac{1}{\Pn{\theta}{m+n+1}} \int h_m(x,\bfm{y}_m)h_n(x,\bfm{z}_n) \, 
\Pnm{\rho}{m+n+1} (\dd (x,\bfm{y}_m,\bfm{z}_n)).
\end{align*}
According to Corollary~\ref{c:A3}, we have
\begin{align*}
    &\int h_m(x,\bfm{y}_m)h_n(x,\bfm{z}_n) \, \Pnm{\rho}{m+n+1} (\dd (x,\bfm{y}_m,\bfm{z}_n)) \\
    &= \I\{m=n\} m! \left( \int  h_m(\bfm{x}_{m+1})^2\, \Pnm{\rho}{m+1}(\dd \bfm{x}_{m+1}) 
    +   \sum_{r=1}^m \int h_m(x_r,\bfm{x}_{m})^2 \, \Pnm{\rho}{m}(\dd \bfm{x}_{m}) \right) \\
    &\hphantom{=} + \I\{m=n+1\} (n+1)! \int h_{n+1}(x_1,\bfm{x}_{n+1}) h_{n}(\bfm{x}_{n+1}) \, \Pnm{\rho}{n+1}(\dd \bfm{x}_{n+1}).
\end{align*}
Hence, we obtain  for $n_0,n_1\in\N$ with $n_0 \le n_1$
\begin{align*}
    \BE&\int\bigg( \sum_{n=n_0}^{n_1} \int h_n(x,\bfm{y}) \, \zeta^n(\dd \bfm{y}) \bigg)^2 \zeta(\dd x)
    = \sum_{m,n=n_0}^{n_1} \BE\int H_m(x)H_n(x) \,\zeta(\dd x) \\
    &= \sum_{n=n_0}^{n_1} \frac{n!}{\Pn{\theta}{2n+1}}  \bigg( \int  h_n(\bfm{x}_{n+1})^2 \, \Pnm{\rho}{n+1}(\dd \bfm{x}_{n+1}) 
    +   \sum_{r=1}^n \int h_n(x_r,\bfm{x}_{n})^2 \, \Pnm{\rho}{n}(\dd \bfm{x}_{n}) \bigg) \\
    &\quad + \sum_{n=n_0}^{n_1-1} \frac{(n+1)!}{\Pn{\theta}{2n+2}} \int h_{n+1}(x_1,\bfm{x}_{n+1}) h_{n}(\bfm{x}_{n+1}) \, \Pnm{\rho}{n+1}(\dd \bfm{x}_{n+1})
    \eqqcolon S_1 + S_2.
\end{align*}
We treat the sums $S_1$ and $S_2$ individually and establish upper bounds for both. 

Let $n\in\N$. Then
\begin{equation*}
    \sum_{r=1}^n \int h_n(x_r,\bfm{x}_{n})^2 \,\Pnm{\rho}{n}(\dd \bfm{x}_{n})
    = \iint  h_n(y,\bfm{x}_{n})^2 \, \delta_{\bfm{x}_{n}} (\dd y) \, \Pnm{\rho}{n}(\dd \bfm{x}_{n}) 
    \le 
    \Pnm{\rho}{n+1}(h_n^2), 
\end{equation*}
where we used the symmetry of $h_n$ in its last $n$ arguments. Therefore,
\begin{equation*}
S_1 \le 2 \sum_{n=n_0}^{n_1} \frac{n!}{\Pn{\theta}{2n+1}}   \Pnm{\rho}{n+1}(h_n^2).
\end{equation*}
To bound $S_2$, we can use the inequalities
\begin{align*}
\int &h_{n+1}(x_1,\bfm{x}_{n+1}) h_{n}(\bfm{x}_{n+1}) \,\Pnm{\rho}{n+1}(\dd \bfm{x}_{n+1}) \\
    &\le \frac{1}{2} \int h_{n+1}(x_1,\bfm{x}_{n+1})^2 \,\Pnm{\rho}{n+1}(\dd \bfm{x}_{n+1})
+\frac{1}{2}\int h_{n}(\bfm{x}_{n+1})^2 \,\Pnm{\rho}{n+1}(\dd \bfm{x}_{n+1}) 
\end{align*}
and
\begin{equation*}
(n+1) \int h_{n+1}(x_1,\bfm{x}_{n+1})^2 \,\Pnm{\rho}{n+1}(\dd \bfm{x}_{n+1}) 
\le \int h_{n+1}(\bfm{x}_{n+2})^2 \,\Pnm{\rho}{n+2}(\dd \bfm{x}_{n+2}),
\end{equation*}
where we used the symmetry of $h_{n+1}$ in its last $n+1$ arguments. Hence, we obtain
\begin{align*}
    S_2 &\le \frac{1}{2}\sum_{n=n_0}^{n_1-1} \frac{n!}{\Pn{\theta}{2n+2}}\big(\Pnm{\rho}{n+2}(h_{n+1}^2) 
+ (n+1)\Pnm{\rho}{n+1}(h_n^2) \big) \\
    &= \frac{1}{2}\sum_{n=n_0+1}^{n_1} \frac{(n-1)!}{\Pn{\theta}{2n}} \Pnm{\rho}{n+1}(h_{n}^2) 
+ \frac{1}{2}\sum_{n=n_0}^{n_1-1} \frac{(n+1)!}{\Pn{\theta}{2n+2}}\Pnm{\rho}{n+1}(h_n^2). 
\end{align*}
Combining our findings, we arrive at 
\begin{equation*}
    \BE \int \left( \sum_{n=n_0}^{n_1} \int h_n(x,\bfm{y}) \,\zeta^n(\dd \bfm{y}) \right)^2 \, \zeta(\dd x) 
    \le \sum_{n=n_0}^{n_1} \frac{n!(4+\frac{1}{2}\theta)}{\Pn{\theta}{2n+1}} \Pnm{\rho}{n+1}(h_n^2). \qedhere
\end{equation*} 
\end{proof}

\begin{remark}\rm Let the assumptions of Lemma \ref{l:4.1} be satisfied
and define the function $H\in L^2(C_\zeta)$ by the right-hand side of \eqref{e:33}.
The proof of the lemma shows that
\begin{align*}
\BE \int H(\zeta,x)^2\,\zeta(\dd x)&=\sum^\infty_{n=0}\frac{n!}{\theta^{(2n+1)}}\rho^{[n+1]}(h_n^2)
+\sum^\infty_{n=0}\frac{nn!}{\theta^{(2n+1)}}\int h_n(x_1,\bfm{x}_n)^2\,\rho^{[n]}(\dd \bfm{x}_n)\\
& \quad+\sum^\infty_{n=0}\frac{(n+1)!}{\theta^{(2n+2)}}\int h_{n+1}(x_1,\bfm{x}_{n+1})h_{n}(\bfm{x}_{n+1})
\,\rho^{[n+1]}(\dd \bfm{x}_{n+1}).
 \end{align*}
\end{remark}

\begin{remark}\rm 
Let $F\in L^2(\zeta)$ with kernel functions $f_n\in\bH_n$, $n\in\N$. Let $g_n\colon\BX\to\R$, $n\in\N_0$, be
measurable functions such that there exists $c\ge 0$ with $|g_n|\le c$ for all $n\in\N_0$.
Then $h_n(x,x_1\ldots,x_n):=g_n(x)f_n(x_1,\ldots,x_n)$, $n\in\N$, and $h_0(x):=g_o(x)$, satisfy the assumptions of 
Lemma \ref{l:4.1}. Indeed, we clearly have $h_n(x,\cdot)\in\bH_n$ for each $n\in\N$ and each $x\in\BX$.
Furthermore, the series \eqref{e:4.11} can be written as
\begin{equation*}
\sum_{n=0}^\infty \frac{n!}{\Pn{\theta}{2n+1}}\iint g_n(x)^2f_n(\bfm{x})^2
\,(\rho+\delta_{\bfm{x}})(\dd x)\,\Pnm{\rho}{n}(\dd \bfm{x})\\
\end{equation*}
and can be bounded by
\begin{equation*}
c^2\sum_{n=0}^\infty \frac{n!(\theta+n)}{\Pn{\theta}{2n+1}}
\int f_n(\bfm{x})^2 \,\Pnm{\rho}{n}(\dd \bfm{x}).
\end{equation*}
By \eqref {e:iso0}, this is finite. Therefore, \eqref{e:4.11} holds.

If $g_n\equiv g$ does not depend on $n\in\N_0$, then the function $H$ 
on the right-hand side of \eqref{e:33} can be chosen as $H(\mu,x)=g(x)F(\mu)$.
\end{remark}

\begin{theorem}\label{t:delta} Suppose that $h_n\colon\BX^{n+1}\to\R$, $n\in\N_0$,
are measurable functions such that $h_n(x,\cdot)\in \mathbf{H}_n$ for each $x\in\BX$
and $n\in\N$
and
\begin{align}\label{e:34}
    \sum_{n=0}^\infty \frac{(\theta + n)^2(n+1)!}{\Pn{\theta}{2n+2}}\int h_n(z)^2 \, \Pnm{\rho}{n+1}(\dd z) < \infty.
\end{align}
Define $H\in L^2(C'_\zeta)$ by the $L^2(C'_\zeta)$-limit \eqref{e:33}.
Then $H\in \operatorname{dom}(\delta)$ and
\begin{align}\label{delta'}
    \delta(H)&= \sum_{n=0}^\infty  \bigg((\theta+n)\int h_n(z) \, \zeta^{n+1}(\dd z)  \\ \notag
 &\quad- \iint h_n(x,y_1, \ldots,y_n) \, (\rho + \delta_{y_1} + \cdots + \delta_{y_n})(\dd x) \, \zeta^n(\dd(y_1, \ldots, y_n))\bigg),
\quad \BP\text{-a.s.},
\end{align}
where the series converges in $L^2(\BP)$.
\end{theorem}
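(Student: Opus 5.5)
The plan has four steps: make sense of $H$ and of the right-hand side of \eqref{delta'}, prove the duality \eqref{PI} chaos by chaos, pass to the limit, and conclude by Riesz uniqueness.

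\emph{Step 1: the objects are well defined.} Since $(\theta+n)^2(n+1)!/\Pn{\theta}{2n+2}$ dominates a constant multiple of $n!/\Pn{\theta}{2n+1}$, condition \eqref{e:34} implies \eqref{e:4.11}. Hence $H$ is well defined by Lemma \ref{l:4.1}.

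Write $Z_n$ for the $n$-th summand on the right-hand side of \eqref{delta'}. For $n=0$ this is $Z_0=\theta\zeta(h_0)-\rho(h_0)$. By Lemma \ref{l:4.5}, $\BE Z_n^2\le C\frac{(n+1)!(\theta+n)^2}{\Pn{\theta}{2n+2}}\rho^{[n+1]}(h_n^2)$.

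Convergence of the series needs more than this bound, because the constant $C$ must not depend on $n$. I would first check that the constant in Lemma \ref{l:4.5} is indeed uniform. Inspecting its proof, every term has coefficient at most of order $n\,n!\,(\theta+n)^2/\Pn{\theta}{2n+2}$ times $\frac1n\rho^{[n+1]}(h^2)$, or $n!\,|n^2-\theta|(\theta+n+1)/\Pn{\theta}{2n+2}$ times $\rho^{[n+1]}(h^2)$. Both are $O\big((n+1)!(\theta+n)^2/\Pn{\theta}{2n+2}\big)$ uniformly in $n$.

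Uniformity alone does not give $L^2$ convergence of $\sum_n Z_n$. I also need (near) orthogonality of the $Z_n$. I would show that $Z_n$ lies in $\bF_{n+1}$, or at least in $\bigoplus_{k\le n+1}\bF_k$ with controlled components. The cleanest route is to compute $\BE Z_n\zeta^m(g)$ for $g\in\bH_m$, which is done in Step 2 anyway. That computation shows that $Z_n$ is orthogonal to $\bF_m$ for every $m\neq n+1$ and $m\ge1$, and $\BE Z_n=0$. Hence $Z_n\in\bF_{n+1}$, the $Z_n$ are mutually orthogonal, and together with the uniform bound and \eqref{e:34} the series converges in $L^2(\BP)$.

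\emph{Step 2: duality on a single chaos.} Take $G=\zeta^m(g)$ with $g\in\bH_m$, and let $H_n(\mu,x)=\int h_n(x,y)\,\mu^n(\dd y)$ (with $H_0=h_0$). By \eqref{defnabla},
\[
\nabla_x G=m\Big(\int g(x,y)\,\zeta^{m-1}(\dd y)-\zeta^m(g)\Big).
\]
Then $\BE\int H_n(x)\nabla_xG\,\zeta(\dd x)$ is, by \eqref{nmoment}, a difference of integrals of $h_n\otimes g$ against $\rho^{[n+m]}$ and $\rho^{[n+m+1]}$. Applying Corollary \ref{c:A3} and Corollary \ref{KorIntegrationInIH_mAndIH_N}, the first integral vanishes unless $m\in\{n,n+1\}$, and the second unless $m=n$.

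Separately, I would compute $\BE Z_nG$ via \eqref{nmoment} and the same corollaries, using \eqref{Rekursion mu[m+n]} to absorb the inner integral against $\rho+\delta_{y}$. The goal is the identity $\BE Z_n G=\BE\int H_n\nabla G\,\dd\zeta$ for each $m$. I expect the case $m=n$ to give zero: the $(\theta+n)$-weighted term cancels against the $\rho+\delta_y$ compensator, which is exactly why that compensator appears in \eqref{delta'}. The case $m=n+1$ should produce matching constants involving $(\theta+n)$ and the rising factorials.

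\emph{Step 3: extension to all of $\operatorname{dom}(\nabla)$.} By linearity the identity of Step 2 holds for finite chaos sums $G$. For general $G\in\operatorname{dom}(\nabla)$, truncate its chaos expansion to $G_N$. Then $\nabla G_N\to\nabla G$ in $L^2(C_\zeta)$ by \eqref{e:iso}, and $G_N\to G$ in $L^2(\BP)$. Summing over $n$ and using the $L^2$ convergence of both $H$ and $\sum_n Z_n$, Cauchy--Schwarz gives
\[
\BE\int H_x\nabla_xG\,\zeta(\dd x)=\BE\Big(\sum_n Z_n\Big)G .
\]
This bound shows $H\in\operatorname{dom}(\delta)$, and uniqueness in the Riesz representation identifies $\delta(H)$ with the series in \eqref{delta'}.

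\emph{Main obstacle.} The combinatorial bookkeeping in Step 2 is the hard part. In the case $m=n+1$ one must match the cross terms produced by Corollary \ref{c:A3}, including those where $x$ coincides with one of the $y$-variables, between $\BE Z_nG$ and $\BE\int H_n\nabla G\,\dd\zeta$. I would first verify the case $n=0$, where $Z_0=\theta\zeta(h_0)-\rho(h_0)$, by direct computation to pin down the constants, and then carry out the general case.
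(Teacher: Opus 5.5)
\textbf{The orthogonality claim in Step 1 is false, and your convergence argument depends on it.} For $n\ge 1$, $Z_n$ does not lie in $\bF_{n+1}$, and the $Z_n$ are not mutually orthogonal. The paper itself remarks that, unlike the Gaussian and Poisson cases, the summands $\delta(H_n)=Z_n$ are not orthogonal. In general $Z_n$ has non-zero components in $\bF_{n-1}$, $\bF_n$ and $\bF_{n+1}$.

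A concrete counterexample with $n=1$: take $h_1(x,y)=a(x)k(y)$ with $\rho(k)=0$. Then $Z_1=(\theta+1)\zeta(a)\zeta(k)-\rho(a)\zeta(k)-\zeta(ak)$. By \eqref{nmoment}, for $g\in\bH_1$,
\[
\BE Z_1\zeta(g)=(\Pn{\theta}{3})^{-1}\big(\theta\rho(akg)-\rho(a)\rho(kg)\big).
\]
Take $\BX=\{1,2,3\}$ with $\rho$ the counting measure, $a=\I_{\{1\}}$ and $k=g=\I_{\{1\}}-\I_{\{2\}}$. Then this equals $1/60\neq 0$.

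Your Step 2 predictions are off in the same way:
\begin{itemize}
\item The integral against $\rho^{[n+m+1]}$ survives for $m\in\{n-1,n,n+1\}$ (Corollaries \ref{KorIntegrationInIH_mAndIH_N} and \ref{c:A3}), not only for $m=n$.
\item The case $m=n$ of the duality does not give zero.
\end{itemize}
The target identity $\BE Z_nG=\BE\int H_n\nabla_xG\,\zeta(\dd x)$ is still true for each $n$ (it is Corollary \ref{c:4.9}). So Steps 2 and 3 survive once the bookkeeping is done correctly. What fails is the $L^2(\BP)$-convergence of $\sum_n Z_n$ as you argued it.

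\textbf{The missing ingredient is the banded structure.} Show that $Z_n\in\bF_{n-1}\oplus\bF_n\oplus\bF_{n+1}$. The paper does this by writing $Z_n=(\theta+n)(X_n-Y_n)$, where $X_n=\zeta^{n+1}(h_n)$ and $Y_n$ is the compensator term divided by $\theta+n$. Corollaries \ref{c:A3} and \ref{c:A6} then give $X_n\in\bF_{n-1}\oplus\bF_n\oplus\bF_{n+1}$ and $Y_n\in\bF_{n-1}\oplus\bF_n$. Consequently $\BE Z_mZ_n=0$ whenever $|m-n|\ge 3$. Combined with $2|\BE Z_mZ_n|\le \BE Z_m^2+\BE Z_n^2$, this yields
\[
\BE\Big(\sum_{n=n_0}^{n_1}Z_n\Big)^2\le 5\sum_{n=n_0}^{n_1}\BE Z_n^2 .
\]
Your check that the constant in Lemma \ref{l:4.5} is uniform in $n$ is correct and needed here. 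Together with \eqref{e:34}, it gives the Cauchy property, after which your Steps 2 and 3 go through. Verifying the duality separately for each pair $(n,m)$, instead of for the truncated double sums at once as the paper does, is fine.
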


\begin{proof} We first show that the series \eqref{delta'} converges in $L^2(\BP)$.
Given $n\in\N$, we define
\begin{align*}
    X_n \coloneqq \int h_n(\bfm{z}) \,\zeta^{n+1}(\dd \bfm{z}) &&\text{and}&& 
Y_n \coloneqq \frac{1}{\theta+n}\iint h_n(x,\bfm{y}_{n}) \, (\rho + \delta_{\bfm{y}_{n}})(\dd x) \,\zeta^n(\dd\bfm{y}_{n}), 
\end{align*}
as well as $ Z_n \coloneqq (\theta+n) (X_n - Y_n)$.
Let $m,n\in\N$. Applying Jensen’s inequality (to the probability measure $\zeta$) as well as~\eqref{nmoment}, we obtain
\begin{equation*}
    \BE X_n^2  \leq  \BE \int h_n(\bfm{z})^2 \, \zeta^{n+1}(\dd \bfm{z})
    = \frac{1}{\Pn{\theta}{n+1}} \int h_n(\bfm{z})^2\, \Pnm{\rho}{n+1}(\dd \bfm{z}).
\end{equation*}
An application of Jensen's inequality to the probability measure $(\theta+n)^{-1}(\rho +\delta_{\bfm{y}_{n}})\,\zeta^n(\dd \bfm{y}_{n})$, 
together with~\eqref{nmoment}, leads to
\begin{equation*}
    \BE Y_n^2 \leq  \BE \int\frac{1}{\theta+n}\int h_n(x,\bfm{y}_{n})^2 \, (\rho + \delta_{\bfm{y}_{n}})(\dd x) \, \zeta^n(\dd\bfm{y}_{n})
    = \frac{1}{\Pn{\theta}{n+1}} \int h_n(\bfm{z})^2\, \Pnm{\rho}{n+1}(\dd \bfm{z}).
\end{equation*}
Hence, $X_n,Y_n\in L^2(\BP)$. Let $m\in\N$ and $g\in\bH_m$. By~\eqref{nmoment}, we have
\begin{equation*}
    \BE X_n \zeta^m(g)
    = \frac{1}{\Pn{\theta}{m+n+1}} \int h_n(\bfm{z}) g(\bfm{x})\, \Pnm{\rho}{m+n+1} (\dd (\bfm{x},\bfm{z}))
\end{equation*}
and 
\begin{equation*}
    \BE Y_n\zeta^m(g) 
    = \frac{1}{\Pn{\theta}{m+n}} \frac{1}{\theta + n} \iint g(\bfm{x}_{m}) h_n(t, \bfm{z}_n) \, (\rho + \delta_{\bfm{z}_{n}})(\dd t) \, 
\Pnm{\rho}{m+n} (\dd (\bfm{x}_{m},\bfm{z}_n)).
\end{equation*}
By Corollaries~\ref{c:A3} and \ref{c:A6}, these expectations vanish unless $m \in \{n+1,n,n-1\}$ and $m \in \{n,n-1\}$, respectively.
Thus, we conclude $X_n \in \mathbf{F}_{n-1} \oplus \mathbf{F}_n \oplus \mathbf{F}_{n+1}$ and $Y_n \in \mathbf{F}_{n-1} \oplus \mathbf{F}_n$, 
which implies $Z_n \in \mathbf{F}_{n-1} \oplus \mathbf{F}_n \oplus \mathbf{F}_{n+1}$.
Let $n_0,n_1\in\N$ with $n_0 \le n_1$. 
The established orthogonality yields
\begin{align*}
    \BE \left(\sum_{n=n_0}^{n_1} Z_n\right)^2
    &= \sum_{m,n= n_0}^{n_1}  \I{\{|m-n|< 3\}} \BE Z_m Z_n \\
    &= \sum_{n= n_0}^{n_1}   \BE Z_n^2 
    + 2 \sum_{n= n_0}^{n_1-1}   \BE Z_n Z_{n+1} 
    +2 \sum_{n= n_0}^{n_1-2}   \BE Z_n Z_{n+2}. 
\end{align*}
Since $2\,\BE |Z_n Z_{m}|\le \BE Z_n^2 + \BE Z_m^2$, $m,n\in\N$, it therefore suffices to show
\begin{equation*}
\sum_{n=0}^{\infty}   \BE Z_n^2<\infty.
\end{equation*}
This follows from Lemmas \ref{l:Edelta2} and \ref{l:4.5}.

Let
\begin{equation*}
    \delta'(H)\coloneqq \sum_{n=0}^\infty  \bigg((\theta+n)\int h_n(z) \, \zeta^{n+1}(\dd z)  \\
    - \iint h_n(x,\bfm{y}_n) \, (\rho + \delta_{\bfm{y}_n})(\dd x) \, \zeta^n(\dd\bfm{y}_n))\bigg).
\end{equation*}
We show that $\delta'(H)$ satisfies partial integration, thus proving that it coincides $\BP$-a.s.\ with $\delta(H)$. 
Let $F\in\operatorname{dom}(\nabla)$ with kernel functions $f_n$, $n\in\N$, and $f_0 \coloneqq \BE F$.
Given $m_0\in\N$, by~\eqref{nmoment}, we have
\begin{align*} 
 &\BE \bigg[\bigg(\sum_{m=0}^{m_0} \bigg((\theta+m) \zeta^{m+1}(h_m) 
- \iint h_m(x,\bfm{y}_{m})  \, (\rho + \delta_{\bfm{y}_{m}})(\dd x)  \, \zeta^m(\dd\bfm{y}_{m}) )\bigg)
    \bigg(\sum_{n=0}^{m_0}  \zeta^n(f_n)\bigg) \bigg] \\
    &= \sum_{m=0}^{m_0} \sum_{n=1}^{m_0} \frac{\theta + m}{\Pn{\theta}{m+n+1}} 
\int h_m(\bfm{y}_{m+1}) f_n(\bfm{z}_{n}) \, \Pnm{\rho}{m+n+1}(\dd (\bfm{y}_{m+1},\bfm{z}_{n})) \\
    &\quad - \sum_{m=0}^{m_0} \sum_{n=1}^{m_0} \frac{1}{\Pn{\theta}{m+n}} 
\iint h_m(x,\bfm{y}_{m}) f_n(\bfm{z}_{n})  \, (\rho + \delta_{\bfm{y}_{m}})(\dd x)   \, \Pnm{\rho}{m+n}(\dd (\bfm{y}_{m},\bfm{z}_{n}))
    \eqqcolon S_1 + S_2.
\end{align*}
According to Corollaries~\ref{c:A3} and \ref{c:A6}, 
\begin{align*}
    S_1 = \sum_{m=2}^{m_0 } & \frac{(\theta + m)m!}{\Pn{\theta}{2m}} \int h_m(x_1, \bfm{x}_{m}) f_{m-1}(x_2, \ldots, x_m) \, \Pnm{\rho}{m}(\dd \bfm{x}_{m}) \\
    &\begin{multlined}[0.8\textwidth]
        + \sum_{m=1}^{m_0 }  \frac{(\theta + m)m!}{\Pn{\theta}{2m+1}} \bigg(\int  h_m(\bfm{x}_{m+1})f_m(x_{2},\ldots,x_{{m+1}}) \,\Pnm{\rho}{m+1}(\dd \bfm{x}_{m+1}) \\ 
        + m \int h_m(x_1,\bfm{x}_{m})f_m(\bfm{x}_{m}) \,\Pnm{\rho}{m}(\dd \bfm{x}_{m}) \bigg)
    \end{multlined} \\
    &+ \sum_{m=0}^{m_0-1} \frac{(\theta + m)(m+1)!}{\Pn{\theta}{2m+2}} \int  h_m(\bfm{x}_{m+1})f_{m+1}(\bfm{x}_{m+1}) \,\Pnm{\rho}{m+1}(\dd \bfm{x}_{m+1})
\end{align*}
and
\begin{multline*}
    S_2 = - \sum_{m=2}^{m_0 } \frac{m!}{\Pn{\theta}{2m-1}} \int h_m(x_m,\bfm{x}_{m}) f_{m-1}( \bfm{x}_{m-1})\, \Pnm{\rho}{m}(\dd \bfm{x}_{m}) \\
    - \sum_{m=1}^{m_0} \frac{m!}{\Pn{\theta}{2m}} \int  h_m(x_{m+1}, \bfm{x}_{m}) f_m( \bfm{x}_{m})\, \Pnm{\rho}{m+1}(\dd \bfm{x}_{m+1}).
\end{multline*}

We now turn to the left-hand side of the partial integration formula \eqref{PI}.
We have
\begin{align*} 
    &\BE \bigg[\int  \bigg(\sum_{m=0}^{m_0} \int h_m(x,\bfm{y}) \, \zeta^m(\dd \bfm{y})\bigg) 
    \bigg(\sum_{n=1}^{m_0} n \bigg( \int f_n (x, \bfm{y}) \,\zeta^{n-1}(\dd \bfm{y}) - \zeta^n( f_n)  \bigg)\bigg) \zeta(\dd x)
    \bigg] \\
    &=\sum_{m=0}^{m_0} \sum_{n=1}^{m_0}
    \frac{n}{\Pn{\theta}{m+n}} \int h_m(x,\bfm{x_{m}}) f_n (x, \bfm{y_{n-1}}) \, \Pnm{\rho}{m+n}(\dd (x, \bfm{x_{m}},\bfm{y_{n-1}}))  \\
    &\quad -  \sum_{m=0}^{m_0} \sum_{n=1}^{m_0}  \frac{n}{\Pn{\theta}{m+n+1}} 
\int h_m(\bfm{x_{m+1}}) f_n (\bfm{y_{n}}) \, \Pnm{\rho}{m+n+1}(\dd (\bfm{x_{m+1}},\bfm{y_{n}})) \eqqcolon \tilde S_1 + \tilde S_2,
\end{align*}
where we have used~\eqref{nmoment}.
By Corollaries~\ref{c:A4} and \ref{c:A3},
\begin{multline*}
    \tilde S_1 = \sum_{m=1}^{m_0} 
        \frac{m m!}{\Pn{\theta}{2m}} \int h_m(x_1, \bfm{x}_{m}) f_m(\bfm{x}_{m})\,\Pnm{\rho}{m}(\dd \bfm{x}_{m}) \\
        + \sum_{m=0}^{m_0-1} 
        \frac{(m+1)m!}{\Pn{\theta}{2m+1}} \int  h_m(\bfm{x}_{m+1}) f_{m+1} (\bfm{x}_{m+1}) \, \Pnm{\rho}{m+1}(\dd \bfm{x}_{m+1}),
\end{multline*}
and
\begin{align*}
    \tilde S_2=& - \sum_{m=1}^{m_0} \frac{(m-1)m!}{\Pn{\theta}{2m}} 
\int h_m(x_1, \bfm{x}_{m}) f_{m-1}(x_2, \ldots, x_m) \, \Pnm{\rho}{m}(\dd \bfm{x}_{m}) \\
        &\begin{multlined}[0.9\textwidth]
            - \sum_{m=1}^{m_0}\frac{mm!}{\Pn{\theta}{2m+1}} 
\left(\int  h_m(\bfm{x}_{m+1})f_m(x_{2},\ldots,x_{{m+1}}) \, \Pnm{\rho}{m+1}(\dd \bfm{x}_{m+1}) \right. \\
            + \left. m \int h_m(x_1,\bfm{x}_{m})f_m(\bfm{x}_{m}) \, \Pnm{\rho}{m}(\dd \bfm{x}_{m}) \right)
        \end{multlined} \\
        &- \sum_{m=0}^{m_0-1} \frac{(m+1)(m+1)!}{\Pn{\theta}{2m+2}} 
\int  h_m(\bfm{x}_{m+1})f_{m+1}(\bfm{x}_{m+1}) \,\Pnm{\rho}{m+1}(\dd \bfm{x}_{m+1}).
\end{align*}
Summarising the terms, we obtain $S_1 + S_2 = \tilde S_1 + \tilde S_2$.
By the $L^2(\BP)$-convergence of \eqref{delta'} and of the chaos expansion $\sum^\infty_{n=0}\zeta^n(f_n)$,
we have that $S_1 + S_2$ (which depends on $m_0$) tends to $\BE \delta'(H)F$ as $m_0\to\infty$. 
On the other  hand, we obtain from Lemma \ref{l:4.1} and Proposition \ref{p:convnabla}
that $\tilde S_1 + \tilde S_2$ tends to $\BE \int H_x\nabla_xF\,\zeta(\dd x)$ as $m_0\to\infty$. This concludes the proof.
\end{proof}

\begin{corollary}\label{c:4.9} Let $n\in\N_0$ and $h\in L^2(\rho^{[n+1]})$. For $n\ge 1$ assume that
$h(x,\cdot) \in \bH_n$ for each $x\in\BX$. Define $H\colon\bM_1\times \BX\to \R$ by 
$H(\mu,x):=\int h(x,x_1,\ldots,x_n)\,\mu^n(\dd (x_1,\ldots,x_n))$. Then $H\in\operatorname{dom}(\delta)$
and
\begin{align}\notag
\delta(H)&=(\theta+n)\int h(z) \, \zeta^{n+1}(\dd z)\\
\label{e:2078} 
&\quad - \iint h(x,y_1, \ldots,y_n) \, (\rho + \delta_{y_1} + \cdots + \delta_{y_n})(\dd x) \, \zeta^n(\dd(y_1, \ldots, y_n)).
\end{align}
\end{corollary}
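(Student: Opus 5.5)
This is a direct specialisation of Theorem \ref{t:delta}: take the sequence $(h_k)_{k\ge0}$ with $h_n:=h$ and $h_k:=0$ for $k\ne n$. The plan is to check the hypotheses of that theorem, identify the resulting field with $H$, and read off the divergence.

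First, the hypotheses. Measurability is given. For $k\ne n$ we have $h_k(x,\cdot)=0\in\bH_k$. For $k=n\ge1$, the condition $h(x,\cdot)\in\bH_n$ for every $x$ is assumed; for $n=0$ no condition is needed. The summability condition \eqref{e:34} collapses to the single term
\begin{equation*}
\frac{(\theta+n)^2(n+1)!}{\Pn{\theta}{2n+2}}\,\Pnm{\rho}{n+1}(h^2),
\end{equation*}
which is finite because $h\in L^2(\rho^{[n+1]})$.

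Next, the identification of $H$. The series \eqref{e:33} has only one nonzero term, namely $(\mu,x)\mapsto\int h(x,\bfm{y})\,\mu^n(\dd\bfm{y})$, so its $L^2(C'_\zeta)$-limit is exactly $H$. For $n=0$ this reads $H(\mu,x)=h(x)$, consistent with the convention $\int\,\dd\mu^0=$ identity. One small point: $H$ must be defined pointwise, i.e.\ $\int|h(x,\bfm{y})|\,\mu^n(\dd\bfm{y})<\infty$. This holds for $C'_\zeta$-a.e.\ $(\mu,x)$, since by \eqref{nmoment} the $C'_\zeta$-integral of $\int|h(x,\bfm y)|\,\mu^n(\dd\bfm y)$ is $(\theta^{(n+1)})^{-1}\rho^{[n+1]}(|h|)<\infty$. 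Hence $H\in L^2(C'_\zeta)$ and $H$ agrees with the limit.

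Theorem \ref{t:delta} then gives $H\in\operatorname{dom}(\delta)$. Its series \eqref{delta'} has only the $n$-th summand, which is exactly \eqref{e:2078}, so no limiting argument is required. There is no real obstacle here; the only care needed is the degenerate case $n=0$. There the inner integral against $\rho+\delta_{y_1}+\cdots+\delta_{y_n}$ becomes integration against $\rho$, so \eqref{e:2078} reads $\delta(H)=\theta\,\zeta(h)-\rho(h)$.
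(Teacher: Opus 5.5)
Your proof is correct and matches the paper: the corollary is stated without a separate proof as the special case of Theorem \ref{t:delta} with $h_n:=h$ and $h_k:=0$ for $k\ne n$, where \eqref{e:34} reduces to one finite term and \eqref{delta'} reduces to the single summand \eqref{e:2078}. One phrase is loose: you say ``Hence $H\in L^2(C'_\zeta)$'' right after an $L^1$ bound, which only gives a.e.\ finiteness; square-integrability comes instead from Jensen's inequality, $\BE\int H(\zeta,x)^2\,\zeta(\dd x)\le \rho^{[n+1]}(h^2)/\theta^{(n+1)}$, or directly from Lemma \ref{l:4.1}.
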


\begin{remark}\label{r:delta(h)}\rm Suppose that $h\in L^2(\rho)$. We can identify $h$ with a function
in $L^2(C'_\zeta)$ which does not depend on the first argument. By Corollary \ref{c:4.9}, we have 
$h\in \operatorname{dom}(\delta)$ and
\begin{equation}
\delta(h)=\theta \int h(x)\,\zeta(\dd x)-\int h(x)\,\rho(\dd x).
\end{equation}
\end{remark}

\begin{remark}\rm Let the assumptions of Theorem \ref{t:delta} be satisfied and define
$H_n(\mu,x):=\int h_n(x,\bfm{x})\,\mu^n(\dd \bfm{x})$, $n\in\N_0$. By Lemma \ref{l:4.1}, the series
$H(\mu,x):=\sum^\infty_{n=0}H_n(\mu,x)$  converges in $L^2(C'_\zeta)$. By Theorem \ref{t:delta}
and Corollary \ref{c:4.9}, we have $H\in\operatorname{dom}(\delta)$ and
\begin{equation}\label{e:014}
\delta(H)=\sum^\infty_{n=0}\delta(H_n),\quad \BP\text{-a.s.},
\end{equation}
where the series converges in $L^2(\BP)$. The proof of Theorem \ref{t:delta} and Lemma \ref{l:Edelta2}
show that these two statements hold under more general (but more technical) assumptions
on the functions $h_n$.  
It suffices to have convergence of the series, arising by summing the 
terms constituting $\BE \delta(H_n)^2$ ($=\BE Z^2$) in Lemma  \ref{l:Edelta2}. Given the sequence $(h_n)_{n\ge 0}$ it seems difficult 
to find simple necessary and sufficient
conditions for $H\in\operatorname{dom}(\delta)$. In contrast to the classical Gaussian and Poisson cases,
the  summands in \eqref{e:014} are not orthogonal.
\end{remark}



Our next lemma provides some information on the chaos expansion
of $H\in\operatorname{dom}(\delta)$, provided that $H$ is given by 
\eqref{e:33}  and \eqref{e:4.11} holds. Note that \eqref{e:4.11}  is significantly
weaker than \eqref{e:34} assumed in  Theorem \ref{t:delta}. 

\begin{lemma}\label{l:4.12} Suppose that the functions $h_n$, $n\in\N_0$, are given as
in Lemma \ref{l:4.1}. Assume that \eqref{e:4.11} holds
and let $H$ be given by  \eqref{e:33}.
Assume that $H\in\operatorname{dom}(\delta)$ and let $F\in\bF_n$ for some
$n\in\N$. Then
\begin{align*}
\BE \delta(H)F=\BE (\delta(H_{n-1})+\delta(H_n)+\delta(H_{n+1}))F,
\end{align*}
where $H_i(\mu,x):=\int h_i(\mu,x)\,\mu^n(\dd x)$ for $i\in\N$ and 
$\delta(H_i)$ are given by \eqref{e:2078}. 
\end{lemma}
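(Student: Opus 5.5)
Write $F=\zeta^n(g)$ with $g\in\bH_n$. By Proposition \ref{p:convnabla} and the definition of $\nabla$ through the chaos expansion, $F\in\operatorname{dom}(\nabla)$ with
\begin{equation*}
\nabla_xF=n\Big(\int g(x,\bfm{y})\,\zeta^{n-1}(\dd\bfm{y})-\zeta^n(g)\Big).
\end{equation*}
The plan is to compute $\BE\delta(H)F$ with the partial integration formula \eqref{PI}:
\begin{equation*}
\BE\delta(H)F=\BE\int H_x\nabla_xF\,\zeta(\dd x).
\end{equation*}
Lemma \ref{l:4.1} gives $H=\sum_{m\ge 0}H_m$ in $L^2(C'_\zeta)$, and $\nabla F\in L^2(C_\zeta)$. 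Hence the right-hand side equals
\begin{equation*}
\sum_{m\ge0}\BE\int H_{m,x}\nabla_xF\,\zeta(\dd x),
\end{equation*}
since the inner product is continuous. This step uses only the weak assumption \eqref{e:4.11}.

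Next I show that the summands vanish for $m\notin\{n-1,n,n+1\}$. By \eqref{nmoment}, the $m$-th summand equals
\begin{equation*}
\frac{n}{\Pn{\theta}{m+n}}\int h_m(x,\bfm{x}_m)g(x,\bfm{y}_{n-1})\,\Pnm{\rho}{m+n}(\dd(x,\bfm{x}_m,\bfm{y}_{n-1}))
-\frac{n}{\Pn{\theta}{m+n+1}}\int h_m(\bfm{x}_{m+1})g(\bfm{y}_n)\,\Pnm{\rho}{m+n+1}(\dd(\bfm{x}_{m+1},\bfm{y}_n)).
\end{equation*}
This is exactly the computation of $\tilde S_1+\tilde S_2$ in the proof of Theorem \ref{t:delta}, restricted to one index $n$. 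Corollaries \ref{c:A4} and \ref{c:A3} apply because $h_m(x,\cdot)\in\bH_m$ and $g\in\bH_n$, and they show that both integrals vanish unless $m\in\{n-1,n,n+1\}$. So only these three terms survive.

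For each remaining $m\in\{n-1,n,n+1\}$ (only $m\ge0$ when $n=1$), Corollary \ref{c:4.9} applies to the single function $h_m$. It requires $h_m\in L^2(\rho^{[m+1]})$, which follows from \eqref{e:4.11}. It gives $H_m\in\operatorname{dom}(\delta)$ with $\delta(H_m)$ as in \eqref{e:2078}, and by \eqref{PI} again
\begin{equation*}
\BE\int H_{m,x}\nabla_xF\,\zeta(\dd x)=\BE\delta(H_m)F.
\end{equation*}
Summing the three terms gives the claim.

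The main obstacle is the vanishing of the cross terms for $|m-n|\ge2$. This requires checking that the orthogonality corollaries in the appendix cover all index ranges, in particular small $n$ and the case $m=0$, where $h_0$ has no centering condition. If those corollaries are stated only under the summation structure used in Theorem \ref{t:delta}, I would instead argue directly: $\BE\int H_{m,x}\nabla_xF\,\zeta(\dd x)=\BE\delta(H_m)F$ holds for every $m$ by Corollary \ref{c:4.9}. From the proof of Theorem \ref{t:delta} we know $\delta(H_m)=Z_m\in\bF_{m-1}\oplus\bF_m\oplus\bF_{m+1}$, so $\BE\delta(H_m)F=0$ whenever $|m-n|\ge2$, because the chaoses are orthogonal. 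This alternative route is cleaner, and I would adopt it as the main argument.
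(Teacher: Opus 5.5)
Your proposal is correct, and the route you finally adopt is essentially the paper's: partial integration for $H$, the $L^2(C'_\zeta)$-convergence from Lemma~\ref{l:4.1} to split into the components $H_m$, Corollary~\ref{c:4.9} to apply partial integration to each $H_m$, and the localization $\delta(H_m)\in\bF_{m-1}\oplus\bF_m\oplus\bF_{m+1}$ from the proof of Theorem~\ref{t:delta} to discard the terms with $|m-n|\ge 2$. The only cosmetic difference is that the paper works with the partial sums $H_0+\cdots+H_k$, whereas you sum term by term.
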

\begin{proof} Define $H_{\le k}:=H_0+\cdots+H_k$ for
$k\in\N_0$. By partial integration and Lemma \ref{l:4.1},
\begin{align*}
\BE \delta(H)F=\BE \int H(\zeta,x)\nabla_xF\,\zeta(\dd x)
=\lim_{k\to\infty}\BE\int H_{\le k}(\zeta,x) \nabla_xF\,\zeta(\dd x).
\end{align*} 
By Corollary \ref{c:4.9}, we have $H_{\le k}\in\operatorname{dom}(\delta)$ and hence,
by partial integration,
\begin{equation*}
\BE \delta(H)F=\lim_{k\to\infty}\BE\delta(H_{\le k}) F.
\end{equation*}
We have seen in the proof of Theorem \ref{t:delta} that
$\delta(H_{m})\in \mathbf{F}_{m-1} \oplus \mathbf{F}_m \oplus \mathbf{F}_{m+1}$
for each $m\in\N$. Since $F\in\bF_n$, we therefore obtain
\begin{equation*}
\BE\delta(H_{\le k}) F=\BE\delta(H_{\le n+1}) F,\quad k\ge n+2,
\end{equation*}
and hence
\begin{equation*}
\BE \delta(H)F=\BE\delta(H_{\le n+1}) F.
\end{equation*}
Since $\BE\delta(H_{m}) F=0$ for $m\le n-2$, the assertion follows.
\end{proof}

\begin{remark}\rm Assume that $H$ satisfies the assumptions of Lemma \ref{l:4.12}.
For $n\in\N_0$ let $\pi_n$ denote the orthogonal projection from $L^2(\BP)$ to
$\bF_n$.  Lemma \ref{l:4.12} implies
\begin{equation*}
\pi_n\delta(H)=\pi_n\delta(H_{n-1})+\pi_n\delta(H_n)+\pi_n\delta(H_{n+1}).
\end{equation*}
In principle, these projections can be computed with Lemma \ref{l:3.5}, thus
providing the chaos expansion of $\delta(H)$.
We omit further details.
\end{remark}

\subsection{The Fleming--Viot operator}

Let $\operatorname{dom}(L)$ stand for the set of all $F\in L^2(\zeta)$ such that 
\begin{equation}\label{domL}
   \sum_{n=1}^\infty \frac{(\theta + n -1)^2n^2 n!}{\Pn{\theta}{2n}}
\int f_n(x)^2\, \Pnm{\rho}{n}(\dd x) < \infty, 
\end{equation}
where the kernel functions $f_n$, $n\in\N$, are again given by \eqref{EqChaosZerlegung}.
A comparison with \eqref{domNabla} shows that 
$\operatorname{dom}(L)\subset\operatorname{dom}(\nabla)$.
Justified by the following lemma we define for $F\in \operatorname{dom}(L)$ 
\begin{equation}\label{L}
LF:=-\sum_{n=1}^\infty (\theta + n -1)n \int  f_n(x) \, \zeta^n(\dd x).
\end{equation}

\begin{Lemma} Let $F\in \operatorname{dom}(L)$ with chaos expansion \eqref{EqChaosZerlegung}. Then the series
on the right-hand side of \eqref{L}
converges in $L^2(\BP)$.
\end{Lemma}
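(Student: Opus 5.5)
The plan is to use the orthogonality of the chaoses $\bF_n$ and reduce convergence of the series to a Cauchy criterion in $L^2(\BP)$. For $n\in\N$ write $G_n:=-(\theta+n-1)n\,\zeta^n(f_n)$. Since $f_n\in\bH_n$ (Theorem \ref{t:chaos}, or Lemma \ref{l:fnHn}), we have $G_n\in\bF_n$. By \eqref{KorOrhtoFmFn} the $G_n$ are pairwise orthogonal in $L^2(\BP)$ and
\begin{equation*}
\BE G_n^2=\frac{(\theta+n-1)^2n^2\,n!}{\Pn{\theta}{2n}}\int f_n(x)^2\,\rho^{[n]}(\dd x).
\end{equation*}

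For $n_0\le n_1$ orthogonality gives
\begin{equation*}
\BE\Big(\sum_{n=n_0}^{n_1}G_n\Big)^2=\sum_{n=n_0}^{n_1}\frac{(\theta+n-1)^2n^2\,n!}{\Pn{\theta}{2n}}\,\rho^{[n]}(f_n^2).
\end{equation*}
This is a tail of the series in \eqref{domL}. That series is finite because $F\in\operatorname{dom}(L)$, so the tail tends to $0$ as $n_0\to\infty$. The partial sums therefore form a Cauchy sequence in $L^2(\BP)$. By completeness they converge, and $LF$ is well defined as the limit, which is $\BP$-a.s.\ unique.

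As a by-product we obtain $\BE(LF)^2=\sum_{n\ge1}\BE G_n^2$. We also get $LF\in L^2(\zeta)$ with kernel functions $-(\theta+n-1)nf_n$: these lie in $\bH_n$, and their uniqueness follows from Lemma \ref{LemEindProj}. There is no real obstacle here. The only point to check is that the orthogonality relation \eqref{KorOrhtoFmFn} applies to the kernels $f_n$, which holds because they belong to $\bH_n$.
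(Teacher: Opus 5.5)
Your proof is correct and is essentially the paper's argument. Orthogonality of the chaoses via \eqref{KorOrhtoFmFn} turns the squared $L^2(\BP)$-norm of a block of partial sums into a tail of the series in \eqref{domL}, which gives the Cauchy property; your extra identification of the kernels of $LF$ as $-(\theta+n-1)nf_n$ matches the paper's subsequent remark that \eqref{L} is the chaos expansion of $LF$.
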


\begin{proof}
Let $m, n \in\N$ with $m\le n$. The orthogonality relations \eqref{KorOrhtoFmFn} yield
\begin{equation*}
    \BE \left(\sum_{k=m}^{n} k(\theta + k -1) \int  f_k(x) \, \zeta^k(\dd x) \right)^2
    = \sum_{k=m}^{n} \frac{k^2 (\theta+k-1)^2k!}{\Pn{\theta}{2k}} \int  f_k(x)^2\, \Pnm{\rho}{k}(\dd x).
\end{equation*}
Consequently, the partial sums of the series under consideration form a Cauchy sequence in $L^2(\BP)$ if and only if $F\in\operatorname{dom}(L)$.
\end{proof}

Since \eqref{L} provides the chaos expansion of $LF$, we immediately have
\begin{equation}\label{Lposdef}
\BE(LF)G=-\sum_{n=1}^\infty  \frac{(\theta + n -1)n n!}{\Pn{\theta}{2n}}
\int  f_n(x)g_n(x)\,\rho^{[n]}(\dd x), \quad F\in \operatorname{dom}(L),\,G\in L^2(\zeta). 
\end{equation}
In particular, we see that $L$ is negative semi-definite and that 
\begin{equation}\label{Lsymmetric}
\BE (LF)G=\BE F (LG),\quad F,G\in \operatorname{dom}(L).
\end{equation}
Since $LF$ is the $L^2$-limit of centred random variables, we further
have $\BE LF=0$ for each $F\in\operatorname{dom}(L)$.

For reasons that will become clear in the next section, we refer
to the linear mapping $L\colon \operatorname{dom}(L)\to L^2(\BP)$ 
as the {\em Fleming--Viot operator}. It plays a similar role as the
Ornstein--Uhlenbeck operator in a Gaussian or Poisson context.
This fact is supported by our next theorem.

\begin{theorem}\label{t:deltanabla} Suppose that $F\in \operatorname{dom}(\nabla)$. Then 
$\nabla F\in \operatorname{dom}(\delta)$ iff $F\in\operatorname{dom}(L)$. In this case
\begin{equation}\label{deltanabla}
\delta(\nabla F)=-LF,\quad \BP\text{-a.s.}
\end{equation}
\end{theorem}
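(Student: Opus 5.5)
The plan is to compare both sides of the partial integration formula \eqref{PI} chaos by chaos. The key identity is that for $F,G\in\operatorname{dom}(\nabla)$, by \eqref{e:iso} and \eqref{Lposdef},
\begin{equation*}
\BE\int \nabla_xF\,\nabla_xG\,\zeta(\dd x)=\sum_{n=1}^\infty\frac{(\theta+n-1)nn!}{\Pn{\theta}{2n}}\rho^{[n]}(f_ng_n).
\end{equation*}
This is exactly $-\BE(LF)G$ whenever $F\in\operatorname{dom}(L)$. So the question reduces to Riesz-type boundedness of the functional $G\mapsto\sum_n c_n\rho^{[n]}(f_ng_n)$, with $c_n:=(\theta+n-1)nn!/\Pn{\theta}{2n}$.

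\emph{Step 1 (if $F\in\operatorname{dom}(L)$).} Let $F\in\operatorname{dom}(L)$ and $G\in\operatorname{dom}(\nabla)$. Then $\BE\int\nabla_xF\,\nabla_xG\,\zeta(\dd x)=\BE(-LF)G$. By Cauchy--Schwarz this is bounded by $\|LF\|_2\|G\|_2$, where $LF\in L^2(\BP)$ by the preceding lemma. Also $\nabla F\in L^2(C_\zeta)$ by Proposition \ref{p:convnabla}, and it is a function of $(\zeta,x)$, so we may regard it as an element of $L^2(C'_\zeta)$. Hence $\nabla F\in\operatorname{dom}(\delta)$. The uniqueness in the Riesz representation, which uses that $\operatorname{dom}(\nabla)$ is dense (it contains all finite chaos sums), then gives $\delta(\nabla F)=-LF$ a.s. The pairing identity for $F\ne G$ follows from \eqref{e:iso} by polarization, which the paper already notes.

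\emph{Step 2 (converse).} Suppose $\nabla F\in\operatorname{dom}(\delta)$, and set $D:=\delta(\nabla F)\in L^2(\BP)$ with kernels $d_n$. Take $G=\zeta^n(g)$ with $g\in\bH_n$; it lies in $\operatorname{dom}(\nabla)$ since it has a single chaos. Partial integration combined with the identity above gives
\begin{equation*}
c_n\rho^{[n]}(f_ng)=\BE DG=\frac{n!}{\Pn{\theta}{2n}}\rho^{[n]}(d_ng),
\end{equation*}
by \eqref{KorOrhtoFmFn}. Since $f_n,d_n\in\bH_n$ and $g$ ranges over $\bH_n$, we can choose $g=(\theta+n-1)nf_n-d_n$, which yields $d_n=(\theta+n-1)nf_n$ $\rho^{[n]}$-a.e. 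Then \eqref{e:iso0} gives
\begin{equation*}
\infty>\BE D^2\ge\sum_n\frac{n!}{\Pn{\theta}{2n}}\rho^{[n]}(d_n^2)=\sum_n\frac{(\theta+n-1)^2n^2n!}{\Pn{\theta}{2n}}\rho^{[n]}(f_n^2),
\end{equation*}
which is \eqref{domL}. So $F\in\operatorname{dom}(L)$, and Step 1 then identifies $D=-LF$. Also $\BE D=0$, from taking $G\equiv1$.

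The only potentially delicate point is the bookkeeping between $L^2(C_\zeta)$ and $L^2(C'_\zeta)$: the domain of $\delta$ is defined for functions of $(\mu,x)$, so the gradient has to be represented as such a function. The defining series \eqref{defnabla} is expressed through $\zeta$ alone, so $\nabla F=\tilde H(\zeta,\cdot)$ for a measurable $\tilde H$ obtained as an $L^2(C'_\zeta)$-limit, and this point should be harmless. Otherwise the argument is a direct consequence of the isometry \eqref{e:iso} and the orthogonality of the chaoses.
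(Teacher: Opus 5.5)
Your proposal is correct and follows essentially the same route as the paper. The forward direction bounds the pairing from \eqref{e:iso} by $\|LF\|_2\|G\|_2$; you get this in one step via \eqref{Lposdef}, where the paper applies Cauchy--Schwarz twice to the series. The converse tests against single-chaos $G$ to obtain $d_n=n(\theta+n-1)f_n$ and then reads off \eqref{domL} from $\delta(\nabla F)\in L^2(\BP)$, exactly as the paper does.
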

\begin{proof}
Let $F\in\operatorname{dom}(\nabla)$ with chaos expansion \eqref{EqChaosZerlegung}.

Assume first that $F\in\operatorname{dom}(L)$. 
Consider identity \eqref{e:iso} for $G\in\operatorname{dom}(\nabla)$. Applying 
the Cauchy--Schwarz inequality, we see that the right-hand side is bounded by
\begin{equation*}
    \sum_{n=1}^\infty \left(\frac{n^2n!(\theta +n-1)^2}{\Pn{\theta}{2n}} 
\int  f_n(x)^2 \, \Pnm{\rho}{n}(\dd x)\right)^{\frac{1}{2}}\left(\frac{n!}{\Pn{\theta}{2n}}\int g_n(x)^2 \, \Pnm{\rho}{n}(\dd x)\right)^{\frac{1}{2}}.
\end{equation*}
Applying the Cauchy–Schwarz inequality again, this series can be bounded by 
\begin{equation*}
    \left(\sum_{n=1}^\infty \frac{n^2n!(\theta +n-1)}{\Pn{\theta}{2n}} \int f_n(x)^2 \, \Pnm{\rho}{n}(\dd x)\right)^{\frac{1}{2}}
    \left(\sum_{n=1}^\infty \frac{n!}{\Pn{\theta}{2n}} \int  g_n(x)^2 \, \Pnm{\rho}{n}(\dd x) \right)^{\frac{1}{2}}.
\end{equation*}
Since $F\in\operatorname{dom}(L)$ and $G\in\operatorname{dom}(\nabla)$, we obtain
\begin{equation}
\BE \int \nabla_xF\nabla_xG\,\zeta(\dd x)\le c\|G\|_2,
\end{equation}
where $c=\|LF\|_2< \infty$. Thus, $\nabla F$ is an element of $\operatorname{dom}(\delta)$.

Assume, conversely, that $\nabla F \in \operatorname{dom}(\delta)$. 
Then we obtain from the integration by parts formula \eqref{PI} that
\begin{equation}\label{4.3}
    \BE \delta(\nabla F) G = \BE \int  (\nabla_x F) (\nabla_x G) \, \zeta(\dd x)
\end{equation}
for all $G\in\operatorname{dom}(\nabla)$. We set $H\coloneqq \delta(\nabla F)$. 
Let the chaos expansions of $H$ and $G\in\operatorname{dom}(\nabla)$ be given by
\begin{equation*}
 H=  \sum_{n=1}^\infty \int h_n(x) \, \zeta^n(\dd x) \qquad \text{and}
\qquad G = \BE G + \sum_{n=1}^\infty \int g_n(x) \, \zeta^n(\dd x),
\end{equation*}
respectively, where we recall that $\BE H=0$. By \eqref{e:iso0},
\begin{equation} \label{eq SKP in L^2(P) und L^2(Czeta) 1}
\BE \delta(\nabla F) G 
=  \sum_{n=1}^\infty \frac{n!}{\Pn{\theta}{2n}} \int h_n(x) g_n(x) \, \Pnm{\rho}{n}(\dd x). 
\end{equation}
Therefore, we obtain from \eqref{4.3} and \eqref{e:iso}
\begin{equation}\label{e:4.678}
\sum_{n=1}^\infty \frac{nn!(\theta +n-1)}{\Pn{\theta}{2n}} \int f_n(x) g_n(x) \, \Pnm{\rho}{n}(\dd x)
=\sum_{n=1}^\infty \frac{n!}{\Pn{\theta}{2n}} \int h_n(x) g_n(x) \, \Pnm{\rho}{n}(\dd x)
\end{equation}
and consequently 
\begin{equation*}
 \int   (n(\theta+n-1)f_n(x) - h_n(x)) g(x) \, \Pnm{\rho}{n}(\dd x) = 0
\end{equation*}
for each $n\in\N$ and each $g\in\bH_n$. Choosing $g=n(\theta+n-1)f_n - h_n$, gives
\begin{equation}\label{e:4.56}
    h_n = n(\theta+n-1)  f_n, \quad \Pnm{\rho}{n}\text{-a.e.},\, n\in\N.
\end{equation}
Since $H\in L^2(\zeta)$, the convergence of
\begin{equation*}
    \sum_{n=1}^\infty \frac{n!}{\Pn{\theta}{2n}} \int h_ n(x)^2 \, \Pnm{\rho}{n}(\dd x) 
= \sum_{n=1}^\infty \frac{n!n^2(\theta + n-1)^2}{\Pn{\theta}{2n}} \int  f_n(x)^2 \, \Pnm{\rho}{n}(\dd x)
\end{equation*}
yields $F\in\operatorname{dom}(L)$. Finally we obtain from
\eqref{eq SKP in L^2(P) und L^2(Czeta) 1} and \eqref{e:4.56} that
\begin{equation*}
\BE \delta(\nabla F) G 
=  \sum_{n=1}^\infty \frac{n!n(\theta + n-1)}{\Pn{\theta}{2n}} \int f_n(x) g_n(x) \, \Pnm{\rho}{n}(\dd x)
    = \BE (-L(F))G. 
\end{equation*}
Since this is true for all $G\in \operatorname{dom}(\nabla)$, we obtain that $\delta(\nabla F)$
and $-LF$ coincide $\BP$-a.s.
\end{proof}

\section{The Fleming--Viot process}\label{sec:FlemingViot}

In this section we assume $\BX$ to be a locally compact Polish space equipped with the Borel $\sigma$-field 
$\mathcal{X}$. The probability measure $\Di(\rho)$ is the unique invariant 
distribution of the measure-valued Fleming--Viot process
(with parent-independent mutation) as
introduced and studied in \cite{FlemingViot79}. 
The generator of this process can be explicitly defined on smooth functions
as follows. 
Let $\mathbf{F}_b(\BX)$ denote the space of all bounded and measurable functions $f\colon\BX\to\R$.
Let $\bS$ denote the space of all functions $F\colon \bM_1(\BX)\to\R$ of
the form $F(\mu)=\varphi(\mu(f_1), \ldots, \mu(f_d))$, where $d\in\N$,
$f_1,\ldots,f_d\in \mathbf{F}_b(\BX)$ and $\varphi\in C^\infty(\R^d)$.
For such an $F$ we define $L_\rho F\colon \bM_1(\BX) \to \R$ by
\begin{align*}
    (L_{\rho} F) (\mu):=\frac{1}{2} &\sum_{i,j= 1}^d (\partial^2_{i,j} \varphi)\left(\mu(f_1), \ldots, \mu(f_d) \right) 
\CV_\mu(f_i,f_j) \\
    &\quad+  \frac{1}{2}\sum_{i= 1}^d (\partial_i\varphi)\left(\mu(f_1), \ldots, \mu(f_d) \right) \mu (A f_i),\quad \mu\in \bM_1(\BX),
\end{align*}
where $\partial_i\varphi$ and $\partial^2_{i,j}\varphi$ denote the first and second order partial derivatives of $\varphi$
and the {\em mutation operator} $A\colon \mathbf{F}_b(\BX)\to \mathbf{F}_b(\BX)$ is defined by
\begin{equation} \label{eq Mutationsop}
    Af (x):=\int (f(y) - f(x)) \, \rho(\dd y), \quad x \in \BX. 
\end{equation}
We will see below that
\begin{equation}\label{Lsmooth}
(L_\rho F)(\zeta)=\frac{1}{2}L F,\quad \BP\text{-a.s.},\, F\in\bS.
\end{equation}
In fact, we shall describe the closure (in $L^2(\Di(\rho)$) of the bilinear  form associated with $L_\rho$
explicitly in terms of Malliavin operators. Moreover, we shall see that $\frac{1}{2}L$ is 
the generator of this form. We start with the following lemma from
\cite{Peccati08}.

\begin{lemma}\label{Sdense}
The set $\bS$ is dense in $L^2(\Di(\rho))$.
\end{lemma}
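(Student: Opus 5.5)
We show that the closed linear span of $\bS$ in $L^2(\Di(\rho))$ contains every random variable $\mu\mapsto\mu^m(f)$ with $m\in\N_0$ and $f\in L^2(\rho^{[m]})$. As used in the proof of Theorem \ref{t:chaos} (see \cite[Lemma 2]{Peccati08}), the closed span of these variables is all of $L^2(\Di(\rho))$, so density of $\bS$ follows. Since $\bS$ is a vector space, it suffices to approximate each $\mu^m(f)$.

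\emph{Step 1: bounded product functions.} Let $f=g_1\otimes\cdots\otimes g_m$ with $g_i\in\mathbf{F}_b(\BX)$. Then $\mu^m(f)=\prod_i\mu(g_i)$. This is $\varphi(\mu(g_1),\dots,\mu(g_m))$ with the polynomial $\varphi(t)=t_1\cdots t_m\in C^\infty(\R^m)$, so it lies in $\bS$. The case $m=0$ is covered by constants, e.g.\ $\varphi\equiv c$. By linearity, $\mu^m(f)\in\bS$ for every finite linear combination $f$ of such tensors. In particular this holds for $f=\I_{B_1\times\cdots\times B_m}$ and for linear combinations of indicators of measurable rectangles.

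\emph{Step 2: extension to $L^2(\rho^{[m]})$.} The map $f\mapsto\mu^m(f)$ from $L^2(\rho^{[m]})$ to $L^2(\Di(\rho))$ is continuous. Indeed, by Jensen's inequality for the probability measure $\zeta^m$ and by \eqref{nmoment},
\begin{equation*}
\BE\,\zeta^m(f)^2\le \BE\,\zeta^m(f^2)=\frac{1}{\Pn{\theta}{m}}\,\rho^{[m]}(f^2).
\end{equation*}
The measure $\rho^{[m]}$ is finite, with total mass $\Pn{\theta}{m}$. Hence linear combinations of indicators of measurable rectangles $B_1\times\cdots\times B_m$ are dense in $L^2(\rho^{[m]})$. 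One sees this by a monotone class argument: the set of $B\in\cX^m$ whose indicator can be approximated by such combinations is a Dynkin system containing the $\pi$-system of rectangles, so it is all of $\cX^m$. Simple functions are dense in $L^2$, which finishes the density claim. Combining this with the continuity estimate, every $\zeta^m(f)$ with $f\in L^2(\rho^{[m]})$ is an $L^2$-limit of elements of $\bS$.

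\emph{Main obstacle.} The only nontrivial ingredient is the cited fact that polynomial functionals $\zeta^m(f)$ span a dense subspace of $L^2(\Di(\rho))$. We take this from \cite{Peccati08}, as the paper already does in the proof of Theorem \ref{t:chaos}. If one prefers a self-contained argument, one would argue as follows. Functions of $(\mu(B_1),\dots,\mu(B_d))$ for finite partitions generate $\cM_1$, so by a monotone class argument it suffices to approximate bounded functions $\psi(\mu(B_1),\dots,\mu(B_d))$. These functions live on the compact simplex $\Delta_d$. There, by Weierstrass, $\psi$ can be approximated by polynomials, first in $L^2$ of the Dirichlet law after smoothing $\psi$ by continuous functions. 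Such polynomials are again of the form $\varphi(\mu(\I_{B_1}),\dots,\mu(\I_{B_d}))\in\bS$, so this route avoids Step 2 entirely.
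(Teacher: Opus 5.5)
Your proof is correct and follows the paper's argument: the paper likewise notes that $\mu\mapsto\mu^m(h_1\otimes\cdots\otimes h_m)$ with bounded $h_i$, together with the constants, lies in the linear space $\bS$, and then invokes \cite[Lemma 2]{Peccati08} directly for these bounded tensor functionals. Your Step 2 (continuity of $f\mapsto\zeta^m(f)$ on $L^2(\rho^{[m]})$ plus density of rectangle indicators) is therefore only a harmless bridge to the form of that lemma quoted in the proof of Theorem \ref{t:chaos}, and your optional self-contained route (finite partitions, a monotone class argument and Weierstrass approximation on $\Delta_d$) is also sound and would remove the dependence on the citation.
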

\begin{proof} 
Let $m\in\N$ and $h_1,\ldots,h_m\in \mathbf{F}_b(\BX)$. Then
$\mu\mapsto \mu^m(h_1\otimes\cdots\otimes h_m)$ is an element of $\bS$.
Furthermore, $\bS$ contains the constant functions. Since $\bS$ is a linear space,
the assertion follows from \cite[Lemma 2]{Peccati08}. 
\end{proof}

Let us introduce a bilinear form 
$\cE\colon \operatorname{dom}(\nabla)\times \operatorname{dom}(\nabla)\to\R$
by 
\begin{equation}
\cE(F,G):=\BE \int (\nabla_xF)(\nabla_xG)\,\zeta(\dd x),\quad F,G\in \operatorname{dom}(\nabla).
\end{equation}
By \eqref{gradientmeanzero}, we can write this as
\begin{equation}
\cE(F,G)=\BE \CV_\zeta (\nabla F,\nabla G),
\end{equation}
where we use the (quite natural) notation 
\begin{equation*}
\CV_\zeta (H,\tilde{H}):=\int H_x\tilde{H}_x\,\zeta(\dd x)-\int H_x\,\zeta(\dd x) \int\tilde{H}_x\,\zeta(\dd x)
\end{equation*}
for measurable functions $(\omega,x)\mapsto H_x(\omega)$ and $(\omega,x)\mapsto \tilde{H}_x(\omega)$ in $L^2(C_\zeta)$.
As usual we abbreviate $\cE(F):=\cE(F,F)$.
The following lemma shows that the form $\cE$ is {\em closed}; see e.g.\ 
\cite{FukushimaOshimaTakeda1994}. This means that
$\operatorname{dom}(\nabla)$ equipped with the scalar product 
$(F,G) \mapsto \cE(F,G) + \BE F G$ is complete.

\begin{lemma} \label{LemGammaAbg}
Let $F_n\in\operatorname{dom}(\nabla)$, $n\in\N$, and assume that $\lim_{m,n\to\infty}\cE(F_m - F_n) = 0$
as well as $\lim_{m,n\to\infty}(\BE F_m - \BE F_n) = 0$.
Then there exists $F\in\operatorname{dom}(\nabla)$ with
$\lim_{n\to\infty} \cE(F - F_n) = 0$ and $F_n\to F$ in $L^2(\BP)$.
\end{lemma}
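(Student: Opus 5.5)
The plan is to reduce to Lemma~\ref{LemGradientAbg} and handle the mean separately. By definition, $\cE(F_m-F_n)=\BE\int(\nabla_xF_m-\nabla_xF_n)^2\,\zeta(\dd x)$. Linearity of $\nabla$ gives $\nabla(F_m-F_n)=\nabla F_m-\nabla F_n$ $C_\zeta$-a.e., so the hypothesis says exactly that $(\nabla F_n)_{n\in\N}$ is a Cauchy sequence in $L^2(C_\zeta)$. Lemma~\ref{LemGradientAbg} then yields some $F'\in\operatorname{dom}(\nabla)$ with $\BE\int(\nabla_xF'-\nabla_xF_n)^2\,\zeta(\dd x)\to 0$ and $F_n-\BE F_n\to F'$ in $L^2(\BP)$. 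By its construction in that proof, $F'$ has no constant chaos component, so $\BE F'=0$.

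Next I use the second hypothesis. The real sequence $(\BE F_n)$ is Cauchy, so it has a limit $c\in\R$. Set $F:=F'+c$. Constants lie in $\operatorname{dom}(\nabla)$ with $\nabla c=0$, since all their kernel functions vanish in \eqref{defnabla}. Hence $F\in\operatorname{dom}(\nabla)$ and $\nabla F=\nabla F'$ $C_\zeta$-a.e. By linearity,
\begin{equation*}
\cE(F-F_n)=\BE\int(\nabla_xF'-\nabla_xF_n)^2\,\zeta(\dd x)\to 0.
\end{equation*}
For the $L^2$ convergence, the triangle inequality gives
\begin{equation*}
\|F_n-F\|_2\le\|(F_n-\BE F_n)-F'\|_2+|\BE F_n-c|,
\end{equation*}
and both terms on the right tend to zero.

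No step here is a real obstacle. The only point needing care is that the hypothesis on $\cE$ alone does not control the zeroth chaos, because the kernels of $F_n$ enter $\cE$ only for $n\ge1$ by \eqref{e:iso}. This is exactly why the separate assumption on $\BE F_n$ is needed, and why the mean $c$ is added back by hand.
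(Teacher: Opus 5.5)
Your proposal is correct and is essentially the paper's argument: the paper only says the lemma is a consequence of Lemma~\ref{LemGradientAbg}, and you supply the omitted details. Specifically, you note that the hypothesis on $\cE$ means $(\nabla F_n)$ is Cauchy in $L^2(C_\zeta)$, and that the limit $c$ of $\BE F_n$ must be added back as a constant with zero gradient. As a minor simplification, $\BE F'=0$ already follows from $F_n-\BE F_n\to F'$ in $L^2(\BP)$, so you do not need to look inside the construction of $F'$.
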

\begin{proof}
The assertion is a consequence Lemma~\ref{LemGradientAbg}.
\end{proof}

For $F\in \operatorname{dom}(L)$ we can use Theorem \ref{t:deltanabla}
and partial integration  to obtain that 
\begin{equation}\label{e:dirichletgenerator}
\BE\int \nabla_x F \nabla_x G\,\zeta(\dd x)=\BE(-LF)G,\quad G\in \operatorname{dom}(\nabla),
\end{equation}
that is
\begin{equation}\label{PI2}
\BE (-LF)G=\cE(F,G),\quad F\in \operatorname{dom}(L),G\in \operatorname{dom}(\nabla).
\end{equation}
We wish to identify $\cE$ with the {\em Dirichlet form} associated with
the generator
$L_\rho$; see  \cite{FukushimaOshimaTakeda1994} for an introduction into
the theory  of Dirichlet forms
and \cite{Overbecketal95} for a thorough study of Dirichlet forms
associated with the Fleming--Viot
process and some of its generalizations.
In order to do so, we need to relate our Malliavin gradient to a
pathwise defined gradient defined on $\bS$ \cite{Stannat00,Shao11}.

For $F\in \bS$ we define $\nabla^*F\colon \Omega\times\BX\to \R$ by
\begin{equation}\label{e:nabla*}
\nabla^*F(\omega,x):=\sum^d_{i=1}\partial_i\varphi(\zeta(\omega)(g_1),\ldots,\zeta(\omega)(g_d))(g_i(x)-\zeta(g_i)). 
\end{equation}
Similarly as before we
denote the random variable $\omega\mapsto \nabla^*F(\omega,x)$ by $\nabla^*_xF$.
Clearly we have
\begin{equation*}
\nabla^*_xF=\frac{\dd}{\dd t}F((1-t)\zeta+t\delta_x)\Big|_{t=0},\quad x\in\BX,
\end{equation*}
showing that  $\nabla^*_xF$ can be interpreted as a directional derivative.

The following proposition shows
that $\nabla$ and $\nabla^*$ coincide on $\bS$ almost everywhere with respect to $C_\zeta$. 
In particular, the definition $\eqref{e:nabla*}$ does $C_\zeta$-a.e.\ not depend on the chosen 
representation of $F$.  

\begin{proposition}\label{l:nabla=nabla*} Let $F\in\bS$. Then $F\in\operatorname{dom}(\nabla)$ and
\begin{equation*}
\nabla F=\nabla^*F,\quad C_\zeta\text{-a.e.}
\end{equation*}
\end{proposition}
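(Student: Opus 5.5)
Since both $\nabla$ and $\nabla^*$ are linear, I will first establish the identity on the polynomial functionals $F(\mu)=\mu^m(h)$ with $h=h_1\otimes\cdots\otimes h_m$ and $h_i\in\bF_b(\BX)$. I will then pass to general $F\in\bS$ by approximating $\varphi$ with polynomials and using the closedness of $\nabla$ (Lemma \ref{LemGradientAbg}).

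\emph{Step 1: polynomials.} Let $F=\zeta^m(f)$ with $f$ bounded and symmetric. By Lemma \ref{l:3.5}, $F$ has only finitely many nonzero kernels $f_0,\dots,f_m$, so \eqref{domNabla} is a finite sum and $F\in\operatorname{dom}(\nabla)$. Directly from \eqref{e:nabla*}, and by symmetry, we have
\[
\nabla^*_x\zeta^m(f)=m\Big(\int f(x,\bfm{y}_{m-1})\,\zeta^{m-1}(\dd\bfm{y}_{m-1})-\zeta^m(f)\Big).
\]
This is exactly the $n$-th summand of \eqref{defnabla} applied to the single term $\zeta^m(f)$, now with $f$ not necessarily in $\bH_m$. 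The operator $D_m\colon f\mapsto m(\zeta^{m-1}(f(x,\cdot))-\zeta^m(f))$ is linear in $f$. So it suffices to show that
\[
\sum_{k=0}^m D_k f_k \;=\; D_m f \qquad\text{on the set } \zeta^m(f)=\sum_k\zeta^k(f_k).
\]
I plan to do this by lowering the degree. Using $\zeta(\BX)=1$, I write $\zeta^k(f_k)=\zeta^m(f_k\otimes 1^{\otimes(m-k)})$ and symmetrize, so that the identity reduces to the following claim: if $g$ is symmetric and $\zeta^m(g)=0$ a.s., then $D_m g=0$ $C_\zeta$-a.e. The other ingredient is the consistency $D_m(g\otimes 1)^\sim=D_{m-1}g$, which is checked directly from the definition because the extra variable integrates to $1$ against $\zeta$.

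\emph{Main obstacle: the claim.} If $\zeta^m(g)=0$ a.s., I need that $\int g(x,\bfm{y})\,\zeta^{m-1}(\dd\bfm{y})=0$ for $C_\zeta$-a.e.\ $(\omega,x)$. I will use the Mecke equation \eqref{Mecke1}. Under $C_\zeta$, the pair $(\zeta,x)$ is distributed as $x\sim\rho/\theta$ together with $\zeta\sim\Di(\rho+\delta_x)$. Moreover $\zeta_{\rho+\delta_x}$ has an atom at $x$ of Beta-distributed size $W$ and can be written as $W\delta_x+(1-W)\zeta'$. Then
\[
0=\zeta_{\rho+\delta_x}^m(g)=\sum_{j}\binom{m}{j}W^j(1-W)^{m-j}\,\zeta'^{m-j}\big(g(x^{\otimes j},\cdot)\big)
\]
holds for all $W$ in a set of positive measure, with $\zeta'$ independent of $W$. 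A polynomial in $W$ that vanishes a.s.\ has all coefficients zero, and so every coefficient vanishes. The $j=1$ coefficient, combined with $j=0$, gives $\zeta^{m-1}(g(x,\cdot))=0$ after recombining. This yields $D_mg=0$.

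Alternatively, one could avoid this step by computing the kernels explicitly via Lemma \ref{l:3.5} and matching them, but the decomposition argument is cleaner and is the one I will pursue.

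\emph{Step 2: general $F\in\bS$.} Write $F=\varphi(\zeta(g_1),\dots,\zeta(g_d))$. The arguments $\zeta(g_i)$ lie in the bounded box $\prod_i[-\|g_i\|_\infty,\|g_i\|_\infty]$. Choose polynomials $p_k$ with $p_k\to\varphi$ and $\nabla p_k\to\nabla\varphi$ uniformly on this box, and set $F_k=p_k(\zeta(g_1),\dots,\zeta(g_d))$. Each $F_k$ is a combination of polynomials of the form treated in Step 1, so $\nabla F_k=\nabla^*F_k$. By the uniform convergence, $\nabla^*F_k\to\nabla^*F$ boundedly, hence in $L^2(C_\zeta)$, and $F_k\to F$ in $L^2(\BP)$. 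Lemma \ref{LemGradientAbg} then provides a limit $\tilde F\in\operatorname{dom}(\nabla)$ with $\nabla F_k\to\nabla\tilde F$ and $F_k-\BE F_k\to\tilde F$. Consequently $F-\BE F=\tilde F$, so $F\in\operatorname{dom}(\nabla)$ and $\nabla F=\nabla\tilde F=\nabla^*F$ $C_\zeta$-a.e.
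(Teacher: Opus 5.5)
Your proof is correct. For the polynomial case it takes a genuinely different route from the paper.

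\textbf{How the routes differ.} The paper works with $F=\zeta^m(h_1\otimes\cdots\otimes h_m)$ and computes everything explicitly. It inserts the kernel formula of Lemma \ref{l:3.5} into $\sum_n n\,\zeta^{n-1}(f_n(x,\cdot))$, expands $(\rho+\delta_{x_1}+\cdots+\delta_{x_j})^{[m]}$ on tensor products (Appendix B), and collapses the sum over $n$ with the rising-factorial identity of Appendix C. A partition-counting argument then shows that the only $x$-dependent survivor is $\sum_i h_i(x)\prod_{j\ne i}\zeta(h_j)$.

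You never compute a kernel. You note that $\nabla$ acts on each chaos through the pathwise operator $D_k$. You then show that $D_m$ respects the a.s.\ identity $\zeta^m(f)=\sum_k\zeta^k(f_k)$, using two facts:
\begin{itemize}
\item the padding consistency $D_m\widetilde{g\otimes 1}=D_{m-1}g$, a pathwise consequence of $\zeta(\BX)=1$;
\item the null-representation lemma: if $\zeta^m(g)=0$ a.s., then $\zeta^{m-1}(g(x,\cdot))=0$ $C_\zeta$-a.e.
\end{itemize}
This is much shorter and proves more. It gives $\nabla\zeta^m(f)=m\bigl(\zeta^{m-1}(f(x,\cdot))-\zeta^m(f)\bigr)$ for every bounded symmetric $f$, not only for symmetrized tensor products.

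\textbf{What you should add.} Your argument uses an ingredient from outside the paper: $\Di(\rho+\delta_x)$ is the law of $W\delta_x+(1-W)\zeta'$, with $W\sim\mathrm{Beta}(1,\theta)$ independent of $\zeta'\sim\Di(\rho)$. This is standard: normalize $\gamma_\rho+G\delta_x$, where $\gamma_\rho$ is a Gamma random measure and $G\sim\mathrm{Gamma}(1)$ is independent. You should still state and justify it. You should also spell out the Mecke step that passes from $\zeta^m(g)=0$ $\BP$-a.s.\ to $\zeta^m_{\rho+\delta_x}(g)=0$ a.s.\ for $\rho$-a.e.\ $x$.

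\textbf{One phrasing fix.} Under $\mu=W\delta_x+(1-W)\zeta'$, the quantity $\mu^{m-1}(g(x,\cdot))$ expands into all the coefficients $\zeta'^{m-j}(g(x^{\otimes j},\cdot))$ with $j\ge 1$. So you need all of them to vanish, not just the ones with $j\in\{0,1\}$. Your Bernstein-basis argument does deliver all of them.

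\textbf{Step 2 is a real addition.} You approximate $\varphi$ in $C^1$ by polynomials on the compact box and then invoke the closedness in Lemma \ref{LemGradientAbg}. The paper's written argument covers only product functionals $\zeta^m(h_1\otimes\cdots\otimes h_m)$ and leaves the extension to general smooth $\varphi$ implicit. Your Step 2 supplies that passage.
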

\begin{proof}
First we examine $\nabla^\ast F$. 
Since $F$ can be represented as
\begin{equation*}
	F(\mu)  = \varphi\big(\mu(f_1), \ldots, \mu(f_m) \big), \quad \mu \in \bM_1(\BX),
\end{equation*}
with $f_i \coloneqq h_i$, $i \in [m]$, and $\varphi \colon \R^m \rightarrow \R$, $\varphi(x_1, \ldots, x_m) \coloneqq \prod_{i=1}^m x_i$, 
it is an element of $\bS$ and formula \eqref{e:nabla*} yields
\begin{equation*}
	\nabla_x^\ast F  = \sum_{i=1}^m  \big(\partial_{i} \varphi\big) \big(\zeta(h_1), \ldots, \zeta(h_m) \big)  \big(h_i (x)-\zeta(h_i)\big) 
    = \sum_{i=1}^m h_i (x) \prod_{\substack{j= 1 \\ j \neq i}}^m \zeta(h_j)  -  m\zeta^m(h).
\end{equation*}

Second we consider $\nabla F$. 
Let $x\in\BX$.
By Lemma~\ref{l:3.5}, we have \ $F \in \bigoplus_{i=0}^m \mathbf{F}_i$. Consequently, $F$ belongs to $\operatorname{dom}(\nabla)$ and 
we have
\begin{equation*}
	\nabla_x F 
= \sum_{n=1}^m n  \int f_n (x, \bfm{y}_{n-1}) \, \zeta^{n-1}(\dd \bfm{y}_{n-1}) 
-  \sum_{n=1}^m n\int f_n (\bfm{y}) \, \zeta^{n}(\dd \bfm{y}) \eqqcolon S(x) - \int S(z) \, \zeta(\dd z),
\end{equation*}
where the kernel functions of $F$ are denoted by $f_n$, $n\in\N$.  
Note that only the
terms in $S$ that explicitly depend on $x$ are relevant here. Indeed, since the
integral of $S$ with respect to the probability measure $\zeta$ is
subtracted, the terms not depending on $x$ do not contribute to the
difference.  
By~\eqref{e:3.25}, we have that $S(x)$ equals
\begin{align*}
    &\sum_{n=1}^m \frac{\theta + 2n - 1}{(n-1)!}
    \Bigg[  \sum_{j=0}^{n-1}  (-1)^{n-j} \frac{\Pn{(\theta+j)}{n-1}}{\Pn{(\theta+j)}{m}} 
             \sum_{1<i_1 < \ldots < i_j}  \int\Pnm{(\rho + \delta_{y_{i_1}} +\cdots + \delta_{y_{i_j}})}{m}(h)\,  \zeta^{n}(\dd \bfm{y}_{n})\\
        &+  \sum_{j=1}^{n}  (-1)^{n-j} \frac{\Pn{(\theta+j)}{n-1}}{\Pn{(\theta+j)}{m}} 
             \sum_{1<i_2 < \ldots < i_j\le n}  \int \Pnm{(\rho + \delta_x + \delta_{y_{i_2}} +\cdots + \delta_{y_{i_j}})}{m}(h) \, \zeta^{n} (\dd \bfm{y}_{n})  \Bigg]\\
    &\eqqcolon S_{1} + S_{2}(x). 
\end{align*}
Since $S_{1}$ does not depend on $x$, this term does not contribute to $\nabla_x F$.
By Corollary~\ref{Kor Rho+delta_x_1+...+delta_x_k} (recall definition \eqref{eq Def f_otimes}), the term $S_{2}(x)$ equals 
\begin{align*}
    & \sum_{n=1}^m \frac{\theta + 2n - 1}{(n-1)!} \sum_{j=1}^{n} (-1)^{n-j} 
\frac{\Pn{(\theta+j)}{n-1}}{\Pn{(\theta+j)}{m}} \binom{n-1}{j-1} \bigg( \Pnm{\rho}{m}(h) \\
    &+ \sum_{r=1}^m \quad \sideset{}{^{\ne}}\sum_{i_1,\dots,i_r\in[m]} \int h_{\otimes i_1, \ldots, i_r}^j (\bfm{y}_{j-1},x) \, 
\zeta^{j-1}(\dd \bfm{y}_{j-1}) \Pnm{\rho}{m-r}(h^{\otimes i_1, \ldots, i_r}) \bigg)
\eqqcolon S_{2,1} + S_{2,2}(x).
\end{align*}
Only the term $S_{2,2}(x)$ contributes to the gradient.
Swapping the order of summation gives 
\begin{equation*}
    S_{2,2}(x) = \sum_{j=1}^{m} \sum_{n=j}^m \frac{(-1)^{n-j}}{(j-1)!(n-j)!}  c_{n,j} \sum_{r=1}^m  \quad\sideset{}{^{\ne}}\sum_{i_1,\dots,i_r\in[m]}
\zeta^{j-1}( h_{\otimes i_1, \ldots, i_r}^j (\cdot,x)) \Pnm{\rho}{m-r}(h^{\otimes i_1, \ldots, i_r}),
\end{equation*}
where $c_{n,j}\coloneqq (\theta + 2n -1) \Pn{(\theta+j)}{n-1}(\Pn{(\theta+j)}{m})^{-1}$.
Applying Lemma~\ref{LemHypergeoSumme 1} to the sum over $n$ and inserting the definition of 
$h_{\otimes i_1, \ldots, i_r}^j$ for $j, r \in [m]$ 
and pairwise distinct $i_1, \ldots, i_r \in [m]$ 
shows that $S_{2,2}(x)$  equals
\begin{align}\label{e:5.8} 
S_{2,2}(x) 
=\sum_{r,j=1}^m & \quad \sideset{}{^{\ne}}\sum_{i_1,\dots,i_r\in[m]} \frac{(-1)^{m-j}}{(j-1)!(m-j)!} \times\\ \notag
&\times \sum_{ 1\le l_1 \le \cdots \le l_r\le j} \int h_{\otimes i_1, \ldots, i_r} (y_{l_1}, \ldots,y_{l_r}) 
\, \zeta^{j-1}(\dd \bfm{y})\Pnm{\rho}{m-r}(h^{\otimes i_1, \ldots, i_r}),
\end{align}
where we set $y_j \coloneqq x$, $j\in [m]$.

Let $r\in\N$ and suppose $1\le l_1 \le \cdots \le l_r\le j$ 
for some $j\in [m]$. To exploit the symmetry in \eqref{e:5.8},
we decompose the summation
as follows. First we choose $i\in\{0,\ldots,r\}$ such that 
$l_{r-i+1} = \cdots = l_r =j$ and $l_{r-i}<j$.    
For $s\in [r-i]$ let $n_s$ be the number of distinct elements  
of the set $\{l_1,\ldots,l_{r-i}\}$ which occur $s$ times 
as component of the vector $(l_1,\ldots,l_{r-i})$. Then $k:=n_1+\cdots+n_{r-i}$
is the cardinality of $\{l_1,\ldots,l_{r-i}\}$ and $\sum^{r-i}_{s=1}sn_s=r-i$.
If $k\ge 1$ (i.e.\ $i<r$), then
we write the non-zero entries of $(n_1,\cdots,n_{r-i})$ in weakly descending order
as $\boldsymbol{\lambda}=(\lambda_1,\ldots,\lambda_k)$. The case $k=0$ is
encoded by $\boldsymbol{\lambda}\coloneqq\emptyset$.
In accordance with the literature, we call $\boldsymbol{\lambda}$ a {\em partition}
of $r-i$ of {\em length} $k$. Given such a partition $\boldsymbol{\lambda}$ with length $k$ there are 
$\binom{j-1}{k}k!(\prod_{s=1}^{k} \lambda_s!)^{-1}$ sequences 
$1\le l_1 \le \cdots \le l_{r-i}\le j-1$ with $k$ different elements associated with $\boldsymbol{\lambda}$.
Since 
\begin{equation*}
    y \mapsto \sideset{}{^{\ne}}\sum_{i_1,\dots,i_r\in[m]} h_{\otimes i_1, \ldots, i_r}(y) \Pnm{\rho}{m-r}(h^{\otimes i_1, \ldots, i_r})
\end{equation*}
is a symmetric function on $\BX^r$, equation \eqref{e:5.8} can be written as
\begin{equation*}
S_{2,2}(x)=\sum_{r,j=1}^m  \frac{(-1)^{m-j}}{(j-1)!(m-j)!} \sum_{i=0}^{r} 
\sum_{k=0}^{( j-1) \land (r-i)} \sum_{\boldsymbol{\lambda}\in \mathcal{P}_{k,r-i}} \binom{j-1}{k} \frac{k!}{\prod_{s=1}^{k} \lambda_s!}
I_{\boldsymbol{\lambda},k,r,i}(x),
\end{equation*}
where for $k\in\N$, $r\in\N_0$ and $i\in\{0,\ldots,r\}$, $\mathcal{P}_{k,r-i}$ is the set of all partitions of $r-i$  of size $k$ and
\begin{align*}
&I_{\boldsymbol{\lambda},k,r,i}(x)\\
&:=\int \sideset{}{^{\ne}}\sum_{i_1,\dots,i_r\in[m]} h_{\otimes i_1, \ldots, i_r} (\underbrace{y_{1}, \ldots, y_1}_{\lambda_1 \text{ times}}, \ldots, 
\underbrace{y_{k}, \ldots, y_k}_{\lambda_k \text{ times}}, \underbrace{x, \ldots, x}_{i \text{ times}}) 
\,\Pnm{\rho}{m-r}(h^{\otimes i_1, \ldots, i_r}) \,\zeta^{k}(\dd \bfm{y}_{k}).
\end{align*}
(For $\boldsymbol{\lambda}=\emptyset$, that is  $r=i$ and $k=0$, we set $\prod_{s=1}^{k} \lambda_s!:=1$.)
Changing the order of summation and simplifying the combinatorial coefficients gives
\begin{equation*}
S_{2,2}(x)=\sum_{r=1}^m\sum_{i=0}^r\sum_{k=0}^{r-i}  \sum^m_{j=k+1}\frac{(-1)^{m-j}}{(j-1)!(j-1-k)!} 
\sum_{\boldsymbol{\lambda}\in \mathcal{P}_{k,r-i}}\frac{1}{\prod_{s=1}^{k} \lambda_s!}
I_{\boldsymbol{\lambda},k,r,i}(x).
\end{equation*}
Here the sum over $j$ comes to $0$ unless $k=m-1$, in which case it equals $1$. Therefore,
\begin{equation*}
S_{2,2}(x)=\sum_{r=1}^m\sum_{i=0}^r\I\{m-1\le r-i\}
\sum_{\boldsymbol{\lambda}\in \mathcal{P}_{m-1,r-i}}\frac{1}{\prod_{s=1}^{m-1} \lambda_s!}
I_{\boldsymbol{\lambda},m-1,r,i}(x).
\end{equation*}
The constraint $m-1\le r-i$ implies that either $i=0$ or $i=1$. In the first case
$I_{\boldsymbol{\lambda},m-1,r,i}(x)$ does not depend on $x$ and does not contribute to $\nabla_xF$.
If $i=1$, then the constraint implies $r=m$. Therefore, 
it remains to consider
\begin{equation*}
S'(x):=\sum_{\boldsymbol{\lambda}\in \mathcal{P}_{m-1,m-1}}\frac{1}{\prod_{s=1}^{m-1} \lambda_s!}
I_{\boldsymbol{\lambda},m-1,m,1}(x).
\end{equation*}
If $m=1$, then $S'(x)$ boils down to just $h_1(x)$. 
If $m\ge 2$, then $\mathcal{P}_{m-1,m-1}$ contains only the single element $(m-1)$, so that
\begin{align*}
S'(x)&=\frac{1}{(m-1)!}I_{(m-1),m-1,m,1}(x)
=\frac{1}{(m-1)!}\int \sideset{}{^{\ne}}\sum_{i_1,\dots,i_{m}\in[m]} h_{\otimes i_1, \ldots, i_{m}} (\bfm{y}_{m-1}, x) \,\zeta^{m-1}(\dd \bfm{y}_{m-1})\\
&=  \sum_{i_m = 1}^m h_{i_m}(x) \int h^{\otimes i_m} (\bfm{y}_{m-1}) \,\zeta^{m-1}(\dd \bfm{y}_{m-1}).
\end{align*}
In conclusion we obtain
\begin{align*}
\nabla_x F &= S(x) - \int S(z) \, \zeta(\dd z) \\
&= \sum_{i = 1}^m h_{i}(x) \int h^{\otimes i} (\bfm{y}) \,\zeta^{m-1}(\dd \bfm{y}) 
- \int \sum_{i = 1}^m h_{i}(z) \int h^{\otimes i} (\bfm{y}) \,\zeta^{m-1}(\dd \bfm{y}) \, \zeta(\dd z),
\end{align*}
which equals  $\nabla_x^\ast F$. This finishes the proof.
\end{proof}

\begin{remark}\label{r:discretegradient}\rm Let $F\colon\bM_1(\BX)\to \R$ be measurable.
The authors of \cite{FlintTorrisi23} have introduced a 
discrete space-size gradient  $(x,t)\mapsto D_{x,t}F$ of $F$ by
\begin{equation}
D_{x,t}F(\mu):=F((1-t)\mu+t\delta_x)-F(\mu),\quad (x,t)\in\BX\times[0,1],\,\mu\in \bM_1(\BX).
\end{equation}
Proposition \ref{l:nabla=nabla*} shows that
\begin{equation*}
\nabla_xF(\zeta(\omega))=\frac{\dd}{\dd t}D_{x,t}F(\zeta(\omega))\Big|_{t=0},\quad C_\zeta\text{-a.e.\ $(\omega,x)$},
\end{equation*}
provided that $F\in\bS$. As far as we can see, there is no simple way to  relate the associated discrete divergence from
\cite{FlintTorrisi23} with our Malliavin divergence; see also Remark \ref{r:discretedivergence}.
\end{remark}

Let us introduce a bilinear form $\cE_1$ on 
$\operatorname{dom}(\nabla)\times \operatorname{dom}(\nabla)$
by
\begin{equation*}
    \cE_1(F,G) \coloneqq \BE F(\zeta)G(\zeta) + \cE(F,G), \quad F,G\in\operatorname{dom}(\nabla).
\end{equation*}

\begin{proposition}\label{t:closure} The closure of $\bS$ with respect to 
$\cE_1$ coincides with $\operatorname{dom}(\nabla)$.
\end{proposition}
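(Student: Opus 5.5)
Both inclusions have to be shown. By Proposition \ref{l:nabla=nabla*} we have $\bS\subset\operatorname{dom}(\nabla)$. Lemma \ref{LemGammaAbg} says that $\operatorname{dom}(\nabla)$ with the scalar product $\cE_1$ is complete, i.e.\ a Hilbert space. Hence the $\cE_1$-closure of $\bS$ is a closed subspace of $\operatorname{dom}(\nabla)$. It remains to prove density: if $G\in\operatorname{dom}(\nabla)$ satisfies $\cE_1(F,G)=0$ for all $F\in\bS$, then $G=0$.

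The plan is to use a convenient class of elements of $\bS$ on which $\cE_1$ can be computed through chaos expansions. The first such class is the chaos expansion of $G$ itself. By \eqref{e:iso0} and \eqref{e:iso},
\begin{equation*}
\cE_1(F,G)=(\BE F)(\BE G)+\sum_{n=1}^\infty \frac{n!}{\Pn{\theta}{2n}}\bigl(1+(\theta+n-1)n\bigr)\rho^{[n]}(f_ng_n).
\end{equation*}
Take $F(\mu)=\mu^m(h_1\otimes\cdots\otimes h_m)$ with bounded $h_i$. This lies in $\bS$, and by Lemma \ref{l:3.5} it belongs to $\bigoplus_{k\le m}\bF_k$. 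More generally, take $F=\mu^m(f)$ with $f$ a finite linear combination of bounded product functions, which is still in $\bS$. Let $c_n:=\frac{n!}{\Pn{\theta}{2n}}(1+(\theta+n-1)n)$. Orthogonality of $G$ to all such $F$ says that
\begin{equation*}
G':=\BE G+\sum_n c_n\frac{\Pn{\theta}{2n}}{n!}\zeta^n(g_n)=\BE G+\sum_n (1+(\theta+n-1)n)\zeta^n(g_n)
\end{equation*}
is $L^2(\BP)$-orthogonal to each such $F$, provided $G'$ makes sense. In general $G'$ is not in $L^2$, since this would require $G\in\operatorname{dom}(L)$. So instead I would argue chaos by chaos and by induction.

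First I would fix $n$. Let $K_m\subset\bigoplus_{k\le m}\bF_k$ be the span of $\{\zeta^m(f):f \text{ bounded product combination}\}$ over all orders up to $m$. Because bounded product functions are dense in $L^2(\rho^{[m]})$, and $f\mapsto\zeta^m(f)$ is $L^2(\rho^{[m]})\to L^2(\BP)$ continuous by \eqref{nmoment} and Jensen, the closure of $K_m$ in $L^2$ is the space generated by all $\zeta^k(f)$, $k\le m$. By the proof of Theorem \ref{t:chaos} this closure equals $\bigoplus_{k\le m}\bF_k$.

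On the finite-dimensional-in-$n$ space $\bigoplus_{k\le m}\bF_k$, the norms induced by $L^2$ and by $\cE_1$ are equivalent, since the weights $c_k\Pn{\theta}{2k}/k!$ are bounded by a constant for $k\le m$. Hence $K_m\subset\bS$ is also $\cE_1$-dense in $\bigoplus_{k\le m}\bF_k$. So each $\zeta^n(g_n)$ and each constant lies in the $\cE_1$-closure of $\bS$. The truncations $G_N:=\BE G+\sum_{n\le N}\zeta^n(g_n)$ converge to $G$ in $\cE_1$, because the tail $\sum_{n>N}c_n\rho^{[n]}(g_n^2)\to0$ by \eqref{domNabla} and \eqref{e:iso0}. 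This gives density, and the orthogonal-complement argument becomes unnecessary.

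The main obstacle is the identification of the $L^2$-closure of $K_m$ with $\bigoplus_{k\le m}\bF_k$. One inclusion comes from Lemma \ref{l:3.5}; the other requires that the projection of any $\zeta^k(f)$, $k\le m$, be approximable. I would reduce it to the end of the proof of Theorem \ref{t:chaos}, which writes $\zeta^m(f)=\sum_{k\le m}\zeta^k(f_k)$, together with the fact that the maps $f\mapsto f_k$ in \eqref{e:3.25} are surjective onto $\bH_k$. For the surjectivity, note that for $f\in\bH_k$ one has $\zeta^k(f)=\zeta^k(f)$ itself, so $\bF_k$ is contained in the closure of $\{\zeta^k(f)\}$, which is in turn in the closure of $K_m$.
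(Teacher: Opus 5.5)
Your argument is correct, but it reaches density by a different and somewhat more economical route than the paper.

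\textbf{The paper's route.} It truncates $F\in\operatorname{dom}(\nabla)$ to $F_0=\sum_{n\le k}\zeta^n(f_n)$. It then approximates $F_0$ in $L^2(\BP)$ by an arbitrary polynomial $\tilde G\in\bS_0$, using the global density of $\bS_0$ in $L^2(\Di(\rho))$ from the proof of Lemma~\ref{Sdense}, i.e.\ Peccati's Lemma~2. Next it projects $\tilde G$ onto $\bF_0\oplus\cdots\oplus\bF_k$. Since $\tilde G$ may have degree larger than $k$, the paper must verify that this projection is again in $\bS_0$, using the explicit kernels \eqref{e:3.25} and Corollary~\ref{Kor Rho+delta_x_1+...+delta_x_k}. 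Only then does the equivalence of the $L^2$- and $\cE_1$-norms on finitely many chaoses close the argument.

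\textbf{Your route.} You use the degree filtration from the start. By the second part of Lemma~\ref{l:3.5} together with Theorem~\ref{t:chaos}, every polynomial of degree at most $m$ already lies in $\bigoplus_{k\le m}\bF_k$. Conversely, $\bF_k=\zeta^k(\bH_k)$ lies in the $L^2$-closure of degree-$k$ polynomials, because $\bH_k\subset L^2(\rho^{[k]})$, product functions are dense there, and $g\mapsto\zeta^k(g)$ is continuous. So you need neither the projection step, nor the explicit kernel formula, nor the density of all polynomials in $L^2(\Di(\rho))$. The paper's route yields, in passing, the structural fact that chaos projections of polynomials are again polynomials; yours yields the identification of $\bigoplus_{k\le m}\bF_k$ with the $L^2$-closure of the polynomials of degree at most $m$.

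\textbf{Presentation.} The inclusion you single out as the ``main obstacle'', namely $\bigoplus_{k\le m}\bF_k\subset\overline{K_m}$, is actually the trivial one. No surjectivity of $f\mapsto f_k$ and no appeal to the end of the proof of Theorem~\ref{t:chaos} is needed. The substantive input is the other inclusion, $K_m\subset\bigoplus_{k\le m}\bF_k$, which comes from Lemma~\ref{l:3.5}. The detour through $G'$ and orthogonal complements can simply be deleted.
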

\begin{proof} We already know from Lemma \ref{LemGammaAbg} and Proposition \ref{l:nabla=nabla*} 
that $\bS\subset \operatorname{dom}(\nabla)$ and that $\operatorname{dom}(\nabla)$ is closed.
It remains to show that each $F\in\operatorname{dom}(\nabla)$ can be approximated by members of
$\bS$. By \eqref{e:iso0} and \eqref{e:iso}, we have
\begin{equation*}
\cE_1(F,F)=
\sum^\infty_{n=0}\frac{n!+(\theta+n-1) nn!}{\Pn{\theta}{2n}}\rho^{[n]}\big(f_n^2\big). 
\end{equation*}
Let $\varepsilon>0$ and choose $k\in\N$ such that
\begin{equation*}
\sum^\infty_{n=k+1}\frac{n!+(\theta+n-1) nn!}{\Pn{\theta}{2n}}\rho^{[n]}\big(f_n^2\big)
\le\varepsilon.
\end{equation*}
We consider the square integrable random variable
\begin{equation*}
F_0:=\sum^k_{n=0}\zeta^n(f_n).
\end{equation*}
Let us introduce the subspace $\bS_0$ of $\bS$  spanned by
the functions
\begin{equation*}
\mu\mapsto \mu(h_1)\cdot\ldots\cdot \mu(h_m),
\end{equation*}
where $m\in\N$ and $h_1,\ldots,h_m\in \mathbf{F}_b(\BX)$.
We have seen in the proof of Lemma \ref{Sdense} that
$\bS_0$ is dense in $L^2(\Di(\rho))$. Hence, there exists 
$\tilde{G}\in\bS_0$ such that $\BE (F_0-\tilde{G}(\zeta))^2\le c^{-1}\varepsilon$, where
$c:=1+(\theta+k-1)k$. Let $G$ be the orthogonal projection of
$\tilde{G}(\zeta)$ onto $\bF_0\oplus\cdots \oplus\bF_k$.
Then $\BE(F_0-G)^2\le \BE(F_0-\tilde{G})^2$. 
Let $g_n$, $0\le n\le k$, denote the kernel functions of $G$. 
By definition of $\bS_0$, Lemma \ref{l:3.5} and Corollary \ref{Kor Rho+delta_x_1+...+delta_x_k},
we have $\BP$-a.s.\ that $G=G_0(\zeta)$ for some $G_0\in\bS_0$. Moreover,
\begin{align*}
\cE_1(F-G,F-G)&=
\sum^\infty_{n=0}\frac{n!+(\theta+n-1) nn!}{\Pn{\theta}{2n}}\rho^{[n]}\big(f_n-g_n\big)^2\\
&\le\varepsilon+\sum^k_{n=0}\frac{n!+(\theta+n-1) nn!}{\Pn{\theta}{2n}}\rho^{[n]}\big(f_n-g_n\big)^2\\ 
&\le\varepsilon+c\sum^k_{n=0}\frac{n!}{\Pn{\theta}{2n}}\rho^{[n]}\big(f_n-g_n\big)^2
=\varepsilon+c\,\BE (F_0-G)^2\le 2\varepsilon.
\end{align*}
This concludes the proof.
\end{proof}

\begin{remark}\label{r:5.1}\rm Define a bilinear form 
$\cE^*\colon \bS\times \bS\to\R$ by 
\begin{equation}\label{e:E*}
\cE^*(F,G):=\BE\int \nabla^*_xF \nabla^*_xG\,\zeta(dx),\quad F,G\in \bS.
\end{equation}
It follows from Propositions \ref{l:nabla=nabla*} and \ref{t:closure} that
$(\operatorname{dom}(\nabla)\times \operatorname{dom}(\nabla),\cE)$ 
is the closure of  $(\bS\times\bS,\cE^*)$; see \cite{FukushimaOshimaTakeda1994}
for terminology. Therefore, we obtain from the general theory in \cite{FukushimaOshimaTakeda1994}
(and elementary properties of $\cE^*$) that 
$(\operatorname{dom}(\nabla),\cE)$  is a Dirichlet form; see also \cite{Stannat00}.
\end{remark}

The {\em generator} $\tilde L$ associated with the Dirichlet form $(\operatorname{dom}(\nabla),\cE)$ 
is  a linear mapping from a subspace of $\operatorname{dom}(\nabla)$ into $L^2(\zeta)$
which is defined as follows. The domain
$\operatorname{dom}(\tilde{L})$ of $\tilde{L}$ is the set of all $F\in \operatorname{dom}(\nabla)$
such that there exists $H\in L^2(\zeta)$ satisfying
\begin{equation}\label{e:pi}
\cE(F,G)= \BE H G,\quad G\in\operatorname{dom}(\nabla).
\end{equation}
In this case one defines $\tilde{L}(F):=H(\zeta)$. Our next theorem shows  that $\tilde L$ coincides
with the operator $L$, introduced by \eqref{L} via chaos expansion. This justifies
to refer to $L$ as Fleming--Viot operator.

\begin{theorem}\label{t:generator} We have
$\operatorname{dom}(L)= \operatorname{dom}(\tilde{L})$ and
\begin{equation}\label{chaosgenerator}
\tilde{L}F=LF,\quad \BP\text{-a.s.},\, F\in \operatorname{dom}(L).
\end{equation}
\end{theorem}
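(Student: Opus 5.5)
The plan is to compare both sides of \eqref{e:pi} chaos by chaos, using the isometries \eqref{e:iso0} and \eqref{e:iso}. This mirrors the second half of the proof of Theorem \ref{t:deltanabla}. Throughout, \eqref{e:pi} is read with the standard sign convention $\cE(F,G)=-\BE(\tilde L F)G$. This matches \eqref{PI2} and is forced if the asserted identity $\tilde L=L$ is to hold, since $L$ is negative semi-definite. With the literal reading one gets $\tilde L=-L$ instead, and the argument below is unchanged up to that sign.

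\emph{Inclusion $\operatorname{dom}(L)\subset\operatorname{dom}(\tilde L)$.} Let $F\in\operatorname{dom}(L)$. Then $F\in\operatorname{dom}(\nabla)$, and \eqref{PI2} gives $\cE(F,G)=\BE(-LF)G$ for all $G\in\operatorname{dom}(\nabla)$. Since $LF\in L^2(\BP)$ by the lemma following \eqref{L}, $H:=-LF$ witnesses $F\in\operatorname{dom}(\tilde L)$. Moreover $\tilde LF=LF$ $\BP$-a.s., because $\operatorname{dom}(\nabla)$ is dense in $L^2(\zeta)$ and so $H$ is determined by \eqref{e:pi}.

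\emph{Inclusion $\operatorname{dom}(\tilde L)\subset\operatorname{dom}(L)$.} Let $F\in\operatorname{dom}(\tilde L)$ and set $H:=-\tilde LF$. Let $h_n$, $n\in\N$, be the kernel functions of $H$, and $f_n$ those of $F$.

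\begin{itemize}
\item First take $G\equiv 1$. This gives $\nabla G=0$, hence $\BE H=0$.
\item Next fix $n\in\N$ and $g\in\bH_n$, and put $G:=\zeta^n(g)\in\bF_n$. Then $G\in\operatorname{dom}(\nabla)$, since its only nonzero kernel is $g$ and \eqref{domNabla} reduces to a single finite term.
\item By \eqref{e:iso}, the left-hand side of \eqref{e:pi} equals $\frac{(\theta+n-1)nn!}{\Pn{\theta}{2n}}\rho^{[n]}(f_ng)$.
\item By \eqref{e:iso0}, the right-hand side equals $\frac{n!}{\Pn{\theta}{2n}}\rho^{[n]}(h_ng)$.
\item Hence $\rho^{[n]}\big(((\theta+n-1)nf_n-h_n)g\big)=0$ for all $g\in\bH_n$. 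Choosing $g=(\theta+n-1)nf_n-h_n\in\bH_n$ (which is legitimate since $\bH_n$ is a linear space) yields $h_n=(\theta+n-1)nf_n$ $\rho^{[n]}$-a.e.
\end{itemize}

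Since $H\in L^2(\zeta)$, \eqref{e:iso0} gives
\[
\sum_{n}\frac{n!}{\Pn{\theta}{2n}}\rho^{[n]}(h_n^2)<\infty .
\]
This is exactly condition \eqref{domL}, so $F\in\operatorname{dom}(L)$. Comparing the chaos expansion of $H$ with \eqref{L} then shows $H=-LF$, that is $\tilde LF=LF$ $\BP$-a.s.

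There is no real obstacle here. The only points needing care are the sign convention in \eqref{e:pi}, and checking that single-chaos elements $\zeta^n(g)$ are admissible test functions lying in $\operatorname{dom}(\nabla)$.
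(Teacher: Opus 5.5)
Your proof is correct and follows the paper's argument. The inclusion $\operatorname{dom}(L)\subset\operatorname{dom}(\tilde L)$ comes from \eqref{PI2}, and the converse comes from testing \eqref{e:pi} chaos by chaos as in the proof of Theorem~\ref{t:deltanabla}, then reading off \eqref{domL} from $H\in L^2(\zeta)$. Your sign convention $\cE(F,G)=-\BE(\tilde LF)G$ and your identity $h_n=n(\theta+n-1)f_n$ are the right ones: the paper's proof glosses over the sign and drops the factor $n$ in the kernel identity, although the factor reappears correctly in the series that follows.
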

\begin{proof}
The inclusion  $\operatorname{dom}(L)\subset \operatorname{dom}(\tilde L)$
and $\tilde L F = L F$, $\BP$-a.s.\ for   $F\in\operatorname{dom}(L)$ follow from
\eqref{PI2}.

Conversely, let $F\in\operatorname{dom}(\tilde L)$,
$G\in\operatorname{dom}(\nabla)$ and $H\in L^2(\zeta)$ satisfy
\eqref{e:pi}. 
Choosing $G \equiv 1$
shows $\BE H=0$. Let $h_n$, $n\in\N$, denote the kernel
functions in the chaos expansion of $H$. We can proceed exactly as in
the proof of Theorem~\ref{t:deltanabla}
(cf.~\eqref{eq SKP in  L^2(P) und L^2(Czeta) 1} and
\eqref{e:4.678}) to show  $h_n = (\theta + n-1) f_n$, $n\in\N$. 
Because of $H\in L^2(\zeta)$, the series
\begin{equation*}
\BE H(\zeta)^2= \sum_{n=1}^\infty \frac{n!}{\Pn{\theta}{2n}}\Pnm{\rho}{n}(h_n^2) 
=\sum_{n=1}^\infty \frac{n!n^2(\theta+n-1)^2}{\Pn{\theta}{2n}}\Pnm{\rho}{n}(f_n^2) 
\end{equation*}
converges. Thus, $F\in\operatorname{dom}(L)$. 
\end{proof}

\begin{remark}\label{r:5.3}\rm
It was shown in \cite{Stannat00} that
$(L_\rho F)(\zeta)=\frac{1}{2}\tilde{L} F$ holds $\BP$-a.s.\ for each $F\in\bS$.
Hence, we obtain \eqref{Lsmooth} from Theorem \ref{t:generator}.
Since there is a one-to-one correspondence between
closed symmetric forms and non-positive definite
self-adjoint operators (cf.\ Theorem~1.3.1
in \cite{FukushimaOshimaTakeda1994}) and $\cE$ is generated by $L$,
we further obtain that $L$ is the closure of $2L_{\rho}$.
\end{remark}

Given $t\ge 0$, we introduce a linear operator $T_t \colon L^2(\zeta) \to L^2(\zeta)$
by
\begin{equation*}
    T_t(F) := \sum_{n=0}^\infty e^{-n(\theta+n-1)t}\int f_n(x) \, \zeta^n(\dd x),
\end{equation*}
where the $f_n$, $n\in\N_0$, are the kernel function of $F\in L^2(\zeta)$. 
By \eqref{e:iso0} and dominated convergence, this series converges in $L^2(\BP)$ and we have
$T_0F=F$ and the contraction property
\begin{equation*}
\BE T_t(F)^2 \le \BE F^2,\quad t\ge 0.
\end{equation*}
Furthermore, we have the semigroup property $T_s(T_t F)=T_{s+t}F$,  $\BP$-a.s., $s,t\ge 0$,
the $L^2(\BP)$-convergence $T_tF\to F$ as $t\to 0$ and the symmetry property
\begin{equation}
\BE T_t(F)G=\BE FT_t(G),\quad F,G\in L^2(\zeta).
\end{equation}
Proofs are straightforward and left to the reader. These properties
make $\{T_t:t\ge 0\}$ a strongly
continuous semigroup in the sense of \cite{FukushimaOshimaTakeda1994}.
The following result shows that $L$ is the generator of this semigroup.

\begin{proposition} Suppose that $H\in L^2(\zeta)$. Then 
$t^{-1}(T_tF-F)\to H$ in $L^2(\BP)$ as $t\to 0$ if and only if
$F\in \operatorname{dom}(L)$. In this case $LF=H$ $\BP$-a.s.
\end{proposition}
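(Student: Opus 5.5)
Everything can be done in chaos coordinates, so I plan to work there. Write $\lambda_n:=n(\theta+n-1)$ and let $f_n$ be the kernels of $F$. Then $T_tF-F=\sum_{n\ge1}(e^{-\lambda_n t}-1)\zeta^n(f_n)$. Let $h_n$, $n\in\N_0$, be the kernels of $H$. By \eqref{e:iso0}, the quantity $\BE\big(t^{-1}(T_tF-F)-H\big)^2$ equals
\begin{equation*}
h_0^2+\sum_{n=1}^\infty\frac{n!}{\Pn{\theta}{2n}}\rho^{[n]}\Big(\big(t^{-1}(e^{-\lambda_nt}-1)f_n-h_n\big)^2\Big).
\end{equation*}
I use the word kernels here because $t^{-1}(T_tF-F)$ has kernels $t^{-1}(e^{-\lambda_nt}-1)f_n\in\bH_n$, and the isometry applies to differences.

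For the direction ``$F\in\operatorname{dom}(L)$ implies convergence to $LF$'', I take $H=LF$, so $h_0=0$ and $h_n=-\lambda_nf_n$. The $n$-th summand is then $\frac{n!}{\Pn{\theta}{2n}}\big(t^{-1}(1-e^{-\lambda_nt})-\lambda_n\big)^2\rho^{[n]}(f_n^2)$. Since $0\le t^{-1}(1-e^{-\lambda t})\le\lambda$, each term tends to $0$ as $t\to0$ and is bounded by $4\lambda_n^2\frac{n!}{\Pn{\theta}{2n}}\rho^{[n]}(f_n^2)$. This bound is summable exactly by \eqref{domL}, so dominated convergence for series gives $L^2(\BP)$-convergence to $LF$.

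For the converse, suppose $t^{-1}(T_tF-F)\to H$ in $L^2(\BP)$. The orthogonal projection $\pi_n$ onto $\bF_n$ is continuous, so each chaos component converges, and Lemma \ref{LemEindProj} (uniqueness of kernels) then gives $h_0=0$ and
\begin{equation*}
h_n=\lim_{t\to0}t^{-1}(e^{-\lambda_nt}-1)f_n=-\lambda_nf_n\quad\text{in }L^2(\rho^{[n]}),\ \rho^{[n]}\text{-a.e.}
\end{equation*}
Since $H\in L^2(\zeta)$, the isometry gives $\sum_n\frac{n!}{\Pn{\theta}{2n}}\lambda_n^2\rho^{[n]}(f_n^2)=\BE H^2-h_0^2<\infty$, which is precisely \eqref{domL}. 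Hence $F\in\operatorname{dom}(L)$, and comparing with \eqref{L} shows that $H=LF$ $\BP$-a.s. This also settles the identification $LF=H$ in the forward direction, since the limit is unique.

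The only delicate point is the dominated-convergence step. It relies on the elementary bound $|t^{-1}(e^{-\lambda t}-1)|\le\lambda$ for $\lambda,t\ge0$, which follows from the mean value theorem; everything else is bookkeeping with \eqref{e:iso0}. I also read the statement as saying that the limit exists for some $H$ if and only if $F\in\operatorname{dom}(L)$, and that in that case $H=LF$; the argument above proves exactly this.
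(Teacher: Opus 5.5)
Your proposal is correct and follows essentially the same route as the paper: expand $\BE(t^{-1}(T_tF-F)-H)^2$ in chaos coordinates via \eqref{e:iso0}, use $|t^{-1}(e^{-\lambda t}-1)|\le\lambda$ for dominated convergence in the forward direction, and identify the kernels chaos by chaos for the converse. Your bookkeeping ($h_0=0$ and $h_n=-n(\theta+n-1)f_n$) is the correct form of what the paper's written proof states with small slips ($f_0=h_0$ and $h_n=n(\theta+n-1)f_n$).
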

\begin{proof} Let $f_n$, $n\in\N_0$, denote the kernel functions of $F$.
Assume that $F\in\operatorname{dom}(L)$. 
By the definitions and orthogonality, 
\begin{align*}
\BE \left(\frac{1}{t} (T_t(F) - F) - L(F) \right)^2 
 &= \BE \left(\sum_{n=0}^\infty 
\left(\frac{1}{t}\left(e^{-n(\theta+n-1)t}-1\right) + n(\theta + n -1)\right) \zeta^n(f_n) \right)^2  \\
 &= \sum_{n=0}^\infty \left(\frac{e^{-n(\theta+n-1)t}-1}{t} + n(\theta + n -1)\right)^2
\frac{n!}{\Pn{\theta}{2n}}\rho^{[n]}(f_n^2). 
\end{align*}
Since $|t^{-1}(e^{-n(\theta+n-1)t} - 1)| \le n(\theta + n -1)$ for all
$n\in \N$ and all $t\ge 0$, we can apply dominated convergence to see that
the latter series tends to $0$ as $t\to 0$.

Assume now that $t^{-1}(T_t(F) - F) \to H$ in $L^2(\BP)$ as $t\to 0$
and denote the kernel functions of $H$ by $h_n$, $n\in\N_0$.
Since
\begin{equation*}
\BE \left(\frac{1}{t}(T_t(F) - F) - H\right)^2
    = \sum_{n=0}^\infty \frac{n!}{\Pn{\theta}{2n}} 
\int  \left(\frac{e^{-n(\theta+n-1)t}-1}{t} f_n(x) - h_n(x)\right)^2 \Pnm{\rho}{n}(\dd x),
\end{equation*}
we obtain that $f_0=h_0$ and, for each $n\in\N$,
\begin{equation*}
\lim_{t\to 0} t^{-1}\big(e^{-n(\theta+n-1)t}-1\big) f_n(x)=n(\theta+n-1)f_n(x)=h_n(x),
\quad \rho^{[n]}\text{-a.e.\ $x$}.
\end{equation*}
We conclude that $F\in\operatorname{dom}(L)$ and $\BP(H = L(F))=1$.
\end{proof}

\section{Further properties of the Malliavin operators}\label{sec:furtherproperties}

In this section we provide further properties of gradient and divergence.
The results are similar to the Gaussian case, but some proofs
require different arguments.
We begin with an analogue of \cite[Proposition 1.3.3]{Nualart2006};
see also \cite[Proposition 2.5.4]{NourdinPeccati2012}.

\begin{proposition}\label{p:deltaF} Suppose that  $F\in\operatorname{dom}(\nabla)$ and
$H\in\operatorname{dom}(\delta)$. Assume that
$\BE F^2\delta(H)^2<\infty$ and $\BE \int F^2H^2_x\,\zeta(\dd x)<\infty$. Then 
$FH\in\operatorname{dom}(\delta)$ and
\begin{equation}\label{e:deltaF}
\delta(FH)=F\delta(H)-\int \nabla_xF H_x\,\zeta(\dd x),\quad \BP\text{-a.s.}
\end{equation}
\end{proposition}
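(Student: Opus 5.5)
The plan is to verify the partial integration formula \eqref{PI} directly for the candidate $Y:=F\delta(H)-\int \nabla_xF H_x\,\zeta(\dd x)$. First, $Y\in L^2(\BP)$: the first term is in $L^2$ by the assumption $\BE F^2\delta(H)^2<\infty$. For the second term we cannot simply use Cauchy--Schwarz with $\nabla F\in L^2(C_\zeta)$ and $H\in L^2(C_\zeta)$, since that only gives $L^1$. So we will not prove $Y\in L^2$ separately. Instead we show that $G\mapsto \BE\int FH_x\nabla_xG\,\zeta(\dd x)$ equals $\BE YG$ for $G$ in a dense class, and bound it by $c\|G\|_2$.

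The key identity is a product rule for the gradient: $\nabla_x(FG)=F\nabla_xG+G\nabla_xF$ for suitable $F,G$. Taking this for granted, for $G\in\operatorname{dom}(\nabla)$ bounded with $FG\in\operatorname{dom}(\nabla)$ we get
\begin{align*}
\BE\int FH_x\nabla_xG\,\zeta(\dd x)
=\BE\int H_x\nabla_x(FG)\,\zeta(\dd x)-\BE\int GH_x\nabla_xF\,\zeta(\dd x)
=\BE\Big(F\delta(H)-\int H_x\nabla_xF\,\zeta(\dd x)\Big)G,
\end{align*}
using \eqref{PI} for $H$. This identity is the whole content of the proposition.

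To make this rigorous, I would first establish the product rule on $\bS$. By Proposition \ref{l:nabla=nabla*}, $\nabla=\nabla^*$ there, and $\nabla^*$ is a directional derivative, so the Leibniz rule is immediate; moreover $\bS$ is an algebra. I would then take $G\in\bS$ and approximate $F$ by $F_k\in\bS$ in $\cE_1$ (Proposition \ref{t:closure}). The intended choice is $F_k=\psi_k(\tilde F_k)$ with bounded $\psi_k$, so that bounded functions can be handled. Passing to the limit requires three convergences: $F_kG\to FG$ in $L^2$; $\int H_x\nabla_x(F_kG)\zeta(\dd x)$ converging, which holds since $G$ and $\nabla G$ are bounded for $G\in\bS$; and $F_k\delta(H)\to F\delta(H)$ in $L^1$ after multiplying by a bounded $G$. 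The first and third follow from $L^2$ convergence and Cauchy--Schwarz. This gives
\[
\BE\int FH_x\nabla_xG\,\zeta(\dd x)=\BE YG,\qquad G\in\bS,
\]
where $Y\in L^1$ at this stage.

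The final and hardest step is to extend from $G\in\bS$ to all $G\in\operatorname{dom}(\nabla)$ and to obtain the bound by $c\|G\|_2$. The left-hand side is controlled using $\BE\int F^2H_x^2\,\zeta(\dd x)<\infty$ together with $\nabla G\in L^2(C_\zeta)$, and it is continuous in the $\cE_1$ norm. Since $\bS$ is $\cE_1$-dense in $\operatorname{dom}(\nabla)$, the identity extends once $G\mapsto\BE YG$ is also $\cE_1$-continuous.

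To show that $Y\in L^2$, I would first test against $G\in\bS$ and use that $\bS$ is $L^2$-dense (Lemma \ref{Sdense}). The real difficulty is bounding $\BE\int FH_x\nabla_xG\,\zeta(\dd x)$ by $c\|G\|_2$ without knowing a priori that $Y\in L^2$. I expect this to be the main obstacle. My first attempt would be to truncate $F$ by $F_N:=(F\wedge N)\vee(-N)$, for which the second term of $Y$ is controlled, and then pass $N\to\infty$ using the two integrability hypotheses. Alternatively, the hypotheses may be read as intending $\int\nabla_xFH_x\,\zeta(\dd x)\in L^2$ implicitly; in that case Riesz representation closes the argument immediately.
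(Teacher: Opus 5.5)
Your argument coincides with the paper's up to the identity
\[
\BE\int F H_x\nabla_x G\,\zeta(\dd x)=\BE\, YG,\qquad G\in\bS,\qquad Y:=F\delta(H)-\int\nabla_xF\,H_x\,\zeta(\dd x).
\]
Both use the Leibniz rule on $\bS$ via $\nabla=\nabla^*$ (Proposition~\ref{l:nabla=nabla*}), fix $G\in\bS$, and approximate $F$ in the $\cE_1$-norm by elements of $\bS$ (Proposition~\ref{t:closure}). The truncations $\psi_k$ are superfluous, since every element of $\bS$ is already bounded. The paper's proof also stops at this identity and asserts that the conclusion follows from the integrability assumptions.

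The step you flag is a genuine gap in both. The hypotheses give $F\delta(H)\in L^2(\BP)$ and $FH\in L^2(C_\zeta)$, but $\int\nabla_xF\,H_x\,\zeta(\dd x)$ is only in $L^1(\BP)$. No argument, truncation included, can close this, because the proposition is false as stated.

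Here is a counterexample. Take $\BX=\{a,b\}$ and $\rho=4\delta_a+\delta_b$. Then $p:=\zeta(\{a\})$ has density $4p^3$ on $[0,1]$, and $\bS=\{g(p):g\in C^\infty(\R)\}$. Let $F:=\sin(1/p)$ and $H(\mu,x):=\mu(\{a\})^{-3/2}\I\{x=a\}$.
\begin{itemize}
\item We have $\nabla_xF=-p^{-2}\cos(1/p)(\I\{x=a\}-p)$ and $\cE(F)=4\int_0^1(1-p)\cos^2(1/p)\,\dd p<\infty$. So $F\in\operatorname{dom}(\nabla)$: approximate by smooth functions of $p$ and use closedness.
\item We have $\BE\int H_x^2\,\zeta(\dd x)=2$. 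Integrating by parts, for $G=g(p)\in\bS$,
\[
\BE\int H_x\nabla_xG\,\zeta(\dd x)=4\int_0^1p^{5/2}(1-p)g'(p)\,\dd p=\BE\big[(\tfrac72p^{-1/2}-\tfrac52p^{-3/2})G\big].
\]
The factor is square integrable, so $H\in\operatorname{dom}(\delta)$ after extending by $\cE_1$-density.
\item Since $F$ is bounded, both hypotheses of the proposition hold.
\item However, $\int\nabla_xF\,H_x\,\zeta(\dd x)=-(1-p)p^{-5/2}\cos(1/p)$ has second moment $4\int_0^1(1-p)^2p^{-2}\cos^2(1/p)\,\dd p=\infty$.
\end{itemize}
Hence $Y\notin L^2(\BP)$. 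Testing the identity above with smooth $g$ approximating $Y$ away from $p=0$ shows that $G\mapsto\BE\int FH_x\nabla_xG\,\zeta(\dd x)$ is not bounded by $c\|G\|_2$ on $\bS$. That is, $FH\notin\operatorname{dom}(\delta)$. Note that $F$ is already bounded here, so truncating $F$ cannot help.

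The correct repair is your second option. It is also the extra hypothesis in Nualart's Proposition~1.3.3, which the paper names as its model. Assume in addition
\[
\BE\Big(\int\nabla_xF\,H_x\,\zeta(\dd x)\Big)^2<\infty .
\]
Then $Y\in L^2(\BP)$, so the right-hand side of the identity is $L^2$-continuous in $G$. The left-hand side is $\cE$-continuous by Cauchy--Schwarz in $L^2(C_\zeta)$, using $FH\in L^2(C_\zeta)$. So the identity extends from $\bS$ to all of $\operatorname{dom}(\nabla)$, which gives $FH\in\operatorname{dom}(\delta)$ and $\delta(FH)=Y$.

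The extra condition holds automatically when $H$ is bounded. By Jensen's inequality for the probability measure $\zeta$,
\[
\Big(\int\nabla_xF\,H_x\,\zeta(\dd x)\Big)^2\le \sup|H|^2\int(\nabla_xF)^2\,\zeta(\dd x).
\]
This covers the way the proposition is used in Proposition~\ref{p:productrule}.
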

\begin{proof} Our argument is based on the useful product rule
\begin{equation}\label{e:productrule}
\nabla(FG)=(\nabla F)G+F(\nabla G),\quad C_\zeta\text{-a.e.}
\end{equation}
for $F,G\in\bS$. This follows from Proposition \ref{l:nabla=nabla*}
and the elementary fact that $\nabla^*$ (defined by \eqref{e:nabla*}) satisfies
this rule everywhere on $\Omega\times\BX$.

We first assume that $F\in\bS$. We take $G\in\bS$ and note that 
$FG\in\bS\subset \operatorname{dom}(\nabla)$.
We then obtain from
\eqref{e:productrule} and partial integration that
\begin{align*}
\BE \int (\nabla_xG)FH_x\,\zeta(\dd x)&=\BE \int \nabla_x(GF)H_x\,\zeta(\dd x)-\BE \int (\nabla_xF)GH_x\,\zeta(\dd x)\\
&=\BE GF\delta(H)-\BE G\int (\nabla_xF)H_x\,\zeta(\dd x).
\end{align*}
Since $G\in\bS$ is arbitrary, this shows both 
$FH\in\operatorname{dom}(\delta)$ and \eqref{e:deltaF}.

Now we take a general $F\in \operatorname{dom}(\nabla)$. By Proposition \ref{t:closure},
there exist $F_n\in\bS$, $n\in\N$, such that $F_n\to F$ in $L^2(\BP)$ and
$\nabla F_n\to \nabla F$ in $L^2(C_\zeta)$. We have already proved that
\begin{equation}\label{e:deltaFn}
\delta(F_nH)=F_n\delta(H)-\int (\nabla_xF_n) H_x\,\zeta(\dd x),\quad \BP\text{-a.s.}
\end{equation}
We treat both sides of this equation  separately and take $G\in\bS$. By partial integration,
\begin{equation*}
\BE G\delta(F_nH)=\BE\int F_nH_x\nabla_xG \,\zeta(\dd x).
\end{equation*}
Furthermore,
\begin{equation*}
\bigg|\BE\int F H_x\nabla_xG \,\zeta(\dd x)-\BE\int F_nH_x\nabla_xG \,\zeta(\dd x)\bigg|
\le c\,\BE |F-F_n|\bigg|\int H_x \,\zeta(\dd x)\bigg|,
\end{equation*}
where $c$ is an upper bound for $|\nabla ^*G|$.
By the Cauchy–Schwarz inequality, this can be further bounded by
\begin{equation*}
c\,\|F-F_n\|_2\bigg(\BE\bigg(\int H_x \,\zeta(\dd x)\bigg)^2\bigg)^{1/2}
\le c\,\|F-F_n\|_2\bigg(\BE\bigg(\int H^2_x \,\zeta(\dd x)\bigg)\bigg)^{1/2}
\end{equation*}
which tends to $0$ as $n\to\infty$. Therefore,
\begin{equation*}
\lim_{n\to\infty}\BE G\delta(F_nH)=\BE\int F H_x\nabla_xG \,\zeta(\dd x).
\end{equation*}
Turning to the right-hand side of \eqref{e:deltaFn},
we note first that
\begin{equation*}
\lim_{n\to\infty} \BE GF_n\delta(H)=\BE GF\delta(H)
\end{equation*}
since $F_n\to F$ in $L^2(\BP)$ and $G$ is bounded. Furthermore,
\begin{align*}
\bigg|\BE G\int (\nabla_xF) &H_x\,\zeta(\dd x)-\BE G\int (\nabla_xF_n) H_x\,\zeta(\dd x)\bigg|
\le c\,\BE \int |\nabla_xF-\nabla_x F_n| |H_x|\,\zeta(\dd x)\\
&\le c\,\bigg(\BE \int (\nabla_xF-\nabla_x F_n)^2 \,\zeta(\dd x)\bigg)^{1/2}  \bigg(\BE \int H^2_x\,\zeta(\dd x)\bigg)^{1/2}
\end{align*}
which goes to $0$ as $n\to\infty$. Summarising, we obtain from \eqref{e:deltaFn} that
\begin{equation*}
\BE\int FH_x\nabla_xG \,\zeta(\dd x)=\BE GF\delta(H)-\BE G\int (\nabla_xF) H_x\,\zeta(\dd x).
\end{equation*}
Hence, the assertions follow from our integrability assumptions.
\end{proof}

The next result provides a pathwise representation of $\delta(H')$ for specific
$H'\in \operatorname{dom}(\delta)$. Similarly as in the classical Gaussian case,
$\delta(H')$ is the difference of a stochastic integral and a divergence term;
see \cite[Proposition 1.3.5]{Nualart2006} or \cite[eqn.\ (2.5.3)]{NourdinPeccati2012}.

\begin{proposition}\label{p:divergence} Suppose $F'\in\operatorname{dom}(\nabla)$ and $h\in L^2(\rho)$.
Define $H'(\mu,x):=F'(\mu)h(x)$. Assume that $F'$ is bounded. Then $H'\in\operatorname{dom}(\delta)$ and
\begin{equation}\label{e:067}
\delta(H')=\int h(x)F'\, (\theta\zeta-\rho)(\dd x)-\int h(x)\nabla_xF' \,\zeta(\dd x),\quad \BP\text{-a.s.}
\end{equation}
\end{proposition}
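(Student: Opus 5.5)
This is a direct application of Proposition~\ref{p:deltaF}, with $F:=F'$ and $H(\mu,x):=h(x)$. By Remark~\ref{r:delta(h)}, the deterministic field $h\in L^2(\rho)$, viewed as an element of $L^2(C'_\zeta)$ not depending on $\mu$, lies in $\operatorname{dom}(\delta)$ and satisfies
\begin{equation*}
\delta(h)=\theta\int h(x)\,\zeta(\dd x)-\int h(x)\,\rho(\dd x)=\int h(x)\,(\theta\zeta-\rho)(\dd x).
\end{equation*}
Once the integrability hypotheses of Proposition~\ref{p:deltaF} are verified, that proposition gives $FH=H'\in\operatorname{dom}(\delta)$ and
\begin{equation*}
\delta(H')=F'\delta(h)-\int \nabla_xF'\,h(x)\,\zeta(\dd x),
\end{equation*}
which is exactly \eqref{e:067}, since $F'$ does not depend on $x$ and can be moved inside the integral against $\theta\zeta-\rho$.

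So the work is to check the two integrability conditions. Let $c$ be a bound for $|F'|$.

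First, $\BE F'^2\delta(h)^2\le c^2\BE\delta(h)^2$, and $\delta(h)\in L^2(\BP)$ by construction of the divergence. Explicitly, $\BE(\int h\,\dd\zeta)^2\le \BE\int h^2\,\dd\zeta=\theta^{-1}\rho(h^2)$ by Jensen and \eqref{nmoment}. Also $(\rho(h))^2\le\theta\rho(h^2)$ because $\rho$ is finite.

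Second, $\BE\int F'^2h(x)^2\,\zeta(\dd x)\le c^2\,\BE\zeta(h^2)=c^2\theta^{-1}\rho(h^2)<\infty$, again by \eqref{nmoment}.

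This also confirms $H'\in L^2(C'_\zeta)$.

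I expect no real obstacle. The only point needing care is that Proposition~\ref{p:deltaF} requires $H\in\operatorname{dom}(\delta)$, and this is supplied by Remark~\ref{r:delta(h)}, i.e.\ Corollary~\ref{c:4.9} with $n=0$. Boundedness of $F'$ is used only to make the two product moments finite. Without it, one would need the stronger moment assumptions $\BE F'^2\delta(h)^2<\infty$ and $\BE\int F'^2h^2\,\dd\zeta<\infty$ directly.
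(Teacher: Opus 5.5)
Your proposal is correct and matches the paper's proof: apply Proposition~\ref{p:deltaF} with $H(\mu,x)=h(x)$, taking $h\in\operatorname{dom}(\delta)$ and the formula for $\delta(h)$ from Corollary~\ref{c:4.9} and Remark~\ref{r:delta(h)}. The two moment conditions then follow from boundedness of $F'$ together with \eqref{nmoment}.
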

\begin{proof} We wish to apply Proposition \ref{p:deltaF} with $H(\mu,x)$ replaced
by $H^*(\mu,x):=h(x)$. By Corollary \ref{c:4.9}, the function $H^*$ is in the domain of $\delta$ and we have
\begin{equation*}
\delta(H^*)=\delta(h)=\theta\int h(x)\,\zeta(\dd x)-\int h(x)\,\rho(\dd x);
\end{equation*}
see Remark \ref{r:delta(h)}.
Since $|F'|$ is bounded by some $c\ge 0$, say, we have 
$\BE (F')^2\delta(H^*)^2<\infty$ and 
\begin{equation}
\BE \int (F')^2(H^*_x)^2\,\zeta(\dd x)\le c^2\BE \int h(x)^2\,\zeta(\dd x)=\frac{c^2}{\theta}\int h(x)^2\,\rho(\dd x)<\infty.
\end{equation} 
Therefore, we obtain the assertion from Proposition \ref{p:deltaF}.
\end{proof}

\begin{remark}\label{r:discretedivergence}\rm Formula \eqref{e:067} can be written as
\begin{equation*}
\delta(H')=\int h(x)(\theta F'-\nabla_xF')\,\zeta(\dd x)-\int h(x)F' \,\rho(\dd x),\quad \BP\text{-a.s.}
\end{equation*}
which is of a similar form as the discrete gradient defined in \cite{FlintTorrisi23}. 
\end{remark}

We are now in a position to extend the product rule \eqref{e:productrule}.

\begin{proposition}\label{p:productrule} Suppose that $F,G\in \operatorname{dom}(\nabla)$
and assume that $FG\in\operatorname{dom}(\nabla)$. Then \eqref{e:productrule} holds. 
\end{proposition}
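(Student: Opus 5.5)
We prove the identity in weak form, testing against a rich enough family of divergences, and then use the duality \eqref{PI} to identify $\nabla(FG)$. Since the claim is an equality in $L^2$-sense only up to $C_\zeta$-null sets, and the right-hand side $(\nabla F)G+F(\nabla G)$ need not be in $L^2(C_\zeta)$ a priori, the comparison will be carried out against bounded test fields. The natural choice of test fields is $H'(\mu,x)=K(\mu)h(x)$ with $K\in\bS$ and $h$ bounded measurable. Such products $K(\mu)h(x)$ generate a class that is measure-determining on $\bM_1\times\BX$ for the finite measure $C'_\zeta$. By Proposition \ref{p:divergence}, each such $H'$ lies in $\operatorname{dom}(\delta)$, and $\delta(H')$ is given by \eqref{e:067}.

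First I show the claim for $F\in\bS$ and general $G\in\operatorname{dom}(\nabla)$ with $FG\in\operatorname{dom}(\nabla)$. Take $G_n\in\bS$ with $G_n\to G$ in $L^2(\BP)$ and $\nabla G_n\to\nabla G$ in $L^2(C_\zeta)$, which is possible by Proposition \ref{t:closure}. By \eqref{e:productrule}, $\nabla(FG_n)=(\nabla F)G_n+F\nabla G_n$. Because $F$ and $\nabla F=\nabla^*F$ are bounded, the right-hand side converges in $L^2(C_\zeta)$ to $(\nabla F)G+F\nabla G$. Here I use $\BE\int G_n^2\,\zeta(\dd x)=\BE G_n^2$, as $\zeta$ is a probability measure. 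Also $FG_n\to FG$ in $L^2(\BP)$. Closedness of $\nabla$ (Lemma \ref{LemGradientAbg}, with the means converging) then gives $FG\in\operatorname{dom}(\nabla)$ and $\nabla(FG)=(\nabla F)G+F\nabla G$. This settles the case $F\in\bS$, where the hypothesis $FG\in\operatorname{dom}(\nabla)$ is not even needed.

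For general $F,G$ I use duality. Fix $K\in\bS$ and bounded $h$. Apply \eqref{PI} with $H'=Kh$ and the random variable $FG$:
\[
\BE\int K h(x)\nabla_x(FG)\,\zeta(\dd x)=\BE FG\,\delta(Kh).
\]
Expanding $\delta(Kh)$ via \eqref{e:067} gives $FG\big(K\int h\,\dd(\theta\zeta-\rho)-\int h\nabla_xK\,\zeta(\dd x)\big)$.

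The target is to show that this equals $\BE\int Kh(x)\big(G\nabla_xF+F\nabla_xG\big)\zeta(\dd x)$. To do so, I apply duality twice more. Set $\tilde K:=KG$. By the case already proved, $\nabla(KG)=G\nabla K+K\nabla G$ whenever $G\in\operatorname{dom}(\nabla)$. Suppose first that $G$ is bounded, so that $KG$ is a bounded element of $\operatorname{dom}(\nabla)$. Then Proposition \ref{p:divergence} applies with $F'=KG$, and \eqref{PI} for $F$ gives
\[
\BE\int KG\,h\,\nabla_xF\,\zeta(\dd x)=\BE F\Big(KG\!\int h\,\dd(\theta\zeta-\rho)-\int h\,(G\nabla_xK+K\nabla_xG)\,\zeta(\dd x)\Big).
\]
Rearranging and comparing with the expansion of $\BE FG\,\delta(Kh)$, the term $\int h\,G\nabla_xK$ cancels and the term $FK\int h\nabla_xG$ appears. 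This yields the desired identity, which determines $\nabla(FG)$ $C_\zeta$-a.e.

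The main obstacle is removing boundedness of $G$ (and integrability of products like $F\nabla G$). I would first attempt a truncation $G_N=\phi_N(G)$, with a smooth bounded $\phi_N$. This requires a chain rule for $\phi_N(G)$, which is not yet available. As an alternative, I would approximate $G$ by $G_n\in\bS$, which are bounded with bounded gradients, and pass to the limit in the tested identity. The terms $\BE FG_n\,K\!\int h\,\dd(\theta\zeta-\rho)$ converge since $K\int h\,\dd(\theta\zeta-\rho)$ is bounded. The term $\BE\int Kh\,F\nabla_xG_n\,\zeta(\dd x)$ converges by Cauchy–Schwarz in $L^2(C_\zeta)$, because $KhF$ lies in $L^2(C_\zeta)$. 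The delicate point is $\BE\int Kh\,G_n\nabla_xF\,\zeta(\dd x)$: this needs $\nabla F\cdot G_n\to\nabla F\cdot G$ in $L^1(C_\zeta)$, which holds by Cauchy–Schwarz since both $\nabla F\in L^2(C_\zeta)$ and $G_n\to G$ in $L^2(C_\zeta)$. The $L^1$-type limits suffice because the test fields are bounded. Identification $C_\zeta$-a.e. then follows from the density of the test class.
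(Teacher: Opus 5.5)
Your proof is correct, but it reaches the key tested identity by a detour that the paper avoids.

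\textbf{The paper's route.} The paper takes $H=hF'$ with $F'\in\bS$ and $h$ bounded, and observes from \eqref{e:067} that $\delta(H)$ is bounded. It then applies Proposition~\ref{p:deltaF} directly to the possibly unbounded factors $F$ and $G$. The integrability hypotheses of that proposition hold automatically because $H$ and $\delta(H)$ are bounded. This gives $\delta(GH)=G\delta(H)-\int H_x\nabla_xG\,\zeta(\dd x)$. Partial integration against $F$, together with $FG\in\operatorname{dom}(\nabla)$, then yields \eqref{e:MCT} immediately. The same monotone class argument you sketch finishes the proof.

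\textbf{Your route.} You reconstruct this divergence formula from Proposition~\ref{p:divergence} instead, and that result requires the multiplier $KG$ to be bounded. This is what forces your closability step for $\bS\times\operatorname{dom}(\nabla)$, the restriction to bounded $G$, and the final approximation by $G_n\in\bS$. The obstacle you single out (removing boundedness of $G$) disappears if you invoke Proposition~\ref{p:deltaF} rather than its special case Proposition~\ref{p:divergence}. Also, since you end up approximating by $G_n\in\bS$ anyway, your Step~1 is only used in the form of the product rule inside $\bS$, which the paper already has.

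\textbf{What your route gives in addition.} First, for $F\in\bS$ the product rule holds for every $G\in\operatorname{dom}(\nabla)$ without assuming $FG\in\operatorname{dom}(\nabla)$. Second, the identity $\BE FG\,\delta(Kh)=\BE\int Kh\,(G\nabla_xF+F\nabla_xG)\,\zeta(\dd x)$ holds for all $F,G\in\operatorname{dom}(\nabla)$. The hypothesis $FG\in\operatorname{dom}(\nabla)$ enters only to identify the left side with $\BE\int Kh\,\nabla_x(FG)\,\zeta(\dd x)$.

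\textbf{Two points for the write-up of the limit passage.} Take to the limit the identity above, or its uncancelled form obtained from partial integration against $F$. Do not use an identity containing $\nabla(FG_n)$, since $FG_n\in\operatorname{dom}(\nabla)$ is not known. Also include the term $\BE FG_n\int h\nabla_xK\,\zeta(\dd x)$ that you omitted; it converges because $h\nabla K$ is bounded.
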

\begin{proof} Let 
$H(\mu,x):=h(x)F'(\mu)$, where $h\colon\BX\to\R$ is measurable
and bounded and $F'\in\bS$. By Proposition \ref{p:divergence},
$H\in \operatorname{dom}(\delta)$ and \eqref{e:067} shows that
$\delta(H)$ is bounded. Therefore, we can apply
Proposition \ref{p:deltaF} with both $F$ and $G$ to obtain that
\begin{align*}
\BE &\int H_x(\nabla_xF)G\,\zeta(\dd x)+\BE \int H_xF\nabla_xG\,\zeta(\dd x)=\BE \delta(GH)F+\BE \delta(FH)G\\
&=\BE \delta(H)FG-\BE F\int \nabla_xG H_x\,\zeta(\dd x)
+\BE \delta(H)FG-\BE G\int \nabla_xF H_x\,\zeta(\dd x).
\end{align*}
Since $FG\in\operatorname{dom}(\nabla)$, this means that
\begin{equation}\label{e:MCT}
\BE \int H_x(\nabla_xF)G\,\zeta(\dd x)+\BE \int H_xF\nabla_xG\,\zeta(\dd x)
=\BE \int H_x\nabla_x(FG)\,\zeta(\dd x).
\end{equation}
The space $\mathbf{G}$ of all functions $H$ of the above type is closed under
(pointwise) multiplication and generates the $\sigma$-field
$\mathcal{M}_1\otimes\cX$. On the other hand, 
$\mathbf{G}$ is contained in the space of all bounded and measurable 
$H\colon\bM_1\times\BX\to\R$ satisfying \eqref{e:MCT}.
Therefore, a monotone class theorem (see e.g.\ \cite[Theorem A.4]{LastPenrose18})
shows that \eqref{e:MCT} holds for all bounded and measurable functions $H$.
This concludes the proof of \eqref{e:productrule}. 
\end{proof}

We continue with a chain rule which looks exactly the same as in the
Gaussian case.

\begin{proposition} Let $k\in\N$ and suppose that $F_1,\ldots,F_k\in\operatorname{dom}(\nabla)$.
Let $\varphi\colon\R^k\to\R$ be continuously differentiable  with bounded partial derivatives.
Then $\varphi(F_1,\ldots,F_k)\in\operatorname{dom}(\nabla)$ and
\begin{equation}
\nabla (\varphi(F_1,\ldots,F_k))=\sum^k_{i=1}\partial_i\varphi(F)\nabla F_i,\quad \BP\text{-a.s.}
\end{equation}
\end{proposition}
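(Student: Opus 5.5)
I would prove the chain rule in two stages: first for smooth cylindrical functionals $F_i\in\bS$, where it reduces to the ordinary chain rule, and then for general $F_i\in\operatorname{dom}(\nabla)$ by approximation, using the density result Proposition \ref{t:closure} and the closedness of the gradient.

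\emph{Step 1: cylindrical functionals.} Let all $F_i\in\bS$. Since $\varphi$ is only $C^1$, I would first take $\varphi\in C^\infty(\R^k)$ with bounded derivatives. Then $\varphi(F_1,\ldots,F_k)$ again belongs to $\bS$: take a common list $f_1,\ldots,f_d$ of test functions and compose the smooth functions. By Proposition \ref{l:nabla=nabla*}, $\nabla$ coincides $C_\zeta$-a.e.\ with $\nabla^*$. From \eqref{e:nabla*}, $\nabla^*$ is a directional derivative in the direction $t\mapsto(1-t)\zeta+t\delta_x$. The ordinary chain rule therefore gives the identity for $\nabla^*$ everywhere, and hence for $\nabla$ $C_\zeta$-a.e.

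\emph{Step 2: approximation of the $F_i$.} For general $F_i\in\operatorname{dom}(\nabla)$, Proposition \ref{t:closure} gives $F_{i,n}\in\bS$ with $F_{i,n}\to F_i$ in $L^2(\BP)$ and $\nabla F_{i,n}\to\nabla F_i$ in $L^2(C_\zeta)$. Put $G_n:=\varphi(F_{1,n},\ldots,F_{k,n})$.
\begin{itemize}
\item Since $\varphi$ is Lipschitz, $G_n\to G:=\varphi(F_1,\ldots,F_k)$ in $L^2(\BP)$.
\item For the gradients, split
\[
\partial_i\varphi(F_n)\nabla F_{i,n}-\partial_i\varphi(F)\nabla F_i
=\partial_i\varphi(F_n)\bigl(\nabla F_{i,n}-\nabla F_i\bigr)+\bigl(\partial_i\varphi(F_n)-\partial_i\varphi(F)\bigr)\nabla F_i .
\]
The first term tends to $0$ in $L^2(C_\zeta)$ because $\partial_i\varphi$ is bounded. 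For the second, pass to a subsequence with $F_{i,n}\to F_i$ $\BP$-a.s. Continuity of $\partial_i\varphi$ then gives $C_\zeta$-a.e.\ convergence of the bracket. The bracket is bounded by $2\|\partial_i\varphi\|_\infty$, and $\nabla F_i\in L^2(C_\zeta)$, so dominated convergence applies.
\end{itemize}
Hence $\nabla G_n$ is Cauchy in $L^2(C_\zeta)$, and $\BE G_n\to\BE G$.

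\emph{Step 3: identifying the limit.} Lemma \ref{LemGammaAbg} gives some $\tilde G\in\operatorname{dom}(\nabla)$ with $G_n\to\tilde G$ in $L^2(\BP)$ and $\nabla G_n\to\nabla\tilde G$. Uniqueness of $L^2$ limits gives $\tilde G=G$. Therefore $G\in\operatorname{dom}(\nabla)$ and $\nabla G=\sum_i\partial_i\varphi(F)\nabla F_i$.

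\emph{Removing the smoothness assumption.} For a general $C^1$ function $\varphi$ with bounded derivatives, mollify: $\varphi_\varepsilon:=\varphi*\eta_\varepsilon$ satisfies $\|\partial_i\varphi_\varepsilon\|_\infty\le\|\partial_i\varphi\|_\infty$, and $\varphi_\varepsilon\to\varphi$, $\partial_i\varphi_\varepsilon\to\partial_i\varphi$ locally uniformly. The argument of Step 2, using dominated convergence again and then Lemma \ref{LemGammaAbg}, passes to the limit.

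\emph{Main obstacle.} The delicate point is the convergence in the second bullet of Step 2. The measure $C_\zeta$ is not a product measure, so I must verify that a $\BP$-null set $N$ lifts to a $C_\zeta$-null set $N\times\BX$. This holds because $\zeta$ is a probability measure, so $C_\zeta(N\times\BX)=\BP(N)=0$. Everything else is routine.
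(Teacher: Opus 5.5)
Your proof is correct and follows the paper's route. You establish the chain rule on $\bS$ via Proposition \ref{l:nabla=nabla*} and $\nabla^*$, then approximate through Proposition \ref{t:closure}, use the same splitting of $\partial_i\varphi(F)\nabla F_i-\partial_i\varphi(F_n)\nabla F_{i,n}$ with dominated convergence, and conclude by closedness of the gradient. Your mollification step is a welcome addition, since the paper tacitly treats $\varphi(F_n)$ as an element of $\bS$ even though $\bS$ only allows $C^\infty$ functions.
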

\begin{proof} If $F_1,\ldots,F_k\in\bS$, then the result
follows from Proposition \ref{l:nabla=nabla*} and the properties of $\nabla^*$.
To lighten the notation, we prove the general case in the case $k=1$, writing $F:=F_1$.
As in the proof of Proposition \ref{p:deltaF} we take $F_n\in\bS$, $n\in\N$, such that $F_n\to F$ in $L^2(\BP)$ and
$\nabla F_n\to \nabla F$ in $L^2(C_\zeta)$. Let us abbreviate the norm in $L^2(C_\zeta)$ by
$\|\cdot\|_\zeta$. We have
\begin{align*}
\|\varphi'(F)\nabla F-\nabla(\varphi(F_n))\|_\zeta
&=\|\varphi'(F)\nabla F-\varphi'(F_n)\nabla F_n\|_\zeta\\
&\le \|(\varphi'(F)-\varphi'(F_n))\nabla F\|_\zeta+\|\varphi'(F_n)(\nabla F-\nabla F_n)\|_\zeta.
 \end{align*}
The squared second term on the above right-hand side equals
$$
\BE\varphi'(F_n)^2\int (\nabla_x F-\nabla_x F_n)^2\,\zeta(\dd x),
$$
which tends to zero since $\varphi'$ is bounded.
The squared first term equals
\begin{equation*}
\BE(\varphi'(F)-\varphi'(F_n))^2\int (\nabla_x F)^2\,\zeta(\dd x).
\end{equation*}
Since $\varphi'$ is bounded, the integrand is bounded by an integrable random variable.
Since $F_n\to F$ in probability, it follows from dominated convergence that this first
term tends to zero as well. Hence,
\begin{equation*}
\lim_{n\to\infty}\|\varphi'(F)\nabla F-\nabla(\varphi(F_n))\|_\zeta=0.
\end{equation*}
Therefore, Lemma \ref{LemGradientAbg}
implies the assertions.
\end{proof}

\section{Covariance identities}\label{sec:covariances}

Equation \eqref{e:iso0} can be written as
\begin{equation}\label{e:cov}
\CV(F,G)=\sum^\infty_{n=1}\frac{n!}{\Pn{\theta}{2n}}\rho^{[n]}\big(f_ng_n\big),\quad F,G\in L^2(\zeta).
\end{equation}
Combining this with our expressions \eqref{kerneln} for the kernel functions,
we can easily derive explicit covariance formulas. If $F$ is in a fixed chaos and $G$ is in a finite
sum of chaoses, then their covariance is of a particularly appealing form:


\begin{theorem}\label{t:covariancechaos} Suppose that $h\in\bH_k$ for some  $k\in\N$.  Let $m\in\N$ and
$h_1,\ldots,h_m\colon\BX\to\R$ be measurable and bounded.
Then
\begin{align}\label{e:cov1}
\CV&[\zeta^k(h),\zeta^m(h_1\otimes\cdots \otimes h_m)]\\ \notag
&=\frac{1}{\theta^{(m+k)}}\sum^m_{r=1}\quad\sideset{}{^{\ne}}\sum_{i_1,\dots,i_r\in[m]}\rho^{[m-r]}\bigg(\bigotimes_{j\notin\{i_1,\ldots,i_r\}} h_j\bigg)
\int h^k_{\otimes_ {i_1, \ldots, i_r}}(\bfm{x})h(\bfm{x})\,\rho^{[k]}(\dd \bfm{x}),
\end{align} 
where $h^k_{\otimes_ {i_1, \ldots, i_r}}$ is given by \eqref{eq Def f_otimes^k}.
\end{theorem}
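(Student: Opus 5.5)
Since $h\in\bH_k$, the random variable $\zeta^k(h)$ lies in $\bF_k$ and has mean zero. The covariance therefore reduces to $\BE\,\zeta^k(h)\,\zeta^m(h_1\otimes\cdots\otimes h_m)$, and by orthogonality only the $k$-th kernel of $G:=\zeta^m(h_1\otimes\cdots\otimes h_m)$ contributes. Let $g_k$ denote that kernel. By \eqref{e:cov}, or equivalently \eqref{KorOrhtoFmFn},
\begin{equation*}
\CV[\zeta^k(h),G]=\frac{k!}{\Pn{\theta}{2k}}\rho^{[k]}(g_kh).
\end{equation*}
If $k>m$, then Lemma~\ref{l:3.5} gives $g_k=0$. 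In that case I expect the right-hand side of \eqref{e:cov1} to vanish as well, via the appendix orthogonality relations, because the integrand is a function of at most $m<k$ of the variables.

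A cleaner route avoids computing $g_k$ explicitly and works directly with the Mecke formula. By \eqref{nmoment},
\begin{equation*}
\BE\,\zeta^k(h)G=\frac{1}{\theta^{(m+k)}}\int h(\bfm{x})\,(h_1\otimes\cdots\otimes h_m)(\bfm{y})\,\rho^{[k+m]}(\dd(\bfm{x},\bfm{y})).
\end{equation*}
The next step rewrites $\rho^{[k+m]}$ through the recursion \eqref{Rekursion mu[m+n]}. Integrating $\bfm{y}$ against $(\rho+\delta_{\bfm{x}})^{[m]}$ and expanding with Corollary~\ref{Kor Rho+delta_x_1+...+delta_x_k}, as was done in the proof of Proposition~\ref{l:nabla=nabla*}, the inner integral becomes
\begin{equation*}
\rho^{[m]}(h_1\otimes\cdots\otimes h_m)+\sum_{r=1}^m\;\sideset{}{^{\ne}}\sum_{i_1,\dots,i_r\in[m]}h^k_{\otimes i_1,\ldots,i_r}(\bfm{x})\,\rho^{[m-r]}\Big(\bigotimes_{j\notin\{i_1,\ldots,i_r\}}h_j\Big).
\end{equation*}
Here $h^k_{\otimes i_1,\ldots,i_r}(\bfm{x})$ collects the factors $h_{i_1},\dots,h_{i_r}$ evaluated at the atoms $x_{l_1},\dots,x_{l_r}$, summed over the appropriate index patterns; this should be exactly the definition \eqref{eq Def f_otimes^k}. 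Integrating this expression against $h(\bfm{x})\,\rho^{[k]}(\dd\bfm{x})$, the constant term drops out because $\rho^{[k]}(h)=0$, which holds since $h\in\bH_k$. The surviving terms give precisely \eqref{e:cov1}, and subtracting $\BE\zeta^k(h)\,\BE G=0$ changes nothing.

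The main obstacle is the bookkeeping in the second step. I must check that the coefficients in Corollary~\ref{Kor Rho+delta_x_1+...+delta_x_k} (the factors $m!/(m-r)!$, the sum over ordered distinct indices, and the weakly increasing patterns $l_1\le\cdots\le l_r$) are exactly what is absorbed into $h^k_{\otimes i_1,\ldots,i_r}$ and the $\sideset{}{^{\ne}}\sum$. I must also confirm the integrability needed for Fubini; this holds because the $h_j$ are bounded and $h\in L^2(\rho^{[k]})\subset L^1(\rho^{[k]})$, the measures being finite. No further cancellation using the conditional centering \eqref{condcent} should be required beyond $\rho^{[k]}(h)=0$, since the stated formula keeps all $r\ge1$ terms.
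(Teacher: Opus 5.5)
Your second (``cleaner'') route is correct, and it differs from the paper's proof.

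The paper stays inside the chaos framework. It writes the covariance as $\frac{k!}{\Pn{\theta}{2k}}\Pnm{\rho}{k}(hf_k)$ via \eqref{e:cov} and inserts the explicit kernel \eqref{e:3.25} of $\zeta^m(h_1\otimes\cdots\otimes h_m)$. It then uses $h\in\bH_k$ together with Corollary~\ref{KorIntegrationInIH_mAndIH_N} to eliminate all terms with $j<k$. Next it checks that the surviving coefficient $\frac{\theta+2k-1}{\Pn{\theta}{2k}}\cdot\frac{\Pn{(\theta+k)}{k-1}}{\Pn{(\theta+k)}{m}}$ collapses to $1/\Pn{\theta}{m+k}$. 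Only then does it reach $\frac{1}{\Pn{\theta}{m+k}}\int \Pnm{(\rho+\delta_{\bfm{x}})}{m}(h_1\otimes\cdots\otimes h_m)\,h(\bfm{x})\,\Pnm{\rho}{k}(\dd\bfm{x})$. It expands this with Proposition~\ref{PropRho+delta_x_1+...+delta_x_k} and $\Pnm{\rho}{k}(h)=0$, exactly as you do.

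You obtain the same intermediate integral in one line from \eqref{nmoment} and the recursion \eqref{Rekursion mu[m+n]}, using only $\BE\zeta^k(h)=\Pnm{\rho}{k}(h)/\Pn{\theta}{k}=0$. Your version is shorter and needs neither the kernel formula nor the orthogonality relations of the appendix. It also covers all $k,m\in\N$ at once, so the case distinction $k>m$ in your first paragraph is unnecessary. The paper's version, in turn, illustrates the section's stated method of reading covariances off the explicit kernels via \eqref{e:cov}.

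Your bookkeeping concern also disappears. Corollary~\ref{Kor Rho+delta_x_1+...+delta_x_k} is already formulated with ordered tuples of distinct indices and the functions $h^k_{\otimes_{i_1,\ldots,i_r}}$, so no factors $m!/(m-r)!$ arise. Those factors only appear after symmetrization, as in Corollary~\ref{c:cov}.
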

\begin{proof} We abbreviate $g:=\otimes^m_{j=1} h_j$. Since $g$ is bounded, we have $\zeta^m(g)\in L^2(\zeta)$.
By \eqref{e:cov} and orthogonality, 
\begin{equation}
\CV[\zeta^k(h),\zeta^m(g)]
=\frac{k!}{\Pn{\theta}{2k}}\rho^{[k]}\big(hf_k\big),
\end{equation}
where $f_k$ is the $k$-th kernel function of $\zeta^m(h^{\otimes m})$. This function is
given by \eqref{e:3.25}. Since $h\in\bH_k$, we have
\begin{equation*}
\rho^{[k]}\big(hf_k\big)
=\frac{\theta + 2k - 1}{k!}
 \frac{\Pn{(\theta+k)}{k-1}}{\Pn{(\theta+k)}{m}} 
\int \Pnm{(\rho + \delta_{\bfm{x}})}{m}(g) h(\bfm{x}) \,\rho^{[k]}(\dd \bfm{x}).
\end{equation*}
The combinatorial  coefficient in front of the integral 
simplifies to $1/\theta^{(m+k)}$.
Moreover, by Proposition \ref{PropRho+delta_x_1+...+delta_x_k} and symmetry of the measures $\rho^{[n]}$, $n\in\N$,
\begin{align*}
\Pnm{(\rho + &\delta_{\bfm{x}})}{m}(g)=\rho^{[m]}(g)\\
&+\sum_{r=1}^{m} \quad\sideset{}{^{\ne}}\sum_{i_1,\dots,i_r\in[m]} 
\rho^{[m-r]}\bigg(\bigotimes_{j\notin\{i_1,\ldots,i_r\}} h_j\bigg) \sum_{1\le n_1\le \cdots \le n_r\le k}
h_{i_1}(x_{n_1})\cdot\ldots \cdot h_{i_r}(x_{n_r}).
\end{align*}
Since $\rho^{[k]}(h)=0$, we obtain the assertion.
\end{proof}

\begin{remark}\rm Consider the assumptions of 
Theorem \ref{t:covariancechaos} and assume $m<k$. By definition of 
$h^{k}_{i_1,\ldots,i_r}$ and since $h\in\bH_k$, we have
$\int h^{k}_{i_1,\ldots,i_r}(\bfm{x})h(\bfm{x})\,\rho^{[k]}(\dd \bfm{x})=0$
for $r<k$, so that $\CV(\zeta^k(h),\zeta^m(g))=0$.
This is in accordance with 
$\zeta^m(h_1\otimes\cdots \otimes h_m)\in \bF_0\oplus\cdots\oplus \bF_m$
(see Lemma \ref{l:3.5}) and the fact that the latter space is orthogonal to $\bF_k$. 
\end{remark}

\begin{corollary}\label{c:cov} Suppose that $h\in\bH_k$ for some  $k\in\N$. Let
$f\colon\BX\to\R$ be measurable and bounded and $m\in \N$. Then
\begin{align}\label{e:cov2}
\CV&(\zeta^k(h),\zeta^m(f^{\otimes m}))\\ \notag
&=\frac{1}{\theta^{(m+k)}}\sum^m_{r=1}\frac{m!}{(m-r)!}\rho^{[m-r]}(f^{\otimes (m-r)})
\sum^r_{{\substack{j_1,\ldots,j_k=1\\j_1+\cdots+j_k=r}}}\int f(x_1)^{j_1}\cdot \ldots \cdot f(x_k)^{j_k}h(\bfm{x})
\,\rho^{[k]}(\dd \bfm{x}).
\end{align}
\end{corollary}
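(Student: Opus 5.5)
This is the special case $h_1=\cdots=h_m=f$ of Theorem \ref{t:covariancechaos}, so I will substitute and then simplify the combinatorics. With $g=f^{\otimes m}$, for every $r\in[m]$ and every tuple of pairwise distinct indices $i_1,\ldots,i_r\in[m]$ the factor $\rho^{[m-r]}(\bigotimes_{j\notin\{i_1,\ldots,i_r\}}h_j)$ equals $\rho^{[m-r]}(f^{\otimes(m-r)})$. This does not depend on the chosen indices.

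The function $h^k_{\otimes_{i_1,\ldots,i_r}}$ likewise depends only on $r$. In the proof of Theorem \ref{t:covariancechaos} it arose as $\sum_{1\le n_1\le\cdots\le n_r\le k}h_{i_1}(x_{n_1})\cdots h_{i_r}(x_{n_r})$, which is the term that survives after integrating against $h$. With all $h_i=f$ it becomes
\begin{equation*}
\sum_{1\le n_1\le\cdots\le n_r\le k} f(x_{n_1})\cdots f(x_{n_r}).
\end{equation*}
A weakly increasing sequence $n_1\le\cdots\le n_r$ in $[k]$ is determined by its multiplicities $j_1,\ldots,j_k\ge0$ with $j_1+\cdots+j_k=r$. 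The sum therefore equals $\sum_{j_1+\cdots+j_k=r}f(x_1)^{j_1}\cdots f(x_k)^{j_k}$.

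The number of ordered tuples of pairwise distinct $(i_1,\ldots,i_r)\in[m]^r$ is $m!/(m-r)!$. Inserting all three observations into \eqref{e:cov1} gives \eqref{e:cov2} directly.

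The only point needing care is matching the precise definition \eqref{eq Def f_otimes^k} from the appendix with the sum over $n_1\le\cdots\le n_r$. If that definition instead counts ordered or unordered index choices differently, the coefficient would change. In that case I would verify the identification by rereading the last display in the proof of Theorem \ref{t:covariancechaos}, which expands $(\rho+\delta_{\bfm x})^{[m]}(g)$ via Proposition \ref{PropRho+delta_x_1+...+delta_x_k}. That display gives exactly the weakly increasing sum, so I would rederive the corollary from it rather than from the abbreviation.

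The summation range in \eqref{e:cov2} also needs a check. Terms with some $j_i=0$ are allowed, and any term in which some variable does not appear integrates to zero against $h\in\bH_k$. This is consistent with, but not needed for, the formula.
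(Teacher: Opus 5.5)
Your argument is the paper's: specialize Theorem \ref{t:covariancechaos} to $h_1=\cdots=h_m=f$, note that $\rho^{[m-r]}(f^{\otimes(m-r)})$ and $f^k_{\otimes_{i_1,\ldots,i_r}}$ do not depend on the indices, count the $m!/(m-r)!$ index tuples, and rewrite the weakly increasing sum in terms of multiplicities.

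Your last paragraph gets one point backwards, and it should be corrected. The inner sum in \eqref{e:cov2} runs over $j_1,\ldots,j_k\ge 1$, whereas the substitution gives the sum over $j_1,\ldots,j_k\ge 0$. So the substitution does not give \eqref{e:cov2} ``directly''. The vanishing of every term with some $j_i=0$ is not optional; it is exactly the final step, which the paper records as ``using that $h\in\bH_k$''. To see the vanishing, use the symmetry of $h$ and $\rho^{[k]}$ to move the absent variable to the last slot. Then integrate it out against $(\rho+\delta_{x_1}+\cdots+\delta_{x_{k-1}})$ via \eqref{condcent}. With that step included, your proof is complete.
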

\begin{proof}
We apply Theorem \ref{t:covariancechaos} with $h_1=\cdots=h_m=f$. Then 
\begin{equation*}
f^k_{\otimes_ {i_1, \ldots, i_r}}(\bfm{x})=\sum_{1\le n_1\le\cdots\le n_r\le k}f(x_{n_1})\cdot \ldots \cdot f(x_{n_r})
=\sum^r_{{\substack{j_1,\ldots,j_k=0\\j_1+\cdots+j_k=r}}} f(x_1)^{j_1}\cdot \ldots \cdot f(x_k)^{j_k},
\end{equation*}
independently of $(i_1,\ldots,i_r)$. Hence, we obtain \eqref{e:cov2} from \eqref{e:cov1}
using that $h\in\bH_k$.
\end{proof}


In the special case $k=2$ Corollary \ref{c:cov} yields the following result.

\begin{corollary}\label{c:cov3} Suppose that $h\in\bH_2$. Let
$f\colon\BX\to\R$ be measurable and bounded and $m\in \N$. Then
\begin{align*}
&\CV(\zeta^2(h),\zeta^m(f^{\otimes m}))
=\frac{1}{\theta^{(m+2)}}\sum^m_{r=1}\frac{m!(r+1)}{(m-r)!}\rho^{[m-r]}(f^{\otimes (m-r)})
\int h(x,x)f(x)^r\,\rho(\dd x)\\
&+\frac{1}{\theta^{(m+2)}}\sum^m_{r=1}\frac{m!}{(m-r)!}\rho^{[m-r]}(f^{\otimes (m-r)})
\sum^{r-1}_{j=1} \int h(x_1,x_2)f(x_1)^j f(x_2)^{r-j}\,\rho^2(\dd (x_1,x_2)).
\end{align*}
\end{corollary}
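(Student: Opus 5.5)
The plan is to apply Corollary \ref{c:cov} with $k=2$ and then simplify the inner sum over $(j_1,j_2)$ using the properties of $h\in\bH_2$. For fixed $r\in[m]$ the inner sum in \eqref{e:cov2} runs over pairs $(j_1,j_2)\in\{0,\ldots,r\}^2$ with $j_1+j_2=r$. I split it into the two boundary terms $(j_1,j_2)=(r,0)$, $(0,r)$ and the interior terms $1\le j\le r-1$ with $(j_1,j_2)=(j,r-j)$. By symmetry of $h$ and of $\rho^{[2]}$, the two boundary terms coincide, so the boundary contribution is
\begin{equation*}
2\int f(x_1)^r h(x_1,x_2)\,\rho^{[2]}(\dd(x_1,x_2)).
\end{equation*}

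Next I unfold $\rho^{[2]}(\dd(x_1,x_2))=(\rho+\delta_{x_1})(\dd x_2)\,\rho(\dd x_1)$, using \eqref{mu[n]}. For the boundary term the integrand depends on $x_2$ only through $h(x_1,x_2)$, and by \eqref{condcent} we have $\int h(x_1,x_2)\,\rho(\dd x_2)=-h(x_1,x_1)$ for $\rho$-a.e.\ $x_1$. The two pieces of $(\rho+\delta_{x_1})$ would then cancel, which forces me to be careful here. I therefore keep the boundary terms undecomposed for now.

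For an interior term, unfolding $\rho^{[2]}$ produces the product-measure integral
\begin{equation*}
\int h(x_1,x_2)f(x_1)^jf(x_2)^{r-j}\,\rho^2(\dd(x_1,x_2)),
\end{equation*}
plus a diagonal piece $\int h(x,x)f(x)^r\,\rho(\dd x)$. There are $r-1$ interior terms, so together they contribute $(r-1)\int h(x,x)f(x)^r\,\rho(\dd x)$ from the diagonal, plus the sum of the product-measure integrals.

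The boundary contribution has to supply the remaining $2\int h(x,x)f(x)^r\,\rho(\dd x)$, for a total coefficient of $r+1$. The main obstacle is exactly this step: using \eqref{condcent} correctly on the boundary term. Writing it as
\begin{equation*}
\int f(x_1)^r\Big(\int h(x_1,x_2)\,\rho(\dd x_2)+h(x_1,x_1)\Big)\rho(\dd x_1)=0
\end{equation*}
seems to make it vanish, and then the coefficient would be $r-1$ rather than $r+1$.

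To resolve this, I would recheck how the $(j_1,j_2)$ sum in Corollary \ref{c:cov} arises from $f^k_{\otimes}$. The sum $\sum_{1\le n_1\le\cdots\le n_r\le 2}$ counts each multiset once, so it is consistent. I would then recompute the interior terms with the roles of the coordinates made explicit, integrating out the coordinate carrying $\delta$ in the other order. Via symmetry of $\rho^{[2]}$, each product-measure piece can be rewritten so that \eqref{condcent} is applied to the coordinate where $h$ appears with $f$ to power $0$. I expect the correct bookkeeping to give the coefficient $r+1$ as stated, and I would verify it on the case $m=1$, $r=1$ by a direct computation with \eqref{nmoment}.
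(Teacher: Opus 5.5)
Your computation in the middle of the proposal is correct, and it is the route the paper indicates: apply \eqref{e:cov2} with $k=2$ and unfold $\rho^{[2]}(\dd(x_1,x_2))=(\rho+\delta_{x_1})(\dd x_2)\,\rho(\dd x_1)$. In \eqref{e:cov2} the indices already satisfy $j_1,j_2\ge 1$. The paper has discarded the terms with some $j_i=0$ using $h\in\bH_2$, which is exactly your observation that the boundary terms vanish by \eqref{condcent}. Each of the $r-1$ interior pairs $(j,r-j)$ contributes $\int h(x_1,x_2)f(x_1)^jf(x_2)^{r-j}\,\rho^2(\dd(x_1,x_2))+\int h(x,x)f(x)^r\,\rho(\dd x)$, so the diagonal coefficient comes out as $r-1$.

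The gap is your final step. You never produce the coefficient $r+1$; you only announce that more careful bookkeeping should yield it, and it cannot. The device you suggest, moving \eqref{condcent} onto a coordinate where $f$ has exponent $0$, is not available for interior terms, since both exponents are $\ge 1$. The product-measure integrals from those terms also appear unchanged on the right-hand side anyway. Your claim that the boundary terms ``must supply'' $2\int h(x,x)f(x)^r\,\rho(\dd x)$ is only true if you also keep their product-measure parts $\int h(x_1,x_2)f(x_1)^r\,\rho^2(\dd(x_1,x_2))=-\int h(x,x)f(x)^r\,\rho(\dd x)$. Those parts do not occur in the stated second sum.

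The sanity check you propose settles the matter against the printed formula. For $m=1$, Lemma~\ref{l:3.5} gives $\zeta(f)\in\bF_0\oplus\bF_1$, which is orthogonal to $\zeta^2(h)\in\bF_2$, so the covariance is $0$. The printed right-hand side instead equals $2(\theta^{(3)})^{-1}\int h(x,x)f(x)\,\rho(\dd x)$, and this need not vanish. Take $\BX=\{a,b\}$, $\rho=\delta_a+\delta_b$, $h(a,a)=h(b,b)=-\tfrac12$, $h(a,b)=h(b,a)=1$ (one checks $h\in\bH_2$) and $f=\I_{\{a\}}$. Then the printed right-hand side equals $-1/24$. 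On the other hand, $\zeta(\{a\})$ is uniform on $[0,1]$, and a direct computation gives $\BE[\zeta^2(h)\zeta(f)]=0$.

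So the displayed identity is misprinted. It holds with $(r-1)$ in place of $(r+1)$, or equivalently with $(r+1)$ kept and the inner sum extended to $j=0,\dots,r$. In the same example with $m=2$, both sides of the corrected identity equal $-1/60$. Your argument, finished with the coefficient $r-1$, proves the corrected statement. As written, though, the proposal neither proves the claim nor recognizes that it is false, and it ends by committing to the wrong coefficient.
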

\begin{proof} The result follows from \eqref{e:cov2} upon using 
$\rho^{[2]}(\dd (x_1,x_2))=(\rho+\delta_{x_1})(\dd x_2)\rho(\dd x_1)$.
\end{proof}

\begin{remark}\rm It can be shown that Corollary \ref{c:cov3} agrees
with \cite[Proposition 4.5]{FlintTorrisi23}. 
Note the combinatorial expression \eqref{e:854} for $\rho^{[m]}(f^{\otimes m})$.
More general but less explicit moment formulas can be found in the recent preprint
\cite{DelloShiavoQuattro23}.
\end{remark}

\begin{remark}\rm
For $m\in\N$ and $j\in[m]$ we denote by $\Pi_{m,j}$
the system of all partitions $\sigma$ of $[m]$ whose cardinality
$|\sigma|$ equals $j$. We write $\sigma\in\Pi_{m,j}$
as $\sigma=\{I_1(\sigma),\ldots,I_j(\sigma)\}$.
Let $f_1,\ldots,f_m\colon\BX\to \R$ be bounded and measurable.
It was shown  in \cite{Ethier90} 
that 
\begin{equation}\label{e:854}
\rho^{[m]}(f_1\otimes\cdots\otimes f_m)=\sum^m_{j=1}\sum_{\sigma\in \Pi_{m,j}}
(|I_1(\sigma)|-1)!\cdot\ldots\cdot (|I_j(\sigma)|-1)!
\prod^j_{i=1}\int \prod_{k\in I_i(\sigma)}f_k(x)\,\rho(\dd x).
\end{equation}
This formula can easily be proved by induction, using the recursion 
\eqref{Rekursion mu[m+n]}.
\end{remark}

\section{The Poincar\'e inequality}\label{sec:Poincare}

The Poincar\'e inequality for functions of the DF process was derived
in \cite{Stannat00} using the Poincar\'e inequality for the Dirichlet distribution 
(proved in \cite{Shimakura77}) and
a suitable approximation. We can derive here this inequality directly from
the chaos expansion \eqref{EqChaosZerlegung}.

\begin{theorem} \label{ThmPoincareUngl}
Let  $F\in\operatorname{dom}(\nabla)$. Then
\begin{equation}\label{GLPoincareUngl}
\BV (F(\zeta)) \leq \frac{1}{\theta}\, \BE \int (\nabla_x F)^2\,\zeta(\dd x) . 
\end{equation}
Equality holds if and only if
there exists $g \in L^2(\rho)$ such that $F(\zeta) = \zeta(g)$ $\BP$-a.s.
\end{theorem}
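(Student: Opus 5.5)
We compare the two sides chaos by chaos. By \eqref{e:cov} (equivalently \eqref{e:iso0}),
\begin{equation*}
\BV(F(\zeta))=\sum_{n=1}^\infty \frac{n!}{\Pn{\theta}{2n}}\,\rho^{[n]}(f_n^2),
\end{equation*}
and by the isometry \eqref{e:iso} with $G=F$,
\begin{equation*}
\BE\int(\nabla_xF)^2\,\zeta(\dd x)=\sum_{n=1}^\infty\frac{(\theta+n-1)\,n\,n!}{\Pn{\theta}{2n}}\,\rho^{[n]}(f_n^2),
\end{equation*}
both series converging because $F\in\operatorname{dom}(\nabla)$. The inequality \eqref{GLPoincareUngl} therefore reduces to a termwise comparison of nonnegative coefficients: for every $n\in\N$ we need
\begin{equation*}
1\le \frac{(\theta+n-1)n}{\theta}.
\end{equation*}
This holds because $(\theta+n-1)n\ge\theta$, with equality exactly when $n=1$; for $n\ge2$ the factor is $(\theta+n-1)n/\theta\ge 2(\theta+1)/\theta>1$. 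Multiplying each variance term by this factor and summing gives the bound.

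For the equality statement, the difference of the two sides is
\begin{equation*}
\sum_{n\ge2}\Big(\frac{(\theta+n-1)n}{\theta}-1\Big)\frac{n!}{\Pn{\theta}{2n}}\rho^{[n]}(f_n^2),
\end{equation*}
a series with strictly positive coefficients. Hence equality holds iff $\rho^{[n]}(f_n^2)=0$, i.e.\ $f_n=0$ $\rho^{[n]}$-a.e., for all $n\ge2$. By the chaos expansion (Theorem \ref{t:chaos}) this is equivalent to $F=\BE F+\zeta(f_1)$ with $f_1\in\bH_1\subset L^2(\rho)$. Since $\zeta$ is a probability measure, we can write $\BE F=\zeta(\BE F)$, so $F=\zeta(g)$ with $g:=f_1+\BE F\in L^2(\rho)$ (note that $\rho$ is finite).

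Conversely, suppose $F=\zeta(g)$ with $g\in L^2(\rho)$. Lemma \ref{l:3.5} with $m=1$ shows that $f_n=0$ for $n\ge2$, and $F\in\bF_0\oplus\bF_1\subset\operatorname{dom}(\nabla)$, so equality holds. No step here is a real obstacle; the only point needing care is checking that a function of the form $\zeta(g)$ lies in the first two chaoses, and Lemma \ref{l:3.5} supplies this directly.
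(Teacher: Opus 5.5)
Your proof is correct and is essentially the paper's argument: you compare the two sides chaos by chaos via \eqref{e:iso0} and \eqref{e:iso}, using that $1\le\theta^{-1}n(\theta+n-1)$ with equality only for $n=1$. Your use of Lemma~\ref{l:3.5} and of $\BE F=\zeta(\BE F)$ to identify $\bF_0\oplus\bF_1$ with $\{\zeta(g):g\in L^2(\rho)\}$ makes explicit a step that the paper leaves implicit.
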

\begin{proof}
From~\eqref{e:iso0} we obtain that the left-hand side of~\eqref{GLPoincareUngl} equals
\begin{equation*}
	\BE ( F(\zeta) - \BE F(\zeta) )^2 
	=  \sum_{n=1}^\infty \frac{n!}{\Pn{\theta}{2n}} \rho^{[n]}(f_n^2) ,
\end{equation*}
where $f_n$, $n\in\N$, are the kernel functions of $F$.
By \eqref{e:iso}, the right-hand side of~\eqref{GLPoincareUngl} equals
\begin{equation*}
    \frac{1}{\theta} \sum_{n=1}^\infty \frac{nn!}{\Pn{\theta}{2n}}(\theta + n -1) 
\rho^{[n]}(f_n^2). 
\end{equation*}    
Since 
$1 \leq \theta^{-1} n(\theta + n-1)$, $n\in\N$, inequality 
\eqref{GLPoincareUngl} follows. The latter inequality becomes an
equality if and only if $n=1$.
Therefore, we have equality in~\eqref{GLPoincareUngl} if and only if 
\begin{equation*}
    \rho^{[n]}(f_n^2)=0,\quad n\ge 2.
\end{equation*}
By Theorem \ref{t:chaos}, this is in turn equivalent to $F\in \mathbf{F}_0 \oplus \mathbf{F}_1$.
\end{proof}

\appendix
\section{Orthogonality relations}


Let $\mu$ be a $\sigma$-finite measure on $\BX$.
Let $m,n\in\N$ and $B \in \cX^{\otimes (n+m)}$. A key ingredient of our proof is the recursion
\begin{align} \label{Rekursion mu[m+n]}
        \Pnm{\mu}{n+m} (B) = \iint \I_B (x_1, \ldots, x_{n+m})& 
\, \Pnm{(\mu + \delta_{x_1} + \cdots + \delta_{x_{m}})}{n} (\dd (x_{m+1}, \ldots, x_{m+n})) \\ \notag
&      \times  \Pnm{\mu}{m}(\dd( x_1, \ldots, x_m)).
\end{align}
This can be proved by induction.

\begin{lemma} \label{LemAllgIsoFormel1Funktion}
Let $m,n, r\in\N_0$ so that $r< n$. Further, let $f\colon \BX^{m+r+n} \to \R$ be such that $f\in L^1(\Pnm{\rho}{m+r+n})$ and 
$f(x_1,\ldots, x_{m+r}, \cdot) \in \bH_n$ for $\Pnm{\rho}{m+r}$-almost all $(x_1, \ldots, x_{m+r}) \in \BX^{m+r}$. Then 
\begin{align*}
\int f&(x_1, \ldots, x_m, y_1, \ldots, y_r, y_1, \ldots, y_n) \, \Pnm{\rho}{m+n}(\dd (x_1, \ldots, x_m, y_1, \ldots, y_n)) \\
 &=   \sideset{}{^{\ne}}\sum_{i_1,\dots,i_{n-r}\in[m]}\int f(x_1, \ldots, x_m, y_1,\ldots, y_r,y_1,\ldots, y_r, x_{i_1}, \ldots, x_{i_{n-r}}) \\ 
&\qquad\qquad\qquad\qquad\qquad \times\,\Pnm{\rho}{m+r}(\dd (x_1, \ldots, x_m, y_1, \ldots, y_r)) 
\end{align*}
whenever $m \ge n-r$. 
If $m< n-r$, the integral on the left-hand side vanishes. 
\end{lemma}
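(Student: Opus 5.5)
The plan is induction on $n-r$, integrating out the $\bH_n$-variables one at a time with the recursion \eqref{Rekursion mu[m+n]}; each application of the centring condition \eqref{condcent} either kills the $\rho$-part or turns a free variable into a copy of an existing one.

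\textbf{Disintegration.} Write the left-hand side via \eqref{Rekursion mu[m+n]} with the first block $(x_1,\ldots,x_m,y_1,\ldots,y_{n-1})$ of length $m+n-1$ and last variable $y_n$:
\begin{align*}
\iint f(\mathbf{x}_m,\mathbf{y}_r,\mathbf{y}_{n-1},y_n)\,(\rho+\delta_{\mathbf{x}_m}+\delta_{\mathbf{y}_{n-1}})(\dd y_n)\,\Pnm{\rho}{m+n-1}(\dd(\mathbf{x}_m,\mathbf{y}_{n-1})).
\end{align*}
Here I read the integrand as $f(x_1,\ldots,x_m,y_1,\ldots,y_r,y_1,\ldots,y_n)$: the first $y_1,\ldots,y_r$ are the duplicated block and the last $n$ slots form the $\bH_n$-argument. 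Integrability is not an issue, since $f\in L^1(\Pnm{\rho}{m+r+n})$, and the same recursion shows the inner integral of $|f|$ is finite a.e.

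The inner measure splits as $(\rho+\delta_{\mathbf{y}_{n-1}})+\delta_{\mathbf{x}_m}$. By \eqref{condcent} for $f(\mathbf{x}_m,\mathbf{y}_r,\cdot)\in\bH_n$, the part against $\rho+\delta_{\mathbf{y}_{n-1}}$ vanishes. This uses that the relevant $(m+r)$-marginal of $\Pnm{\rho}{m+n-1}$ is absolutely continuous with respect to $\Pnm{\rho}{m+r}$, as $r\le n-1$, so the a.e.\ hypothesis transfers. What remains is
\begin{align*}
\sum_{i=1}^m\int f(\mathbf{x}_m,\mathbf{y}_r,\mathbf{y}_{n-1},x_i)\,\Pnm{\rho}{m+n-1}.
\end{align*}
By symmetry of $f$ in its last $n$ arguments, move $x_i$ to the front of the $\bH_n$-block.

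\textbf{Iteration.} Each summand now has the same structure: the first $n-1$ variables $y_1,\ldots,y_{n-1}$ are the free ones. Peel off $y_{n-1}$ in the same way. Its $\rho$-part again vanishes, because $f(\ldots,x_i,\mathbf{y}_{n-2},\cdot)$ is still centred in its last slot against $\rho+\delta_{x_i}+\delta_{\mathbf{y}_{n-2}}$, which is exactly \eqref{condcent} applied with $x_i$ among the conditioning variables. The atom $\delta_{x_i}$ is also present. The cleanest bookkeeping is to take the $\delta_{\mathbf{x}_m}$-part in full and cancel $\delta_{x_i}$ against $\rho+\delta_{x_i}+\ldots$: write
\begin{align*}
\rho+\delta_{\mathbf{x}_m}+\delta_{\mathbf{y}_{n-2}}=(\rho+\delta_{x_i}+\delta_{\mathbf{y}_{n-2}})+\sum_{j\ne i}\delta_{x_j},
\end{align*}
where the first bracket integrates to zero by centring. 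This produces the constraint that the indices be pairwise distinct.

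\textbf{Termination.} After $n-r$ steps the free variables $y_{r+1},\ldots,y_n$ are all gone. They have been replaced by $x_{i_1},\ldots,x_{i_{n-r}}$ with distinct indices, and the remaining measure is $\Pnm{\rho}{m+r}$ on $(\mathbf{x}_m,\mathbf{y}_r)$. At that stage the last $n$ arguments are $(y_1,\ldots,y_r,x_{i_1},\ldots,x_{i_{n-r}})$, and the decomposition cannot continue because $y_1,\ldots,y_r$ also appear in the duplicated block. This is why the statement needs $r<n$ and stops at exactly $n-r$ steps. If $m<n-r$, the distinct-index sums run out at some step, leaving an empty sum, so the integral vanishes.

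\textbf{Main obstacle.} I expect the a.e.\ bookkeeping to be the delicate part: at each stage, \eqref{condcent} must hold for almost every conditioning tuple under the \emph{current} measure. Here the conditioning tuple mixes some $x_j$'s with $y$'s, so I will check absolute continuity of the corresponding projections of $\Pnm{\rho}{m+r+k}$-type measures with respect to $\Pnm{\rho}{m+r}\otimes$(marginals). I would handle this once at the start by an induction on $k=n-r$, with the inductive step being precisely the single peeling computation above. The case $k=n-r$ with $r$ replaced by $r+1$ is not needed.
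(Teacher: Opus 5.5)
Your peeling scheme is the paper's proof. You disintegrate with \eqref{Rekursion mu[m+n]} so that a free $y$ comes last, let \eqref{condcent} kill the part $\rho+\delta_{(\text{rest of the block})}$, keep the atoms $\delta_{x_j}$ at unused indices, and repeat $n-r$ times; for $m<n-r$ the $(m+1)$-st step leaves an empty sum. That combinatorics is right.

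The gap is the a.e.\ step you flag yourself, and your proposed fix for it does not work. Absolute continuity of the $(m+r)$-marginal only disposes of the exceptional set of outer tuples $(\mathbf x_m,\mathbf y_r)$. For a good outer tuple, centring and symmetry of the section $f(\mathbf x_m,\mathbf y_r,\cdot)$ are known only off a $\rho^{[n-1]}$-null set, and that set may depend on the outer tuple. You invoke centring at $(y_1,\ldots,y_{n-1})$, whose first $r$ entries \emph{are} $\mathbf y_r$, and at later steps at tuples containing some $x_i$. The joint law of outer and conditioning tuple is concentrated on such diagonals, which are null for $\rho^{[m+r]}\otimes\rho^{[n-1]}$ when $\rho$ is diffuse. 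So the absolute continuity you plan to check is false, and in fact the lemma fails as stated.

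Counterexample: take $\BX=[0,1]$, $\rho$ Lebesgue measure, $m=r=1$, $n=2$, and $f(x,y,z_1,z_2):=\I\{z_1=y\}+\I\{z_2=y\}$. Every section $f(x,y,\cdot)$ is symmetric, vanishes $\rho^{[2]}$-a.e., and is centred at every $z_1\ne y$, so it lies in $\bH_2$. Yet the left-hand side is $\rho^{[3]}(\BX^3)+\rho^{[3]}(\{y_2=y_1\})=6+3=9$, while the right-hand side is $\int(1+\I\{x_1=y_1\})\,\dd\rho^{[2]}=2+1=3$. Your first step breaks exactly here, since $\int f(x_1,y_1,y_1,z)\,(\rho+\delta_{y_1})(\dd z)=3\ne 0$.

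The paper's proof passes over the same point silently, so this is a defect of the hypothesis rather than of your strategy. Your argument goes through verbatim if the conditions are imposed jointly rather than section-wise:
\begin{itemize}
\item $f$ is symmetric in its last $n$ arguments $\rho^{[m+r+n]}$-a.e.;
\item $\int f(\mathbf w,\mathbf z_{n-1},z)\,(\rho+\delta_{\mathbf z_{n-1}})(\dd z)=0$ for $\rho^{[m+r+n-1]}$-a.e.\ $(\mathbf w,\mathbf z_{n-1})$.
\end{itemize}
Every tuple at which you use symmetry or centring is the image of the current integration variables under a coordinate-duplication map $\phi$, and $\phi_*\rho^{[K]}\le\rho^{[K']}$. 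To see this, use symmetry to put the duplicated coordinates last, then apply \eqref{Rekursion mu[m+n]} together with $(\rho+\delta_{\mathbf u})^{[j]}(\{\mathbf v\})\ge 1$ whenever all entries of $\mathbf v$ occur in $\mathbf u$. The same bound also justifies the integrability you assert. These joint conditions hold for the product integrands $h(\mathbf x)g(\mathbf y)$ with $g\in\bH_n$ used in Corollary~\ref{KorIntegrationInIH_mAndIH_N}, because there the exceptional sets do not depend on the outer variables.
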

\begin{proof}
In the case $m\ge n-r$
the recursion~\eqref{Rekursion mu[m+n]} and the defining property of $\bH_n$ yield
\begin{align*}
     &\int f(\bfm{x}_m, \bfm{y}_r, \bfm{y}_n) \,\Pnm{\rho}{m+n}(\dd (\bfm{x}_m,\bfm{y}_n)) \\
     &=\iint f(\bfm{x}_m, \bfm{y}_r, \bfm{y}_n) \, (\rho + \delta_{\bfm{x}_m} + \delta_{\bfm{y}_{n-1}})(\dd y_n) \, 
\rho^{[m+n-1]}(\dd (\bfm{y}_{n-1},\bfm{x}_m))\\
     &=\iint \sum_{i_1=1}^m f(\bfm{x}_m, \bfm{y}_r, \bfm{y}_{n-1}, x_{i_1}) 
\,\Pnm{(\rho+ \delta_{\bfm{y}_{n-1}})}{m}(\dd \bfm{x}_m) \,\Pnm{\rho}{n-1}(\dd  \bfm{y}_{n-1}). 
\end{align*}
Further iterations give
\begin{align*}
\int &f(\bfm{x}_m, \bfm{y}_r, \bfm{y}_n)\, \Pnm{\rho}{m+n}(\dd (\bfm{x}_m,\bfm{y}_n)) \\
    &= \int \sideset{}{^{\ne}}\sum_{i_1,\dots,i_{n-r}\in[m]} f(\bfm{x}_m, \bfm{y}_r, \bfm{y}_{r}, x_{i_1},\ldots, x_{i_{n-r}}) 
\, \Pnm{\rho}{m+r}(\dd (\bfm{x}_m, \bfm{y}_{r})),
\end{align*}
as asserted.
If $m<n-r$, then the above argument gives
\begin{align*}
\int f&(\bfm{x}_m, \bfm{y}_r, \bfm{y}_n) \, \Pnm{\rho}{m+n}(\dd (\bfm{x}_m,\bfm{y}_n)) \\ 
     &= m!
    \int f(\bfm{x}_m,  \bfm{y}_{r}, \bfm{y}_{n-m}, \bfm{x}_m) 
\, \Pnm{\rho}{n}(\dd (\bfm{x}_m, \bfm{y}_{n-m})).
\end{align*}
This comes to zero after applying the recursion \eqref{Rekursion mu[m+n]} one more time.
\end{proof}

\begin{corollary}\label{KorIntegrationInIH_mAndIH_N}
Let $n,l, k \in \N_0$ with $n \ge l \ge k$. Then $g \in \bH_n$ and $h \in L^2(\Pnm{\rho}{l})$ satisfy
\begin{align*}
    \int & g(y_1, \ldots, y_{n}) h(x_1, \ldots, x_{l-k}, y_1, \ldots, y_k)  \, \Pnm{\rho}{l+n-k}(\dd  (x_1, \ldots, x_{l-k}, y_1,\ldots,y_{n})) \\
    &= \I\{l=n\} (n-k)! \int g(z) h(z) \, \Pnm{\rho}{n}(\dd  z).
\end{align*}
\end{corollary}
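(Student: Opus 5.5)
The plan is to derive the corollary from Lemma~\ref{LemAllgIsoFormel1Funktion}, with
\begin{equation*}
m:=l-k,\qquad r:=k,\qquad f(x_1,\ldots,x_{l-k},z_1,\ldots,z_k,y_1,\ldots,y_n):=h(x_1,\ldots,x_{l-k},z_1,\ldots,z_k)\,g(y_1,\ldots,y_n).
\end{equation*}
Setting $z_i=y_i$ for $i\le k$ turns the integrand of the lemma into $h(x_1,\ldots,x_{l-k},y_1,\ldots,y_k)\,g(y_1,\ldots,y_n)$, integrated against $\rho^{[l+n-k]}$. This is exactly the left-hand side of the corollary. Since $g\in\bH_n$, we have $f(x,z,\cdot)\in\bH_n$ for every $(x,z)$.

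The first step is the integrability hypotheses, which I would get from Cauchy--Schwarz. I would bound $\int|h\,g|$ by $(\int h^2)^{1/2}(\int g^2)^{1/2}$, with both integrals taken with respect to the relevant measure $\rho^{[\cdot]}$. Each factor is evaluated with the recursion \eqref{Rekursion mu[m+n]} together with the symmetry of the measures $\rho^{[j]}$. Integrating out the variables on which a function does not depend only produces finite total masses $(\theta+j)^{(i)}$, because $\rho$ is finite. This gives a finite multiple of $\rho^{[l]}(h^2)$ and of $\rho^{[n]}(g^2)$. The same argument covers the version with repeated variables $z=y_{1..k}$ that actually appears in the lemma's proof.

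Next I would split into cases.
\begin{itemize}
\item[(i)] If $k=n$, then $l=n$ as well. The integral is literally $\int g h\,\dd\rho^{[n]}$ and $(n-k)!=1$, so there is nothing to prove.
\item[(ii)] If $k<n$, then $r<n$ and the lemma applies.
\begin{itemize}
\item[(a)] If $l<n$, then $m=l-k<n-k=n-r$, and the lemma says the integral vanishes. This matches the indicator $\I\{l=n\}$.
\item[(b)] If $l=n$, then $m=n-r$. The lemma yields a sum over injective tuples $(i_1,\ldots,i_{n-k})\in[n-k]^{n-k}$, that is, over all $(n-k)!$ permutations, of
\begin{equation*}
\int h(x_1,\ldots,x_{n-k},y_1,\ldots,y_k)\,g(y_1,\ldots,y_k,x_{i_1},\ldots,x_{i_{n-k}})\,\rho^{[n]}(\dd(x,y)).
\end{equation*}
Because $g$ is symmetric, every summand equals $\int h(x,y)\,g(y,x)\,\dd\rho^{[n]}=\int g h\,\dd\rho^{[n]}$, where the last step uses the symmetry of $\rho^{[n]}$ and of $g$. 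This gives the factor $(n-k)!$.
\end{itemize}
\end{itemize}

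The only delicate point is the bookkeeping of the argument ordering in the lemma. Its conclusion places the $x_{i_j}$ in the last slots of the $\bH_n$-argument, and matching this to $h$ needs the symmetry of both $g$ and $\rho^{[n]}$. I would check this carefully, together with the $L^1$ verification above; everything else is direct substitution.
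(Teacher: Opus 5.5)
Your proposal is correct and matches the paper's proof essentially step for step. The paper also treats $k=n$ as trivial, sets $f=h\otimes g$ with $L^1(\rho^{[l+n]})$-integrability from Cauchy--Schwarz, and applies Lemma~\ref{LemAllgIsoFormel1Funktion} with $r=k$, $m=l-k$. The integral then vanishes for $l<n$, and for $l=n$ it is a sum of $(n-k)!$ terms, each equal to $\int gh\,\dd\rho^{[n]}$ by the symmetry of $g$. You make this final symmetry step explicit, whereas the paper leaves it implicit.
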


\begin{proof}
If $k=n$, then $n=l=k$ and there is nothing to prove.
Now assume $k<n$ and define
$f\colon \BX^{l+n} \to \R$ by $f(\bfm{x}_{l}, \bfm{y}_n) \coloneqq h(\bfm{x}_{l}) g(\bfm{y}_n)$.
Since $g$ and $h$ are square-integrable, the Cauchy--Schwarz inequality shows that $f\in L^1(\Pnm{\rho}{l+n})$.
Applying Lemma~\ref{LemAllgIsoFormel1Funktion} with $r=k$ and $m=l-k$, 
we obtain in the case $l-k \ge n - k$ (i.e.\ $l\ge n$) that
\begin{align*}
    \int & h(\bfm{x}_{l-k}, \bfm{y}_{k})g(\bfm{y}_n) \, \Pnm{\rho}{l-k+n}(\dd (\bfm{x}_{l-k}, \bfm{y}_n)) \\
&=  \sideset{}{^{\ne}}\sum_{i_1,\dots,i_{n-k}\in[l-k]}  \int h(\bfm{x}_{l-k}, \bfm{y}_{k}) g(\bfm{y}_{k}, x_{i_1}, \ldots, x_{i_{n-k}}) 
\,\Pnm{\rho}{l}(\dd (\bfm{x}_{l-k}, \bfm{y}_{k})).  
\end{align*}
Otherwise the integral vanishes. 
\end{proof}


\begin{corollary} \label{c:A3}
Let $m,n\in\N_0$ and $f\in L^1(\Pnm{\rho}{m+n+1})$ be such that $f(x_1, \ldots, x_{m+1}, \cdot) \in \bH_n$  for $\Pnm{\rho}{m+1}$-almost all 
$(x_1, \ldots, x_{m+1}) \in \BX^{m+1}$ and $f(\cdot, x_{m+1}, \ldots, x_{m+n+1})\in \bH_m$  for $\Pnm{\rho}{n+1}$-almost all 
$(x_{m+1}, \ldots, x_{m+n+1}) \in \BX^{n+1}$. Then 
\begin{align*}
    &\int f(x_1, \ldots, x_{m+n+1}) \, \Pnm{\rho}{m+n+1}(\dd (x_1,\ldots,x_{m+n+1})) \\
    &= \I\{m=n+1\} m! \int f(x_1,\ldots,x_m, x_m,x_1,\ldots,x_{m-1})\, \Pnm{\rho}{m}(\dd (x_1,\ldots,x_m))  \\    
    &\quad \begin{multlined}[0.9\textwidth]
        + \I\{m=n\} m! \left( m\int f(x_1, \ldots, x_m, x_m,x_1, \ldots, x_m) \,\Pnm{\rho}{m}(\dd (x_1, \ldots, x_m)) \right. \\
        \left. + \int f(x_1,\ldots, x_{m+1}, x_1,\ldots,x_m)\, \Pnm{\rho}{m+1}(\dd(x_1,\ldots,x_{m+1})) \right)
    \end{multlined}\\ 
&\quad + \I\{m=n-1\} (m+1)! \int f(x_1,\ldots,x_{m+1}, x_1,\ldots,x_{m+1})\, \Pnm{\rho}{m+1}(\dd (x_1,\ldots,x_{m+1})). 
\end{align*}
\end{corollary}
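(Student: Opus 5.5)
We prove Corollary~\ref{c:A3} by applying Lemma~\ref{LemAllgIsoFormel1Funktion} twice, once for each of the two centring conditions. Write the variables as $(\bfm{x}_{m+1},\bfm{y}_n)$ with $\bfm{x}_{m+1}=(x_1,\ldots,x_{m+1})$ and $\bfm{y}_n=(x_{m+2},\ldots,x_{m+n+1})$. The function $f(\bfm{x}_{m+1},\cdot)$ lies in $\bH_n$.

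\textbf{First application.} We apply Lemma~\ref{LemAllgIsoFormel1Funktion} with $r=0$, with $m+1$ playing the role of $m$, and with $n$ unchanged. If $m+1<n$ the integral vanishes; this already gives the claim when $m\le n-2$, since all three indicators are then zero. If $m+1\ge n$ we obtain
\begin{equation*}
\int f\,\dd\Pnm{\rho}{m+n+1}=\sideset{}{^{\ne}}\sum_{i_1,\dots,i_{n}\in[m+1]}\int f(\bfm{x}_{m+1},x_{i_1},\ldots,x_{i_n})\,\Pnm{\rho}{m+1}(\dd \bfm{x}_{m+1}).
\end{equation*}

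\textbf{Second application.} Next we use the centring of the first block, namely that $f(\cdot,x_{m+1},\ldots)\in\bH_m$, with $x_{m+1}$ fixed. We split the sum according to whether the index $m+1$ occurs among $i_1,\ldots,i_n$ and, by the symmetry of $f$ in its last $n$ arguments, reorder the arguments.

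\emph{Case $m+1\notin\{i_j\}$.} The last block consists of $n$ distinct variables among $x_1,\ldots,x_m$. Disintegrating $\Pnm{\rho}{m+1}$ via \eqref{Rekursion mu[m+n]} with $x_{m+1}$ integrated first, Lemma~\ref{LemAllgIsoFormel1Funktion} (applied to the first $m$ variables) forces $n\ge m$. Combined with $m+1\ge n$, this leaves $n\in\{m,m+1\}$. For $n=m$ all $m!$ injections survive and, after symmetrisation, give the term $m!\int f(x_1,\ldots,x_{m+1},x_1,\ldots,x_m)\,\dd\Pnm{\rho}{m+1}$. For $n=m+1$ the tuple must contain $m+1$, so this case is empty.

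\emph{Case $m+1\in\{i_j\}$.} The remaining $n-1$ indices come from $[m]$. The recursion \eqref{Rekursion mu[m+n]} and the $\bH_m$ property of the first block again reduce the integral. By Lemma~\ref{LemAllgIsoFormel1Funktion}, with the repeated $x_{m+1}$ playing the role of the variables $y_1,\ldots,y_r$ and $r=1$, we need $n-1\ge m-1$. For $n=m+1$ we get $(m+1)!$ times the diagonal integral over $\Pnm{\rho}{m+1}$, which is the indicator $\I\{m=n-1\}$ term. For $n=m$ there are $m\cdot m!$ choices; integrating out one variable against $\rho+\delta$ collapses it onto $x_m$ and produces the term $m\,m!\int f(x_1,\ldots,x_m,x_m,x_1,\ldots,x_m)\,\dd\Pnm{\rho}{m}$. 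For $n=m-1$ there are $m!$ arrangements, which yield the $\I\{m=n+1\}$ term over $\Pnm{\rho}{m}$.

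\textbf{Main obstacle.} The delicate part is the bookkeeping in the second application: tracking which variable is integrated against $\rho+\delta_{\cdots}$, checking that only the atom contributions survive after the centring kills the $\rho$ part, and verifying the multiplicities $m!$, $m\,m!$ and $(m+1)!$. We will check these factors first in the small cases $m=0,1$. Integrability of all intermediate integrals follows from $f\in L^1$ as in the proof of Lemma~\ref{LemAllgIsoFormel1Funktion}.
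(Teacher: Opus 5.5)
Your overall strategy is the paper's: Lemma~\ref{LemAllgIsoFormel1Funktion} with $r=0$ on the $\bH_n$ block, then the recursion \eqref{Rekursion mu[m+n]} and the $\bH_m$ property of the first block. However, the case analysis in your second step is wrong in two places, and the two errors happen to cancel in the list of terms you write down.

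\textbf{Case $m+1\notin\{i_1,\ldots,i_n\}$.} You conclude that only $n=m$ survives, but this ignores the variable $x_{m+1}$. With $x_{m+1}$ outermost, the inner measure is $(\rho+\delta_{x_{m+1}})^{[m]}$, not $\rho^{[m]}$. The $\bH_m$ centring only annihilates integrals against $\rho+\sum_{k\le m,\,k\ne j}\delta_{x_k}$, never the atom at $x_{m+1}$. So Lemma~\ref{LemAllgIsoFormel1Funktion} cannot be applied to the first $m$ variables on their own. Take $n=m-1$ and let $j\in[m]$ be the single missing index. Integrating $x_j$ against $\rho+\delta_{\bfm{x}^{(j)}_{m+1}}$ leaves exactly the atom at $x_{m+1}$, so $x_j$ collapses onto $x_{m+1}$. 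These $m!$ tuples produce $m!\int f(x_1,\ldots,x_m,x_m,x_1,\ldots,x_{m-1})\,\rho^{[m]}(\dd(x_1,\ldots,x_m))$, which is precisely the $\I\{m=n+1\}$ term. The smallest check, $m=1$, $n=0$, already exposes this. The index tuple is empty and the integral equals $\int f(x,x)\,\rho(\dd x)$, whereas your Case~1 returns $0$ and your Case~2 is empty.

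\textbf{Case $m+1\in\{i_1,\ldots,i_n\}$.} You correctly obtain $n\ge m$, but then attribute the $\I\{m=n+1\}$ term to $n=m-1$, contradicting that constraint. For $n=m-1$, two indices $j,k\in[m]$ are missing. Once $x_j$ has collapsed onto $x_{m+1}$, the variable $x_{m+1}$ sits in the first block, so integrating $x_k$ against $\rho+\delta_{\bfm{x}^{(j,k)}_{m+1}}$ gives zero. (There are also $(m-1)\,m!/2$ such tuples, not $m!$.)

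\textbf{The fix.} The criterion the paper uses is this: a missing index of $[m]$ can only be absorbed by the atom at $x_{m+1}$, and this can happen at most once. Hence an injective tuple contributes iff at most one index of $[m]$ is absent from $\{i_1,\ldots,i_n\}$. Counting then gives:
\begin{itemize}
\item[] $n=m-1$: $m!$ collapsed terms;
\item[] $n=m$: $m!$ uncollapsed terms over $\rho^{[m+1]}$ plus $m\cdot m!$ collapsed terms;
\item[] $n=m+1$: $(m+1)!$ terms;
\item[] $m\ge n+2$: nothing.
\end{itemize}
If you want to phrase this through Lemma~\ref{LemAllgIsoFormel1Funktion}, the repeated variables must be the indices from $[m]$ occurring in both blocks, with $x_{m+1}$ as the lone free variable. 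This is the reverse of your Case~2, where $x_{m+1}$ plays the role of $y_1$.
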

\begin{proof}
Applying Lemma~\ref{LemAllgIsoFormel1Funktion} with $r=0$ and $m+1$ in place of $m$ yields
\begin{equation} \label{Gl 1}
    \int f(\bfm{x}_{m+n+1}) \, \Pnm{\rho}{m+n+1}(\dd \bfm{x}_{m+n+1}) 
    = \sideset{}{^{\ne}}\sum_{i_1,\dots,i_{n}\in[m+1]} \int  f(\bfm{x}_{m+1}, x_{i_1},\ldots, x_{i_{n}}) \, \Pnm{\rho}{m+1}(\dd \bfm{x}_{m+1})
\end{equation}
if $m+1\ge n$. Otherwise, the integral vanishes.
 Using the symmetry of $f$ in its last arguments yields the first assertion in the case $m+1=n$.
Suppose that $m+1\ge n$ and let $i_1, \ldots, i_n \in [m+1]$ be pairwise distinct. 
If there exists $j \in \{1, \ldots, m\} \setminus \{i_1,\ldots, i_{n}\}$, the defining properties of $\bH_m$ give
\begin{align*}
\int f&(\bfm{x}_{m+1}, x_{i_1},\ldots, x_{i_{n}}) \, \Pnm{\rho}{m+1}(\dd \bfm{x}_{m+1}) \\
    &=  \iint  f(\bfm{x}_{m+1}, x_{i_1},\ldots, x_{i_{n}}) \, (\rho + \delta_{\bfm{x}_{m+1}^{(j)}})(\dd x_j)\, \Pnm{\rho}{m}(\dd \bfm{x}_{m+1}^{(j)}) \\
    &= \int f(\bfm{x}_{m+1}^{(j)}, x_{m+1}, x_{i_1},\ldots, x_{i_{n}}) \, \Pnm{\rho}{m}(\dd \bfm{x}_{m+1}^{(j)}),
\end{align*}
where $\bfm{x}_{m+1}^{(j)}$ arises from  $\bfm{x}_{m+1}$ by dropping $x_j$.
If in addition there exists
$k\in \{1, \ldots, m\} \setminus \{j, i_1,\ldots, i_{n}\}$,
integrating with respect to
$(\rho + \delta_{\bfm{x}_{m+1}^{(j,k)}})(\dd x_k) \Pnm{\rho}{m-1}(\dd
\bfm{x}_{m+1}^{(j,k)}) $ shows that the integral vanishes.  This
establishes the assertion in the case $m>n+1$.  In the case $m=n+1$
the only terms remaining in~\eqref{Gl 1} are those corresponding to
sequences $(i_1,\ldots, i_{m-1})\in[m]^{m-1}$ with pairwise distinct entries. 
The result in this case is thus a consequence of the symmetry of $f$ in its last $n$
arguments.   The case $m=n$ does also follow  
from the symmetry of $f$ by distinguishing the cases where 
$\{i_1,\ldots,i_m\}$ contains $m+1$ or where it does not.
The remaining cases follow by swapping the roles of $m$ and $n$.  
\end{proof}

\begin{corollary}\label{c:A4}
Let the assumptions of Corollary \ref{c:A3} be satisfied. 
Then
\begin{align*}
\int f&(x_1, \ldots, x_{m+1}, x_{m+1}, \ldots, x_{m+n}) \, \Pnm{\rho}{m+n}(\dd (x_1,\ldots,x_{m+n})) \\
&= \I\{m=n-1\} m! \int f(x_1,\ldots,x_{m+1}, x_1,\ldots,x_{m+1})\, \Pnm{\rho}{m+1}(\dd (x_1,\ldots,x_{m+1}))  \\
&\quad + \I\{m=n\} m! \int f(x_1, \ldots, x_m, x_m,x_1, \ldots, x_m) \,\Pnm{\rho}{m}(\dd (x_1, \ldots, x_m)). 
\end{align*}
\end{corollary}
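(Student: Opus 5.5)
The plan is to deduce Corollary~\ref{c:A4} from Corollary~\ref{c:A3} by integrating out the variable $x_{m+1}$, which is shared by both blocks. Define
\begin{equation*}
g(x_1,\ldots,x_{m+1},y_1,\ldots,y_n):=f(x_1,\ldots,x_m,x_{m+1},x_{m+1},y_2,\ldots,y_n)
\end{equation*}
and compare the left-hand side with an integral against $\rho^{[m+n]}$ in the variables $(x_1,\ldots,x_{m+1},x_{m+2},\ldots,x_{m+n})$, where $x_{m+1}$ occurs twice in the argument of $f$. The recursion \eqref{Rekursion mu[m+n]} lets us split $\rho^{[m+n]}$ as $\rho^{[m+1]}(\dd \bfm{x}_{m+1})$ followed by $(\rho+\delta_{\bfm{x}_{m+1}})^{[n-1]}$ for the remaining variables. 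Alternatively, we can apply the recursion to move $x_{m+1}$ to the end of the first block and view the diagonal as an $r=1$ instance of Lemma~\ref{LemAllgIsoFormel1Funktion}.

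The cleanest route is Lemma~\ref{LemAllgIsoFormel1Funktion} applied twice, mirroring the proof of Corollary~\ref{c:A3}.

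\textbf{First application.} Treat $(x_1,\ldots,x_m)$ as the ``$\bfm{x}_m$'' block and $y_1=x_{m+1}$ as the repeated variable ($r=1$). Since $f(x_1,\ldots,x_{m+1},\cdot)\in\bH_n$, the lemma gives, when $m\ge n-1$,
\begin{equation*}
\sideset{}{^{\ne}}\sum_{i_1,\dots,i_{n-1}\in[m]}\int f(\bfm{x}_m,x_{m+1},x_{m+1},x_{i_1},\ldots,x_{i_{n-1}})\,\rho^{[m+1]}(\dd \bfm{x}_{m+1}).
\end{equation*}
When $m<n-1$, the integral vanishes.

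\textbf{Second application.} Use the centredness of $f(\cdot,x_{m+1},\ldots)\in\bH_m$ in the first block. If some index $j\in[m]$ is missing from $\{i_1,\ldots,i_{n-1}\}$, integrate $x_j$ against $(\rho+\delta_{\text{others}})(\dd x_j)$ via the recursion and symmetry of $\rho^{[m+1]}$. This replaces $x_j$ by the remaining coordinates. If two such indices are missing, the term vanishes, exactly as in the proof of Corollary~\ref{c:A3}.

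This leaves two cases.
\begin{itemize}
\item \emph{Case $m=n-1$.} All $m$ indices appear, giving $m!$ permutations. By symmetry of $f$ in its last $n$ arguments, this produces $m!\int f(\bfm{x}_{m+1},x_{m+1},\bfm{x}_m)\,\rho^{[m+1]}$, which is the stated first term after relabeling.
\item \emph{Case $m=n$.} Exactly one index $j$ is missing, for which there are $m$ choices and $(m-1)!$ orderings each. Integrating out $x_j$ with $\bH_m$-centredness converts $x_j$ into either $x_{m+1}$ or one of the $x_{i}$. The contributions with $x_j\to x_i$, $i\neq j$, cancel against the centering, while the $x_j\to x_{m+1}$ contribution survives. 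This reduces the integral to $\rho^{[m]}$ on $(x_1,\ldots,x_m)$ with $x_{m+1}$ identified with $x_m$. Collecting gives $m\cdot(m-1)!=m!$ times $\int f(\bfm{x}_m,x_m,\bfm{x}_m)\,\rho^{[m]}$.
\end{itemize}
For $m\ge n+1$, at least two indices are missing, so every term vanishes.

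The main obstacle is the $m=n$ bookkeeping in the second application: the centering measure is $\rho+\delta_{\bfm{x}^{(j)}}$, which includes $\delta_{x_{m+1}}$. One must check carefully, using \eqref{Rekursion mu[m+n]} to reorder so that $x_j$ is integrated last, that the integral of $\rho+\delta_{\text{others}}$ annihilates everything except the atom at $x_{m+1}$ with the right multiplicity. The integrability needed for Fubini comes from $f\in L^1(\rho^{[m+n+1]})$ together with the recursion, in the same way as in Corollary~\ref{c:A3}.
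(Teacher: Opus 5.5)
Your proposal is correct and matches the paper's proof. You apply Lemma~\ref{LemAllgIsoFormel1Funktion} with $r=1$, then use the recursion \eqref{Rekursion mu[m+n]} together with $\bH_m$-centring in the first block, and split into three cases: $m=n-1$ needs only symmetry; $m=n$ has exactly one missing index, which picks up only the atom at $x_{m+1}$ and gives $m\cdot(m-1)!=m!$ terms; $m>n$ has two missing indices, so every term vanishes. The opening detour via $g$ and Corollary~\ref{c:A3} is never used, and your ``second application'' is really the recursion plus centring rather than another use of Lemma~\ref{LemAllgIsoFormel1Funktion}, but neither affects correctness.
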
 
\begin{proof}
This time we apply Lemma~\ref{LemAllgIsoFormel1Funktion} with $r=1$. This shows 
that the integral on the left-hand side vanishes unless $m\ge n-1$, in which case it equals
\begin{align*} 
    &\int f(\bfm{x}_{m}, y_1, \bfm{y}_n) \, \Pnm{\rho}{m+n}(\dd (\bfm{x}_{m}, \bfm{y}_n)) \\
    &= \sideset{}{^{\ne}}\sum_{i_1,\dots,i_{n-1}\in[m]} \int  f(\bfm{x}_{m}, y_1, y_1, x_{i_1},\ldots, x_{i_{n-1}}) \, \Pnm{\rho}{m+1}(\dd (\bfm{x}_{m},y_1)).
\end{align*}
If $m=n-1$, we obtain the assertion from the symmetry of $f$ in its
last $n$ arguments.  If $m=n$, we can again use the recursive structure 
of $\rho^{[m+1]}$ and the definition of $\bH_m$ to get the result.
Assume finally, that $m>n$. Then there exist for each
$(i_1,\ldots, i_{n-1})\in [m]^{n-1}$ with pairwise distinct entries
distinct $j,k \in \{1, \ldots, m\}\setminus \{i_1, \ldots, i_{n-1}\}$ and thus
the integral comes to zero.
\end{proof}

\begin{corollary}\label{c:A6} 
Let $m,n\in\N$ and $f\in \bH_n$, $h\in L^2(\Pnm{\rho}{m+1})$ such that $h(x,\cdot)\in\bH_m$ for all $x\in\BX$. Then
\begin{align*}
    &\iint h(x,x_1, \ldots, x_m)f(x_{m+1}, \ldots, x_{m+n})  \, (\rho + \delta_{x_1} + \cdots + \delta_{x_m})(\dd x) \, \Pnm{\rho}{m+n}(\dd (x_1, \ldots, x_{m+n})) \\
    &= \I\{m=n\}m! \int h(x_{m+1}, x_1, \ldots, x_m) f(x_1, \ldots, x_m) \, \Pnm{\rho}{m+1}(\dd (x_1, \ldots, x_{m+1})) \\
    &\quad + \I\{m=n+1\} m! \int h(x_m, x_1, \ldots, x_m) f( x_1, \ldots, x_{m-1}) \, \Pnm{\rho}{m}(\dd (x_1, \ldots, x_m)).
\end{align*}
\end{corollary}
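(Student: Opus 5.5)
Our plan is to compute the left-hand side by integrating out the variable $x$ first, and then to split the measure $\rho + \delta_{x_1} + \cdots + \delta_{x_m}$ into its $\rho$-part and its atomic part. The atomic part gives
\begin{equation*}
\sum_{i=1}^m \int h(x_i,x_1,\ldots,x_m) f(x_{m+1},\ldots,x_{m+n})\,\Pnm{\rho}{m+n}(\dd (x_1,\ldots,x_{m+n})).
\end{equation*}
By symmetry of $h(x,\cdot)$ in its last $m$ arguments and symmetry of $\Pnm{\rho}{m+n}$ in the first $m$ coordinates, this equals $m$ times the term with $i=m$. That term is an integral of a function $(x_1,\ldots,x_{m+n})\mapsto h(x_m,x_1,\ldots,x_m)f(x_{m+1},\ldots,x_{m+n})$. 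Its first block $(x_1,\ldots,x_m)$ is \emph{not} in $\bH_m$ (the first argument is repeated), but its last block is in $\bH_n$. We therefore apply Corollary~\ref{KorIntegrationInIH_mAndIH_N} with $g=f$, $l=m$, $k=0$. This yields $\I\{m=n\}\,n!\int h(x_m,x_1,\ldots,x_m)f(x_1,\ldots,x_m)\,\Pnm{\rho}{m}$, and also $0$ when $m>n$.

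The case $m>n$ has to be rechecked, since that Corollary requires $n\ge l$. For $m>n$ we instead invoke Lemma~\ref{LemAllgIsoFormel1Funktion} with $r=0$, and then use the $\bH_m$-property of $h(x,\cdot)$ in the free coordinates, as in the proof of Corollary~\ref{c:A3}. The function $(y_1,\ldots,y_m)\mapsto h(y_m,y_1,\ldots,y_m)$ still behaves well with respect to the coordinates $y_1,\ldots,y_{m-1}$. Integrating over them, as in Corollary~\ref{c:A3}, should leave only $m=n+1$, where the term matches the second indicator in the statement.

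For the $\rho$-part, we use the recursion \eqref{Rekursion mu[m+n]} to rewrite $\rho(\dd x)\,\Pnm{\rho}{m+n}$ in terms of $\Pnm{\rho}{m+n+1}$ with $x$ placed last. Concretely, we write
\begin{equation*}
\int h(x,\bfm{x}_m)f(\cdot)\,\rho(\dd x)\,\Pnm{\rho}{m+n} = \int h(x,\bfm{x}_m)f(\cdot)\,\Pnm{\rho}{m+n+1} - \int h(x,\bfm{x}_m)f(\cdot)\,\delta_{\bfm{x}_{m+n}}(\dd x)\,\Pnm{\rho}{m+n}.
\end{equation*}
The full $\Pnm{\rho}{m+n+1}$-integral is handled by Corollary~\ref{c:A3}, where $h$ supplies the $\bH_m$-block and $f$ the $\bH_n$-block. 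The subtracted atoms split into atoms at $x_i$ with $i\le m$, which cancel against the previous atomic part, and atoms at $x_{m+j}$ with $j\le n$. The latter are evaluated using the symmetry of $f$ and Corollary~\ref{c:A4}-type arguments.

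The main obstacle is the bookkeeping: we must make sure that all cross terms cancel exactly, leaving just the two indicator terms with coefficient $m!$. We would verify this first in the case $m=n=1$ as a sanity check, and then organize the general case by conditioning on $\rho^{[m+n]}=\rho^{[m]}\otimes$ (recursion) and repeatedly using the $\bH$-centering.
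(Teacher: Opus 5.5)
Your argument is correct, but it takes a different route from the paper. By the recursion \eqref{Rekursion mu[m+n]} and symmetry, $(\rho+\delta_{\mathbf{x}_m}+\delta_{\mathbf{y}_n})(\dd x)\,\rho^{[m+n]}(\dd(\mathbf{x}_m,\mathbf{y}_n))$ is $\rho^{[m+n+1]}$. So the $\delta_{x_i}$-atoms cancel identically, and your decomposition collapses to
\begin{equation*}
\text{LHS}=\int h(x,\mathbf{x}_m)f(\mathbf{y}_n)\,\rho^{[m+n+1]}(\dd(\mathbf{x}_m,x,\mathbf{y}_n))
-n\int h(y_1,\mathbf{x}_m)f(\mathbf{y}_n)\,\rho^{[m+n]}(\dd(\mathbf{x}_m,\mathbf{y}_n))=:C-B .
\end{equation*}
All pieces are integrable by Cauchy--Schwarz. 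Your first two paragraphs are therefore superfluous, including the merely sketched treatment of the atomic part for $m>n$ (whose conclusion happens to be right).

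With $F(\mathbf{x}_m,x,\mathbf{y}_n):=h(x,\mathbf{x}_m)f(\mathbf{y}_n)$, Corollary~\ref{c:A3} evaluates $C$ and Corollary~\ref{c:A4} evaluates $B$, and the bookkeeping you worry about is short:
\begin{itemize}
\item For $m=n-1$, both $C$ and $B$ equal $(m+1)!\int h(x_{m+1},\mathbf{x}_m)f(\mathbf{x}_{m+1})\,\rho^{[m+1]}(\dd\mathbf{x}_{m+1})$.
\item For $m=n$, the term $m\,m!\int h(x_m,\mathbf{x}_m)f(\mathbf{x}_m)\,\rho^{[m]}(\dd\mathbf{x}_m)$ occurs in both and cancels, leaving the first indicator term.
\item For $m=n+1$, $B=0$ and $C$ is exactly the second indicator term.
\item In all other cases both vanish.
\end{itemize}

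The paper never splits the measure. It applies Lemma~\ref{LemAllgIsoFormel1Funktion} with $r=0$ to
\begin{equation*}
\tilde f(\mathbf{x}_m,\mathbf{y}_n):=\int h(t,\mathbf{x}_m)\,(\rho+\delta_{\mathbf{x}_m})(\dd t)\,f(\mathbf{y}_n),
\end{equation*}
whose sections $\tilde f(\mathbf{x}_m,\cdot)$ lie in $\bH_n$. This collapses the $f$-variables onto $\mathbf{x}_m$ and gives $0$ for $m<n$. The recursion then merges $(\rho+\delta_{\mathbf{x}_m})(\dd x)\,\rho^{[m]}(\dd\mathbf{x}_m)$ into $\rho^{[m+1]}$. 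The proof finishes with symmetry of $f$ for $m=n$, or with one (for $m=n+1$) or two (for $m\ge n+2$) uses of the centring of $h(x,\cdot)$.

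The paper's route is more direct and produces no cancelling terms. Yours reduces the statement to the two previously established corollaries used as black boxes, at the cost of computing terms that cancel. As a by-product, it exhibits the left-hand side as the difference $C-B$.
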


\begin{proof}
We apply Lemma~\ref{LemAllgIsoFormel1Funktion} with $r=0$ and the function
$\widetilde f\colon \BX^{m+n}\to\R$ given by
$\widetilde f(\bfm{x}_{m}, \bfm{y}_{n}) \coloneqq \int
h(t,\bfm{x}_m) \, (\rho + \delta_{\bfm{x}_{m}})(\dd  t)f(\bfm{y}_{n})$.
This shows that the integral on the left-hand side of the asserted identity vanishes unless $m\ge n$ in which case 
\begin{align*} 
 \iint  h&(x,\bfm{x}_{m})f(\bfm{y}_{n}) \, (\rho + \delta_{\bfm{x}_{m}})(\dd x) \, \Pnm{\rho}{m+n}(\dd (\bfm{x}_{m},\bfm{y}_{n})) \\
    &= \sideset{}{^{\ne}}\sum_{i_1, \ldots,i_{n}\in[m] } \iint h(x,\bfm{x}_{m})f( x_{i_1}, \ldots, x_{i_{n}}) \, (\rho + \delta_{\bfm{x}_{m}})(\dd x)  
\, \Pnm{\rho}{m}(\dd \bfm{x}_{m})\\
&= \sideset{}{^{\ne}}\sum_{i_1, \ldots,i_{n}\in[m]} \int h(x,\bfm{x}_{m})f( x_{i_1}, \ldots, x_{i_{n}}) \,
\, \Pnm{\rho}{m+1}(\dd (x,\bfm{x}_{m})),
\end{align*}
where we have used the recursion~\eqref{Rekursion mu[m+n]}. 
If $m=n$, we can use the symmetry  of $f$ to obtain the assertion.
If $m=n+1$ we can use the recursion \eqref{Rekursion mu[m+n]} and the definition of
$\bH_m$ to obtain the result. If $m\ge n+2$, we apply the recursion twice to obtain that the integral comes to $0$. 
\end{proof}

\section{Integral formulas}

In this subsection we deal with the behaviour of the mapping $\mu\mapsto \mu^{[m]}$
under the addition of Dirac measures. We start with introducing some 
notation.  Let $m, r\in\N$ with $m\ge r$ and
$i_1, \ldots, i_r \in [m]$ pairwise distinct.   
Suppose
$1 \le j_1 < j_2 < \cdots < j_{m-r}\le m$ are such that
$\{j_1, \ldots, j_{m-r}\} = [m] \setminus \{i_1, \ldots, i_r\}$. Given a function $f \colon \BX^m \to \R$, 
we define $f_{i_1, \ldots, i_r} \colon \BX^m \to \R$ by
\begin{equation} \label{eq Def f_i}
    f_{i_1, \ldots, i_r} (x_1, \ldots, x_m) \coloneqq f(\widetilde{x}_1, \ldots, \widetilde{x}_m), 
\end{equation}
where
\begin{align*}
    \widetilde{x}_{i_1} = x_1, \, \ldots, \, \widetilde{x}_{i_r} = x_r, \quad \widetilde{x}_{j_1} = x_{r+1}, \,\ldots, \, \widetilde{x}_{j_{m-r}} = x_m.
\end{align*}
That is, in order to compute $f_{i_1, \ldots, i_r} (x_1, \ldots, x_m)$
for $(x_1, \ldots, x_m)\in\BX^m$, the function $f$ is evaluated at the
point whose $i_k$-th coordinate is $x_k$ for $k \in [r]$ and whose
remaining coordinates are filled with $x_{r+1}, \ldots, x_m$.
Finally, for $k\in \N$ and $x_1, \ldots, x_k, z_1, \ldots, z_{m-r} \in \BX$ let
\begin{equation} \label{eq Notation f_(i_1, ..., i_r)^k 1}
    f_{i_1, \ldots, i_r}^k (x_1, \ldots, x_k, z_1, \ldots, z_{m-r}) 
\coloneqq \sum_{1\le j_1 \le \ldots \le j_r \le k} f_{i_1, \ldots, i_r} (x_{j_1}, \ldots, x_{j_r}, z_1, \ldots, z_{m-r})
\end{equation}
if $m>r$. If $m=r$, define
\begin{equation} \label{eq Notation f_(i_1, ..., i_r)^k 2}
    f_{i_1, \ldots, i_r}^k (x_1, \ldots, x_k) \coloneqq \sum_{1\le j_1 \le \ldots \le j_r \le k} f_{i_1, \ldots, i_r} (x_{j_1}, \ldots, x_{j_r}).
\end{equation}

\begin{Lemma} \label{LemRekursionrho+delta_x} Let $\mu$ be a
$\sigma$-finite measure on $\BX$ and $x \in \BX$. Then we have for each
$m\in\N$ and each measurable $g\colon \BX^m \rightarrow [0,\infty]$ 
that
\begin{equation}\label{e:702}
\Pnm{(\mu + \delta_x)}{m}(g) = 
\Pnm{\mu}{m}(g) + \sum_{i=1}^m  \Pnm{(\mu + \delta_x)}{m-1}(g_i(x,\cdot)).
\end{equation}
This remains true for all measurable and integrable $g\colon \BX^m \rightarrow \R$  
which are integrable with respect to $\mu+\delta_x$.
\end{Lemma}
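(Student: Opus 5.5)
We prove \eqref{e:702} by induction on $m$, working directly with the iterated-integral definition \eqref{mu[n]} of $(\mu+\delta_x)^{[m]}$. Throughout, $g_i(x,\cdot)$ means the function on $\BX^{m-1}$ obtained by putting $x$ into the $i$-th coordinate of $g$ and the remaining $m-1$ arguments, in order, into the other slots, as in \eqref{eq Def f_i}. For $m=1$ the claim reads $(\mu+\delta_x)(g)=\mu(g)+g(x)$, which is just the definition, since $(\mu+\delta_x)^{[0]}$ integrates constants to themselves.

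For the induction step we write the innermost integral in \eqref{mu[n]} separately, as in the recursion \eqref{Rekursion mu[m+n]}:
\begin{align*}
(\mu+\delta_x)^{[m]}(g)=\iint g(\bfm{y}_{m-1},y)\,(\mu+\delta_x+\delta_{\bfm{y}_{m-1}})(\dd y)\,(\mu+\delta_x)^{[m-1]}(\dd \bfm{y}_{m-1}).
\end{align*}
We split the inner measure into $(\mu+\delta_{\bfm{y}_{m-1}})$ and $\delta_x$. The $\delta_x$ part gives
\begin{align*}
\int g(\bfm{y}_{m-1},x)\,(\mu+\delta_x)^{[m-1]}(\dd\bfm{y}_{m-1})=(\mu+\delta_x)^{[m-1]}(g_m(x,\cdot)),
\end{align*}
which is the $i=m$ summand. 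For the other part we define $G(\bfm{y}_{m-1}):=\int g(\bfm{y}_{m-1},y)\,(\mu+\delta_{\bfm{y}_{m-1}})(\dd y)$, a nonnegative measurable function on $\BX^{m-1}$. We then apply the induction hypothesis to $(\mu+\delta_x)^{[m-1]}(G)$ and obtain
\begin{align*}
\mu^{[m-1]}(G)+\sum_{i=1}^{m-1}(\mu+\delta_x)^{[m-2]}(G_i(x,\cdot)).
\end{align*}
The first term equals $\mu^{[m]}(g)$ by \eqref{mu[n]}. In the $i$-th remaining term, $G_i(x,\bfm{z}_{m-2})=\int g(\ldots x\ldots,y)\,(\mu+\delta_x+\delta_{\bfm{z}_{m-2}})(\dd y)$, where $x$ sits in slot $i$. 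Note that the inner measure now contains $\delta_x$, because $x$ is one of the first $m-1$ coordinates. Integrating this against $(\mu+\delta_x)^{[m-2]}(\dd\bfm{z}_{m-2})$ is, by \eqref{mu[n]} applied to the measure $\mu+\delta_x$ with $m-1$ factors, exactly $(\mu+\delta_x)^{[m-1]}(g_i(x,\cdot))$. Summing over $i=1,\dots,m-1$ and adding the $i=m$ term gives \eqref{e:702}.

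The main point needing care is this last bookkeeping step: one must check that the $\delta_x$ coming from $x$ being a coordinate, rather than from the base measure, reappears correctly, so that the base measure $\mu+\delta_x$ in $(\mu+\delta_x)^{[m-1]}$ matches. Since the last variable is being integrated, $g_i(x,\cdot)$ keeps the remaining variables in their original order, so no symmetry of $g$ is needed. For nonnegative $g$ all manipulations are justified by Tonelli's theorem. For signed integrable $g$ we apply the nonnegative case to $g^+$ and $g^-$. This shows that every term is finite whenever $(\mu+\delta_x)^{[m]}(|g|)<\infty$, since each term on the right is dominated by the left side, and we then subtract.
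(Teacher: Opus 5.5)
Your argument is correct. Like the paper, you induct on $m$ and peel off the last coordinate with the recursion \eqref{Rekursion mu[m+n]}. The difference is where you split off the atom $\delta_x$ of the base measure.

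In the paper (step $m\to m+1$), the induction hypothesis is applied to the whole inner integral $\bfm{y}_m\mapsto(\mu+\delta_x+\delta_{\bfm{y}_m})(g(\bfm{y}_m,\cdot))$. Since $x$ then enters both through the base measure and as the $i$-th coordinate, this gives $\mu^{[m+1]}(g)$ and the correct summands $(\mu+\delta_x)^{[m]}(g_i(x,\cdot))$, $i\le m$. It also leaves two extra pieces, $\mu^{[m]}(g_{m+1}(x,\cdot))$ and $\sum_{i\le m}(\mu+\delta_x)^{[m-1]}(g_{m+1,i}(x,x,\cdot))$. These are merged into the last summand $(\mu+\delta_x)^{[m]}(g_{m+1}(x,\cdot))$ by a second application of the induction hypothesis, to $g_{m+1}(x,\cdot)$.

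You instead separate $\delta_x$ from $\mu+\delta_{\bfm{y}_{m-1}}$ before invoking the hypothesis. The last summand then appears at once with the correct outer measure $(\mu+\delta_x)^{[m-1]}$. A single application of the hypothesis to $G$ gives the remaining summands, with the needed $\delta_x$ in the inner measure supplied by $x$ sitting in slot $i$. You rightly flag this as the point to check.

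Your route is slightly shorter and avoids the recombination step. The signed case, via $|g|$ and $g^\pm$, is handled the same way in both.
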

\begin{proof} We prove the first assertion
by induction over $m\in\N$.
The base case is
\begin{equation*}
	(\mu + \delta_x)(g) =\mu (g) +  g(x).
\end{equation*}
We now assume that the assertion holds for all measurable functions 
$g\colon \BX^m \rightarrow [0,\infty]$. Let
$g\colon \BX^{m+1} \rightarrow[0,\infty]$ be measurable.
From the recursion~\eqref{Rekursion mu[m+n]} we obtain
\begin{equation*}
	\Pnm{(\mu + \delta_x)}{m+1}(g)  
= \iint  g(\bfm{y}_{m+1}) \, (\mu + \delta_x + \delta_{\bfm{y}_{m}})(\dd y_{m+1}) \, \Pnm{(\mu + \delta_x)}{m}(\dd\bfm{y}_{m}).
\end{equation*}
Applying the induction hypothesis to the mapping
$\bfm{y}_{m}\mapsto (\mu + \delta_x + \delta_{\bfm{y}_{m}})(g(\bfm{y}_{m},\cdot))$ 
and using the recursion \eqref{Rekursion mu[m+n]} gives 
\begin{align*}
\Pnm{(\mu + \delta_x)}{m+1}&(g)  
= \int  g(\bfm{y}_{m+1}) \, \Pnm{\mu}{m+1}(\dd \bfm{y}_{m+1}) 
+ \int g(\bfm{y}_{m},x)  \, \Pnm{\mu}{m}(\dd \bfm{y}_{m})  \\
&+ \sum_{i=1}^m \iint  g_{i}(x, \bfm{y}_{m-1},y_{m+1})\, (\mu + \delta_{\bfm{y}_{m-1}} + \delta_x)(\dd y_{m+1})\, 
\Pnm{(\mu+ \delta_x)}{m-1}(\dd \bfm{y}_{m-1}) \\
&+ \sum_{i=1}^{m} \int g_{i}(x,\bfm{y}_{m-1},x)\, \Pnm{(\mu+ \delta_x)}{m-1}(\dd \bfm{y}_{m-1}).
\end{align*}
Therefore,       
\begin{align}\notag
\Pnm{(\mu + \delta_x)}{m+1}(g)&=\mu^{[m+1]}(g)
    +\mu^{[m]}(g_{m+1}(x,\cdot))     
+ \sum_{i=1}^m \Pnm{(\mu+ \delta_x)}{m}(g_{i}(x, \cdot))\\  \label{e:912}   
       &\quad + \sum_{i=1}^{m} \int g_{m+1,i}(x,x,\bfm{y}_{m-1})\,\Pnm{(\mu+ \delta_x)}{m-1}(\dd \bfm{y}_{m-1}). 
\end{align}
An application of the induction hypothesis to the function $\bfm{y}_{m} \mapsto g_{m+1}(x,\bfm{y}_{m})$ shows
\begin{align*}
(\mu+\delta_x)^{[m]}(g_{m+1}(x,\cdot)) = \mu^{[m]}(g_{m+1}(x,\cdot))
+\sum_{i=1}^m  (\mu+\delta_x)^{[m-1]}(g_{m+1,i}(x,x, \cdot)).
\end{align*}
Inserting this into \eqref{e:912} yields the result.

Assume now that $g\colon \BX^m \rightarrow \R$ is measurable and integrable w.r.t.\ $\mu+\delta_x$.
Applying \eqref{e:702} to the function $|g|$ shows that all resulting terms in \eqref{e:702} are finite.
Therefore, we obtain the assertion from \eqref{e:702} by decomposing $g$ into its negative
and positive part.
\end{proof}

The next proposition deals with the addition of several Dirac measures.

\begin{proposition} \label{PropRho+delta_x_1+...+delta_x_k}
Suppose that $\mu$ is a $\sigma$-finite measure. Then we have for all $m,k\in \N$, $x_1, \ldots, x_k \in \BX$ and all
measurable functions $f\colon \BX^m \to [0,\infty]$ 
that
\begin{align} \label{eq PropRho+delta_x_1+...+delta_x_k}
   \int& f (z) \,\Pnm{(\mu + \delta_{x_1} + \cdots + \delta_{x_k})}{m}(\dd z) 
    = \int f(z)  \,\Pnm{\mu}{m}(\dd z)\\ 
&+ \sum_{r=1}^{m} \quad \sideset{}{^{\ne}}\sum_{i_1,\dots,i_r\in[m]} 
\int f_{i_1, \ldots, i_r}^k (x_1, \ldots, x_k, z_1, \ldots, z_{m-r}) \,\Pnm{\mu}{m-r}(\dd (z_1, \ldots, z_{m-r})). \nonumber
\end{align}
This remains true for all measurable $f\colon \BX^m \rightarrow \R$  
which are integrable with respect to $\mu + \delta_{x_1} + \cdots + \delta_{x_k}$.
\end{proposition}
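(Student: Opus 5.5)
Fix $m\in\N$ and prove \eqref{eq PropRho+delta_x_1+...+delta_x_k} by induction over $k\in\N$, using Lemma \ref{LemRekursionrho+delta_x} as the engine. The case $k=1$ should follow from iterating \eqref{e:702}: applying \eqref{e:702} to $g=f$ gives $\mu^{[m]}(f)$ plus $\sum_i(\mu+\delta_x)^{[m-1]}(f_i(x,\cdot))$, and applying \eqref{e:702} again to each $f_i(x,\cdot)$ produces terms in which $x$ is inserted into two distinct coordinates $i_1\neq i_2$. Continuing down to $(\mu+\delta_x)^{[0]}$, the surviving terms are indexed by ordered tuples $(i_1,\dots,i_r)$ of pairwise distinct indices with $x$ in each such slot, integrated against $\mu^{[m-r]}$. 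For $k=1$, $f^1_{i_1,\ldots,i_r}(x,z)=f_{i_1,\ldots,i_r}(x,\ldots,x,z)$, so this is exactly the claim. I would make this an inner induction over $m$, checking that the bookkeeping of $f_{i}$ composed with $(f_i)_{j}$ matches $f_{i,j'}$ for the suitable relabelled index $j'$.

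For the induction step $k\to k+1$, write $\mu+\delta_{x_1}+\cdots+\delta_{x_{k+1}}=\mu'+\delta_{x_{k+1}}$ with $\mu':=\mu+\delta_{x_1}+\cdots+\delta_{x_k}$, which is again $\sigma$-finite. By the case $k=1$ applied to $\mu'$,
\[
(\mu'+\delta_{x_{k+1}})^{[m]}(f)=\mu'^{[m]}(f)+\sum_{s=1}^m\ \sideset{}{^{\ne}}\sum_{i_1,\dots,i_s\in[m]}\mu'^{[m-s]}\big(f_{i_1,\ldots,i_s}(x_{k+1},\ldots,x_{k+1},\cdot)\big).
\]
Then apply the induction hypothesis (for $k$ points) to $\mu'^{[m]}(f)$ and to each $\mu'^{[m-s]}(\cdots)$. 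Each resulting term has some coordinates filled with points from $\{x_1,\dots,x_k\}$ in weakly increasing index order, some with $x_{k+1}$, and the rest integrated against $\mu^{[m-r]}$. The goal is to regroup these terms by the total set of filled slots and the ordered tuple $(i_1,\dots,i_r)$, and to recognise the inner sum over $1\le j_1\le\cdots\le j_r\le k+1$ in $f^{k+1}_{i_1,\ldots,i_r}$ as splitting into the part with $j$'s at most $k$, followed by a block of $j$'s equal to $k+1$.

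The main obstacle is this combinatorial matching. The sum over ordered distinct tuples $(i_1,\dots,i_r)$, combined with the convention that $f_{i_1,\ldots,i_r}$ places the $t$-th argument at slot $i_t$, must produce each configuration of ``which slot gets which $x_j$'' exactly once. I would handle it by showing that both sides equal a sum over maps $\phi$ from a subset $S\subset[m]$ to $[k+1]$, after first checking that the ordered-tuple/weakly-increasing encoding is a bijection onto such assignments up to the symmetric structure used. If the multiplicities do not match directly, I would instead restrict to product functions $f=h_1\otimes\cdots\otimes h_m$, which reduces the check to a bookkeeping identity, and conclude by a monotone class argument.

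For the extension to real-valued integrable $f$, apply the nonnegative case to $|f|$ to see that all terms are finite, then split $f=f^+-f^-$, exactly as in the proof of Lemma \ref{LemRekursionrho+delta_x}.
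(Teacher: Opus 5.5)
Your approach works, but the induction is organized differently from the paper's. The paper runs an outer induction on $m$ and an inner one on $k$. In the $k$-step it applies Lemma~\ref{LemRekursionrho+delta_x} once, with base measure $\mu+\delta_{x_1}+\cdots+\delta_{x_k}$. This inserts a single copy of $x_{k+1}$ and leaves integrals against $(\mu+\delta_{x_1}+\cdots+\delta_{x_{k+1}})^{[m]}$, which are handled by the induction hypothesis in $m$ (with all $k+1$ points). The bookkeeping then only splits $\sum_{1\le j_1\le\cdots\le j_r\le k+1}$ according to whether $j_r=k+1$. You instead isolate the one-point formula, valid for every $\sigma$-finite base measure and every $m$; your base case is exactly the paper's inner base case. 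You apply it to $\mu'=\mu+\delta_{x_1}+\cdots+\delta_{x_k}$ to remove all copies of $x_{k+1}$ at once, and then use the $k$-point case. The $j$-sum is therefore split by the number $s$ of indices equal to $k+1$. This presents the result as a composition of one-point expansions, which is conceptually transparent. The cost is a two-level index bijection instead of the paper's one-step split.

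There are two things to correct. First, you cannot fix $m$. Your step applies the $k$-point case to $(\mu')^{[m-s]}$ with $m-s<m$, so the induction on $k$ must carry the statement for all $m$ simultaneously; the case $m-s=0$ is trivial by the convention $\mu^{[0]}(c)=c$. Second, the matching you call the main obstacle should not be done by configurations, and ``exactly once'' is false. An assignment in which $x_j$ occupies $p_j$ slots is produced $\prod_j p_j!$ times. For example, for diffuse $\mu$, $(\mu+\delta_x)^{[2]}$ gives $(x,x)$ mass $2$, matching the two tuples $(1,2)$ and $(2,1)$. These multiplicities are the same on both sides, so nothing breaks, but the clean argument matches index tuples. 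Take the summand of $f^{k+1}_{i_1,\ldots,i_r}$ with $j_1\le\cdots\le j_t\le k<j_{t+1}=\cdots=j_r=k+1$, and set $s=r-t$. Let $\sigma$ be the increasing bijection of $[m-s]$ onto $[m]\setminus\{i_{t+1},\ldots,i_r\}$. This summand corresponds to the term of your expansion in which $x_{k+1}$ fills the slots $(i_{t+1},\ldots,i_r)$ and the tuple $(\sigma^{-1}(i_1),\ldots,\sigma^{-1}(i_t))\in[m-s]^t$ is used in the $k$-point step. This correspondence is a bijection, and because $\sigma$ is increasing the $z$-coordinates land in the same slots on both sides. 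So the identity holds term by term, and the fallback via product functions and a monotone class argument is unnecessary; it would not shorten the bookkeeping anyway.
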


\begin{proof}
We prove the assertion by induction on $m$. If $m = 1$ and
$f\colon \BX^m \to [0,\infty]$ is measurable, then 
\begin{equation*}
    (\mu + \delta_{\bfm{x}_{k}}) \left( f \right) 
    = \mu \left(f \right) + \sum_{i=1}^k f (x_i) 
    = \mu \left(f \right) + f^k_1(x_1, \ldots, x_k)
\end{equation*}
by definition of $f^k_1$. 
For the induction step we assume that the assertion holds for some
$m\in\N$, all $k\in\N$ and all $x_1, \ldots, x_k \in \BX$ as well as
all measurable $f\colon \BX^m\to[0,\infty]$. To establish the formula for
$m+1$, we use induction on $k\in\N$.  
Take a measurable $f\colon\BX^{m+1}\to[0,\infty]$ and $x_1\in\BX$. 
By Lemma~\ref{LemRekursionrho+delta_x}, 
\begin{equation*}
     \Pnm{(\mu + \delta_{x_1})}{m+1} (f ) 
    = \Pnm{\mu}{m+1}(f ) + \sum_{i=1}^{m+1} \int f_i(x_1,\bfm{y}_{m}) \,\Pnm{(\mu + \delta_{x_1})}{m}(\dd\bfm{y}_{m}).
\end{equation*}
Let $i\in [m+1]$. The induction hypothesis in the induction on $m$
applied to the function
$\bfm{y}_{m} \mapsto  f_i(x_1,\bfm{y}_{m})$ 
shows that the second term  on the above right-hand side equals
\begin{align*}
    &\sum_{i=1}^{m+1} \int f_i(x_1,\bfm{y}_{m}) \, \Pnm{\mu}{m}(\dd\bfm{y}_{m}) 
    + \sum_{r=1}^m \quad \sideset{}{^{\ne}}\sum_{i_1,\dots,i_{r+1}\in[m+1]} \int f_{i_1, \ldots, i_{r+1}}^1 (x_1, \bfm{z}_{m-r}) \, \Pnm{\mu}{m-r}(\dd \bfm{z}_{m-r}) \\
    &= \sum_{r=1}^{m+1} \quad \sideset{}{^{\ne}}\sum_{i_1,\dots,i_r\in[m+1]} \int f_{i_1, \ldots, i_{r}}^1 (x_1, \bfm{z}_{m+1-r}) \, \Pnm{\mu}{m+1-r}(\dd \bfm{z}_{m+1-r}).
\end{align*}
This finishes the base case of the induction on $k$. 
Next we assume that the assertion holds for some $k\in\N$.
Let $x_1,\ldots,x_{k+1}\in\BX$. By 
Lemma~\ref{LemRekursionrho+delta_x}, 
\begin{equation*}
    \Pnm{(\mu + \delta_{\bfm{x}_{k+1}})}{m+1} (f) 
= \Pnm{(\mu +  \delta_{\bfm{x}_{k}})}{m+1}(f) 
+ \sum_{i=1}^{m+1} \int f^{(i)} (\bfm{y}_{m})\, \Pnm{(\mu +  \delta_{\bfm{x}_{k+1}})}{m}(\dd\bfm{y}_{m}),
\end{equation*}
where $f^{(i)}(\bfm{y}_{m}) \coloneqq f_i(x_{k+1},\bfm{y}_{m})$ for $\bfm{y}_{m} \in \BX^m$ and $i\in [m+1]$.
Using the induction hypothesis in the induction on $k$, we obtain
\begin{equation*}
    \Pnm{(\mu + \delta_{\bfm{x}_{k}})}{m+1}(f) 
    =  \Pnm{\mu}{m+1}(f)  + \sum_{r=1}^{m+1} \quad \sideset{}{^{\ne}}\sum_{i_1,\dots,i_{r}\in[m+1]} \int f_{i_1, \ldots, i_r}^k (\bfm{x}_{k}, \bfm{z}_{m+1-r})\, \Pnm{\mu}{m+1-r}(\dd \bfm{z}_{m+1-r}).
\end{equation*}
Let $i\in [m+1]$. The induction hypothesis in the induction on $m$ yields
\begin{align*}
\int & f^{(i)} (\bfm{y}_{m}) \, \Pnm{(\mu + \delta_{\bfm{x}_{k+1}})}{m}(\dd\bfm{y}_{m}) \\
    &= \Pnm{\mu}{m}(f^{(i)}) + \sum_{r=1}^m \quad \sideset{}{^{\ne}}\sum_{i_1,\dots,i_r\in[m]} \int (f^{(i)})_{i_1, \ldots, i_r}^{k+1} (\bfm{x}_{k+1}, \bfm{z}_{m-r}) \,\Pnm{\mu}{m-r}(\dd \bfm{z}_{m-r}).
\end{align*}
Combining these findings, we obtain
\begin{align*}
&\Pnm{(\mu + \delta_{\bfm{x}_{k+1}})}{m+1} (f) = \Pnm{\mu}{m+1}(f) \\
&\quad + \sum_{r=1}^{m+1} \quad \sideset{}{^{\ne}}\sum_{i_1,\dots,i_{r}\in[m+1]} \int f_{i_1, \ldots, i_r}^k (\bfm{x}_{k}, \bfm{z}_{m+1-r}) \,\Pnm{\mu}{m+1-r}(\dd \bfm{z}_{m+1-r}) 
    + \sum_{i=1}^{m+1} \Pnm{\mu}{m}(f^{(i)}) \\
&\quad+  \sum_{i=1}^{m+1} \sum_{r=1}^m \quad \sideset{}{^{\ne}}\sum_{i_1,\dots,i_r\in[m]} 
\int (f^{(i)})_{i_1, \ldots, i_r}^{k+1} (\bfm{x}_{k+1}, \bfm{z}_{m-r}) \,\Pnm{\mu}{m-r}(\dd \bfm{z}_{m-r})\\
&=:\Pnm{(\mu + \delta_{\bfm{x}_{k+1}})}{m+1} (f)+A+B.
\end{align*}
We need to show that
\begin{equation}\label{e:619}
\sum_{r=1}^{m+1} \quad \sideset{}{^{\ne}}\sum_{i_1,\dots,i_{r}\in[m+1]} \int f_{i_1, \ldots, i_r}^{k+1} (\bfm{x}_{k+1}, \bfm{z}_{m+1-r}) 
\,\Pnm{\mu}{m+1-r}(\dd \bfm{z}_{m+1-r})=A+B.
\end{equation}
To this end, we note that
for $r\in [m+1]$, pairwise distinct $i_1, \ldots, i_r\in [m+1]$ 
and $\bfm{z}_{m+1-r}\in \BX^{m+1-r}$ we have by definition
that
\begin{align*}
    &f_{i_1, \ldots, i_r}^{k+1} (\bfm{x}_{k+1}, \bfm{z}_{m+1-r}) 
    = \sum_{1\le j_1 \le \cdots \le j_r \le k+1} f_{i_1, \ldots, i_r} (x_{j_1}, \ldots, x_{j_r}, \bfm{z}_{m+1-r}) \\
    &= \sum_{1\le j_1 \le \cdots \le j_r \le k} f_{i_1, \ldots, i_r} (x_{j_1}, \ldots, x_{j_r}, \bfm{z}_{m+1-r})\\
&\quad     + \sum_{1\le j_1 \le \cdots \le j_{r-1} \le k+1} f_{i_1, \ldots, i_r} (x_{j_1}, \ldots, x_{j_{r-1}}, x_{k+1}, \bfm{z}_{m+1-r}) \\
     &= f_{i_1, \ldots, i_r}^{k} (\bfm{x}_{k}, \bfm{z}_{m+1-r}) 
+\sum_{1\le j_1 \le \cdots \le j_{r-1} \le k+1} f_{i_1, \ldots, i_r} (x_{j_1}, \ldots, x_{j_{r-1}}, x_{k+1}, \bfm{z}_{m+1-r})
\end{align*}
if $r\ge 2$. In the case $r=1$, it holds
\begin{align*}
    f_{i_1}^{k+1} (\bfm{x}_{k+1}, \bfm{z}_{m}) 
   & = \sum_{1\le j_1  \le k+1} f_{i_1} (x_{j_1}, \bfm{z}_{m}) \\
    &= \sum_{1\le j_1 \le k} f_{i_1} (x_{j_1}, \bfm{z}_{m})
     +  f_{i_1} (x_{k+1}, \bfm{z}_{m}) 
     = f_{i_1}^{k} (\bfm{x}_{k}, \bfm{z}_{m}) + f^{(i_1)}(\bfm{z}_{m}).
\end{align*}
Therefore, the left-hand side of \eqref{e:619} equals
\begin{align*}
&\sum_{r=1}^{m+1} \quad\sideset{}{^{\ne}}\sum_{i_1,\dots,i_{r}\in[m+1]} \int f_{i_1, \ldots, i_r}^{k} (\bfm{x}_{k}, \bfm{z}) \,\Pnm{\mu}{m+1-r}(\dd \bfm{z}) 
    + \sum_{i_1=1}^{m+1} \int f^{(i_1)}(\bfm{z}) \,\Pnm{\mu}{m}(\dd \bfm{z}) \\
    & + \sum_{r=2}^{m+1} \quad\sideset{}{^{\ne}}\sum_{i_1,\dots,i_{r}\in[m+1]} \int \sum_{1\le j_1 \le \cdots \le j_{r-1} \le k+1} 
f_{i_1, \ldots, i_r} (x_{j_1}, \ldots, x_{j_{r-1}}, x_{k+1}, \bfm{z}) \,\Pnm{\mu}{m+1-r}(\dd \bfm{z}).  
\end{align*}
Here the first term equals $A$, while the second equals
\begin{equation*}
\sum_{r=1}^m \quad \sideset{}{^{\ne}}\sum_{i_1,\dots,i_r\in[m]} \sum_{i=1}^{m+1} \int \sum_{1\le j_1 \le \cdots \le j_r \le k+1} 
(f^{(i)})_{i_1, \ldots, i_r} (x_{j_1}, \ldots, x_{j_r}, \bfm{z}_{m-r})
\, \Pnm{\mu}{m-r}(\dd \bfm{z}_{m-r})=B.
\end{equation*}
This finishes the induction on $k$ and hence also on $m$.

The second assertion follows as in the proof of Lemma \ref{LemRekursionrho+delta_x}.
\end{proof}

Let $m, r\in\N$ with $m\ge r$ and
$f_1, \ldots, f_m \colon \BX \to \R$.  The tensor product
$(\otimes_{j=1}^m f_j) \colon \BX^m \to \R$ is defined by
\begin{equation} \label{Def Tensorprodukt}
    \bigg(\bigotimes_{j=1}^m f_j\bigg) (x_1, \ldots, x_r) \coloneqq \prod_{j=1}^m f_j (x_j).
\end{equation}
Given $f_1, \ldots, f_m$ and pairwise distinct $i_1, \ldots, i_r \in [m]$,  
we define
\begin{align} \label{eq Def f_otimes}
    f_{\otimes_ {i_1, \ldots, i_r}} \coloneqq \bigotimes_{j=1}^r f_{i_j}.\qquad 
f^{\otimes_{i_1, \ldots, i_r}} \coloneqq \bigotimes_{j\notin \{i_1, \ldots, i_r\}} f_j,
\end{align}
where $f^{\otimes_{i_1, \ldots, i_m}} \coloneqq 1$. 
For $k\in \N$ and $x_1, \ldots, x_k \in \BX$ let
\begin{equation} \label{eq Def f_otimes^k}
    f_{\otimes_{i_1, \ldots, i_r}}^k (x_1, \ldots, x_k) \coloneqq \sum_{1\le j_1 \le \cdots \le j_r \le k} f_{\otimes_{i_1, \ldots, i_r}} (x_{j_1}, \ldots, x_{j_r}).
\end{equation}

\begin{corollary} \label{Kor Rho+delta_x_1+...+delta_x_k}
Let $\mu$ be a $\sigma$-finite measure on $\BX$. 
Suppose that $k, m \in\N$, $x_1, \ldots, x_k \in \BX$ and
$f_1, \ldots, f_m \colon \BX \to \R$ are measurable functions such that
$\otimes_{i=1}^m f_i$ is integrable with respect to
$\Pnm{(\mu + \delta_{x_1} + \cdots + \delta_{x_k})}{m}$.
Then
\begin{align*}
   &\int \bigg(\bigotimes_{i=1}^m f_i\bigg) (z) \,\Pnm{(\mu + \delta_{x_1} + \cdots + \delta_{x_k})}{m}(\dd z) \\
    &= \int \bigg(\bigotimes_{i=1}^m f_i\bigg)(z)  \,\Pnm{\mu}{m}(\dd z) 
+ \sum_{r=1}^m \quad\sideset{}{^{\ne}}\sum_{i_1,\dots,i_r\in[m]} f_{\otimes_{i_1, \ldots, i_r}}^k (x_1, \ldots, x_k) \int f^{\otimes_{i_1, \ldots, i_r}}(z)  \,\Pnm{\mu}{m-r}(\dd z). 
\end{align*}
\end{corollary}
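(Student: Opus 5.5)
This is a direct specialization of Proposition \ref{PropRho+delta_x_1+...+delta_x_k} to $f:=\bigotimes_{i=1}^m f_i$. I would first treat nonnegative $f_i$, where the proposition applies with no integrability hypothesis. The general case then follows as in the last step of Lemma \ref{LemRekursionrho+delta_x}.

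\textbf{Step 1 (applying the proposition).} The proposition gives
\[
(\mu+\delta_{\bfm{x}_k})^{[m]}(f)=\mu^{[m]}(f)+\sum_{r=1}^m\ \sideset{}{^{\ne}}\sum_{i_1,\dots,i_r\in[m]}\int f^k_{i_1,\ldots,i_r}(\bfm{x}_k,\bfm{z}_{m-r})\,\mu^{[m-r]}(\dd\bfm{z}_{m-r}).
\]
So the whole task is to rewrite each summand on the right.

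\textbf{Step 2 (factorizing the summands).} Take pairwise distinct $i_1,\ldots,i_r$ and let $j_1<\cdots<j_{m-r}$ be the remaining indices. By \eqref{eq Def f_i}, coordinate $i_s$ receives $y_s$ for $s\in[r]$, and coordinate $j_t$ receives $z_t$. For a tensor product this gives
\[
f_{i_1,\ldots,i_r}(y_1,\ldots,y_r,z_1,\ldots,z_{m-r})=\prod_{s=1}^r f_{i_s}(y_s)\prod_{t=1}^{m-r}f_{j_t}(z_t).
\]
In the notation of \eqref{eq Def f_otimes}, this equals $f_{\otimes_{i_1,\ldots,i_r}}(y)\,f^{\otimes_{i_1,\ldots,i_r}}(z)$; the second factor uses the increasing ordering of the $j_t$, matching the definition of $\bigotimes_{j\notin\{i_1,\ldots,i_r\}}f_j$.

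\textbf{Step 3 (pulling out the $x$-dependent factor).} Summing over $1\le l_1\le\cdots\le l_r\le k$ with $y_s=x_{l_s}$, the $z$-factor does not depend on the $l_s$ and can be pulled out. By \eqref{eq Def f_otimes^k},
\[
f^k_{i_1,\ldots,i_r}(\bfm{x}_k,\bfm{z})=f^k_{\otimes_{i_1,\ldots,i_r}}(\bfm{x}_k)\,f^{\otimes_{i_1,\ldots,i_r}}(\bfm{z}).
\]
The first factor is a constant with respect to $\mu^{[m-r]}$, so it comes out of the integral. This gives the asserted formula. For $r=m$ the convention $f^{\otimes_{i_1,\ldots,i_m}}=1$ together with $\int\cdot\,\dd\mu^{[0]}$ being evaluation matches \eqref{eq Notation f_(i_1, ..., i_r)^k 2}.

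\textbf{Step 4 (signed $f_i$).} The proposition also covers real-valued $f$ that are integrable with respect to $(\mu+\delta_{\bfm{x}_k})^{[m]}$, which is exactly the stated hypothesis. Steps 2 and 3 are pointwise identities, so they go through unchanged. The one point needing care is that each factored integral is finite. For this I apply the nonnegative case to $\bigotimes|f_i|$: every term on its right-hand side is then bounded by $(\mu+\delta_{\bfm{x}_k})^{[m]}(\bigotimes|f_i|)<\infty$, provided the corresponding $f^k_{\otimes}(\bfm{x}_k)$-factor is nonzero.

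The main obstacle is purely bookkeeping in Step 2: checking that the coordinate relabelling in $f_{i_1,\ldots,i_r}$ produces exactly $f_{\otimes}\cdot f^{\otimes}$ with the right argument order.
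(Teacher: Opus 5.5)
Your proposal is correct and is exactly the intended argument: the paper gives no separate proof and treats the corollary as the specialization of Proposition~\ref{PropRho+delta_x_1+...+delta_x_k} to $f=\bigotimes_{i=1}^m f_i$, with the pointwise factorization $f^k_{i_1,\ldots,i_r}(\bfm{x}_k,\bfm{z})=f^k_{\otimes_{i_1,\ldots,i_r}}(\bfm{x}_k)\,f^{\otimes_{i_1,\ldots,i_r}}(\bfm{z})$ doing all the work. The caveat you flag in Step 4 is harmless: if $\sum_{1\le l_1\le\cdots\le l_r\le k}\prod_{s=1}^r|f_{i_s}(x_{l_s})|=0$, the corresponding integrand in the proposition vanishes identically, so that summand is $0$, and the matching product in the corollary should be read as $0$ whether or not $\int f^{\otimes_{i_1,\ldots,i_r}}\,\dd\mu^{[m-r]}$ is defined.
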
 

\section{A summation formula for rising factorials}

The following formula has been used in the proof of  Proposition \ref{l:nabla=nabla*}.
Since we have not found the result in the literature, we give the simple proof.

\begin{Lemma} \label{LemHypergeoSumme 1}
Let $\theta >0$ and $m\in \N$, $m\ge 2$. 
For each $j \in [m-1]$ it holds
\begin{equation*}
\sum_{n=j}^{m} (-1)^{n-j} \frac{\theta + 2n - 1}{(n-j)!}\Pn{(\theta+j)}{n-1} 
= (-1)^{m-j} \frac{\Pn{(\theta+j)}{m}}{(m-j)!}. 
\end{equation*}
\end{Lemma}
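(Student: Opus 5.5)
The natural route is induction on $m$ with $j$ fixed, i.e.\ we prove that the partial sums telescope. Write
\begin{equation*}
a_n:=(-1)^{n-j}\frac{\theta+2n-1}{(n-j)!}\Pn{(\theta+j)}{n-1},\qquad
b_m:=(-1)^{m-j}\frac{\Pn{(\theta+j)}{m}}{(m-j)!},
\end{equation*}
for $n,m\ge j$. The claim is $\sum_{n=j}^m a_n=b_m$. We prove this for every $m\ge j$; the range $m\ge 2$, $j\in[m-1]$ is then a special case. It suffices to verify two things: the base case $a_j=b_j$, and the increment identity $a_m=b_m-b_{m-1}$ for all $m\ge j+1$.

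\emph{Base case.} We have $a_j=(\theta+2j-1)\Pn{(\theta+j)}{j-1}$. Since $\Pn{(\theta+j)}{j-1}=(\theta+j)\cdots(\theta+2j-2)$, multiplying by $\theta+2j-1$ gives $(\theta+j)\cdots(\theta+2j-1)=\Pn{(\theta+j)}{j}=b_j$. This works for $j\ge1$, where the convention $\Pn{w}{0}=1$ covers $j=1$.

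\emph{Increment.} Factor out $(-1)^{m-j}\Pn{(\theta+j)}{m-1}/(m-j)!$ from all three terms, using two facts:
\begin{itemize}
\item $\Pn{(\theta+j)}{m}=\Pn{(\theta+j)}{m-1}(\theta+j+m-1)$,
\item $\Pn{(\theta+j)}{m-1}=\Pn{(\theta+j)}{m-2}(\theta+j+m-2)$.
\end{itemize}
Here $b_{m-1}$ carries the opposite sign, and its denominator $(m-1-j)!$ equals $(m-j)!/(m-j)$. Hence $b_m-b_{m-1}$ equals the common factor times $(\theta+j+m-1)+(m-j)$. This sum is $\theta+2m-1$, which is exactly the coefficient appearing in $a_m$. (The telescoping requires $b_{m-1}$ to be written with $\Pn{(\theta+j)}{m-1}$ and with the factor $m-j$ pulled from the factorial.)

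Summing the increments from $n=j+1$ to $m$ and adding the base case gives the lemma. There is no real obstacle here. The only step needing care is the sign and factorial bookkeeping in the increment, and the whole argument is a two-line algebraic check.
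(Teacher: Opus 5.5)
Your proof is correct and is essentially the paper's argument: the paper also sums from the bottom term $n=j$ upward, checking that the partial sums equal $(-1)^{N-j}\Pn{(\theta+j)}{N}/(N-j)!$ for $N=j+1,j+2$, and then says the general case follows inductively; your increment identity $a_m=b_m-b_{m-1}$ makes that induction explicit. (The second rising-factorial fact you list, $\Pn{(\theta+j)}{m-1}=\Pn{(\theta+j)}{m-2}(\theta+j+m-2)$, is never used and can be dropped.)
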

\begin{proof} Let $S$ denote the left-hand side of the asserted identity.
We have
\begin{align*}
S= (\theta +2j-1) \Pn{(\theta+j)}{j-1} &- (\theta + 2j + 1)\Pn{(\theta+j)}{j} \\
&+  \sum_{n=j+2}^{m} (-1)^{n-j} \frac{\theta + 2n - 1}{(n-j)!}\Pn{(\theta+j)}{n-1}.
\end{align*}
Using 
the definition of rising factorials, we can write $S$ as
\begin{equation*}
S= \Pn{(\theta+j)}{j} \left( -(\theta + 2j) 
+ \sum_{n=j+2}^{m} (-1)^{n-j} \frac{\theta + 2n - 1}{(n-j)!}\prod_{k=j}^{n-2} (\theta + k +j) \right).
\end{equation*}
In particular, we obtain the assertion for $j=m-1$.
In the case $j\le m-2$ 
we have
\begin{equation*}
S=    \Pn{(\theta+j)}{j+1} \left( -1 + \frac{\theta + 2j + 3}{2!} 
+ \sum_{n=j+3}^{m} (-1)^{n-j} \frac{\theta + 2n - 1}{(n-j)!}\prod_{k=j+1}^{n-2} (\theta + k +j) \right),
\end{equation*}
that is
\begin{equation*}
S=\frac{\Pn{(\theta+j)}{j+2}}{2} \left( 1 + 2 \sum_{n=j+3}^{m} (-1)^{n-j} \frac{\theta + 2n - 1}{(n-j)!}\prod_{k=j+2}^{n-2} (\theta + k +j) \right).
\end{equation*}
In particular, we obtain the assertion for $j=m-2$. 
The general case follows inductively.
\end{proof}

\end{document}